\newtheorem{theorem}{Theorem}
\numberwithin{theorem}{section}
\newtheorem*{theorem*}{Theorem}
\newtheorem{maintheorem}[theorem]{Main Theorem}
\newtheorem*{maintheorem*}{Main Theorem}
\newtheorem{lemma}[theorem]{Lemma}
\newtheorem*{lemma*}{Lemma}
\newtheorem{fact}[theorem]{Fact}
\newtheorem*{fact*}{Fact}
\newtheorem{corollary}[theorem]{Corollary}
\newtheorem*{corollary*}{Corollary}
\newtheorem{proposition}[theorem]{Proposition}
\newtheorem*{proposition*}{Proposition}
\theoremstyle{definition}
\newtheorem{definition}[theorem]{Definition}
\newtheorem*{definition*}{Definition}
\newtheorem{remark}[theorem]{Remark}
\newtheorem*{remark*}{Remark}
\newtheorem{question}[theorem]{Question}
\newtheorem*{question*}{Question}
\newcommand{\A}{{\mathcal{A}}}
\newcommand{\B}{{\mathcal{B}}}
\newcommand{\C}{{\mathcal{C}}}
\newcommand{\T}{{\mathcal{T}}}
\newcommand{\CC}{{\mathbb{C}}}
\newcommand{\HH}{{\mathbb{H}}}
\newcommand{\PP}{{\mathbb{P}}}
\newcommand{\QQ}{{\mathbb{Q}}}
\newcommand{\TT}{{\mathbb{T}}}
\newcommand{\ZZ}{{\mathbb{Z}}}
\renewcommand{\a}{{\mathfrak{a}}}
\renewcommand{\c}{{\mathfrak{c}}}
\renewcommand{\d}{{\mathfrak{d}}}
\renewcommand{\i}{{\mathfrak{i}}}
\renewcommand{\t}{{\mathfrak{t}}}
\newcommand{\Chi}{\mathrm{X}}
\DeclareMathOperator{\dom}{dom}
\DeclareMathOperator{\restr}{\upharpoonright}
\DeclareMathOperator{\concat}{{^\smallfrown}}
\newcommand{\simpleset}[1]{{\{{#1}\}}}
\newcommand{\simpleseq}[1]{{\langle{#1}\rangle}}
\newcommand{\set}[2]{{\{ {#1} \mid {#2} \}}}
\newcommand{\seq}[2]{{\langle {#1} \mid {#2} \rangle}}
\DeclareMathOperator{\completesubposet}{{\mathbin{\leqslant \!\! \circ}}}
\DeclareMathOperator{\extends}{{\mathbin{\leq}}}
\DeclareMathOperator{\compat}{{\mathbin{||}}}
\DeclareMathOperator{\forces}{{ \, \Vdash \, }}
\DeclareMathOperator{\name}{name}
\DeclareMathOperator{\cof}{cof}
\DeclareMathOperator{\spec}{spec}
\DeclareMathOperator{\aE}{\a_{\text{\normalfont e}}}
\DeclareMathOperator{\aG}{\a_{\text{\normalfont g}}}
\DeclareMathOperator{\aT}{\a_{\text{\normalfont T}}}
\DeclareMathOperator{\cantorspace}{{^\omega 2}}
\DeclareMathOperator{\fincantorspace}{{^{<\omega} 2}}
\DeclareMathOperator{\supp}{supp}
\DeclareMathOperator{\hsupp}{hsupp}
\DeclareMathOperator{\tree}{tree}
\DeclareMathOperator{\subsequence}{\trianglelefteq}
\DeclareMathOperator{\id}{id}
\DeclareMathOperator{\actson}{{\curvearrowright}}
\DeclareMathOperator{\Aut}{Aut}
\title{Realizing arbitrarily large spectra of $\aT$}
\author{V. Fischer}
\address{Institute of Mathematics, University of Vienna, Kolingasse 14-16, 1090 Vienna, Austria}
\email{vera.fischer@univie.ac.at}
\author{L. Schembecker}
\address{Institute of Mathematics, University of Vienna, Kolingasse 14-16, 1090 Vienna, Austria}
\email{lukas.schembecker@univie.ac.at}
\begin{document}
	\maketitle
	
	\begin{abstract}
		We improve the state-of-the-art proof techniques for realizing various spectra of $\aT$ in order to realize arbitrarily large spectra.
		Thus, we make significant progress in addressing a question posed by Brian in his work~\cite{Brian_2021}.
		As a by-product, we obtain many complete subforcings and an algebraic analysis of the automorphisms of the forcing which adds a witness for the spectrum of $\aT$ of desired size.
	\end{abstract}
	
	\section{Introduction} \label{SEC_Introduction}
	
	Fundamentally, combinatorial set theory studies the possible sizes and relations between special subsets of reals.
	Usually, these special subsets are defined by some combinatorial property, e.g.\ mad families, independent families or partitions of Baire space into compact sets.
	Classically, the corresponding cardinal characteristics, i.e.\ the minimal sizes of such special subsets, and their relations are of main interest.
	However, a more recent approach is the study of their corresponding spectra, i.e.\ of all possible sizes of such special subsets at the same time.
	For some fixed type of combinatorial family of reals its spectrum can be studied from the following two angles.
	
	On one hand, one may consider which properties of the spectrum are provable in {\sf ZFC}.
	On the other hand, given a set of cardinals $\Theta$ with some additional assumptions one may construct forcing extensions in which $\Theta$ is precisely realized as the spectrum.
	Thus, the ultimate goal is to reduce the additional assumptions on $\Theta$ until they agree with the provable properties of the spectrum in {\sf ZFC}, so that we obtain a complete classification of the possible spectra of some type of combinatorial family of reals.
	
	Usually, the spectrum of some type of family may be rather arbitrary, so that there are not many provable properties in {\sf ZFC}.
	However, recent progress suggests that the following properties are shared between different spectra.
	First, usually by some straightforward combinatorial argument the continuum $\c$ is in the spectrum (a notable exception is the tower number $\t$).
	By König's Theorem we obtain the following necessary restriction on $\Theta$:
	\begin{enumerate}[(I)]
		\item $\max(\Theta)$ exists and has uncountable cofinality.
	\end{enumerate}
	Secondly, there seems to be the following additional restriction on $\Theta$:
	\begin{enumerate}[(I)]
		\setcounter{enumi}{1}
		\item $\Theta$ is closed under singular limits.
	\end{enumerate}
	For example, in \cite{Hechler_1972} Hechler proved that $\spec(\a)$ is closed under singular limits.
	Similarly, recently Brian proved in ~\cite{Brian_2021} that also $\spec(\aT)$ (cf.\ Definition~\ref{DEF_aT}) is closed under singular limits.
	However, for most other types of families it is still not known if this restriction is necessary, i.e.:
	\begin{question*}
		Are $\spec(\i)$, $\spec(\aE)$ and $\spec(\aG)$ closed under singular limits?
	\end{question*}
	Finally, specifically for the spectrum of $\aT$ Brian recently provided another necessary assumption given by {\sf ZFC}.
	\begin{theorem*}[Brian, 2022, \cite{Brian_2022}]
		Assume $0^\dagger$ does not exist.
		If $\theta$ has countable cofinality and we have $\theta \in \spec(\aT)$, then also $\theta^+ \in \spec(\aT)$.
	\end{theorem*}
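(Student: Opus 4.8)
\emph{Plan.} The strategy is to reduce the statement to a topological partition problem about the Cantor space and then to solve that problem by a transfinite recursion of length $\theta^+$ whose only ingredient beyond {\sf ZFC} is a consequence of $\neg 0^\dagger$. First, the arithmetic: since $\theta$ has countable cofinality while $\c$ has uncountable cofinality by König's theorem, $\theta \ne \c$; and $\theta \le \c$ since $\bairespace$ has size $\c$ and a partition into nonempty pieces has at most $\c$ blocks. Hence $\theta^+ \le \c$, so $\theta^+$ is not ruled out by (I). Now fix a partition $\bairespace = \bigsqcup_{\alpha<\theta} C_\alpha$ into compact sets witnessing $\theta \in \spec(\aT)$. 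Using the homeomorphism $\bairespace \cong \bairespace \times \cantorspace$, it suffices to partition $\cantorspace$ into exactly $\theta^+$ nonempty closed sets: from such a partition $\cantorspace = \bigsqcup_{\xi<\theta^+} D_\xi$ the compact sets $C_\alpha \times D_\xi$ partition $\bairespace \times \cantorspace$ into $\theta \cdot \theta^+ = \theta^+$ blocks, which transports back to $\bairespace$. (Equivalently, one may pick an uncountable compact $C_{\alpha^\ast}$, which contains a compact perfect set $P \cong \cantorspace$ by Cantor--Bendixson, and re-partition $P$; the hypothesis $\theta \in \spec(\aT)$ is used only to furnish the ambient compact scaffolding.)

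\emph{The core construction.} To partition $\cantorspace$ into exactly $\theta^+$ nonempty closed sets, build a decreasing chain of nonempty closed ``reservoirs'' $\langle F_\xi \mid \xi \le \theta^+\rangle$ with $F_0 = \cantorspace$: at successors peel off a closed block $E_\xi := F_\xi \setminus F_{\xi+1}$ while keeping $F_{\xi+1}$ nonempty and, for $\xi<\theta^+$, perfect; at limits $\lambda$ put $F_\lambda := \bigcap_{\xi<\lambda} F_\xi$; and finally set $E_{\theta^+} := F_{\theta^+}$. Then $\{E_\xi \mid \xi \le \theta^+\}$ is as desired, provided each $E_\xi$ is nonempty and the recursion survives all $\theta^+$ steps. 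The danger is entirely at limits $\lambda$ of cofinality $\omega$, where $F_\lambda$ is a decreasing $\omega$-intersection of perfect sets and may collapse to a single point (or to $\emptyset$), aborting the construction before $\theta^+$ blocks are produced. To prevent this, parametrize the reservoirs by a fixed increasing sequence of regular cardinals $\theta_n \nearrow \theta$, recording for a perfect $F_\xi$ ``how far'' one has peeled along the $n$-th block by a value $f_\xi(n) < \theta_n$, and drive the recursion by a scale $\langle f_\xi \mid \xi<\theta^+\rangle$ in $(\prod_n \theta_n, <^*)$ of length exactly $\theta^+$. At a limit $\lambda$ the surviving reservoir is governed by a $<^*$-upper bound of $\{f_\xi \mid \xi<\lambda\}$, which lies below $f_\lambda$ exactly when the scale is \emph{good} at $\lambda$ --- and goodness at $\lambda$ is precisely what keeps $F_\lambda$ perfect, so that the peeling can continue.

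\emph{Where $\neg 0^\dagger$ enters, and the main obstacle.} Shelah's {\sf PCF} theory already supplies, for a suitable choice of the $\theta_n$, a scale of length $\theta^+$ that is good at every point of uncountable cofinality; what is missing in {\sf ZFC} is goodness at the cofinality-$\omega$ points. This is the role of the assumption that $0^\dagger$ does not exist: then the core model $K$ exists, satisfies {\sf GCH} and the square principle $\square_\theta$, and weak covering gives $(\theta^+)^K = \theta^+$, so a $\square_\theta$-sequence of $K$ remains a $\square_\theta$-sequence in $V$; and $\square_\theta$ yields a scale on $\theta$ that is good at every cofinality-$\omega$ point below $\theta^+$. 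With such a scale the reservoirs never collapse prematurely, the recursion runs for exactly $\theta^+$ steps, and $\theta^+ \in \spec(\aT)$ follows. I expect the main difficulty to be the bookkeeping that makes the topological and {\sf PCF} sides cohere: parametrizing perfect subsets of $\cantorspace$ by $\prod_n\theta_n$ faithfully enough that ``$F_\lambda$ is perfect'' is equivalent to the existence of an exact upper bound for $\{f_\xi \mid \xi<\lambda\}$ below $f_\lambda$, while simultaneously ensuring that the blocks $E_\xi$ are all nonempty and together exhaust $\cantorspace \setminus F_{\theta^+}$. All set-theoretic strength is concentrated in producing a scale on $\theta$ good at the cofinality-$\omega$ points, which is exactly what $\neg 0^\dagger$ delivers through the core model and covering.
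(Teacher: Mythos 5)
This theorem is only quoted by the paper from \cite{Brian_2022}; no proof of it appears here, so I can only assess your argument on its own terms, and it has a fatal gap. First, the ``reduction'' is circular: by Definition~\ref{DEF_aT}, producing a partition of $\cantorspace$ into $\theta^+$ nonempty closed sets \emph{is} the statement $\theta^+ \in \spec(\aT)$, and your core construction makes no essential use of the hypothesis $\theta \in \spec(\aT)$ (you say as much yourself --- the given partition only supplies ``scaffolding''). If the construction worked, it would show that $\theta^+ \in \spec(\aT)$ for \emph{every} $\theta < \c$ of countable cofinality under $\neg 0^\dagger$, which is refuted by known models: for instance, the paper's own Main Theorem applied over $L$ with $\Theta = \{\aleph_1, \aleph_{\omega+2}\}$ gives a c.c.c.\ extension in which $0^\dagger$ still does not exist, $\aleph_\omega < \c = \aleph_{\omega+2}$, yet $\aleph_{\omega+1} \notin \spec(\aT)$. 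Any correct proof must weave the given $\theta$-sized partition into the construction of the new one in an essential way.

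Second, and decisively, the reservoir recursion is impossible for purely topological reasons: a second countable space admits no strictly decreasing chain of closed sets of uncountable length. Indeed, given such a chain $\langle F_\xi \mid \xi < \omega_1 \rangle$, pick for each $\xi$ a point $x_\xi \in F_\xi \setminus F_{\xi+1}$ and a basic clopen $U_\xi \ni x_\xi$ with $U_\xi \cap F_{\xi+1} = \emptyset$; for $\xi < \eta$ we have $U_\xi \cap F_\eta = \emptyset$ while $x_\eta \in U_\eta \cap F_\eta$, so $\xi \mapsto U_\xi$ is injective into a countable base --- a contradiction. Since your blocks $E_\xi = F_\xi \setminus F_{\xi+1}$ must all be nonempty, your chain of reservoirs would have to decrease strictly at every one of $\theta^+ \geq \aleph_2$ successor steps, so the construction aborts before stage $\omega_1$ regardless of how the scale is chosen; no goodness property at cofinality-$\omega$ points can rescue a single nested chain of closed subsets of $\cantorspace$. (Your account of where $\neg 0^\dagger$ enters --- a scale of length $\theta^+$ on $\prod_n \theta_n$ that is good at points of countable cofinality, extracted from the core model via weak covering and square --- is the right kind of ingredient for Brian's theorem, but it has to act on the pieces of the \emph{given} partition, e.g.\ by organizing the $\theta$ closed sets into $\omega$ columns of sizes $\theta_n$ and attaching to each $f_\xi$ a closed set assembled from those columns, rather than on a decreasing chain of reservoirs.)
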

	
	In particular, a model in which $\theta \in \spec(\aT)$ and $\theta^+ \notin \spec(\aT)$ implies that $0^\dagger$ exists, so that there exists an inner model with a measurable cardinal.
	Hence, such a model cannot be constructed relative to {\sf ZFC}.
	Note that this result is in stark contrast to the situation for the spectrum of $\a$.
	In this, case Shelah and Spinas proved in \cite{ShelahSpinas_2015} that consistently $\aleph_{\omega} \in \spec(\a)$, but $\aleph_{\omega + 1} \notin \spec(\a)$.
	Hence, despite their similarities there are distinct discrepancies between the spectra of different types of families.
	
	On the other hand, the realization of various spectra with the means of forcing was first studied for almost disjoint families.
	For mad families Hechler proved that any set of uncountable cardinals may be contained in the spectrum.
	
	\begin{theorem*}[Hechler, 1972, \cite{Hechler_1972}]
		Let $\Theta$ be any set of uncountable cardinals.
		Then, there is a c.c.c.\ forcing extension in which $\Theta \subseteq \spec(\a)$ holds.
	\end{theorem*}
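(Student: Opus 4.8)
The plan is to realize all the cardinals of $\Theta$ at once by a single c.c.c.\ forcing of product form, $\mathbb{P}=\prod^{\mathrm{fin}}_{\kappa\in\Theta}\mathbb{H}_\kappa$ with finite support, where $\mathbb{H}_\kappa$ is Hechler's poset for adjoining a maximal almost disjoint family of size $\kappa$, and then to verify that each factor still contributes a mad family of size $\kappa$ in the full extension. A condition of $\mathbb{H}_\kappa$ is a finite approximation to an indexed family $\langle a_\alpha:\alpha<\kappa\rangle$ of subsets of $\omega$ that are to be made almost disjoint: concretely a finite partial function $p$ with $\dom(p)\in\finsubset{\kappa}$ and $p(\alpha)\in\finsubset{\omega}$, thought of as specifying $a_\alpha\cap N_p=p(\alpha)$ for $\alpha\in\dom(p)$, where $N_p$ is the least integer above everything mentioned by $p$. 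The extension relation freezes these initial segments and, from height $N_p$ on, commits every coordinate already appearing in $p$ to being almost disjoint from all the others, while leaving a freshly introduced coordinate entirely free below $N_p$; the precise clauses are calibrated so that the maximality argument below goes through, and pinning them down is the delicate part of the construction. Let $G$ be $\mathbb{P}$-generic over the ground model and, for $\kappa\in\Theta$ and $\alpha<\kappa$, put $a^\kappa_\alpha:=\bigcup\{p(\alpha):p\in G_\kappa,\ \alpha\in\dom(p)\}$, where $G_\kappa$ is the restriction of $G$ to the $\kappa$-th factor, and $\A_\kappa:=\{a^\kappa_\alpha:\alpha<\kappa\}$.

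First I would establish the chain condition. A $\Delta$-system argument shows that $\mathbb{H}_\kappa$ has precaliber $\aleph_1$ (indeed is Knaster): given $\aleph_1$ conditions, thin to a $\Delta$-system in the domains, and then, since only finitely much finite data sits on the root, to a subfamily whose conditions literally agree on the root; any two of these have a common extension, because off the root the domains are disjoint and the finitely many new pairwise intersections that could arise are absorbed simply by raising the height. Knasterness is preserved by finite products and by finite-support products, so $\mathbb{P}$ is Knaster, hence c.c.c.\ and cardinal preserving. Routine density arguments then show in $V[G]$ that each $a^\kappa_\alpha$ is infinite, that $a^\kappa_\alpha\cap a^\kappa_\beta$ is finite for $\alpha\ne\beta$, and that the $a^\kappa_\alpha$ are pairwise distinct; so $\A_\kappa$ is an almost disjoint family of size exactly $\kappa$.

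The heart of the matter is to prove each $\A_\kappa$ maximal in $V[G]$, and I expect this to be the main obstacle. Fix $\kappa$, a condition $p_0$, and a $\mathbb{P}$-name $\dot B$ for an infinite subset of $\omega$; the goal is to find $q\extends p_0$ and $\alpha<\kappa$ forcing $|\dot B\cap a^\kappa_\alpha|=\aleph_0$. By c.c.c., $\dot B$ mentions only countably many coordinates of $\mathbb{P}$, so one reflects to a subproduct $\mathbb{P}_0$ of size $\le\kappa$ that still contains $\mathbb{H}_\kappa$ and is still Knaster; in particular no manipulation of the \emph{other} factors $\mathbb{H}_\mu$, $\mu\ne\kappa$, can be smuggled in to build an intruder, since those factors are all but countably suppressed. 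Inside $\mathbb{P}_0$ one argues a dichotomy: either $\dot B$ is already forced to have infinite intersection with one of the finitely many relevant generic sets $a^\kappa_\alpha$, $\alpha\in\dom(p_0)$ — which covers in particular every $\dot B$ forced to coincide mod finite with, or contain an infinite subset of, a generic set — and we are done; or else each such $a^\kappa_\alpha$ meets $\dot B$ finitely, and then one decides ever larger elements of $\dot B$ by conditions that have not irrevocably committed those elements within $\mathbb{H}_\kappa$, rerouting each into one fixed fresh coordinate $\gamma<\kappa$ (fresh, in particular, for the countably many coordinates that $\dot B$ mentions), the only obstructions being the finitely many, finitely obstructing old chunks; along the generic this yields $|a^\kappa_\gamma\cap B|=\aleph_0$. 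There is no tension with the fact that $\mathbb{P}$ forces $\c$ far above $\kappa$: a single generic set may legitimately absorb infinitely many distinct intruders, so no pigeonhole obstruction arises. The genuine difficulty — and the reason the extension relation of $\mathbb{H}_\kappa$ must be chosen with care — is to guarantee the second horn: that an infinite $\dot B$ evading every already-present generic set cannot ``hide'' arbitrarily large elements inside the almost disjoint sets of $\mathbb{H}_\kappa$ in a way that blocks the rerouting into a fresh coordinate. By contrast the chain condition and the size count are comparatively soft, and since only an inclusion $\Theta\subseteq\spec(\a)$ (not an equality) is claimed, one need not count nice names or control which other cardinals enter the spectrum. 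Once maximality holds for every $\kappa\in\Theta$, we get $\Theta\subseteq\spec(\a)$ in $V[G]$, as required.
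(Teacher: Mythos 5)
The paper does not actually prove this statement: it is quoted as Hechler's classical theorem with only a citation to \cite{Hechler_1972}, so there is no in-paper argument to measure you against. Judged on its own, your plan is the standard and correct one: a finite-support product $\prod_{\kappa\in\Theta}\HH_\kappa$ of Hechler a.d.-family posets, Knasterness via a $\Delta$-system, and maximality of each $\A_\kappa$ in the full extension via the countable support of a name together with rerouting into a fresh coordinate $\gamma<\kappa$. Taking a product of separate posets (rather than one poset on $\bigsqcup_{\kappa\in\Theta}\kappa$, which would make the families mutually almost disjoint and destroy maximality) is the right move, and your observation that only the inclusion $\Theta\subseteq\spec(\a)$ is claimed, so no isomorphism-of-names bookkeeping is needed, is also correct.

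The one genuine gap is the one you flag yourself: you never write down the extension relation, and that is exactly where the content sits. The clause you need is that $q\extends p$ imposes the disjointness requirement $q(\alpha)\cap q(\beta)\subseteq N_p$ \emph{only} on pairs $\alpha\neq\beta$ that both already lie in $\dom(p)$ (and only within a single factor); pairs involving a coordinate first mentioned by $q$ carry no requirement below $N_q$. This is what makes the second horn go through: fix $q\extends p_0$ with $\gamma\in\dom(q)$, let $F:=\dom(q\restr\HH_\kappa)\cap J$ where $J$ is a countable set of coordinates supporting $\dot B$ with $\gamma\notin J$, and let $r$ range over conditions in the $J$-subproduct below $q\restr J$ deciding some $k\geq N_q$ into $\dot B$. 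The amalgam of $r$ and $q$ with $k$ added to the $\gamma$-coordinate is a legal condition unless $k\in r(\alpha)$ for some $\alpha\in F$ — coordinates of $r$ outside $\dom(q)$ impose no constraint on the pair $(\gamma,k)$ precisely because of the clause above. If this obstruction occurs for every deciding $r$ below some fixed $r_0$ and all but finitely many $k$, then $r_0\forces\dot B\subseteq^*\bigcup_{\alpha\in F}a^\kappa_\alpha$, and since $F$ is finite and $\dot B$ is infinite this lands you in horn one. Otherwise the set of conditions forcing some $k>m$ into $\dot B\cap a^\kappa_\gamma$ is dense below $q$ for every $m$, giving $|\dot B\cap a^\kappa_\gamma|=\aleph_0$ generically. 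Writing out this amalgamation computation (in particular, checking that the disjointness constraints accumulate only along pairs present in the weaker condition) is all that separates your sketch from a proof; with the "wrong" extension relation — e.g.\ one constraining a new coordinate against old ones from $N_p$ on — the rerouting would be blocked and the argument would fail.
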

	
	In order to exclude values from the spectrum and precisely realize $\Theta$ as some spectrum, one usually employs an isomorphism-of-names argument.
	For example, Blass proved that under the following additional assumptions on the set $\Theta$, in Hechler's model the set $\Theta$ is already precisely realized as the spectrum of mad families:
	
	\begin{theorem*}[Blass, 1993, \cite{Blass_1993}]
		Assume $\sf{GCH}$ and let $\Theta$ be a set of uncountable cardinals such that
		\begin{enumerate}[\normalfont (I)]
			\item $\max(\Theta)$ exists and has uncountable cofinality,
			\item $\Theta$ is closed under singular limits,
			\item $\aleph_1 \in \Theta$,
			\item If $\theta \in \Theta$ with $\cof(\theta) = \omega$, then $\theta^+ \in \Theta$.
		\end{enumerate}
		Then, there is a c.c.c.\ forcing extension in which $\spec(\a) = \Theta$ holds.
	\end{theorem*}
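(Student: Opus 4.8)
The plan is to run Hechler's construction and then trim the spectrum down to $\Theta$ by an isomorphism-of-names argument; this is exactly Hechler's strategy for obtaining $\Theta\subseteq\spec(\a)$, carried one step further to get equality. First I would fix a ground model of $\mathsf{GCH}$ and let $\PP$ be Hechler's poset adding a system of almost disjoint families $\langle\dot A_\theta:\theta\in\Theta\rangle$ with $\dot A_\theta=\{\dot a^\theta_\alpha:\alpha<\theta\}$: a condition specifies, for finitely many pairs $(\theta,\alpha)$ with $\alpha<\theta$, a finite initial approximation of $\dot a^\theta_\alpha$, together with finitely many ``freezing'' side conditions committing members of a given $\dot A_\theta$ to be pairwise almost disjoint and committing members to absorb the diagonalizations recorded so far, so that each $\dot A_\theta$ is forced to be \emph{maximal}. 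A $\Delta$-system argument gives that $\PP$ is ccc (indeed $\sigma$-centered below each finite set of coordinates); $\PP$ has finite support over $\Theta$, and under $\mathsf{GCH}$ the index set, hence $\PP$ itself, has size $\max\Theta$. Since hypothesis (I) yields $\max\Theta^{\aleph_0}=\max\Theta$, forcing with $\PP$ makes $\c=\max\Theta$, and by ccc all cardinals and cofinalities are preserved. Hypothesis (III) lets me put $\aleph_1$ among the indices $\theta$.

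Next I would verify $\Theta\subseteq\spec(\a)$: by genericity each $\dot A_\theta$ is realized as an almost disjoint family of size exactly $\theta$ (distinct rows are forced distinct, and the freezing side conditions are everywhere satisfiable), while the recorded diagonalizations force maximality; together with $\c=\max\Theta\in\spec(\a)$ this gives the inclusion.

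The hard part is the reverse inclusion $\spec(\a)\subseteq\Theta$, done by isomorphism of names. Let $B\in V[G]$ be mad with $|B|=\kappa$; then $\aleph_1\le\kappa\le\c=\max\Theta$, and I must show $\kappa\in\Theta$. Fixing a nice name $\dot B=\{\dot b_\xi:\xi<\kappa\}$, the ccc gives that each $\dot b_\xi$ is decided by countably many coordinates, so $\dot B$ lives on a set $S$ of coordinates with $|S|\le\kappa$; write $S_\theta$ for the part of $S$ in the $\theta$-block. Assuming $\kappa\notin\Theta$, I would use (I), (II) and (IV) to produce $\theta^\ast\in\Theta$ above $\kappa$ on whose generic family the argument runs: since $|S_{\theta^\ast}|\le\kappa<\theta^\ast$, the family $\dot A_{\theta^\ast}$ has $\theta^\ast$-many members whose defining coordinates miss $S$ altogether. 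Every permutation of these indices induces an automorphism of $\PP$ fixing $\PP\upharpoonright S$ pointwise, hence fixing $\dot B$ and each $\dot b_\xi$. Playing these automorphisms against the maximality of $B$ — each of the $\theta^\ast$-many ``free'' members of $\dot A_{\theta^\ast}$ must meet some $\dot b_\xi$ infinitely — a counting and $\Delta$-system argument, together with the diagonalizations built into $\PP$, should produce a single real of $V[G]$ almost disjoint from every member of $B$, contradicting the maximality of $B$. Hence $\kappa\in\Theta$, so $\spec(\a)=\Theta$ in $V[G]$.

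I expect the main obstacle to be precisely this last step: making the isomorphism-of-names argument go through uniformly for every $\kappa\le\max\Theta$ outside $\Theta$. That is where all four hypotheses earn their keep — (I) pins down $\c$ and feeds the cardinal arithmetic, (III) realizes $\aleph_1$, and (II) and (IV) force $\Theta$ to share the closure properties that $\spec(\a)$ automatically enjoys in Hechler's model (closure under singular limits, and under successors of cardinals of countable cofinality), thereby guaranteeing a usable $\theta^\ast\in\Theta$ above $\kappa$. It is also the step where the diagonalizing design of Hechler's conditions is indispensable: bare finite approximations to the $\dot A_\theta$ would force neither their maximality nor the destruction of a spurious mad family $B$.
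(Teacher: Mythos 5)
Your overall architecture --- Hechler's poset to force $\Theta\subseteq\spec(\a)$, followed by an isomorphism-of-names argument for the reverse inclusion --- is exactly Blass's strategy (and the template this paper adapts to $\aT$ in Section~9), and the first two steps of your sketch are fine. The gap is in the exclusion step, which is the entire content of the theorem. The mechanism you propose there does not close: observing that each of the $\theta^*$-many ``free'' members of $\dot A_{\theta^*}$ must meet some $\dot b_\xi$ infinitely, and then invoking ``a counting and $\Delta$-system argument'', does not produce a real almost disjoint from all of $B$ --- a single $b_\xi$ can perfectly well meet $\theta^*$-many pairwise almost disjoint sets infinitely, so no pigeonhole contradiction is available. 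Moreover, your automorphisms fix $\dot B$ pointwise by construction, so they cannot by themselves disturb $B$'s maximality. The argument that actually works (and the one carried out for $\aT$ in the proof of the Main Theorem here) runs the $\Delta$-system over the names $\dot b_\xi$ \emph{themselves}: extract $\sigma$-many indices whose hereditary supports form a $\Delta$-system with root $R$, thin out so that the corresponding names are pairwise conjugate under isomorphisms of complete subforcings fixing the root, and then \emph{clone} one such name onto fresh coordinates disjoint from the support of any given $\dot b_\beta$; absoluteness between the complete subforcings then shows the clone is forced almost disjoint from every $\dot b_\beta$, contradicting maximality. The contradiction comes from isomorphisms between \emph{different} subforcings, not from automorphisms fixing the support of $\dot B$.

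You also misplace where hypotheses (II) and (IV) earn their keep. Finding some $\theta^*\in\Theta$ above $\kappa$ is trivial ($\max(\Theta)$ works, since $\kappa\le\c=\max(\Theta)$ and $\max(\Theta)\in\Theta$). What (II), (IV) and $\sf{GCH}$ actually buy is a \emph{regular} $\sigma\le\kappa$ with $[\sigma,\kappa]\cap\Theta=\emptyset$ and $\mu^{\aleph_0}<\sigma$ for all $\mu<\sigma$: this is what legitimizes the generalized $\Delta$-system lemma applied to $\sigma$-many countable supports and keeps the root small relative to the blocks being used, so that fresh coordinates for the clone exist. If $\kappa$ were a singular limit of elements of $\Theta$, or $\theta^+$ for some $\theta\in\Theta$ of countable cofinality, no such $\sigma$ exists and the argument genuinely fails --- which is precisely why (II) and (IV) appear as hypotheses rather than as conveniences. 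As written, your proposal names the right destination but leaves the decisive step --- the construction of the extra almost disjoint real --- unproved.
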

	
	Employing a more sophisticated isomorphism-of-names argument, Shelah and Spinas later improved this result by weakening assumption (III) and removing assumption (IV):
	
	\begin{theorem*}[Shelah, Spinas, 2015, \cite{ShelahSpinas_2015}]
		Assume $\sf{GCH}$ and let $\Theta$ be a set of uncountable cardinals such that
		\begin{enumerate}[\normalfont (I)]
			\item $\max(\Theta)$ exists and has uncountable cofinality,
			\item $\Theta$ is closed under singular limits,
			\item $\min(\Theta)$ is regular.
		\end{enumerate}
		Then, there is a c.c.c.\ forcing extension in which $\spec(\a) = \Theta$ holds.
	\end{theorem*}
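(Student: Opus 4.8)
The plan is to combine Hechler's product forcing, which already yields $\Theta\subseteq\spec(\a)$ in {\sf ZFC}, with a Blass-style isomorphism-of-names argument for the reverse inclusion, inserting one new ingredient that lets us weaken Blass's hypothesis $\aleph_1\in\Theta$ to the mere regularity of $\min(\Theta)$ and drop his hypothesis ``$\theta\in\Theta$, $\cof(\theta)=\omega\Rightarrow\theta^+\in\Theta$''. Write $\mu=\min(\Theta)$ and $\lambda=\max(\Theta)$, so $\mu$ is regular and $\cof(\lambda)>\omega$. Over a model of {\sf GCH} I would force with a finite-support iteration $\PP$ whose iterands are $\sigma$-centered posets of two kinds: for each $\theta\in\Theta$ a Hechler-type poset $\Q_\theta$ adding, by finite approximations, an almost disjoint family $\dot{\A}_\theta=\set{\dot A^\theta_\alpha}{\alpha<\theta}$ of size $\theta$; and, guided by a bookkeeping function, ``sealing'' posets that generically adjoin a real almost disjoint from every member of some almost disjoint family of size $<\mu$ which has appeared so far. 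Since $\lambda=\max(\Theta)\in\Theta$, the iterand $\Q_\lambda$ alone adds $\lambda$ reals, and a routine computation from {\sf GCH} gives $|\PP|=\lambda$, so together with $\cof(\lambda)>\omega$ one gets $\c=\lambda$ in $V^{\PP}$; as every iterand is $\sigma$-centered, $\PP$ is Knaster, hence c.c.c., and preserves cardinals and cofinalities.

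First I would check $\Theta\subseteq\spec(\a)$. Fix $\theta\in\Theta$ and suppose $X\in[\omega]^\omega$ in $V^{\PP}$ were almost disjoint from every member of $\A_\theta$; by c.c.c.\ and finite supports, $X$ lies in an extension by a sub-iteration depending on only countably many coordinates, and a density argument inside $\Q_\theta$ over that submodel shows that $X$ is almost covered by finitely many members of $\A_\theta$ --- a contradiction --- so $\A_\theta$ remains mad and $\theta\in\spec(\a)$. For the reverse inclusion, the first half is $\a\ge\mu$: given an almost disjoint family $\B$ of size $<\mu$ in $V^{\PP}$, I would use that $\mu$ is regular and uncountable --- so that the $<\mu$ many countably supported names for the members of $\B$ together mention fewer than $\mu$ coordinates of $\PP$ --- to conclude that $\B$ already lives in an extension by a sub-forcing of size $<\mu$; arranging the bookkeeping so that every such $\B$ is treated at some stage, a sealing step then adds a real almost disjoint from all of $\B$, so $\B$ is not maximal. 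This is exactly the point at which Blass's proof needs $\aleph_1\in\Theta$, and it is why $\min(\Theta)$ must be regular; coordinating it with the possibility $\lambda>\mu$ is already a genuine subtlety.

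The second half of the reverse inclusion --- and the main obstacle --- is to show $\sigma\notin\spec(\a)$ for every $\sigma$ with $\mu<\sigma<\lambda$ and $\sigma\notin\Theta$. By hypothesis (II) such a $\sigma$ is not a singular limit of $\Theta\cap\sigma$, so there is an interval just below $\sigma$ disjoint from $\Theta$; the genuine singular limits of $\Theta$ are in $\Theta$ and, by Hechler's {\sf ZFC} theorem that $\spec(\a)$ is closed under singular limits, they land in the spectrum automatically, so nothing is lost there. Assume $\dot{\A}=\set{\dot A_\alpha}{\alpha<\sigma}$ names a mad family. Each $\dot A_\alpha$ is decided, by c.c.c., from a countable set $s_\alpha$ of coordinates of $\PP$; I would then apply the $\Delta$-system lemma to $\seq{s_\alpha}{\alpha<\sigma}$, pass to a subfamily of size $\sigma$ with root $r$, and exploit the automorphisms of $\PP$ that fix $r$ and permute coordinates inside the $\Q_\theta$-blocks to argue that the $\dot A_\alpha$ lying off the root are mutually indistinguishable. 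From this symmetry the aim is to extract a condition forcing some new $\dot X\in[\omega]^\omega$ to be almost disjoint from every $\dot A_\alpha$, contradicting maximality. The delicate subcase is $\sigma=\theta^+$ with $\theta\in\Theta$ of countable cofinality --- this is precisely what dropping Blass's hypothesis (IV) demands --- and here the gap in $\Theta$ just below $\sigma$ together with the regularity of $\mu$ must be leveraged to make the $\Delta$-system counting succeed. Putting the two inclusions together yields $\spec(\a)=\Theta$ in $V^{\PP}$, which is the claim.
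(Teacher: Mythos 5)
This statement is quoted from Shelah--Spinas \cite{ShelahSpinas_2015}; the paper contains no proof of it, only a description of their method: the forcing is a large finite-support \emph{product} of Hechler's a.d.-family posets, the exclusion of cardinals hinges on the complete-subforcing fact $\HH_I\completesubposet\HH_J$ for $I\subseteq J$ (Fact~\ref{FAC_Hechler}), and the isomorphism-of-names argument works with isomorphisms between small complete subforcings rather than with automorphisms of the whole poset. Measured against that (and on its own terms), your proposal has a genuine gap at the very first step. By interleaving the Hechler blocks with bookkeeping-driven ``sealing'' iterands you turn the forcing into a true iteration, and then your verification of $\Theta\subseteq\spec(\a)$ breaks down: if $X$ is added at a stage \emph{after} the block $\Q_\theta$, the coordinates of $\Q_\theta$ are part of the ground model over which $X$ is generic, not the other way around, so there is nothing left in $\Q_\theta$ to diagonalize against $X$. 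The ``density argument inside $\Q_\theta$ over that submodel'' presupposes that the whole forcing factors as (a small part capturing the name of $X$) times (the remaining Hechler coordinates), and with sealing iterands whose very definition depends on earlier generics this factorization is exactly what has to be proved --- it is the analogue of Theorem~\ref{THM_Complete_Subforcings}, which the paper identifies as the main difficulty in the iteration setting. In the pure product no such problem arises, which is why Shelah--Spinas do not need sealing posets at all; the regularity of $\min(\Theta)$ is consumed in showing that a family of size $<\min(\Theta)$ is captured by a small complete subforcing, after which the same exclusion argument applies to it as to any $\sigma\notin\Theta$.

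The second gap is that the heart of the theorem is only announced, not carried out. From a $\Delta$-system of $\sigma$ many names $\dot A_\alpha$ you must actually produce a \emph{new} name $\dot A_\sigma$ and a proof that the given condition forces it to be almost disjoint from every $\dot A_\alpha$; ``the aim is to extract a condition forcing some new $\dot X$'' is the statement of the problem, not its solution. Concretely one needs: a counting argument (this is where {\sf GCH} enters) thinning the $\sigma$ names to a family that is pairwise isomorphic over the root; a copy of one such name onto fresh coordinates disjoint from the support of a fixed $\dot A_\beta$; and a transfer of the statement ``$\dot A_\alpha$ and $\dot A_\beta$ are almost disjoint'' down to a small complete subforcing, across the isomorphism, and back up by absoluteness. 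None of this machinery appears in your sketch, and the automorphism/complete-subforcing structure you would need for it is precisely what the sealing iterands destroy.
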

	
	By the previous discussion (I) and (II) are necessary assumptions.
	However, $\a$ may be singular, so that (III) is definitely not necessary.
	In fact, Brendle proved that $\a$ may be any uncountable singular cardinal, even of countable cofinality \cite{Brendle_2003}.
	Thus, an answer to the following question would yield a complete classification of all the possible spectra of $\a$:
	
	\begin{question*}
		Can assumption (III) be removed from the previous theorem?
	\end{question*}
	
	Similar progress has been made for independent families by Fischer and Shelah \cite{FischerShelah_2022} and partitions of Baire space into compact sets by Brian \cite{Brian_2021}, which is the main focus of this paper:
	
	\begin{theorem*}[Brian, 2021, \cite{Brian_2021}]\label{THM_AT_Brian}
		Assume $\sf{GCH}$ and let $\Theta$ be a set of uncountable cardinals such that
		\begin{enumerate}[\normalfont (I)]
			\item $\max(\Theta)$ exists and has uncountable cofinality,
			\item $\Theta$ is closed under singular limits,
			\item If $\theta \in \Theta$ with $\cof(\theta) = \omega$, then $\theta^+ \in \Theta$,
			\item $\min(\Theta)$ is regular,
			\item $\left|\Theta\right| < \min(\Theta)$.
		\end{enumerate}
		Then, there is a c.c.c.\ forcing extension in which $\spec(\aT) = \Theta$ holds.
	\end{theorem*}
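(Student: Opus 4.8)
The plan is to force over a ground model $V \models \sf{GCH}$ with a finite-support c.c.c.\ poset $\mathbb{P}_\Theta$ of size $\lambda := \max(\Theta)$ which simultaneously blows the continuum up to $\lambda$ and, for each $\theta \in \Theta$, adjoins a partition $\mathcal{K}_\theta$ of $\bairespace$ into exactly $\theta$ compact sets that remains a partition in the final model. Since any partition of $\bairespace$ into compact sets has at most $\c$ pieces, while a partition into exactly $\c$ pieces exists in $\sf{ZFC}$ (split an uncountable piece of a maximal pairwise disjoint family of compact sets into its singletons), this already yields $\Theta \subseteq \spec(\aT)$ and $\max(\spec(\aT)) = \c = \max(\Theta)$; the content of the theorem is the reverse inclusion.

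For a regular $\theta \in \Theta$ one takes $\mathbb{Q}_\theta$ to be a finite-support iteration of length $\theta$ of a $\sigma$-centered ``diagonalization'' forcing whose conditions are finite approximations to a single new compact set, decorated with side conditions that constrain all later members of the generic family; running bookkeeping over the $\mathbb{Q}_\theta$-names for reals --- of which there are exactly $\theta$, by $\sf{GCH}$ and $\cof(\theta) = \theta > \omega$ --- makes the generic sets $\langle K^\theta_\alpha \mid \alpha < \theta \rangle$ pairwise disjoint and maximal, hence a partition of $(\bairespace)^{V^{\mathbb{Q}_\theta}}$, and the side conditions are engineered so that this family stays maximal (equivalently, a partition) under every further c.c.c.\ forcing of the kind used below. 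Singular $\theta \in \Theta$ are handled either by stacking $\cof(\theta)$-many pieces of regular sizes from $\Theta$ cofinal in $\theta$ --- assumption (II) together with Brian's theorem that $\spec(\aT)$ is closed under singular limits then pins the size to exactly $\theta$ --- or by realizing only the regular members of $\Theta$ directly and invoking that closure theorem. The structural feature that does the real work later, and is the source of the ``many complete subforcings'' of the abstract, is that along any sufficiently closed set of $\mu \le \theta$ coordinates the restriction of $\mathbb{Q}_\theta$ is a complete subforcing isomorphic to $\mathbb{Q}_\mu$.

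Let $\mathbb{P}_\Theta$ be the finite-support product $\prod_{\theta \in \Theta} \mathbb{Q}_\theta$, interleaved by a global bookkeeping so that each block diagonalizes against names appearing anywhere in $\mathbb{P}_\Theta$, together with Cohen reals on a tail if needed to force $\c = \lambda$. By $\sf{GCH}$ and assumption (V) --- which gives $|\Theta| < \min(\Theta) \le \lambda$, hence $|\mathbb{P}_\Theta| = \lambda$ --- this poset is c.c.c.\ of size $\lambda$ and forces $\c = \lambda$, $\mathfrak{d} = \min(\Theta)$ and $\Theta \subseteq \spec(\aT)$. Assumption (IV) that $\min(\Theta)$ is regular is what permits the smallest block to be iterated and gives $\min(\Theta) \ge \aleph_1$, so all blocks have uncountable length; assumption (III) is forced on us, since $0^\dagger$ stays absent in $V^{\mathbb{P}_\Theta}$ and Brian's 2022 theorem then makes $\theta \in \spec(\aT)$ imply $\theta^+ \in \spec(\aT)$ whenever $\cof(\theta) = \omega$. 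For $\spec(\aT) \subseteq \Theta$ I would argue by induction on $\lambda = \max(\Theta)$, over all $\Theta$ obeying (I)--(V), that $\mathbb{P}_\Theta$ forces $\spec(\aT) = \Theta$. Suppose $\kappa \in \spec(\aT)^{V^{\mathbb{P}_\Theta}}$ with $\kappa \notin \Theta$; as $\c = \lambda \in \Theta$ and $\aT \ge \mathfrak{d} = \min(\Theta)$ in $V^{\mathbb{P}_\Theta}$, necessarily $\min(\Theta) \le \kappa < \lambda$. Fix a name for an enumerated partition $\mathcal{K}$ of size $\kappa$; by c.c.c.\ its support has size $\kappa$, and enlarging it to a closed set $X$ that also contains the smallest block in full --- still of size $\kappa$, using that $X$ meets at most $|\Theta| < \kappa$ blocks so that they can be rounded up --- turns $\mathbb{P}_\Theta \restr X$ into a complete subforcing isomorphic to $\mathbb{P}_{\Theta^*}$ for a set $\Theta^*$ of cardinals $\le \kappa$ with $\min(\Theta^*) = \min(\Theta)$, $|\Theta^*| \le |\Theta|$ and $\max(\Theta^*) = \c^{V^{\mathbb{P}_\Theta \restr X}}$; transferring closure under singular limits and the countable-cofinality clause from $\Theta$ via Brian's $\sf{ZFC}$ results and using König's theorem for (I), one checks $\Theta^*$ still obeys (I)--(V). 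Since ``partition of $\bairespace$ into $\kappa$ compact sets'' is downward absolute --- the union still covers the smaller Baire space, and disjointness, compactness of the pieces and their pairwise distinctness all persist --- $\mathcal{K}$ is already such a partition in $V^{\mathbb{P}_\Theta \restr X}$, so the inductive hypothesis gives $\kappa \in \Theta^*$, in fact $\kappa = \max(\Theta^*)$. Finally $\mathcal{K}$ is also a partition in $V^{\mathbb{P}_\Theta} = V^{\mathbb{P}_\Theta \restr X}[\dot{\mathbb{R}}]$, and a careful analysis of how the tail $\dot{\mathbb{R}}$ --- which still contains blocks of sizes up to $\lambda > \kappa$ --- can cover the reals it adds by the compact pieces of $\mathcal{K}$, normalizing the name of $\mathcal{K}$ via the automorphism group of the block forcings and playing it against the indestructibility of the designed partitions, forces $\kappa$ to coincide with one of the block sizes of $\Theta$ --- contradiction.

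I expect two places to carry the weight. The first is the block forcing $\mathbb{Q}_\theta$: making its generic family at once of exact cardinality $\theta$ and indestructibly maximal throughout the whole construction while the ambient $\c$ is driven up to $\lambda$ --- this is where the fine structure of compact subsets of $\bairespace$ and the design of the side conditions live, and it is essentially ``the forcing which adds a witness for the spectrum of $\aT$ of desired size'' of the abstract. The second, and in my view the genuine obstacle, is the isomorphism-of-names step at the end of the previous paragraph: one needs $\mathbb{P}_\Theta$ to possess enough complete subforcings of the canonical shape and enough automorphisms (the ``algebraic analysis of the automorphisms of the forcing'') to trap an arbitrary small partition inside a sub-construction and close the induction. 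Assumption (V) is exactly the hypothesis that makes the cardinal bookkeeping of this step go through --- only $|\Theta| < \min(\Theta)$ blocks need to be rounded up and reshuffled --- and relaxing it is what the present paper is about.
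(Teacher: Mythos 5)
This statement is quoted from \cite{Brian_2021} as background; the paper does not reprove it, so the only fair comparison is with the analogous isomorphism-of-names machinery the paper develops for its Main Theorem (Sections 5--9). Measured against that, your proposal correctly identifies the ingredients (interleaved diagonalization forcings adding the witnesses, complete subforcings, automorphisms, a $\Delta$-system/counting step), and the forward inclusion $\Theta \subseteq \spec(\aT)$ together with $\c = \max(\Theta)$ is essentially right. But the reverse inclusion --- which is the entire content of the theorem --- has a genuine gap. Your plan is an induction on $\max(\Theta)$: restrict a name for a $\kappa$-sized partition to a complete subforcing $\PP_\Theta \restr X$ isomorphic to $\PP_{\Theta^*}$, apply the inductive hypothesis to get $\kappa \in \Theta^*$, and then derive a contradiction from ``a careful analysis of the tail.'' This cannot work as stated. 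First, the induction does not set up: $\Theta^*$ is obtained by truncating blocks down to size $\le \kappa$, so there is no reason it satisfies (I)--(III) --- if $\cof(\kappa) = \omega$ then $\max(\Theta^*) = \kappa$ already violates (I), and closure under singular limits is not preserved by truncation --- so the inductive hypothesis is not applicable. Second, even when it is applicable it yields only $\kappa = \max(\Theta^*)$, which is perfectly consistent and produces no contradiction; everything is deferred to the unexplained tail step, which is precisely the isomorphism-of-names argument you are supposed to be proving.

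The argument that actually closes the gap (in \cite{Brian_2021}, in \cite{ShelahSpinas_2015}, and in Section 9 of this paper) does not pass through downward absoluteness plus induction on $\max(\Theta)$. Instead, given $p$ forcing that $\seq{\dot T_\alpha}{\alpha<\lambda}$ is an almost disjoint family of trees with $\lambda \notin \Theta$, one uses {\sf GCH}, hypothesis (V) (so that $|\Theta| < \min(\Theta) \le \lambda$ leaves room for the counting), and the generalized $\Delta$-system lemma to find $\sigma$-many indices whose hereditary supports form a $\Delta$-system and whose (nice) names become literally equal after applying the canonical isomorphisms between the corresponding complete subforcings; one then manufactures a ``$\lambda$-th'' name $\dot T_\lambda$ on fresh coordinates and, for each $\beta<\lambda$, transfers the statement $[\dot T_\alpha]\cap[\dot T_\beta]=\emptyset$ (which holds for some $\alpha$ in the homogeneous set whose support meets $\beta$'s only in the root) through an isomorphism fixing $\dot T_\beta$ to conclude $p \forces [\dot T_\lambda]\cap[\dot T_\beta]=\emptyset$. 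Hence the family is not maximal and cannot underlie a partition. Nothing in your proposal constructs this extra tree, and that construction is the theorem. A secondary inaccuracy: the generic partitions do not survive to the final model because the blocks are ``indestructibly maximal under every further c.c.c.\ forcing'' --- designing such a block is itself problematic --- but because the whole poset is a single finite-support iteration of length of uncountable cofinality (here $\min(\Theta)$, whence (IV) and $\d = \min(\Theta) \le \aT$) in which every block re-diagonalizes against all reals added so far at every stage, so each real is absorbed at some later stage.
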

	
	Again, by the previous discussion (I), (II) and (III) are necessary assumptions.
	Assumption (IV) is not necessary as $\aT$ may be any singular cardinal of uncountable cofinality.
	However, unlike $\a$ it is still open if $\aT$ may have countable cofinality.
	Assumption (V) is also not necessary as we may force any set of uncountable cardinals to be contained in $\spec(\aT)$ similar to Hechler's theorem for $\spec(\a)$.
	In other words, assumption (V) implies that once the minimum of $\Theta$ has been fixed, only a bounded set of cardinals may be realized with the methods employed by Brian.
	Thus, in~\cite{Brian_2021} he asked if it is possible to remove assumption (V).
	Inspired by the methods of Shelah and Spinas for $\spec(\a)$ and towards obtaining a complete classification of the possible spectra of $\aT$, we prove the following Main Theorem~\ref{THM_AT_Spectrum} and give a partial answer to Brian's question:
	
	\begin{maintheorem*}
		Assume $\sf{GCH}$ and let $\Theta$ be a set of uncountable cardinals such that
		\begin{enumerate}[\normalfont (I)]
			\item $\max(\Theta)$ exists and has uncountable cofinality,
			\item $\Theta$ is closed under singular limits,
			\item If $\theta \in \Theta$ with $\cof(\theta) = \omega$, then $\theta^+ \in \Theta$,
			\item $\aleph_1 \in \Theta$.
		\end{enumerate}
		Then, there is a c.c.c.\ forcing extension in which $\spec(\aT) = \Theta$ holds.
	\end{maintheorem*}
	
	Thus, we are indeed able to realize arbitrarily large spectra, however our current proof methods require us to strengthen assumption (IV).
	In Section~\ref{SEC_Sketch} we outline the proof of Main Theorem~\ref{THM_AT_Spectrum} and discuss how to possibly avoid the strengthening of (IV) in order to obtain a full answer to Brian's question.
	Nevertheless, the following summarizes how the proof of Main Theorem~\ref{THM_AT_Spectrum} extends the current proof methods and techniques for realizing various spectra:
	
	Generally, the forcing used to obtain our result is very similar to the forcing used in Brian's result above, but with a distinct modification in order to allow a more sophisticated isomorphism-of-names argument.
	Inspired by Shelah's and Spinas' result for $\spec(\a)$ the main feature of our argument is the restriction to isomorphic complete subforcings of the entire forcing.
	In contrast, Brian's argument only uses automorphism of the entire forcing, which leads to his restriction (V).
	
	The main difficulty of our proof is showing that we indeed have many complete subforcings (see Theorem~\ref{THM_Complete_Subforcings}).
	In the situation for $\spec(\a)$ there is a Suslin-c.c.c.\ product-like forcing, which adds a maximal almost disjoint family of desired size.
	Thus, the existence of complete subforcings is easy to prove in that case.
	In contrast, for $\spec(\aT)$ there is no known such Suslin-c.c.c.\ product-like forcing and instead we have to use an iteration of c.c.c.\ forcings in order to obtain a witness for $\aT$ of desired size.
	To establish the existence of complete subforcings, we introduce the following novel advancements.
	
	First, compared to Brian's forcing in \cite{Brian_2021}, our forcing (see Definition~\ref{DEF_Forcing}) has a distinct modification, which allows for more automorphisms.
	In Section~\ref{SEC_Extend_Automorphisms} we provide a very algebraic framework of these automorphisms.
	Nevertheless, we strive for a self-contained presentation.
	Secondly, since we do not have a Suslin forcing, we cannot simply use the standard notion of a canonical projection of a nice name of a real (cf.\ \cite{FischerTornquist_2015}).
	Instead, in Definition~\ref{DEF_Nice_Name_WRT_Trees} we introduce the technical notion of a nice name for a finite set of reals with respect to a sequence of names for trees.
	The canonical projection of this technical nice name then has the desired properties in our proof to obtain complete subforcings.
	
	Finally, in Section~\ref{SEC_Proof} we prove the isomorphism-of-names argument needed for our Main Theorem~\ref{THM_AT_Spectrum}. 
	However, again the situation is more complicated than for the spectrum of $\a$ by Shelah and Spinas, because we are working with an iteration.
	To this end, in Section~\ref{SEC_Extending_Isomorphisms} we provide a very algebraic/categorical framework for the isomorphisms between the many complete subforcings just discussed.
	Lastly, we use these isomorphisms to show that the corresponding isomorphism-of-names argument can be carried out for the iteration.
	Thus, the main insight is that this more sophisticated isomorphism-of-names argument can not only be applied in a product-like context as for $\spec(\a)$, but also in a more intricate iteration-like context as for $\spec(\aT)$.
	
	\section{Preliminaries} \label{SEC_Preliminaries}
	
	In this section we introduce the cardinal characteristic $\aT$ and its associated spectrum $\spec(\aT)$.
	We will also define a c.c.c.\ forcing which forces the existence of witnesses in $\spec(\aT)$ of various sizes.
	In Definition~\ref{DEF_Forcing} we define a slightly tweaked version of this forcing in order to realize arbitrarily large spectra of $\aT$ in our Main Theorem~\ref{THM_AT_Spectrum}.
	
	\begin{definition} \label{DEF_aT}
		We define the spectrum
		$$
			\spec(\aT) := \set{\kappa > \aleph_0}{\text{There is a partition of } \cantorspace \text{ into } \kappa \text{-many closed sets}}.
		$$
		and define the cardinal characteristic $\aT := \min(\spec(\aT))$.
	\end{definition}
	
	We arbitrarily fixed $\cantorspace$ as our Polish space of choice here.
	However, Miller proved that a witness for $\aleph_1 \in \spec(\aT)$ does not depend on the underlying Polish space:
	
	\begin{theorem}[Miller, 1980, \cite{Miller_1980}]
		There is a partition of $\cantorspace$ into $\aleph_1$-many closed sets iff there is a partition of some Polish space into $\aleph_1$-many closed sets iff every Polish space has a partition into $\aleph_1$-many closed sets.
	\end{theorem}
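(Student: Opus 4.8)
The plan is to close the circle of implications by always routing through $\cantorspace$. Write (I), (II), (III) for the three statements in the order given; since $\cantorspace$ is itself an uncountable Polish space, (III)$\Rightarrow$(I)$\Rightarrow$(II) are trivial (and only uncountable Polish spaces matter, since a countable space admits no partition into $\aleph_1$ nonempty sets), so what remains is to prove (II)$\Rightarrow$(I) and (I)$\Rightarrow$(III).

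The soft ingredient is that a partition into closed sets pulls back along any continuous surjection: if $g\colon A\to B$ is a continuous surjection and $B=\bigsqcup_{\alpha<\omega_1}B_\alpha$ with the $B_\alpha$ closed, then $A=\bigsqcup_{\alpha<\omega_1}g^{-1}(B_\alpha)$ is a partition into closed sets with no empty piece. I would combine this with three standard facts: (a) $\bairespace$ admits a continuous surjection onto every nonempty Polish space; (b) every uncountable Polish space $X$ admits a continuous surjection onto $[0,1]$ --- by the perfect set theorem $X$ contains a closed copy of $\cantorspace$, which maps onto $[0,1]$ by binary expansion, and one extends this over $X$ by Tietze's theorem; (c) binary expansion $b\colon\cantorspace\to[0,1]$ is a continuous surjection, and closed since $\cantorspace$ is compact. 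By (b), a partition of $[0,1]$ into $\aleph_1$ closed sets yields one of every uncountable Polish space; by (a), a partition of any Polish space yields one of $\bairespace$. So the theorem reduces to two transfer statements: (A) a partition of $\cantorspace$ into $\aleph_1$ closed sets yields one of $[0,1]$; (B) a partition of $\bairespace$ into $\aleph_1$ closed sets yields one of $\cantorspace$. Indeed, (I) gives ``$[0,1]$ has such a partition'' by (A), which gives (III) by (b); and (II) gives ``$\bairespace$ has such a partition'' by (a), which gives (I) by (B). (Pulling back along $b$ shows the $[0,1]$-version also implies (I), so these are equivalent.)

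The crux is (A), since no continuous surjection $[0,1]\to\cantorspace$ is available. Here $b$ is a closed quotient map whose nontrivial fibres are exactly the pairs $\{s01^\omega,s10^\omega\}$ of the two binary expansions of a dyadic rational in $(0,1)$; letting $\sim$ be the equivalence relation on $\cantorspace$ whose nontrivial classes are these pairs, the maps $C\mapsto b^{-1}(C)$ and $F\mapsto b(F)$ set up a correspondence between partitions of $[0,1]$ into closed sets and partitions of $\cantorspace$ into $\sim$-saturated closed sets. So, given an arbitrary partition $\{F_\alpha\}_{\alpha<\omega_1}$ of $\cantorspace$ into closed sets, the task is to merge its straddling $\sim$-pairs --- those whose two points lie in distinct pieces --- into single pieces, of which there are only countably many. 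Two observations make this safe: (i) adjoining to a closed set $F$ the partners of all its points again yields a closed set, because a sequence of newly added points $s_n01^\omega$ with distinct $s_n$ has $|s_n|\to\infty$ and hence converges to the same point as the partners $s_n10^\omega\in F$, which is in $F$; and (ii) deleting from a closed set a collection of points each isolated in it again yields a closed set. Hence, \emph{provided} every straddling pair has a representative isolated in its current piece, one merges all pairs at once by moving such a representative into its partner's piece: by (i) and (ii) all the resulting pieces are closed, the partition is $\sim$-saturated, no piece is created, and the only pieces that can vanish are finite sets of $\sim$-points, of which there are just countably many --- so exactly $\aleph_1$ pieces survive and (A) follows.

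The main obstacle is exactly that proviso. The $\sim$-points form a \emph{countable dense} subset of $\cantorspace$, and one must first modify $\{F_\alpha\}$ so that every straddling pair acquires a representative isolated in its piece; the naive fix of splitting these points off into singletons fails, because deleting a dense set destroys closedness. I expect this to require a fusion construction: recursively carve small clopen neighbourhoods of the $\sim$-points out of the relevant pieces, shrinking the neighbourhoods fast enough and keeping them off the finitely many points already treated, so that the approximating partitions converge pointwise to a genuine partition into closed sets with the desired isolation. Transfer statement (B) has the same character --- reassembling a partition of the ``richer'' space $\bairespace$ into one of $\cantorspace$, again with no reverse surjection to ride on --- and I would handle it by a parallel fusion. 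These fusion arguments are the technical heart; everything else is the bookkeeping above.
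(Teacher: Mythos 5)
The paper itself offers no proof of this theorem --- it is quoted from Miller's 1980 article --- so there is nothing internal to compare against; I can only judge the proposal on its own terms. Your reduction scheme is correct as far as it goes: the trivial implications, the pullback of closed partitions along continuous surjections, and the three standard facts (a)--(c) are all fine, and they do reduce the theorem to your two transfer statements (A) and (B). But neither (A) nor (B) is actually proved, and these are precisely where the content of the theorem lies.

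For (A), the one-shot merging argument is sound \emph{given} the proviso that every straddling pair has a representative isolated in its current piece (your observations (i) and (ii) are both correct), but the proviso can simply fail: if $x=s01^\omega$ is a non-isolated point of its piece $F_\alpha$ and its partner $y=s10^\omega$ is a non-isolated point of $F_\beta$ (e.g.\ both pieces perfect), then neither point can be deleted from its piece without destroying closedness. The fusion you gesture at does not restore the proviso: splitting a clopen neighbourhood $U\ni x$ off $F_\alpha$ merely replaces $F_\alpha$ by the two closed pieces $F_\alpha\setminus U$ and $F_\alpha\cap U$, and $x$ is still non-isolated in $F_\alpha\cap U$, no matter how small $U$ is. Nor can one fall back on taking unions over the components of the straddling graph, since a countable union of the $F_\alpha$ need not be closed. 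So a genuinely different device is needed to fuse such a pair, and it is not supplied. Statement (B) is not argued at all beyond the phrase ``a parallel fusion,'' and it is if anything harder: the natural route realizes $\bairespace$ as $\cantorspace$ minus a countable dense set, and passing to closures destroys disjointness at the deleted points in an uncontrolled way (a single deleted point can lie in the closure of uncountably many pieces). Your write-up is an accurate map of where the difficulty sits, but the difficult part is left undone, so the proposal does not constitute a proof.
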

 	
 	More generally, Spinas proved in \cite{Spinas_1997} that $\aT$ is independent of the underlying Polish space and that $\d \leq \aT$.
 	Brian extended this result in the following way:
	
	\begin{theorem}[\cite{Brian_2021}]
		Let $\kappa$ be an uncountable cardinal.
		Then, all six statements of the following form are equivalent:
		$$
			\text{Some/Every uncountable Polish space can be partitioned into } \kappa \text{ compact/closed/}F_\sigma \text{-sets.}
		$$
	\end{theorem}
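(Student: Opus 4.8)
The easy direction is pure monotonicity: a partition into compact sets is \emph{a fortiori} one into closed sets, which is \emph{a fortiori} one into $F_\sigma$-sets, and an uncountable Polish space exists. Hence all six statements follow downward from ``every uncountable Polish space admits a partition into $\kappa$ compact sets,'' and it suffices to prove the single reverse implication: \emph{if some uncountable Polish space admits a partition into $\kappa$ many $F_\sigma$-sets, then every uncountable Polish space admits a partition into $\kappa$ many compact sets.} Throughout I would use that $\kappa$ is uncountable, so $\kappa = \kappa \cdot \aleph_0$ and adjoining or deleting countably many pieces leaves the size of a partition unchanged. Three things come essentially for free. First, in a partition $X = \bigsqcup_{\alpha} A_\alpha$ into $F_\sigma$-sets every piece $A_\alpha$ is automatically $G_\delta$ as well, since its complement $\bigcup_{\beta \neq \alpha} A_\beta$ is again $F_\sigma$; so $A_\alpha = \bigcup_n C_{\alpha,n}$ for some increasing sequence of closed sets, and $X \setminus A_\alpha$ has such an exhaustion too. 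Secondly, since every Polish space is the continuous bijective image of a closed subspace of $\bairespace$ and preimages of $F_\sigma$-sets under continuous maps are $F_\sigma$-sets, the hypothesis already yields that $\bairespace$ itself admits a partition into $\kappa$ many $F_\sigma$-sets (the open complement of the closed subspace is padded with at most countably many clopen pieces, and the bijection loses no piece). Thirdly, because a zero-dimensional $\sigma$-compact metrizable space is a countable disjoint union of compact sets, to obtain a partition of $\bairespace$ into $\kappa$ compact sets it is enough to obtain one into $\kappa$ closed \emph{$\sigma$-compact} sets.

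So the real work is to convert a partition $\bairespace = \bigsqcup_{\alpha<\kappa} A_\alpha$ into $F_\sigma$-sets into a partition of $\bairespace$ into $\kappa$ closed $\sigma$-compact sets, and this is the step I expect to be the principal obstacle. A naive refinement is hopeless: a nondegenerate closed set such as $[0,1]$ requires $\c$ many closed subpieces -- by Sierpi\'nski's theorem a nondegenerate closed interval admits no partition into countably many closed sets at all -- so for $\kappa < \c$ the obvious refinement overshoots, and, worse, a closed subset of $\bairespace$ need not be $\sigma$-compact. My attempt would exploit that $\bairespace$ is not $\sigma$-compact, by a ``spreading'' device: fix a clopen partition $\cantorspace = \bigsqcup_n E_n$ and, working inside $\bairespace \times \cantorspace \cong \bairespace$, attach to each piece the set $B_\alpha := \bigcup_n (C_{\alpha,n} \times E_n)$; this is closed (it meets each clopen block $\bairespace \times E_n$ in the closed set $C_{\alpha,n} \times E_n$), the $B_\alpha$ are pairwise disjoint (the $A_\alpha$ are), and the uncovered remainder is exactly $\bigsqcup_{n} \bigsqcup_\alpha \bigl( (A_\alpha \setminus C_{\alpha,n}) \times E_n \bigr)$, which is again a partition into at most $\kappa$ many $F_\sigma$-sets, so one iterates inside a countable power of $\bairespace$ (again homeomorphic to $\bairespace$). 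The crux is to organise this iteration so that (i) it exhausts the space -- the $\omega$-th remainder is a $G_\delta$ set that must still submit to the same treatment, which likely forces a transfinite recursion or a sharper choice of the exhaustions $C_{\alpha,n}$ -- and (ii) the pieces one finally produces are $\sigma$-compact, the $C_{\alpha,n}$ themselves not being so; and all of this while using $\kappa = \kappa \cdot \aleph_0$ to keep the running total of pieces pinned at exactly $\kappa$.

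Finally, from a partition $\bairespace = \bigsqcup_{\alpha<\kappa} K_\alpha$ into compact sets I would transfer to an arbitrary uncountable Polish space $Y$: take a continuous bijection $b \colon S \to Y$ from a closed $S \subseteq \bairespace$ and push forward, so $Y = \bigsqcup_\alpha b(K_\alpha \cap S)$ with each piece compact (continuous images of compacta are compact). This yields a partition of size at most $\kappa$, and the final difficulty -- which, together with the construction above, I regard as the true crux -- is to upgrade ``at most $\kappa$'' to ``exactly $\kappa$'': pieces disappear when $b(K_\alpha \cap S) = \emptyset$ or when a single compact piece swallows a whole copy of $\cantorspace$, and a connected compact piece such as $[0,1]$ can be re-split only by passing back up to an uncountable partition (again Sierpi\'nski). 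Here I would invoke that $Y$ always contains a closed copy of $\cantorspace$ (a Cantor scheme in the perfect kernel, automatically closed because compact) together with Hurewicz's dichotomy -- $Y$ is $\sigma$-compact or contains a closed copy of $\bairespace$ -- to structure the bookkeeping, splitting off the copy of $\cantorspace$ (on which closed $=$ compact) and reconciling the remaining counts, countable discrepancies being absorbed once more through $\kappa = \kappa \cdot \aleph_0$.
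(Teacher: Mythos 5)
The paper states this theorem as a black box imported from \cite{Brian_2021} and gives no proof of it, so there is no in-paper argument to compare your route against; I can only assess the plan on its own terms. Your organizational reductions are sound: everything does follow from the single implication ``some uncountable Polish space admits a partition into $\kappa$ many $F_\sigma$-sets $\Rightarrow$ every uncountable Polish space admits one into $\kappa$ compact sets''; pulling the hypothesis back along a continuous bijection from a closed $S \subseteq \bairespace$ and padding the clopen complement does transfer it to $\bairespace$; closed $\sigma$-compact pieces do suffice (for an increasing exhaustion by compacta $K_n$, each $K_{n+1}\setminus K_n$ is open in $K_{n+1}$, hence locally compact, zero-dimensional and second countable, hence a countable disjoint union of compact open sets); and the endgame of splitting off a closed copy of $\cantorspace$, partitioning it into exactly $\kappa$ compacta and absorbing countable discrepancies via $\kappa = \kappa\cdot\aleph_0$ works. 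One side claim is false, however: in a partition into uncountably many $F_\sigma$-sets the pieces need \emph{not} be $G_\delta$, because $\bigcup_{\beta\neq\alpha}A_\beta$ is an \emph{uncountable} union of $F_\sigma$-sets and so need not be $F_\sigma$. (Partition $\cantorspace$ into a countable dense set $D$ and the singletons of its complement: $D$ is $F_\sigma$ but not $G_\delta$.) You do not appear to use this claim later, so it is not load-bearing, but it should be deleted.

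The genuine gap is exactly where you locate it, and it is worse than a matter of ``organising the iteration.'' First, the pieces $B_\alpha=\bigcup_n(C_{\alpha,n}\times E_n)$ are closed but not $\sigma$-compact, since the $C_{\alpha,n}$ are arbitrary closed subsets of $\bairespace$; so even a successful exhaustion would at best convert an $F_\sigma$-partition into a closed one, leaving the closed-to-compact conversion --- the actual content of the theorem --- untouched. Second, the uncovered remainder is $\bigcup_n\bigl(\bairespace\setminus\bigcup_{\alpha}C_{\alpha,n}\bigr)\times E_n$, and $\bigcup_\alpha C_{\alpha,n}$ is a union of $\kappa$ many closed sets, which for uncountable $\kappa$ need not be Borel (the exhaustions $C_{\alpha,n}$ are unconstrained, so this union can be an arbitrary subset of $\bairespace$ when the pieces are singletons), let alone $F_\sigma$ or $G_\delta$. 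The remainder is therefore not a Polish subspace and is not ``again a partition into $\kappa$ many $F_\sigma$-sets'' of anything to which the hypothesis applies, so the recursion cannot even be restarted, never mind shown to terminate. In the degenerate case where the partition is already closed ($C_{\alpha,n}=A_\alpha$) the device returns the original partition crossed with $\cantorspace$ and makes no progress at all. Closing the central implication requires a different idea --- some structural analysis of how the individual closed pieces sit inside $\bairespace$ (e.g.\ a Hurewicz-type dichotomy or a tree decomposition of each piece into a $\sigma$-compact part and a part containing a closed copy of $\bairespace$ that can be traded against the rest of the partition) --- rather than a refinement of the product trick. As it stands, the proposal establishes the easy reductions but not the theorem.
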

	
	Hence, neither the cardinal characteristic $\aT$ nor its spectrum $\spec(\aT)$ depend on the underlying Polish space, or if partitions into compact, closed or $F_\sigma$-sets are considered.
	In order to force a desired constellation of $\spec(\aT)$, we will add partitions of $\cantorspace$ into $F_\sigma$-sets.
	To this end, we will use the usual identification of non-empty closed set of $\cantorspace$ and branches of trees:
	
	\begin{definition} \label{DEF_Tree}
		A tree $T$ is a non-empty subset of $\fincantorspace$ such that
		\begin{enumerate}
			\item for all $s \in \fincantorspace$ and $t \in T$ with $s \subsequence t$ we have $s \in T$,
			\item for all $s \in T$ we have $s \concat 0 \in T$ or $s \concat 1 \in T$ (or both).
		\end{enumerate}
		We denote with $[T]$ the set of branches of $T$:
		$$
			[T] := \set{f \in \cantorspace}{\text{for all } n < \omega \text{ we have } f \restr n \in T}.
		$$
		We call $T$ nowhere dense if it additionally satisfies
		\begin{enumerate}
			\setcounter{enumi}{2}
			\item for all $s \in T$ there is a $t \in \fincantorspace$ with $s \subsequence t$ and $t \notin T$.
		\end{enumerate}
	\end{definition}
	
	\begin{remark} \label{REM_Identify_Trees_Closed_Sets}
		Given a tree $T$, the set $[T]$ is a non-empty closed set of $\cantorspace$.
		Conversely, given any non-empty closed set $C$ the set
		$$
			\tree(C) := \set{s \in \fincantorspace}{\text{there is an } f \in C \text{ with } s \subsequence f}
		$$
		is a non-empty tree.
		Since, $[\tree(C)] = C$ and $\tree([T]) = T$ we may identify trees and non-empty closed sets of $\cantorspace$ under these bijections.
		Furthermore, if $T$ is nowhere dense, then also $[T]$ is nowhere dense and conversely if $C$ is nowhere dense, then also $\tree(C)$ is nowhere dense.
		Hence, this identification restricts to nowhere dense trees and nowhere dense closed subsets.
	\end{remark}

	\begin{definition} \label{DEF_Almost_Disjoint}
		Let $S, T$ be trees.
		We call $S$ and $T$ almost disjoint iff $S \cap T$ is finite.
	\end{definition}

	Note that by König's lemma two trees $S$ and $T$ are almost disjoint exactly iff $[S] \cap [T] = \emptyset$.
	In order to force the existence of a witness for $\kappa \in \spec(\aT)$, we will add $\kappa$-many countable families $\set{\T_\alpha}{\alpha < \kappa}$ of nowhere dense trees which satisfy
	\begin{enumerate}
		\item for all $\alpha < \beta < \kappa$ and $S \in \T_\alpha, T \in \T_\beta$ the trees $S$ and $T$ are almost disjoint,
		\item for all $f \in \cantorspace$ there is an $\alpha < \kappa$ with $f \in \bigcup_{T \in \T_\alpha} [T]$.
	\end{enumerate}
	Notice that for $\alpha < \kappa$ and $S \neq T \in \T_\alpha$ we do not require that $S$ and $T$ are almost disjoint.
	However, the two conditions above imply that $\set{\bigcup_{T \in \C_\alpha} [T]}{\alpha < \kappa}$ is a partition of $\cantorspace$ into $\kappa$-many $F_\sigma$-sets.
	Next, in order to approximate new nowhere dense trees with finite conditions we fix the following notions.
	
	\begin{definition} \label{DEF_nTree}
		Let $n < \omega$.
		An $n$-tree $T$ is a non-empty subset of ${^{\leq n}{2}}$ such that
		\begin{enumerate}
			\item for all $s \in {^{\leq n}{2}}$ and $t \in T$ with $s \subsequence t$ we have $s \in T$,
			\item for all $s \in T$ there is a $t \in T \cap {^{n}{2}}$ with $s \subsequence t$.
		\end{enumerate}
		We denote with $[T]$ the set of leaves $T \cap {^{n}{2}}$ of $T$.
		Given $n \leq m$, an $n$-tree $S$ and an $m$-tree $T$ we write $S \subsequence T$ iff $T$ end-extends $S$, i.e.\ $T \cap {^{\leq n}{2}} = S$.
	\end{definition}

	\begin{definition} \label{DEF_Forcing_Witness}
		Let $\T$ be a family of nowhere dense trees.
		We define the forcing $\TT_0(\T)$ to be the set of all pairs $p = (T_p, F_p)$, where $T_p$ is an $n_p$-tree for some $n_p < \omega$ and $F_p \subseteq \cantorspace$ is finite such that for all $f \in F_p$ we have $f \notin \bigcup_{T \in \T}[T]$ and  $f \restr n_p \in [T_p]$.
		
		Given two conditions $p, q \in \TT_0(\T)$ we define $q \extends p$ iff $n_p \leq n_q$, $F_p \subseteq F_q$ and $T_p \subsequence T_q$.
		Further, we define $\TT(\T)$ to be the finitely supported product of size $\omega$
		$$
			\TT(\T) := \prod_{\omega}\TT_0(\T).
		$$
	\end{definition}
	
	We just summarize the crucial properties of $\TT_0(\T)$ and $\TT(\T)$ as they follow from standard density and forcing arguments.
	See \cite{FischerSchembecker_2022} for more details for a very similar forcing.
	
	\begin{remark} \label{REM_Diagonalization}
		$\TT_0(\T)$ is $\sigma$-centered, so also $\TT(\T)$ is $\sigma$-centered.
		Further, if $G$ is $\TT_0(\T)$-generic in $V[G]$ the set
		$$
			T^G := \bigcup \set{T_p}{p \in G}
		$$
		is a nowhere dense tree such that $T^G$ and $T$ are almost disjoint for all $T \in \T$.
		Analogously, if $G$ is $\TT(\T)$-generic we denote with $\seq{T^G_n}{n < \omega}$ the $\omega$-many new nowhere dense trees by $\TT(\T)$.
		We have the following diagonalization properties:
		\begin{enumerate}[(D1)]
			\item For all $n < \omega$ the tree $T^G_n$ is almost disjoint from every $T \in \T$.
			\item For all $f \in (\cantorspace)^V$ with $f \notin \bigcup_{T \in \T}[T]$ we have $f \in \bigcup_{n < \omega} [T^G_n]$.
		\end{enumerate}
		Note that in general $T^G_n$ and $T^G_m$ need not be almost disjoint for $n \neq m$.
		The diagonalization properties immediately yield the following lemma:
	\end{remark}

	\begin{lemma} \label{LEM_Realize_Witness}
		Let $\kappa$ be an uncountable cardinal.
		Then, there is a c.c.c.\ forcing which forces the existence of a witness for $\kappa \in \spec(\aT)$.
	\end{lemma}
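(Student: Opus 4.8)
The plan is to build the witness by a finite support iteration which adds, one stage at a time, a countable family of nowhere dense trees using the forcing $\TT(\cdot)$ of Definition~\ref{DEF_Forcing_Witness}; at the end these families will satisfy the two bullet conditions stated before Definition~\ref{DEF_nTree}, and so, via Remark~\ref{REM_Identify_Trees_Closed_Sets}, produce a partition of $\cantorspace$ into $\kappa$-many $F_\sigma$-sets. The only point that needs attention is the length of the iteration: we must end up with exactly $\kappa$-many families and also guarantee that every real of the extension is covered. The latter would fail at a naive iteration of length $\kappa$ when $\cof(\kappa) = \omega$, since the Cohen real added at the limit stage $\kappa$ would not be covered; we avoid this by iterating along an ordinal of cardinality $\kappa$ but of uncountable cofinality.

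In detail, I would first fix an ordinal $\lambda$ with $|\lambda| = \kappa$ and $\cof(\lambda) > \omega$ --- e.g.\ $\lambda = \kappa$ if $\cof(\kappa) > \omega$, and $\lambda = \kappa + \omega_1$ otherwise, in which case $\kappa \geq \aleph_\omega$ so that $|\kappa + \omega_1| = \kappa$. Then define a finite support iteration $\langle P_\alpha, \dot Q_\alpha \mid \alpha \leq \lambda\rangle$ recursively: writing $V_\alpha := V^{P_\alpha}$ and letting $\T^{<\alpha} := \bigcup_{\beta < \alpha}\T_\beta$ be the union of the families added before stage $\alpha$ (with $\T^{<0} := \emptyset$), put $\dot Q_\alpha := \TT(\T^{<\alpha})$ computed in $V_\alpha$, and let $\T_\alpha := \{T^{G_\alpha}_n \mid n < \omega\}$ be the countable family of nowhere dense trees added by the stage-$\alpha$ generic as described in Remark~\ref{REM_Diagonalization}. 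Each $\dot Q_\alpha$ is $\sigma$-centered, in particular c.c.c., so $P_\lambda$ is c.c.c.\ as a finite support iteration of c.c.c.\ forcings.

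Next I would verify the two bullet conditions for $\{\T_\alpha \mid \alpha < \lambda\}$ in $V_\lambda$. For almost-disjointness across families: if $\alpha < \beta < \lambda$ then $\T_\alpha \subseteq \T^{<\beta}$, so the diagonalization property (D1) applied at stage $\beta$ yields that every $T \in \T_\beta$ is almost disjoint from every $S \in \T_\alpha$. For covering: let $f \in \cantorspace$ in $V_\lambda$; a nice name for $f$, coded as a subset of $\omega$, mentions only countably many countable antichains, hence only countably many conditions of $P_\lambda$, whose supports form a countable subset of $\lambda$, which is bounded since $\cof(\lambda) > \omega$. Thus $f \in V_\gamma$ for some $\gamma < \lambda$, and (D2) applied at stage $\gamma$ places $f$ in $\bigcup_{T \in \T^{<\gamma}}[T]$ or in $\bigcup_{T \in \T_\gamma}[T]$; either way, $f \in \bigcup_{T \in \T_\delta}[T]$ for some $\delta \leq \gamma$.

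By the discussion following Definition~\ref{DEF_Almost_Disjoint}, these two conditions make $\{\bigcup_{T \in \T_\alpha}[T] \mid \alpha < \lambda\}$ a partition of $\cantorspace$ into $F_\sigma$-sets; each of these sets contains $[T^{G_\alpha}_0] \neq \emptyset$ and they are pairwise disjoint, so there are exactly $|\lambda| = \kappa$ of them. By Brian's theorem on the equivalence of partitioning an uncountable Polish space into $\kappa$-many compact, closed, or $F_\sigma$-sets (\cite{Brian_2021}), this witnesses $\kappa \in \spec(\aT)$ in $V_\lambda$, and $P_\lambda$ is the desired c.c.c.\ forcing. As flagged above, the only genuine obstacle is the cofinality bookkeeping for $\cof(\kappa) = \omega$; once $\lambda$ is chosen, everything else is routine density and preservation, and no appeal to $\sf{GCH}$ is needed.
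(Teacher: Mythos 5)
Your proposal is correct, but it realizes the $\kappa$-many pieces differently from the paper. The paper's sketch front-loads the cardinality: it first forces with a finitely supported product of $\kappa$-many Cohen forcings, takes $\T_1$ to be the $\kappa$-many singleton trees of the Cohen reals (these already give $\kappa$ pairwise almost disjoint pieces), and then runs the $\TT(\cdot)$-iteration for only $\aleph_1$-many steps, getting $\kappa + \aleph_1 = \kappa$ pieces in total; the uncountable cofinality needed for the covering argument is supplied by the fixed length $\aleph_1$. You instead run a pure finite support iteration of $\TT(\cdot)$ of length $\lambda$ with $|\lambda| = \kappa$ and $\cof(\lambda) > \omega$, so that each stage contributes exactly one $F_\sigma$-piece; this forces you to pad the length to $\kappa + \omega_1$ when $\cof(\kappa) = \omega$, a case the paper's version absorbs automatically. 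Both arguments rest on the same ingredients — the diagonalization properties (D1) and (D2) of Remark~\ref{REM_Diagonalization}, and the standard "every real appears at a bounded stage" argument for finite support c.c.c.\ iterations of uncountably cofinal length — and your verification of disjointness and covering is sound (including the observation that $\TT(\emptyset)$ at stage $0$ is a legitimate, if degenerate, instance). The one thing the paper's decomposition buys that yours does not is that it is literally the shape of the forcing $\PP^{\Phi}_{\aleph_1}$ of Definition~\ref{DEF_Forcing} used for the Main Theorem, where the initial Cohen product (and its automorphisms) is essential and the iteration length must stay at $\aleph_1$ for the complete-subforcing arguments.
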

	
	\begin{proof}
		Sketch.
		Consider the following iteration:
		Start with the finitely supported product of Cohen forcing of size $\kappa$.
		In the generic extension, let $\T_1$ be the set $\seq{T_\alpha}{\alpha < \kappa}$, where $T_\alpha$ is the nowhere dense tree with only branch the $\alpha$-th Cohen real.
		Then, force with $\TT(\T_1)$ to obtain $\omega$-many new nowhere dense trees $\seq{T_n}{n < \omega}$ with properties $(\text{D1})$ and $(\text{D2})$ in Remark~\ref{REM_Diagonalization}.
		Extend $\T_1$ to $\T_2 := \T_1 \cup \set{T_n}{n < \omega}$ and continue iterating $\TT(\T_\alpha)$ the same way $\aleph_1$-many times with finite support.
		In the end we obtain $\kappa + \aleph_1 = \kappa$-many $F_\sigma$-sets which are disjoint by $(\text{D1})$ and cover $\cantorspace$ by $(\text{D2})$ and since $\aleph_1$ has uncountable cofinality.
	\end{proof}
	
	In order to realize a whole spectrum of $\aT$, in Definition~\ref{DEF_Forcing} we define our forcing as a product of a slightly tweaked version of this iteration.
	A bookkeeping argument and Lemma~\ref{LEM_Realize_Witness} may be used to force $\aT = \kappa$ and $\c = \lambda$ for any regular $\kappa$ and $\lambda > \kappa$ of uncountable cofinality \cite{FischerSchembecker_2022}.
	Further, since $\d \leq \aT$, for $\kappa$ of uncountable cofinality any model of $\d = \kappa = \c$ satifies $\aT = \kappa$.
	However, this leaves open the following question:
	\begin{question}
		Can $\aT$ be singular of uncountable cofinality and $\aT < \c$?
	\end{question}
	In \cite{Brendle_2003} Brendle constructed a model of $\a = \aleph_{\aleph_0}$.
	While we may use Lemma~\ref{LEM_Realize_Witness} to force the existence of a witness for $\aleph_{\aleph_0} \in \spec(\aT)$ the following question is still open:
	
	\begin{question}
		Can $\aT$ be of countable cofinality? In particular is $\aT = \aleph_{\aleph_0}$ consistent?
	\end{question}

	Note that $\d < \aT$ must hold in such a model as $\d$ can only have uncountable cofinality.
	
	\section{Realizing arbitrarily large spectra of $\aT$} \label{SEC_Sketch}
	
	The culmination of this paper is the following Main Theorem.
	In this section we will describe the proof ingredients and summarize the role of each section towards this goal.
	
	\begin{maintheorem}\label{THM_AT_Spectrum}
		Assume $\sf{GCH}$ and let $\Theta$ be a set of uncountable cardinals such that
		\begin{enumerate}[\normalfont (I)]
			\item $\max(\Theta)$ exists and has uncountable cofinality,
			\item $\Theta$ is closed under singular limits,
			\item If $\theta \in \Theta$ with $\cof(\theta) = \omega$, then $\theta^+ \in \Theta$,
			\item $\aleph_1 \in \Theta$.
		\end{enumerate}
		Then, there is a c.c.c.\ forcing extension in which $\spec(\aT) = \Theta$ holds.
	\end{maintheorem}

	Our proof strategy is inspired by Shelah's and Spinas' work on the spectrum of mad families mentioned above.
	They realize a desired spectrum $\Theta$ with a large product of Hechler forcing.
	In order to exclude cardinalities, their proof essentially hinges on the following fact:
	
	\begin{fact}\label{FAC_Hechler}
		Let $\HH_I$ be Hechler's forcing for adding an almost disjoint family indexed by $I$.
		If $I \subseteq J$ then $\HH_I \completesubposet \HH_J$, i.e.\ $\HH_I$ is a complete subforcing of $\HH_J$.
	\end{fact}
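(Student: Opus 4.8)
\emph{Proof proposal.} The plan is to realize $\HH_I$ as a retract of $\HH_J$ via coordinate restriction and then appeal to the standard criterion for complete suborders. Recall that a condition of $\HH_I$ records a finite set $F \subseteq I$ of indices, a level $n < \omega$, and finite approximations $s_\alpha \subseteq n$ to the generic almost disjoint sets for $\alpha \in F$, with $q \le p$ iff $F_q \supseteq F_p$, $n_q \ge n_p$, each $s^q_\alpha$ end-extends $s^p_\alpha$ for $\alpha \in F_p$, and no two ``old'' approximations $s^q_\alpha$, $s^q_\beta$ (distinct $\alpha, \beta \in F_p$) overlap at or above $n_p$. Since $I \subseteq J$, a condition of $\HH_I$ is literally a condition of $\HH_J$ and the two orderings agree on $\HH_I$, so what remains is to check (a) that incompatibility is absolute between $\HH_I$ and $\HH_J$, and (b) that every $q \in \HH_J$ has a reduction lying in $\HH_I$.

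First I would define $\pi \colon \HH_J \to \HH_I$ by $\pi(q) = q \restr I$, i.e.\ keep $n_q$ and the approximations $s^q_\alpha$ for $\alpha \in F_q \cap I$ and discard the coordinates in $J \setminus I$. One checks directly that $\pi$ is order preserving and is the identity on $\HH_I$; in particular (a) follows, since a common extension in $\HH_J$ of two conditions $p_1, p_2 \in \HH_I$ projects under $\pi$ to a common extension of $p_1, p_2$ inside $\HH_I$. For (b) I would show that $\pi(q)$ is a reduction of $q$: given $p' \le \pi(q)$ in $\HH_I$ — so $n_{p'} \ge n_q$, the $s^{p'}_\alpha$ end-extend the $s^q_\alpha$ for $\alpha \in F_q \cap I$, and these $s^{p'}_\alpha$ pairwise overlap only below $n_q$ — amalgamate $p'$ and $q$ into $r$ with $F_r = F_{p'} \cup F_q$, $n_r = n_{p'}$, $s^r_\alpha = s^{p'}_\alpha$ for $\alpha \in F_{p'}$, and $s^r_\alpha = s^q_\alpha$ for $\alpha \in F_q \setminus F_{p'} \subseteq J \setminus I$. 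The two clauses agree on the overlap $F_{p'} \cap F_q = F_q \cap I$, and $r$ is a condition since all $s^r_\alpha$ lie below $n_r = n_{p'}$. Then $r \le p'$ is immediate (the ``no new overlaps'' requirement is vacuous, as everything sits below $n_r$), and for $r \le q$ the end-extension clauses are routine while the ``no new overlaps'' clause splits into: pairs from $F_q \cap I$, which is exactly the hypothesis on $p'$; and pairs involving a coordinate from $F_q \setminus I$, whose approximation still lies below $n_q$, making the inclusion trivial. With (a) and (b) in place, the standard fact that a suborder of $\HH_J$ is complete once incompatibility is absolute and every element of $\HH_J$ admits a reduction into it yields $\HH_I \completesubposet \HH_J$.

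The hard part — and really the only subtle point — is the last case in checking $r \le q$: the ordering of $\HH_J$ ``freezes'' overlaps among the already-named coordinates, so extending the $I$-coordinates of $q$ (which $p'$ may do, possibly past $n_q$) risks creating a forbidden new overlap with some approximation $s^q_\beta$ for $\beta \in F_q \setminus I$. The amalgamation above avoids this by never enlarging the $(J \setminus I)$-coordinates of $q$, keeping them below $n_q$; and this is legitimate precisely because $q$ carries no cross-commitment between its $I$-part and its $(J \setminus I)$-part above level $n_q$, so the only constraints $p'$ must respect are the ones among the $I$-coordinates, which are exactly what $p' \le \pi(q)$ guarantees.
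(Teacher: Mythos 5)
Your argument is correct, and it is the standard restriction-plus-amalgamation proof: the paper itself states this as a known \emph{Fact} about Hechler's almost-disjointness forcing (imported from the Shelah--Spinas setting) and offers no proof, so there is nothing in the text to compare against. Your verification of the two nontrivial points --- that $\pi(q)=q\restr I$ is order preserving (hence incompatibility is absolute) and that it is a reduction, via the amalgam $r$ that end-extends the $I$-coordinates according to $p'$ while freezing the $(J\setminus I)$-coordinates of $q$ below $n_q$ --- is exactly right, and your closing remark correctly identifies why the frozen coordinates cannot create forbidden overlaps: they lie entirely below $n_q\le n_r$.
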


	Their isomorphism-of-names argument then uses this fact in this product-like setting by reducing to countable subforcings of their whole forcing and using appropriate isomorphisms between these countable subforcings.
	In contrast, the isomorphism-of-names argument by Brian mentioned in his theorem above directly employs automorphisms of the whole forcing, which is less flexible.
	In our proof of our Main Theorem~\ref{THM_AT_Spectrum} we adapt the proof strategy of Shelah and Spinas for the iteration-like situation of $\aT$ instead of the product-like situation of $\a$.
	Consequently, this paper is structured as follows:
	
	In Section~\ref{SEC_Forcing} we define the c.c.c.\ forcing (see Definition~\ref{DEF_Forcing}) which yields Main Theorem~\ref{THM_AT_Spectrum}.
	Similarly to Brian's forcing in~\cite{Brian_2021}, our forcing adds a witness for $\theta \in \spec(\aT)$ for every $\theta \in \Theta$.
	However, in contrast we define our forcing directly as an iteration.
	Moreover, we fix a larger family of trees after adding many Cohen reals in the first step of our iteration.
	Hence, the family of trees is closed under more automorphisms of the initial Cohen forcing.
	In Section~\ref{SEC_Extend_Automorphisms} we provide a very algebraic framework, how to extend automorphisms of the Cohen forcing to automorphisms of the entire forcing, which culminates in Corollary~\ref{COR_Induced_Sequence_Of_Actions}.
	Next, throughout the paper we will need to work with nice conditions of our iteration, which describes all the forcing information in a given condition.
	Hence, in Section~\ref{SEC_Dense_Subset} we inductively define the notion of a nice condition (see Definition~\ref{DEF_Nice_Condition}) and prove their density in Lemma~\ref{LEM_Ds_Are_Dense}.
	We also define the hereditary support of a condition (see Definition~\ref{DEF_Hereditary_Support}) and study the behaviour of nice conditions under the automorphisms described in Section~\ref{SEC_Extend_Automorphisms} (see Lemma~\ref{LEM_Ds_Fixed_Under_Action} and Lemma~\ref{LEM_Condition_Below_Stable_For_Automorphism}).
	
	Section~\ref{SEC_Complete_Embeddings} is the heart of the entire proof.
	We show that our forcing from Section~\ref{SEC_Forcing} has enough complete subforcings to imitate the isomorphism-of-names argument by Shelah and Spinas.
	However, we do not obtain a direct analogue to Fact~\ref{FAC_Hechler} above, but the slightly weaker Theorem~\ref{THM_Complete_Subforcings}:
	
	\begin{theorem*}
		Let $\Phi \subseteq \Psi$ be a $\Theta$-subindexing function and assume $\Phi$ is countable.
		Then, $\PP_{\alpha}^\Phi \completesubposet \PP_{\alpha}^\Psi$ for all $\alpha \leq \aleph_1$.
	\end{theorem*}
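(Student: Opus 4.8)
The plan is to prove the statement by induction on $\alpha \le \aleph_1$, and in fact to prove the slightly stronger assertion that the natural inclusion $\PP_\alpha^\Phi \hookrightarrow \PP_\alpha^\Psi$ is a complete embedding \emph{together with} an explicit order-preserving \emph{restriction map} $\rho_\alpha \colon \PP_\alpha^\Psi \to \PP_\alpha^\Phi$ that is the identity on $\PP_\alpha^\Phi$ and for which $\rho_\alpha(q)$ is a reduction of $q$ for every $q$. Granting such a $\rho_\alpha$, completeness is automatic: order-preservation of the inclusion is trivial, incompatibility is reflected because $\rho_\alpha(s)$ witnesses compatibility in $\PP_\alpha^\Phi$ whenever $s$ witnesses it in $\PP_\alpha^\Psi$ for two conditions already lying in $\PP_\alpha^\Phi$, and the reduction property says precisely that every maximal antichain of $\PP_\alpha^\Phi$ remains maximal in $\PP_\alpha^\Psi$. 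So everything reduces to constructing $\rho_\alpha$ inductively.

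For $\alpha = 0$ the posets $\PP_0^\Phi$ and $\PP_0^\Psi$ are finite-support products of Cohen forcing over index sets carved out by $\Phi \subseteq \Psi$; the former is literally a subproduct of the latter, hence a complete subforcing, and $\rho_0$ is coordinatewise restriction of finite partial functions to the smaller index set. For $\alpha$ a limit, the iteration has finite support, so $\PP_\alpha^{\bullet} = \varinjlim_{\beta<\alpha}\PP_\beta^{\bullet}$ and every condition already lies in some $\PP_\beta^{\bullet}$ with $\beta<\alpha$; setting $\rho_\alpha(q) := \rho_\beta(q)$ for such a $\beta$ is well defined, inherits the required properties from the $\rho_\beta$, and is a complete embedding because a direct limit of complete embeddings is one. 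The case $\alpha=\aleph_1$ is covered, since finite supports are bounded below $\aleph_1$.

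The real work is the successor step $\alpha = \beta+1$. Write $\PP_{\beta+1}^{\bullet} = \PP_\beta^{\bullet}\ast\dot\QQ_\beta^{\bullet}$, where $\dot\QQ_\beta^{\bullet}$ is the stage-$\beta$ iterand, a finite-support product of copies of the tree forcing $\TT_0$ over the families of nowhere dense trees produced so far, and note that these families for $\Phi$ are contained in those for $\Psi$. By the induction hypothesis $\PP_\beta^\Phi \completesubposet \PP_\beta^\Psi$ via $\rho_\beta$, so by the standard two-step criterion it suffices to lift $\rho_\beta$ to the iterands. I would first pass, using the density of nice conditions (Lemma~\ref{LEM_Ds_Are_Dense}) and their controlled hereditary support (Definition~\ref{DEF_Hereditary_Support}), to conditions $q$ whose stage-$\beta$ part is displayed explicitly as a finite tuple $((\dot T_n,\dot F_n))_n$, with each $\dot T_n$ essentially a name for an element of a fixed countable set of $n$-trees and each $\dot F_n$ a name for a finite set of reals forced to miss $\bigcup_{T\in\dot\T_\beta^\Psi}[T]$; I would then replace each $\dot F_n$ by an equivalent \emph{nice name for a finite set of reals with respect to the relevant sequence of tree-names} of Definition~\ref{DEF_Nice_Name_WRT_Trees}, take its canonical projection along $\rho_\beta$ to obtain a $\PP_\beta^\Phi$-name $\dot G_n$, and set $\rho_{\beta+1}(q) := (\rho_\beta(q\restr\beta),((\dot T_n,\dot G_n))_n)$. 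The reduction property at $\beta+1$ then follows from the one at $\beta$ together with the combinatorics of $\TT_0$: given $r\le\rho_{\beta+1}(q)$ in $\PP_{\beta+1}^\Phi$ the induction hypothesis amalgamates the stage-$\le\beta$ parts, and below the amalgam the stage-$\beta$ part of $r$ end-extends the trees of $q$ while its diagonalization set, united with that of $q$, stays admissible, so one amalgamates the $n$-trees to a sufficiently tall common end-extension.

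The crux --- the step I do not expect to be routine --- is verifying that the canonical projection $\dot G_n$ of a nice-name-with-respect-to-trees really yields an admissible $\TT_0$-condition over the \emph{smaller} tree family \emph{and} that the resulting diagonalization data never accidentally meets a tree of $\dot\T_\beta^\Psi$ outside $\dot\T_\beta^\Phi$, so that compatibility is genuinely reflected into $\PP_{\beta+1}^\Phi$. This is exactly where the ``with respect to trees'' refinement is indispensable: a plain nice name does not survive projection, since membership of a real in $[T]$ for a tree $T$ outside the subforcing is not decided inside $\PP_\beta^\Phi$, whereas tree-niceness ties the decisive bits of the name to the generic data of the trees one must control, which does live in $\PP_\beta^\Phi$; meanwhile the enlarged, automorphism-closed family of trees fixed in the first step of the iteration --- our modification of Brian's forcing --- is what makes the trees discarded in passing from $\Psi$ to $\Phi$ mutually generic over the $\Phi$-part and hence impossible to hit with its reals. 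Making all of this precise is the purpose of the algebraic framework for automorphisms in Section~\ref{SEC_Extend_Automorphisms} and of the analysis of nice conditions under those automorphisms in Lemmas~\ref{LEM_Ds_Fixed_Under_Action} and~\ref{LEM_Condition_Below_Stable_For_Automorphism}.
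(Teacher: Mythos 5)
Your proposal follows essentially the same route as the paper's proof: induction on $\alpha$, reduction to the dense set of nice conditions, nice names for finitely many reals \emph{with respect to} the sequences of tree-names together with their canonical projections to produce the reduction at successor steps, and the automorphism framework (properties (E)--(G) in Section~\ref{SEC_Complete_Embeddings}) to guarantee that $\PP^\Phi_\beta$-names for reals cannot coincide with branches of the trees indexed in $\Psi(\theta)\setminus\Phi(\theta)$, which is exactly the crux you single out. The only cosmetic difference is that you package the reductions as a single globally order-preserving map $\rho_\alpha$; the paper constructs, and needs, only a reduction for each nice condition separately.
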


	Hence, with our current methods we can only show that we have complete subforcings if the index set is sufficiently small (countable in the sense of Definition~\ref{DEF_Indexing_Function}) and the iteration is at most of length $\aleph_1$.
	This is precisely where we require the strengthening of (IV) in our Main Theorem~\ref{THM_AT_Spectrum}.
	In other words, if Theorem~\ref{THM_Complete_Subforcings} can be proven for longer iterations, requirement (IV) can be again relaxed to the requirement \textquoteleft$\min(\Theta)$ is regular\textquoteright, which would yield a full answer to Brian's question.
	
	Theorem~\ref{THM_Complete_Subforcings} is proved in an elaborate inductive fashion.
	First, in order to show that the embedding of some subforcing is well-defined we will need the additional automorphisms our forcing possesses due to our modifications.
	Secondly, in order to show that these embeddings are indeed complete, we introduce the technical notion of a nice name for a finite set of reals with respect to a sequence of names for trees (see Definition~\ref{DEF_Nice_Name_WRT_Trees}).
	Then, the canonical projection of such a nice name (see Lemma~\ref{LEM_Reduction_Of_Nice_Name}) will have the desired properties in order to define a reduction of a condition in our forcing (see Lemma~\ref{LEM_Projection_Of_Nice_WRT_Sequence}).

	Finally, in Section~\ref{SEC_Extending_Isomorphisms} we give an algebraic/categorical analysis of isomorphisms between the complete subforcings given by Theorem~\ref{THM_Complete_Subforcings}.
	We then put everything together and provide the remaining isomorphism-of-names argument needed for Main Theorem~\ref{THM_AT_Spectrum} in Section~\ref{SEC_Proof}.
	
	\section{Defining the iteration} \label{SEC_Forcing}
	
	In this section we define the forcing used to prove Main Theorem~\ref{THM_AT_Spectrum}.
	For the remainder of this paper let $\Theta$ be fixed as in Main Theorem~\ref{THM_AT_Spectrum}.
	
	\begin{definition} \label{DEF_Indexing_Function}
		A $\Theta$-indexing function is a partial function $\Phi: \Theta \to V$.
		For two $\Theta$-indexing functions $\Phi, \Psi$, we write $\Phi \subseteq \Psi$ iff $\Phi$ is a $\Theta$-subindexing function of $\Psi$, i.e.\ $\dom(\Phi) \subseteq \dom(\Psi)$ and for all $\theta \in \dom(\Phi)$ we have $\Phi(\theta) \subseteq \Psi(\theta)$.
		Finally, we call a $\Theta$-indexing $\Phi$ function countable iff $\dom(\Phi)$ is countable and for every $\theta \in \dom(\Phi)$ we have that $\Phi(\theta)$ is countable.
	\end{definition}

	\begin{definition} \label{DEF_Indexed_Cohen}
		Let $\Phi$ be a $\Theta$-indexing function.
		Define $\CC^{\Phi}$ to be the partial order adding new Cohen reals indexed by pairs $(\theta, i)$ where $\theta \in \dom(\Theta)$ and $i \in \Phi(\theta)$, i.e.\
		$$
			\CC^\Phi := \set{s:\bigcup_{\theta \in \dom(\Phi)} (\simpleset{\theta} \times \Phi(\theta)) \to \CC}{\supp(s) \text{ is finite}}.
		$$
		Further, we write $\dot{c}^{\Phi, \theta}_i$ for the canonical $\CC^\Phi$-name for the Cohen real indexed by $(\theta, i)$ and $\dot{T}^{\Phi, \theta}_i$ for the canonical $\CC^\Phi$-name for the tree with only branch $\dot{c}^{\Phi, \theta}_i$.
	\end{definition}
	
	\begin{remark} \label{REM_Strong_Projection}
		Clearly, if $\Phi \subseteq \Psi$ we have that $\CC^{\Phi} \completesubposet \CC^{\Psi}$.
		In fact there is a strong projection from $\CC^\Psi$ onto $\CC^\Phi$, which just forgets all Cohen information outside of $\Phi$'s indexing.
		We denote this complete embedding by $\iota^{\Phi, \Psi}:\CC^\Phi \to \CC^\Psi$.
		Notice that for $\theta \in \dom(\Phi)$ and $i \in \Phi(\theta)$ we have 
		$$
			\iota^{\Phi, \Psi}(\dot{c}_i^{\Phi, \theta}) = \dot{c}^{\Psi, \theta}_i \text{ and } \iota^{\Phi, \Psi}(\dot{T}_i^{\Phi, \theta}) = \dot{T}^{\Psi, \theta}_i.
		$$
	\end{remark}
	
	$\CC^\Phi$ will be the first step of our iteration. 
	Note that $\CC^\Phi$ has a vast amount of automorphisms and we need to extend some of these automorphisms through our iteration.
	In fact, we will need even more - we also need to preserve the group structure of the automorphisms.
	Hence, it is very natural to use the language of group actions and morphisms between group actions to express these properties.
	
	\begin{definition} \label{DEF_Action}
		Let $\Gamma$ denote the group $\bigoplus_{\omega}\ZZ / 2$ with group operation $+$.
		We define a group action $\Gamma \actson \CC$ for $\gamma \in \Gamma$, $s \in \CC$ by $\dom(\gamma.s) := \dom(s)$ and for $n \in \dom(s)$
		$$
			(\gamma.s)(n) :=
			\begin{cases}
				s(n) & \text{if } \gamma(n) = 0,\\
				1 - s(n) & \text{otherwise}.
			\end{cases}
		$$
		Hence, an element $\gamma \in \Gamma$ flips the Cohen information at place $n$ precisely iff $\gamma(n) = 1$.
	\end{definition}

	\begin{remark} \label{DEF_Action_Preserves_Order}
		Note that the action $\Gamma \actson \CC$ preserves the order, that is $\gamma.s \leq \gamma.t$ for all $s,t \in \CC$ with $s \extends t$.
		In other words, the action $\Gamma \actson \CC$ is equivalent to a group homomorphism from $\pi:\Gamma \to \Aut(\CC)$.
		Also, remember that every automorphism of $\CC$ is an involution.
	\end{remark}

	\begin{definition} \label{DEF_Action_Induced}
		Let $\Phi$ be a $\Theta$-indexing function, $\theta \in \dom(\Phi)$ and $i \in \Phi(\theta)$.
		Then, we have an induced group action of $\Gamma$ acting on the $(\theta, i)$-th component of $\CC^\Phi$, which we denote with $\Gamma \smash{\overset{\theta, i}{\actson}} \CC^\Phi$.
		In other words, we have that the inclusion map $\iota^{\Phi, \theta}_i: \CC \to \CC^\Phi$ is a morphism of $\Gamma$-sets, i.e.\ the following diagram commutes for every $\gamma \in \Gamma$:
		\begin{center}
		\begin{tikzcd}
			\CC \arrow[r, "{\iota^{\Phi, \theta}_i}"] \arrow[d, "\pi(\gamma)"] & \CC^{\Phi} \arrow[d, "{\pi^{\Phi, \theta}_i(\gamma)}"] \\
			\CC \arrow[r, "{\iota^{\Phi, \theta}_i}"]                          & \CC^{\Phi}                                            
		\end{tikzcd}
		\end{center}
		where $\pi^{\Phi, \theta}_i$ is the group homomorphism corresponding to $\Gamma \smash{\overset{\theta, i}{\actson}} \CC^\Phi$.
	\end{definition}

	\begin{remark} \label{REM_Morphism_Of_Gamma_Sets_On_Cohen}
		Since we have various group actions of $\Gamma \actson \CC^\Phi$, we will usually use the corresponding group homomorphisms $\pi^{\Phi, \theta}_i:\Gamma \to \Aut(\CC^\Phi)$ to avoid confusion.
		Also, note that more generally for any $\Theta$-subindexing function $\Phi \subseteq \Psi$, $\theta \in \dom(\Phi)$ and $i \in \Phi(\theta)$ we have that $\iota^{\Phi, \Psi}$ is a morphisms of $\Gamma$-sets, i.e.\ the following diagram commutes for every $\gamma \in \Gamma$:
		\begin{center}
		\begin{tikzcd}
			\CC^\Phi \arrow[r, "{\iota^{\Phi, \Psi}}"] \arrow[d, "{\pi^{\Phi, \theta}_i(\gamma)}"] & \CC^{\Psi} \arrow[d, "{\pi^{\Psi, \theta}_i(\gamma)}"] \\
			\CC^\Phi \arrow[r, "{\iota^{\Phi, \Psi}}"]                                             & \CC^{\Psi}                                            
		\end{tikzcd}
		\end{center}
	\end{remark}

	\begin{definition} \label{DEF_Names_For_Trees}
		Let $\Phi$ be a $\Theta$-indexing function, $\theta \in \dom(\Phi)$ and $i \in \Phi(\theta)$.
		Denote with $\dot{\T}^{\Phi, \theta}_{i}$ the canonical $\CC^\Phi$-name for the set
		$$
			\set{\pi^{\Phi, \theta}_i(\gamma)(\dot{T}^{\Phi, \theta}_i)}{\gamma \in \Gamma}.
		$$
		Similarly, we let $\dot{\T}^{\Phi, \theta}$ denote the canonical $\CC^\Phi$-name for the set
		$$
			\set{\pi^{\Phi, \theta}_i(\gamma)(\dot{T}^{\Phi, \theta}_i)}{i \in \Phi(\theta) \text{ and } \gamma \in \Gamma}.
		$$
	\end{definition}

	\begin{remark} \label{REM_Properties_Of_Automorphism_Subgroup}
		Since $\Gamma$ is countable, also $\dot{\T}^{\Phi, \theta}_{i}$ is countable and $\dot{\T}^{\Phi, \theta}$ of size $\left|\Phi(\theta)\right| \cdot \aleph_0$, hence countable in case that $\Phi$ is countable.
		Further, using Remark~\ref{REM_Strong_Projection}~and~\ref{REM_Morphism_Of_Gamma_Sets_On_Cohen} it is easy to verify the following properties for every $\Theta$-subindexing function $\Phi \subseteq \Psi$, $\theta \in \dom(\Phi)$ and $i \in \Phi(\theta)$:
		\begin{enumerate}[$\bullet$]
			\item $\dot{\T}^{\Phi, \theta}$ is the canonical $\CC^\Phi$-name for $\bigcup_{i \in \Phi(\theta)}\dot{\T}^{\Phi, \theta}_{i}$,
			\item $\iota^{\Phi, \Psi}(\dot{\T}^{\Phi, \theta}_{i}) = \dot{\T}^{\Psi, \theta}_{i}$,
			\item $\CC^\Phi \forces \bigcup_{T \in \dot{\T}^{\Phi, \theta}_{i}} [T] = \set{f \in \cantorspace}{f =^* \dot{c}^{\Phi, \theta}_i}$.
		\end{enumerate}
	\end{remark}

	Next, given a $\Theta$-indexing function $\Phi$ we define the forcing iteration realizing the desired spectrum of $\aT$ for Main Theorem~\ref{THM_AT_Spectrum}.
	The forcing is a finite support iteration of c.c.c.\ forcings of length $\aleph_1$:

	\begin{definition} \label{DEF_Forcing}
		Let $\Phi$ be a $\Theta$-indexing function.
		We will define a finite support iteration $\seq{\PP^{\Phi}_\alpha, \dot{\QQ}^{\Phi}_\beta}{\alpha \leq \aleph_1, \beta < \aleph_1}$, $\PP_{\alpha + 1}^\Phi$-names $\dot{T}^{\Phi, \theta}_{\alpha, n}$ for nowhere dense trees for $\theta \in \dom(\Phi)$, $0 < \alpha < \aleph_1$ and $n < \omega$, and $\PP_\alpha^\Phi$-names $\dot{\T}^{\Phi, \theta}_{\alpha}$ for families of nowhere dense trees for $\theta \in \dom(\Phi)$ and $0 < \alpha \leq \aleph_1$:
		
		\begin{enumerate}[$\bullet$]
			\item Let $\dot{\QQ}^\Phi_0$ be the forcing $\CC^{\Phi}$.
			Then, we already defined the $\CC^\Phi$-names $\dot{\T}^{\Phi, \theta}$ in Definition~\ref{DEF_Names_For_Trees} for every $\theta \in \dom(\Phi)$.
			Then, let $\dot{\T}^{\Phi, \theta}_1$ the corresponding canonical $\PP^\Phi_1$-names.
			\item For $\alpha > 0$ let $\dot{\QQ}^{\Phi}_\alpha$ be the canonical $\PP^{\Phi}_\alpha$-name for the finitely supported product
			$$
				\prod_{\theta \in \supp(\Phi)} \TT(\T^{\Phi, \theta}_{\alpha}).
			$$
			Also, for every $\theta \in \dom(\Phi)$ let $\dot{T}^{\Phi, \theta}_{\alpha, n}$ be the canonical $\PP^\Phi_{\alpha + 1}$-names for the $\omega$-many new nowhere dense trees added by $\TT(\T^{\Phi, \theta}_{\alpha})$, where $n < \omega$.
			Finally, let $\dot{\T}^{\Phi, \theta}_{\alpha + 1}$ be the canonical $\PP^{\Phi}_{\alpha + 1}$-name for $\dot{\T}^{\Phi, \theta}_{\alpha} \cup \set{\dot{T}^{\Phi, \theta}_{\alpha, n}}{n \in \omega}$.
			\item At limit $\alpha$ for every $\theta \in \dom(\Phi)$ let $\dot{\T}^{\Phi, \theta}_\alpha$ be the canonical $\PP^{\Phi}_\alpha$-name for $\bigcup_{\beta < \alpha} \dot{\T}^{\Phi, \theta}_\beta$.
		\end{enumerate}
	\end{definition}
	
	Grouping together the $\omega$-many new trees added at each successor step into one $F_\sigma$-set, we have that for every $\theta \in \dom(\Phi)$ the family $\dot{\T}^{\Phi, \theta}_{\aleph_1}$ will be witness of a partition of Cantor space into $F_\sigma$-sets of size $\left|\Phi(\theta)\right| \cdot \aleph_1$.
	Thus, if every $\Phi(\theta)$ is a set of size $\theta$, then $\Theta \subseteq \spec(\aT)$ is forced by $\PP^{\Phi}_{\aleph_1}$ as in Lemma~\ref{LEM_Realize_Witness}.
	Thus, it only remains to prove the reverse inclusion.
	
	\section{Extending group actions through the iteration} \label{SEC_Extend_Automorphisms}
	
	Since $\PP^\Phi_1 \cong \CC^\Phi$, in the last section we essentially considered group actions $\Gamma \smash{\overset{\theta, i}{\actson}} \PP^\Phi_1$.
	In this section, we will show that there is a canonical way to extend these group actions through the iteration, i.e.\ to group actions $\Gamma \smash{\overset{\theta, i}{\actson}} \PP^{\Phi}_{\alpha}$ for $0 < \alpha \leq \aleph_1$,.
	This process leads to the notion of an induced sequence of group actions in Corollary~\ref{COR_Induced_Sequence_Of_Actions}.
	We write $\iota_1^{\Phi, \Psi}:\PP^\Phi_1 \to \PP^\Psi_1$ for the complete embedding corresponding to $\iota^{\Phi, \Psi}: \CC^\Phi \to \CC^\Psi$ and $\pi^{\Phi, \theta}_{1, i}:\Gamma \to \Aut(\PP^\Phi_1)$ for the group homomorphism corresponding to $\pi^{\Phi, \theta}_{i}:\Gamma \to \Aut(\CC^\Phi)$.
	
	\begin{definition} \label{DEF_Increasing_Sequence_Of_Actions}
		Let $\Phi$ be a $\Theta$-indexing function, $\theta \in \dom(\Phi)$, $i \in \Phi(\theta)$ and $\epsilon \leq \aleph_1$.
		We say that
		$$
			\seq{\pi^{\Phi, \theta}_{\alpha,i}:\Gamma \to \Aut(\PP^\Phi_\alpha)}{0 < \alpha \leq \epsilon}
		$$
		 is an increasing sequence of $\Gamma$-actions iff every $\pi^{\Phi, \theta}_{\alpha, i}$ is a group homomorphism (i.e.\ an action of $\Gamma$ on $\PP^\Phi_\alpha$), for all $0 < \alpha \leq \epsilon$, $\eta \in \dom(\Phi)$ and $\gamma \in \Gamma$ we have that
		 $$
		 	\pi^{\Phi, \theta}_{\alpha, i}(\gamma)(\dot{\T}^{\Phi, \eta}_{\alpha}) = \dot{\T}^{\Phi,\eta}_{\alpha}
		 $$
		 and for all $0 < \alpha \leq \beta \leq \epsilon$ the canonical embedding $\iota^\Phi_{\alpha, \beta}:\PP^\Phi_\alpha \to \PP^\Phi_\beta$ is a morphism of $\Gamma$-sets, i.e\ the following diagram commutes for every $\gamma \in \Gamma$:

		\begin{center}
		\begin{tikzcd}
			\PP^\Phi_\alpha \arrow[d, "{\pi^{\Phi,\theta}_{\alpha,i}(\gamma)}"] \arrow[r, "{\iota^\Phi_{\alpha,\beta}}"] & \PP^\Phi_\beta \arrow[d, "{\pi^{\Phi,\theta}_{\beta,i}(\gamma)}"] \\
			\PP^\Phi_\alpha \arrow[r, "{\iota^\Phi_{\alpha,\beta}}"]                                                     & \PP^\Phi_\beta                                                   
		\end{tikzcd}
		\end{center}
	\end{definition}

	Our goal for this section is to provide a canonical extension of $\pi^{\Phi, \theta}_{1, i}$ as defined in Definition~\ref{DEF_Action_Induced} to an increasing sequence of $\Gamma$-actions of length $\aleph_1$.
	Since the iterands of the forcing in Definition~\ref{DEF_Forcing} are definable from the parameters $\dot{\T}^{\Phi, \theta}_{\alpha}$, it is crucial that the group action fixes these parameters, which allows for an extension through the iteration.
	Before we consider the successor step,we show that for limit steps by the universal property of the direct limit there is a unique way to extend an increasing sequence of $\Gamma$-actions:

	\begin{lemma} \label{LEM_Limits_Of_Increasing_Sequences_Are_Unique}
		Let $\Phi$ be a $\Theta$-indexing function, $\theta \in \dom(\Phi)$, $i \in \Phi(\theta)$ and let $\epsilon \leq \aleph_1$ be a limit.
		Assume
		$$
			\seq{\pi^{\Phi, \theta}_{\alpha,i}:\Gamma \to \Aut(\PP^\Phi_\alpha)}{0 < \alpha < \epsilon}
		$$
		is an increasing sequence of $\Gamma$-actions.
		Then there is a unique group homomorphism $\pi^{\Phi, \theta}_{\epsilon,i}$ so that
		$$
			\seq{\pi^{\Phi, \theta}_{\alpha,i}:\Gamma \to \Aut(\PP^\Phi_\alpha)}{0 < \alpha \leq \epsilon}
		$$
		is an increasing sequences of $\Gamma$-actions.
	\end{lemma}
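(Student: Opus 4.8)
The plan is to present $\PP^\Phi_\epsilon$ as the direct limit of the directed system $\seq{\PP^\Phi_\alpha}{0 < \alpha < \epsilon}$ along the canonical embeddings $\iota^\Phi_{\alpha,\beta}$ and then invoke its universal property. Since the iteration has finite support and $\epsilon$ is a limit ordinal, every condition $p\in\PP^\Phi_\epsilon$ has $\supp(p)\subseteq\alpha$ for some $\alpha<\epsilon$, hence $p\in\ran(\iota^\Phi_{\alpha,\epsilon})$; that is, $\PP^\Phi_\epsilon=\bigcup_{0<\alpha<\epsilon}\ran(\iota^\Phi_{\alpha,\epsilon})$, each $\iota^\Phi_{\alpha,\epsilon}$ is an order-embedding, and $\iota^\Phi_{\beta,\epsilon}\circ\iota^\Phi_{\alpha,\beta}=\iota^\Phi_{\alpha,\epsilon}$ for $\alpha\leq\beta<\epsilon$. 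With this observation, both existence and uniqueness become the standard universal-property computation.

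For existence, fix $\gamma\in\Gamma$ and define $\pi^{\Phi,\theta}_{\epsilon,i}(\gamma)$ on $p\in\PP^\Phi_\epsilon$ by choosing $\alpha<\epsilon$ and $p'\in\PP^\Phi_\alpha$ with $p=\iota^\Phi_{\alpha,\epsilon}(p')$ and setting $\pi^{\Phi,\theta}_{\epsilon,i}(\gamma)(p):=\iota^\Phi_{\alpha,\epsilon}\bigl(\pi^{\Phi,\theta}_{\alpha,i}(\gamma)(p')\bigr)$. Well-definedness follows from the commuting squares in Definition~\ref{DEF_Increasing_Sequence_Of_Actions}: if $\alpha\leq\beta$ and $p=\iota^\Phi_{\beta,\epsilon}(p'')$, then $p''=\iota^\Phi_{\alpha,\beta}(p')$ by injectivity, so $\iota^\Phi_{\beta,\epsilon}\bigl(\pi^{\Phi,\theta}_{\beta,i}(\gamma)(p'')\bigr)=\iota^\Phi_{\beta,\epsilon}\bigl(\iota^\Phi_{\alpha,\beta}(\pi^{\Phi,\theta}_{\alpha,i}(\gamma)(p'))\bigr)=\iota^\Phi_{\alpha,\epsilon}\bigl(\pi^{\Phi,\theta}_{\alpha,i}(\gamma)(p')\bigr)$. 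Since any two conditions of $\PP^\Phi_\epsilon$ lie in the range of a common $\iota^\Phi_{\alpha,\epsilon}$ and each $\pi^{\Phi,\theta}_{\alpha,i}(\gamma)$ is an order automorphism, $\pi^{\Phi,\theta}_{\epsilon,i}(\gamma)$ is an order automorphism of $\PP^\Phi_\epsilon$; and $\gamma\mapsto\pi^{\Phi,\theta}_{\epsilon,i}(\gamma)$ is a group homomorphism because the identity $\pi^{\Phi,\theta}_{\alpha,i}(\gamma_1+\gamma_2)=\pi^{\Phi,\theta}_{\alpha,i}(\gamma_1)\circ\pi^{\Phi,\theta}_{\alpha,i}(\gamma_2)$, valid at every $\alpha<\epsilon$, transports along the $\iota^\Phi_{\alpha,\epsilon}$. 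By construction the squares for $\iota^\Phi_{\alpha,\epsilon}$ ($\alpha<\epsilon$) commute, and they commute trivially for $\iota^\Phi_{\epsilon,\epsilon}=\id$, so the extended sequence satisfies the morphism-of-$\Gamma$-sets clause of Definition~\ref{DEF_Increasing_Sequence_Of_Actions}.

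It remains to verify the parameter-fixing clause: $\pi^{\Phi,\theta}_{\epsilon,i}(\gamma)(\dot{\T}^{\Phi,\eta}_{\epsilon})=\dot{\T}^{\Phi,\eta}_{\epsilon}$ for every $\eta\in\dom(\Phi)$. Here one uses that, by the limit clause of Definition~\ref{DEF_Forcing}, $\dot{\T}^{\Phi,\eta}_{\epsilon}$ is the canonical $\PP^\Phi_\epsilon$-name for $\bigcup_{\beta<\epsilon}\dot{\T}^{\Phi,\eta}_{\beta}$, where each $\dot{\T}^{\Phi,\eta}_{\beta}$ is read as a $\PP^\Phi_\epsilon$-name via $\iota^\Phi_{\beta,\epsilon}$. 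Since the name action induced by $\pi^{\Phi,\theta}_{\epsilon,i}(\gamma)$ commutes with the name maps induced by the $\iota^\Phi_{\beta,\epsilon}$ — this being the name-level reflection of the commuting squares just established — and since $\pi^{\Phi,\theta}_{\beta,i}(\gamma)$ fixes $\dot{\T}^{\Phi,\eta}_{\beta}$ by hypothesis, the automorphism fixes each reinterpreted component and hence the canonical name for their union. I expect this bookkeeping — matching the induced action on names with the canonical ``name for the union'' through the direct-limit embeddings — to be the only step requiring genuine care; the rest is routine. Finally, for uniqueness, any group homomorphism $\pi'$ making the length-$\epsilon$ sequence increasing must satisfy $\pi'(\gamma)\circ\iota^\Phi_{\alpha,\epsilon}=\iota^\Phi_{\alpha,\epsilon}\circ\pi^{\Phi,\theta}_{\alpha,i}(\gamma)$ for all $\alpha<\epsilon$ and $\gamma\in\Gamma$; as $\PP^\Phi_\epsilon=\bigcup_{\alpha<\epsilon}\ran(\iota^\Phi_{\alpha,\epsilon})$, this determines $\pi'(\gamma)$ on all of $\PP^\Phi_\epsilon$, so $\pi'=\pi^{\Phi,\theta}_{\epsilon,i}$.
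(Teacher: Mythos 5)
Your proposal is correct and follows essentially the same route as the paper: define $\pi^{\Phi,\theta}_{\epsilon,i}(\gamma)$ via the universal property of the direct limit, transport the group-homomorphism identity along the embeddings $\iota^\Phi_{\alpha,\epsilon}$, and check the parameter-fixing clause by writing $\dot{\T}^{\Phi,\eta}_{\epsilon}$ as the canonical name for $\bigcup_{\beta<\epsilon}\iota^\Phi_{\beta,\epsilon}(\dot{\T}^{\Phi,\eta}_{\beta})$ and using that each component is fixed at stage $\beta$. Your explicit well-definedness check and uniqueness argument are just the spelled-out form of what the paper delegates to the universal property.
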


	\begin{proof}
		By definition of an increasing sequence of $\Gamma$-actions (cf. Definition~\ref{DEF_Increasing_Sequence_Of_Actions}) we have a directed system of maps
		$$
			\seq{\iota^\Phi_{\alpha, \epsilon} \circ (\pi^{\Phi, \theta}_{\alpha, i}(\gamma)):\PP^\Phi_\alpha \to \PP^\Phi_\epsilon}{0 < \alpha < \epsilon}
		$$
		By the universal property of $\PP^\Phi_\epsilon$ there is a  unique map $\pi^{\Phi, \theta}_{\epsilon,i}(\gamma):\PP^\Phi_\epsilon\to \PP^\Phi_\epsilon$, so that the following diagram commutes for every $0 < \alpha \leq \epsilon$ and $\gamma \in \Gamma$:
		
		\begin{center}
			\begin{tikzcd}
				\PP^\Phi_\alpha \arrow[d, "{\pi^{\Phi,\theta}_{\alpha,i}(\gamma)}"] \arrow[r, "{\iota^\Phi_{\alpha,\epsilon}}"] & \PP^\Phi_\epsilon \arrow[d, "{\pi^{\Phi,\theta}_{\epsilon,i}(\gamma)}"] \\
				\PP^\Phi_\alpha \arrow[r, "{\iota^\Phi_{\alpha,\epsilon}}"]                                                     & \PP^\Phi_\epsilon                                                   
			\end{tikzcd}
		\end{center}
		
		Next, fix $\gamma, \delta \in \Gamma$.
		We need to verify that $\pi^{\Phi, \theta}_{\epsilon,i}(\gamma) \circ \pi^{\Phi, \theta}_{\epsilon,i}(\delta) = \pi^{\Phi, \theta}_{\epsilon,i}(\gamma + \delta)$, so let $p \in \PP^{\Phi, \theta}_\epsilon$.
		Choose $\alpha < \epsilon$ such that $\iota^\Phi_{\alpha, \epsilon}(p \restr \alpha) = p$.
		Then, we compute
		\begin{align*}
			\pi^{\Phi, \theta}_{\epsilon,i}(\gamma)(\pi^{\Phi, \theta}_{\epsilon,i}(\delta)(p))
			&= \pi^{\Phi, \theta}_{\epsilon,i}(\gamma)(\pi^{\Phi, \theta}_{\epsilon,i}(\delta)(\iota^\Phi_{\alpha, \epsilon}(p \restr \alpha))) && (\text{choice of } \alpha)\\
			&= \pi^{\Phi, \theta}_{\epsilon,i}(\gamma)(\iota^\Phi_{\alpha,\epsilon}(\pi^{\Phi, \theta}_{\alpha,i}(\delta)(p \restr \alpha))) && (\text{choice of } \pi^{\Phi, \theta}_{\epsilon,i}(\gamma))\\
			&= \iota^\Phi_{\alpha,\epsilon}(\pi^{\Phi, \theta}_{\alpha,i}(\gamma)(\pi^{\Phi, 
			\theta}_{\alpha,i}(\delta)(p \restr \alpha)))&& (\text{choice of } \pi^{\Phi, \theta}_{\epsilon,i}(\gamma))\\
			&= \iota^\Phi_{\alpha,\epsilon}(\pi^{\Phi, \theta}_{\alpha,i}(\gamma + \delta)(p \restr \alpha)) && (\pi^{\Phi, \theta}_{\alpha, i}\text{ is group homomorphism})\\
			&= \pi^{\Phi, \theta}_{\epsilon,i}(\gamma + \delta)(\iota^\Phi_{\alpha,\epsilon}(p \restr \alpha)) && (\text{choice of } \pi^{\Phi, \theta}_{\epsilon,i}(\gamma))\\
			&= \pi^{\Phi, \theta}_{\epsilon,i}(\gamma + \delta)(p) && (\text{choice of } \alpha).
		\end{align*}
		
		Thus, $\pi^{\Phi, \theta}_{\epsilon,i}:\Gamma \to \Aut(\PP^\Phi_\epsilon)$ is a group homomorphism.
		Finally, by Definition~\ref{DEF_Forcing} $\dot{\T}^{\Phi, \eta}_{\epsilon}$ is the canonical name for $\bigcup_{\alpha < \epsilon} \iota^{\Phi}_{\alpha, \epsilon}(\dot{\T}^{\Phi, \eta}_{\alpha})$.
		Thus, for any $\gamma \in \Gamma$ and $\eta \in \dom(\Phi)$ we compute
		\begin{align*}
			\pi^{\Phi, \theta}_{\epsilon, i}(\gamma)(\dot{\T}^{\Phi, \eta}_{\epsilon}) 
			&= \pi^{\Phi, \theta}_{\epsilon, i}(\gamma)(\bigcup_{\alpha < \epsilon} \iota^{\Phi}_{\alpha, \epsilon}(\dot{\T}^{\Phi, \eta}_{\alpha})) && (\text{Definition~\ref{DEF_Forcing}})\\
			&= \bigcup_{\alpha < \epsilon} \pi^{\Phi, \theta}_{\epsilon, i}(\gamma)(\iota^{\Phi}_{\alpha, \epsilon}(\dot{\T}^{\Phi, \eta}_{\alpha})) && (\text{canonical name})\\
			&= \bigcup_{\alpha < \epsilon} \iota^{\Phi}_{\alpha, \epsilon}(\pi^{\Phi, \theta}_{\alpha, i}(\gamma)(\dot{\T}^{\Phi, \eta}_{\alpha})) && (\text{choice of } \pi^{\Phi, \theta}_{\epsilon,i}(\gamma))\\
			&= \bigcup_{\alpha < \epsilon} \iota^{\Phi}_{\alpha, \epsilon}(\dot{\T}^{\Phi, \eta}_{\alpha}) && (\text{Definition~\ref{DEF_Increasing_Sequence_Of_Actions}})\\
			&= \dot{\T}^{\Phi, \eta}_{\alpha} && (\text{Definition~\ref{DEF_Forcing}}).\qedhere
		\end{align*}
	\end{proof}
	
	Next, we consider the successor case.
	In this case, there is no unique extension of the increasing sequence of $\Gamma$-actions.
	However, we prove that there is a canonical one in the following sense:
	
	\begin{definition}\label{DEF_Canonical_Extension}
		Let $\Phi$ be a $\Theta$-indexing function, $\theta \in \dom(\Phi)$, $i \in \Phi(\theta)$ and $\epsilon < \aleph_1$.
		Assume
		$$
			\seq{\pi^{\Phi, \theta}_{\alpha,i}:\Gamma \to \Aut(\PP^\Phi_\alpha)}{0 < \alpha \leq \epsilon}
		$$
		is an increasing sequence of $\Gamma$-actions.
		For every $\gamma \in \Gamma$ define $\pi^{\Phi, \theta}_{\epsilon + 1, i}(\gamma): \PP^{\Phi}_{\epsilon + 1} \to \PP^\Phi_{\epsilon + 1}$ by 
		$$
			\pi^{\Phi, \theta}_{\epsilon + 1, i}(\gamma)(p) := \pi^{\Phi, \theta}_{\epsilon, i}(\gamma)(p \restr \epsilon) \concat \pi^{\Phi, \theta}_{\epsilon, i}(\gamma)(p(\epsilon)).
		$$
		Then, we call $\pi^{\Phi, \theta}_{\epsilon + 1, i}$ the canonical extension of $\seq{\pi^{\Phi, \theta}_{\alpha,i}:\Gamma \to \Aut(\PP^\Phi_\alpha)}{0 < \alpha \leq \epsilon}$.
	\end{definition}
	
	\begin{lemma} \label{LEM_Canonical_Extension}
		Let $\Phi$ be a $\Theta$-indexing function, $\theta \in \dom(\Phi)$, $i \in \Phi(\theta)$ and $\epsilon < \aleph_1$.
		Assume
		$$
			\seq{\pi^{\Phi, \theta}_{\alpha,i}:\Gamma \to \Aut(\PP^\Phi_\alpha)}{0 < \alpha \leq \epsilon}
		$$
		is an increasing sequence of $\Gamma$-actions and let $\pi^{\Phi, \theta}_{\epsilon + 1,i}$ be the canonical extension.
		Then
		$$
			\seq{\pi^{\Phi, \theta}_{\alpha,i}:\Gamma \to \Aut(\PP^\Phi_\alpha)}{0 < \alpha \leq \epsilon + 1}
		$$
		is an increasing sequence of $\Gamma$-actions.
	\end{lemma}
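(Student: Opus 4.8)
The plan is to verify, one clause at a time, those parts of Definition~\ref{DEF_Increasing_Sequence_Of_Actions} that mention the new index $\epsilon+1$: (a) each $\pi^{\Phi,\theta}_{\epsilon+1,i}(\gamma)$ is an order-automorphism of $\PP^\Phi_{\epsilon+1}$; (b) $\pi^{\Phi,\theta}_{\epsilon+1,i}$ is a group homomorphism; (c) $\pi^{\Phi,\theta}_{\epsilon+1,i}(\gamma)$ fixes $\dot{\T}^{\Phi,\eta}_{\epsilon+1}$ for every $\eta\in\dom(\Phi)$; and (d) $\iota^\Phi_{\alpha,\epsilon+1}$ is a morphism of $\Gamma$-sets for all $0<\alpha\leq\epsilon+1$. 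Everything rests on one observation drawn from the inductive hypothesis: $\pi^{\Phi,\theta}_{\epsilon,i}(\gamma)$ fixes every $\dot{\T}^{\Phi,\eta}_\epsilon$, and since by Definition~\ref{DEF_Forcing} the iterand $\dot\QQ^\Phi_\epsilon$ — together with its ordering and its maximum — is a canonical $\PP^\Phi_\epsilon$-name built by an absolute recursion from the parameters $\seq{\dot{\T}^{\Phi,\eta}_\epsilon}{\eta\in\supp(\Phi)}$, it follows that $\pi^{\Phi,\theta}_{\epsilon,i}(\gamma)$ fixes $\dot\QQ^\Phi_\epsilon$ as well. Write $\pi_\alpha:=\pi^{\Phi,\theta}_{\alpha,i}$ for brevity.

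For (a) I would first check well-definedness: since $\PP^\Phi_\epsilon\forces p(\epsilon)\in\dot\QQ^\Phi_\epsilon$, the symmetry lemma applied to the automorphism $\pi_\epsilon(\gamma)$ gives $\PP^\Phi_\epsilon\forces\pi_\epsilon(\gamma)(p(\epsilon))\in\pi_\epsilon(\gamma)(\dot\QQ^\Phi_\epsilon)=\dot\QQ^\Phi_\epsilon$, so that $\pi_{\epsilon+1}(\gamma)(p)=\pi_\epsilon(\gamma)(p\restr\epsilon)\concat\pi_\epsilon(\gamma)(p(\epsilon))$ really is a condition in $\PP^\Phi_{\epsilon+1}=\PP^\Phi_\epsilon\ast\dot\QQ^\Phi_\epsilon$. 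Order-preservation is the same move one level up: from $q\extends p$ one reads off $q\restr\epsilon\extends p\restr\epsilon$ and $q\restr\epsilon\forces q(\epsilon)\extends p(\epsilon)$, and applying $\pi_\epsilon(\gamma)$ (an order-automorphism fixing $\dot\QQ^\Phi_\epsilon$ with its ordering) together with the symmetry lemma yields $\pi_{\epsilon+1}(\gamma)(q)\extends\pi_{\epsilon+1}(\gamma)(p)$. Bijectivity I would defer to (b).

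For (b): since $\pi_\epsilon(0)=\id$, the formula of Definition~\ref{DEF_Canonical_Extension} gives $\pi_{\epsilon+1}(0)=\id$, and for $\gamma,\delta\in\Gamma$ and $p\in\PP^\Phi_{\epsilon+1}$ I would compute, using that the induced action of automorphisms of $\PP^\Phi_\epsilon$ on its names is functorial and that $\pi_\epsilon$ is a homomorphism,
\begin{align*}
\pi_{\epsilon+1}(\gamma)(\pi_{\epsilon+1}(\delta)(p))
&=\pi_\epsilon(\gamma)\bigl(\pi_\epsilon(\delta)(p\restr\epsilon)\bigr)\concat\pi_\epsilon(\gamma)\bigl(\pi_\epsilon(\delta)(p(\epsilon))\bigr)\\
&=\pi_\epsilon(\gamma+\delta)(p\restr\epsilon)\concat\pi_\epsilon(\gamma+\delta)(p(\epsilon))=\pi_{\epsilon+1}(\gamma+\delta)(p),
\end{align*}
which is just the finite-stage version of the limit computation already carried out in the proof of Lemma~\ref{LEM_Limits_Of_Increasing_Sequences_Are_Unique}. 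Then $\pi_{\epsilon+1}(\gamma)\circ\pi_{\epsilon+1}(\gamma)=\pi_{\epsilon+1}(0)=\id$ (as $\gamma+\gamma=0$ in $\Gamma$), so each $\pi_{\epsilon+1}(\gamma)$ is an involution with order-preserving inverse, hence an element of $\Aut(\PP^\Phi_{\epsilon+1})$, completing (a) and (b) together. Clause (d) is then quick: $\iota^\Phi_{\epsilon+1,\epsilon+1}=\id$ trivially; $\iota^\Phi_{\epsilon,\epsilon+1}$ appends the trivial last coordinate and $\pi_\epsilon(\gamma)$ fixes the name for the maximum of $\dot\QQ^\Phi_\epsilon$, so $\pi_{\epsilon+1}(\gamma)\circ\iota^\Phi_{\epsilon,\epsilon+1}=\iota^\Phi_{\epsilon,\epsilon+1}\circ\pi_\epsilon(\gamma)$; and the case $\alpha<\epsilon$ follows from $\iota^\Phi_{\alpha,\epsilon+1}=\iota^\Phi_{\epsilon,\epsilon+1}\circ\iota^\Phi_{\alpha,\epsilon}$ and the inductive hypothesis.

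The genuinely delicate point, and where I expect the real work, is (c). By Definition~\ref{DEF_Forcing}, $\dot{\T}^{\Phi,\eta}_{\epsilon+1}$ is the canonical name for $\iota^\Phi_{\epsilon,\epsilon+1}(\dot{\T}^{\Phi,\eta}_\epsilon)\cup\set{\dot{T}^{\Phi,\eta}_{\epsilon,n}}{n<\omega}$; the first piece is fixed by (d) together with $\pi_\epsilon(\gamma)(\dot{\T}^{\Phi,\eta}_\epsilon)=\dot{\T}^{\Phi,\eta}_\epsilon$, so the task reduces to showing $\pi_{\epsilon+1}(\gamma)(\dot{T}^{\Phi,\eta}_{\epsilon,n})=\dot{T}^{\Phi,\eta}_{\epsilon,n}$ for each $n$. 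For this I would unwind how $\pi_{\epsilon+1}(\gamma)$ acts on the last coordinate: it is the action induced by $\pi_\epsilon(\gamma)$ on $\dot\QQ^\Phi_\epsilon=\prod_{\eta\in\supp(\Phi)}\TT(\T^{\Phi,\eta}_\epsilon)$-names, and because $\pi_\epsilon(\gamma)$ fixes $\dot\QQ^\Phi_\epsilon$ with its order, this action can move a generic condition of $\dot\QQ^\Phi_\epsilon$ only by altering its finite real side-conditions $F_p$, leaving each $n_p$-tree component $T_p$ — a finite object of $V$, hence fixed by any automorphism — untouched. Since $\dot{T}^{\Phi,\eta}_{\epsilon,n}$ is by construction the union, along the generic, of the $n$-tree components in the relevant factor of the product, it is fixed. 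In the paper the precise bookkeeping behind this last invariance is packaged into the machinery of nice conditions developed later, but the content is exactly this stability of the $n$-tree parts under the Cohen-flip automorphisms. Assembling (a)–(d) shows that the extended sequence $\seq{\pi_\alpha}{0<\alpha\leq\epsilon+1}$ is an increasing sequence of $\Gamma$-actions.
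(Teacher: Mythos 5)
Your proposal is correct and follows essentially the same route as the paper: it derives $\pi^{\Phi,\theta}_{\epsilon,i}(\gamma)(\dot{\QQ}^\Phi_\epsilon)=\dot{\QQ}^\Phi_\epsilon$ from the definability of the iterand in the fixed parameters $\dot{\T}^{\Phi,\eta}_\epsilon$, verifies the homomorphism and $\Gamma$-set-morphism identities by the same coordinatewise computations, and reduces clause (c) to the invariance of the names $\dot{T}^{\Phi,\eta}_{\epsilon,n}$, which the paper likewise settles by noting that the finite tree components are check-names fixed by any automorphism. Your unwinding of that last point is only a slightly more explicit version of the paper's one-line remark.
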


	\begin{proof}
		First, by definition of an increasing sequence of $\Gamma$-actions (cf. Definition~\ref{DEF_Increasing_Sequence_Of_Actions}) for every $\eta \in \dom(\Phi)$ and $\gamma \in \Gamma$ we have
		$$
			\pi^{\Phi, \theta}_{\epsilon,i}(\gamma)(\dot{\T}^{\Phi, \eta}_\epsilon) = \dot{\T}^{\Phi, \eta}_\epsilon.
		$$
		By Definition~\ref{DEF_Forcing} $\dot{\QQ}^\Phi_\epsilon$ is the canonical $\PP^\Phi_\epsilon$-name for $\prod_{\eta \in \dom(\Phi)}\TT(\T^{\Phi, \eta}_\epsilon)$.
		Thus, we obtain
		$$
			\pi^{\Phi, \theta}_{\epsilon,i}(\gamma)(\dot{\QQ}^\Phi_\epsilon) = \dot{\QQ}^\Phi_\epsilon,
		$$
		as both $\prod_{\eta \in \dom(\Phi)}\TT(\T^{\Phi, \eta}_\epsilon)$ as well as the order $\leq$ are definable from the parameters $\T^{\Phi, \eta}_\epsilon$.
		Thus, we get $\pi^{\Phi, \theta}_{\epsilon + 1,i} \in \Aut(\PP^\Phi_{\epsilon + 1})$.
		Next, we verify that for every $\gamma \in \Gamma$ the following diagram commutes:
		\begin{center}
		\begin{tikzcd}
			\PP^\Phi_\epsilon \arrow[d, "{\pi^{\Phi,\theta}_{\epsilon,i}(\gamma)}"] \arrow[r, "{\iota^\Phi_{\epsilon,\epsilon+1}}"] & \PP^\Phi_{\epsilon + 1} \arrow[d, "{\pi^{\Phi,\theta}_{\epsilon+1,i}(\gamma)}"] \\
			\PP^\Phi_\epsilon \arrow[r, "{\iota^\Phi_{\epsilon,\epsilon+1}}"]                                                       & \PP^\Phi_{\epsilon + 1}                                                        
		\end{tikzcd}
		\end{center}
		Let $\gamma \in \Gamma$ and $p \in \PP^{\Phi}_\epsilon$.
		Then, we compute
		\begin{align*}
			\pi^{\Phi,\theta}_{\epsilon + 1,i}(\gamma)(\iota^\Phi_{\epsilon,\epsilon+1}(p))
			&= \pi^{\Phi, \theta}_{\epsilon, i}(\gamma)(\iota^\Phi_{\epsilon,\epsilon+1}(p) \restr \epsilon) \concat \pi^{\Phi, \theta}_{\epsilon, i}(\gamma)(\iota^\Phi_{\epsilon,\epsilon+1}(p)(\epsilon)) && (\text{Definition~\ref{DEF_Canonical_Extension}})\\
			&= \pi^{\Phi, \theta}_{\epsilon, i}(\gamma)(p) \concat \pi^{\Phi, \theta}_{\epsilon, i}(\gamma)(\mathds{1}) && (\text{definition of } \iota^{\Phi}_{\epsilon, \epsilon + 1})\\
			&= \pi^{\Phi, \theta}_{\epsilon, i}(\gamma)(p) \concat \mathds{1} && (\pi^{\Phi, \theta}_{\epsilon, i}(\gamma) \in \Aut(\PP^\Phi_\epsilon))\\
			&= \iota^\Phi_{\epsilon,\epsilon+1}(\pi^{\Phi,\theta}_{\epsilon,i}(\gamma)(p)) && (\text{definition of } \iota^{\Phi}_{\epsilon, \epsilon + 1}).
		\end{align*}
		
		Now, let $\gamma, \delta \in \Gamma$.
		We need to verify that $\pi^{\Phi, \theta}_{\epsilon + 1,i}(\gamma) \circ \pi^{\Phi, \theta}_{\epsilon + 1,i}(\delta) = \pi^{\Phi, \theta}_{\epsilon + 1,i}(\gamma + \delta)$, so let $p \in \PP^{\Phi, \theta}_{\epsilon + 1}$.
		Then, we compute
		\begin{align*}
			\pi^{\Phi, \theta}_{\epsilon + 1,i}(\gamma)(\pi^{\Phi, \theta}_{\epsilon + 1,i}(\delta)(p))
			&= \pi^{\Phi, \theta}_{\epsilon + 1,i}(\gamma)(\pi^{\Phi, \theta}_{\epsilon, i}(\delta)(p \restr \epsilon) \concat \pi^{\Phi, \theta}_{\epsilon, i}(\delta)(p(\epsilon))) && (\text{Definition~\ref{DEF_Canonical_Extension}})\\
			&= \pi^{\Phi, \theta}_{\epsilon,i}(\gamma)(\pi^{\Phi, \theta}_{\epsilon, i}(\delta)(p \restr \epsilon)) \concat \pi^{\Phi, \theta}_{\epsilon,i}(\gamma)(\pi^{\Phi, \theta}_{\epsilon, i}(\delta)(p(\epsilon))) && (\text{Definition~\ref{DEF_Canonical_Extension}})\\
			&= \pi^{\Phi, \theta}_{\epsilon,i}(\gamma + \delta)(p \restr \epsilon) \concat \pi^{\Phi, \theta}_{\epsilon,i}(\gamma + \delta)(p(\epsilon)) && (\pi^{\Phi, \theta}_{\epsilon, i}\text{ is gr.hom.})\\
			&= \pi^{\Phi, \theta}_{\epsilon + 1,i}(\gamma + \delta)(p) && (\text{Definition~\ref{DEF_Canonical_Extension}}).
		\end{align*}
		
		Thus, $\pi^{\Phi, \theta}_{\epsilon + 1,i}:\Gamma \to \Aut(\PP^\Phi_\epsilon)$ is a group homomorphism.
		Finally, let $\eta \in \dom(\Phi)$ and $\gamma \in \Gamma$.
		By Definition~\ref{DEF_Forcing} $\dot{\T}_{\epsilon + 1}^{\Phi, \eta}$ is the canonical name for $\iota^\Phi_{\epsilon, \epsilon + 1}(\dot{\T}^{\Phi, \eta}_{\epsilon}) \cup \set{\dot{T}^{\Phi, \eta}_{\epsilon, n}}{n \in \omega}$.
		Since
		\begin{align*}
			\pi^{\Phi, \theta}_{\epsilon + 1,i}(\gamma)(\iota^\Phi_{\epsilon, \epsilon + 1}(\dot{\T}^{\Phi, \eta}_{\epsilon}))
			&= \iota^\Phi_{\epsilon, \epsilon + 1}(\pi^{\Phi, \theta}_{\epsilon,i}(\gamma)(\dot{\T}^{\Phi, \eta}_{\epsilon})) && (\text{by commutativity above}) \\
			&= \iota^\Phi_{\epsilon, \epsilon + 1}(\dot{\T}^{\Phi, \eta}_{\epsilon}) && (\text{by Definition~\ref{DEF_Increasing_Sequence_Of_Actions}}),
		\end{align*}
		it suffices to verify that for all $n < \omega$ we have
		$$
			\pi^{\Phi, \theta}_{\epsilon + 1,i}(\gamma)(\dot{T}^{\Phi, \eta}_{\epsilon,n}) = \dot{T}^{\Phi, \eta}_{\epsilon,n}.
		$$
		But this follows since $\dot{T}^{\Phi, \eta}_{\epsilon,n}$ is the canonical $\PP^\Phi_{\epsilon + 1}$-name for the $n$-th new nowhere dense trees added by $\TT(\T^{\Phi, \theta}_{\epsilon})$ and check-names are fixed by any automorphism; remember that $\dot{T}^{\Phi, \eta}_{\epsilon,n}$ is just canonical name the union of the finite approximations in the generic filter.
	\end{proof}

	\begin{lemma}\label{LEM_Initial_Automorphism_Preservation}
		Let $\Phi$ be a $\Theta$-indexing function, $\theta \in \dom(\Phi)$, $i \in \Phi(\theta)$.
		Then, for every $\eta \in \dom(\Phi)$ and $\gamma \in \Gamma$ we have $\pi^{\Phi, \theta}_{1,i}(\gamma)(\dot{\T}^{\Phi, \eta}_1) = \dot{\T}^{\Phi, \eta}_1$, where $\pi^{\Phi, \theta}_{1,i}$ is defined as in Definition~\ref{DEF_Action_Induced}.
		In other words
		$$
			\seq{\pi^{\Phi, \theta}_{\alpha,i}:\Gamma \to \Aut(\PP^\Phi_\alpha)}{0 < \alpha \leq 1}
		$$
		is an increasing sequence of $\Gamma$-actions (of length 1).
	\end{lemma}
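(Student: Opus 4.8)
The plan is to verify the three clauses of Definition~\ref{DEF_Increasing_Sequence_Of_Actions} for the one-term sequence $\seq{\pi^{\Phi,\theta}_{\alpha,i}}{0 < \alpha \leq 1}$. That $\pi^{\Phi,\theta}_{1,i}$ is a group homomorphism into $\Aut(\PP^\Phi_1)$ is immediate from Definition~\ref{DEF_Action_Induced} together with Remark~\ref{DEF_Action_Preserves_Order}, since $\Gamma \actson \CC$ is already a homomorphism $\Gamma \to \Aut(\CC)$ and the induced action on the $(\theta,i)$-component of $\CC^\Phi \cong \PP^\Phi_1$ inherits this property. The commuting-square clause is vacuous in length $1$: the only pair $0 < \alpha \leq \beta \leq 1$ is $\alpha = \beta = 1$, and $\iota^\Phi_{1,1}$ is the identity. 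So the only real content is the parameter-fixing clause $\pi^{\Phi,\theta}_{1,i}(\gamma)(\dot{\T}^{\Phi,\eta}_1) = \dot{\T}^{\Phi,\eta}_1$ for all $\eta \in \dom(\Phi)$ and $\gamma \in \Gamma$, which I would establish at the level of $\CC^\Phi$ and then transport along $\PP^\Phi_1 \cong \CC^\Phi$, recalling from Definition~\ref{DEF_Forcing} that $\dot{\T}^{\Phi,\eta}_1$ is by definition the canonical $\PP^\Phi_1$-name corresponding to the $\CC^\Phi$-name $\dot{\T}^{\Phi,\eta}$ and that $\pi^{\Phi,\theta}_{1,i}$ corresponds to $\pi^{\Phi,\theta}_i$.

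Since $\dot{\T}^{\Phi,\eta}$ is the canonical $\CC^\Phi$-name for $\bigcup_{j \in \Phi(\eta)} \dot{\T}^{\Phi,\eta}_j$ (first bullet of Remark~\ref{REM_Properties_Of_Automorphism_Subgroup}), it suffices to show $\pi^{\Phi,\theta}_i(\gamma)(\dot{\T}^{\Phi,\eta}_j) = \dot{\T}^{\Phi,\eta}_j$ for each $j \in \Phi(\eta)$. By Definition~\ref{DEF_Names_For_Trees}, $\dot{\T}^{\Phi,\eta}_j$ names $\set{\pi^{\Phi,\eta}_j(\delta)(\dot{T}^{\Phi,\eta}_j)}{\delta \in \Gamma}$, so applying $\pi^{\Phi,\theta}_i(\gamma)$ and using that automorphisms commute with the name-forming operations, the result names $\set{\pi^{\Phi,\theta}_i(\gamma)\,\pi^{\Phi,\eta}_j(\delta)(\dot{T}^{\Phi,\eta}_j)}{\delta \in \Gamma}$. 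I would then split into two cases. If $(\eta,j) \neq (\theta,i)$, the actions $\Gamma \smash{\overset{\theta,i}{\actson}} \CC^\Phi$ and $\Gamma \smash{\overset{\eta,j}{\actson}} \CC^\Phi$ flip bits in disjoint components, hence commute as automorphisms, and $\pi^{\Phi,\theta}_i(\gamma)$ fixes $\dot{T}^{\Phi,\eta}_j$; so each element of the defining set is fixed and the set is unchanged. If $(\eta,j) = (\theta,i)$, then $\pi^{\Phi,\theta}_i(\gamma)\,\pi^{\Phi,\theta}_i(\delta) = \pi^{\Phi,\theta}_i(\gamma+\delta)$ by the homomorphism property, and as $\delta$ ranges over $\Gamma$ so does $\gamma+\delta$ since $\Gamma$ is a group, so the defining set is permuted onto itself. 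Either way $\pi^{\Phi,\theta}_i(\gamma)(\dot{\T}^{\Phi,\eta}_j) = \dot{\T}^{\Phi,\eta}_j$, and taking unions over $j$ gives $\pi^{\Phi,\theta}_i(\gamma)(\dot{\T}^{\Phi,\eta}) = \dot{\T}^{\Phi,\eta}$; transporting along $\PP^\Phi_1 \cong \CC^\Phi$ finishes the proof.

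The argument is essentially bookkeeping, and I do not expect a real obstacle — this lemma is the easy base case that launches the recursion whose successor and limit steps are carried out in Lemma~\ref{LEM_Canonical_Extension} and Lemma~\ref{LEM_Limits_Of_Increasing_Sequences_Are_Unique}. The one point that deserves care is the case $(\eta,j) = (\theta,i)$: there the defining set of the name is not fixed pointwise but only setwise, so one must invoke that left translation by $\gamma$ is a bijection of $\Gamma$ to see that the $\CC^\Phi$-name, which quantifies over all $\delta \in \Gamma$, denotes the same set after the shift $\delta \mapsto \gamma+\delta$. One should also spell out once that $\pi(\gamma)$ applied to a canonical name for a set of names $\set{\sigma_\delta}{\delta}$ yields the canonical name for $\set{\pi(\gamma)(\sigma_\delta)}{\delta}$, but this is the standard compatibility of automorphisms with name formation and needs no new idea.
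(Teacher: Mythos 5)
Your proposal is correct and follows essentially the same route as the paper's proof: reduce to the names $\dot{\T}^{\Phi,\eta}_{1,j}$ via the union over $j \in \Phi(\eta)$, then split into the case $(\eta,j)=(\theta,i)$ (handled by the group homomorphism property and the fact that $\delta \mapsto \gamma+\delta$ permutes $\Gamma$, so the defining set is fixed setwise) and the case $(\eta,j)\neq(\theta,i)$ (handled by noting the action touches a disjoint coordinate, so each element is fixed pointwise). Your additional remarks that the homomorphism clause is inherited from Definition~\ref{DEF_Action_Induced} and that the commuting-square clause is vacuous at length $1$ are correct and merely make explicit what the paper leaves implicit.
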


	\begin{proof}
		Let $\eta \in \dom(\Phi)$ and $\gamma \in \Gamma$.
		By definition \ref{DEF_Names_For_Trees} $\dot{\T}^{\Phi, \eta}_{1}$ is the canonical $\CC^\Phi$-name for the set		
		$$
			\bigcup_{j \in \Phi(\theta)}\dot{\T}^{\Phi, \eta}_{1, j},
		$$
		so it suffices to check that for all $j \in \Phi(\eta)$ we have $\pi^{\Phi, \theta}_{1,i}(\gamma)(\dot{\T}^{\Phi, \eta}_{1,j}) = \dot{\T}^{\Phi, \eta}_{1,j}$, so fix some $j \in \Phi(\eta)$.
		By Definition~\ref{DEF_Names_For_Trees} $\dot{\T}^{\Phi, \eta}_{1,j}$ is the canonical $\CC^\Phi$-name for the set
		$$
			\set{\pi^{\Phi, \eta}_j(\delta)(\dot{T}^{\Phi, \eta}_j)}{\delta \in \Gamma}.
		$$
		Thus, in case that $(\theta,i) = (\eta,j)$ we compute
		\begin{align*}
			\pi^{\Phi, \theta}_{1,i}(\gamma)(\dot{\T}^{\Phi, \theta}_{1,i})
			&= \pi^{\Phi, \theta}_{1,i}(\gamma)(\set{\pi^{\Phi, \theta}_i(\delta)(\dot{T}^{\Phi, \theta}_i)}{\delta \in \Gamma}) && (\text{Definition \ref{DEF_Names_For_Trees}})\\
			&= \set{\pi^{\Phi, \theta}_{1,i}(\gamma)(\pi^{\Phi, \theta}_i(\delta)(\dot{T}^{\Phi, \theta}_i))}{\delta \in \Gamma} && (\text{canonical name})\\
			&= \set{\pi^{\Phi, \theta}_{1,i}(\gamma + \delta)(\dot{T}^{\Phi, \theta}_i)}{\delta \in \Gamma} && (\pi^{\Phi, \theta}_{1,i}\text{ is gr.hom.})\\
			&= \set{\pi^{\Phi, \theta}_{1,i}(\delta)(\dot{T}^{\Phi, \theta}_i)}{\delta \in \Gamma} && (\Gamma \text{ is a group})\\
			&= \dot{\T}^{\Phi, \theta}_{1,j} && (\text{Definition \ref{DEF_Names_For_Trees}}).
		\end{align*}
		Otherwise, $\pi^{\Phi, \eta}_j(\delta)(\dot{T}^{\Phi, \eta}_j)$ has no information in the $(\theta, i)$-th coordinate for every $\delta \in \Gamma$, so that
		\begin{align*}
			\pi^{\Phi, \theta}_{1,i}(\gamma)(\dot{\T}^{\Phi, \eta}_{1,j})
			&= \pi^{\Phi, \theta}_{1,i}(\gamma)(\set{\pi^{\Phi, \eta}_j(\delta)(\dot{T}^{\Phi, \eta}_j)}{\delta \in \Gamma}) && (\text{Definition \ref{DEF_Names_For_Trees}})\\
			&= \set{\pi^{\Phi, \theta}_{1,i}(\gamma)(\pi^{\Phi, \eta}_j(\delta)(\dot{T}^{\Phi, \eta}_j))}{\delta \in \Gamma} && (\text{canonical name})\\
			&= \set{\pi^{\Phi, \eta}_{1,j}(\delta)(\dot{T}^{\Phi, \eta}_j)}{\delta \in \Gamma} && ((\theta, i) \neq (\eta, j))\\
			&= \dot{\T}^{\Phi, \eta}_{1,j}&& (\text{Definition \ref{DEF_Names_For_Trees}}).\qedhere
		\end{align*}
	\end{proof}

	\begin{corollary}\label{COR_Induced_Sequence_Of_Actions}
		Let $\Phi$ be a $\Theta$-indexing function, $\theta \in \dom(\Phi)$, $i \in \Phi(\theta)$.
		Then, there is an increasing sequence of $\Gamma$-actions 
		$$
			\seq{\pi^{\Phi, \theta}_{\alpha,i}:\Gamma \to \Aut(\PP^\Phi_\alpha)}{0 < \alpha \leq \aleph_1}
		$$
		such that $\pi^{\Phi, \theta}_{\epsilon + 1, i}$ the canonical extension of $\seq{\pi^{\Phi, \theta}_{\alpha,i}:\Gamma \to \Aut(\PP^\Phi_\alpha)}{0 < \alpha \leq \epsilon}$ for every $\epsilon < \aleph_1$.
		We call this sequence the induced sequence of group actions of $\pi^{\Phi, \theta}_{1,i}$ and will reserve the notions $\seq{\pi^{\Phi, \theta}_{\alpha,i}}{0 < \alpha \leq \aleph_1}$ for it.
	\end{corollary}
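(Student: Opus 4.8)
The plan is a transfinite recursion of length $\aleph_1$ that simply threads the three preceding lemmas together. I would define the maps $\pi^{\Phi, \theta}_{\alpha, i}$ for $0 < \alpha \leq \aleph_1$ by recursion on $\alpha$: for $\alpha = 1$ let $\pi^{\Phi, \theta}_{1, i}$ be the group homomorphism from Definition~\ref{DEF_Action_Induced}; for $\alpha = \epsilon + 1$ a successor let $\pi^{\Phi, \theta}_{\epsilon + 1, i}$ be the canonical extension of $\seq{\pi^{\Phi, \theta}_{\beta,i}}{0 < \beta \leq \epsilon}$ in the sense of Definition~\ref{DEF_Canonical_Extension}; and for $\alpha = \epsilon$ a limit let $\pi^{\Phi, \theta}_{\epsilon, i}$ be the unique group homomorphism supplied by Lemma~\ref{LEM_Limits_Of_Increasing_Sequences_Are_Unique}. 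The ``canonical extension'' clause demanded by the statement then holds by construction at successor stages.

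Next I would verify, by a second induction on $\epsilon \leq \aleph_1$, that $\seq{\pi^{\Phi, \theta}_{\alpha,i}}{0 < \alpha \leq \epsilon}$ is an increasing sequence of $\Gamma$-actions. The base case $\epsilon = 1$ is precisely Lemma~\ref{LEM_Initial_Automorphism_Preservation}. For $\epsilon = \delta + 1$ the inductive hypothesis says $\seq{\pi^{\Phi, \theta}_{\alpha,i}}{0 < \alpha \leq \delta}$ is an increasing sequence, so the hypothesis of Lemma~\ref{LEM_Canonical_Extension} is met and its conclusion gives the claim for $\epsilon$. For $\epsilon$ a limit the inductive hypothesis gives that $\seq{\pi^{\Phi, \theta}_{\alpha,i}}{0 < \alpha < \epsilon}$ is an increasing sequence — here one uses the trivial observation that being an increasing sequence of $\Gamma$-actions is preserved under restriction to an initial segment, as is immediate from Definition~\ref{DEF_Increasing_Sequence_Of_Actions} — so Lemma~\ref{LEM_Limits_Of_Increasing_Sequences_Are_Unique} applies and yields the claim for $\epsilon$. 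Taking $\epsilon = \aleph_1$ completes the proof.

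Since each of the three ingredient lemmas has already been established, there is no genuine obstacle here; the only point requiring (minimal) care is the coherence of the recursion, i.e.\ that the object built at stage $\epsilon$ really extends all the objects built at stages $\alpha < \epsilon$ rather than redefining them. This is automatic because the recursion only ever appends a new map $\pi^{\Phi, \theta}_{\epsilon, i}$ without altering the earlier ones, and the defining conditions of an increasing sequence refer only to stages $\leq \epsilon$. I would also remark that uniqueness at limit stages (Lemma~\ref{LEM_Limits_Of_Increasing_Sequences_Are_Unique}) forces the resulting length-$\aleph_1$ sequence to be independent of any choices: the only freedom is at successors, and there we have pinned the sequence down by decreeing the canonical extension, which justifies the terminology ``the induced sequence of group actions''.
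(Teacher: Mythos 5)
Your proposal is correct and matches the paper's own argument exactly: both proceed by transfinite recursion, starting from Definition~\ref{DEF_Action_Induced} via Lemma~\ref{LEM_Initial_Automorphism_Preservation}, using Lemma~\ref{LEM_Canonical_Extension} at successors and Lemma~\ref{LEM_Limits_Of_Increasing_Sequences_Are_Unique} at limits. Your additional remarks on coherence and uniqueness are accurate but not needed beyond what the three lemmas already supply.
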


	\begin{proof}
		We iteratively construct the desired sequence.
		By Lemma~\ref{LEM_Initial_Automorphism_Preservation} we may start with $\pi^{\Phi, \theta}_{1,i}$ as in Definition~\ref{DEF_Action_Induced}, use Lemma~\ref{LEM_Canonical_Extension} for the successor step and Lemma~\ref{LEM_Limits_Of_Increasing_Sequences_Are_Unique} for the limit step.
	\end{proof}

	\section{A nice dense subset} \label{SEC_Dense_Subset}
	
	In the following sections we will need to work with a nice dense subset $D^\Phi_\alpha$ of $\PP^\Phi_\alpha$.
	A condition $p \in \PP^{\Phi}_\alpha$ has finite support, where $p(0) \in \CC^\Phi$ and for $\alpha \in \supp(p) \setminus \simpleset{0}$ we have
	$$
		p \restr \alpha \forces p(\alpha) \in \dot{\QQ}^\Phi_\alpha = \prod_{\theta \in \dom(\Phi)}\TT(\dot{\T}^{\Phi, \theta}_\alpha).
	$$
	We will define $D^\Phi_\alpha$, so that as many parameters for $p(\alpha)$ as possible are decided as ground model objects.
	First, we will need the following definition of a nice name for a real.
	
	\begin{definition} \label{DEF_Nice_Name_Real}
		Let $\PP$ be a forcing and $p \in \PP$.
		A nice $\PP$-name for a real below $p$ is a sequence $\seq{(\A_n, f_n)}{n < \omega}$ such that
		\begin{enumerate}[$\bullet$]
			\item for all $n < \omega$ the set $\A_n$ is a maximal antichain below $p$ and $f_n:\A_n \to 2^{>n}$,
			\item for all $n < m$ the antichain $\A_m$ refines $\A_n$, i.e.\ every $b \in \A_m$ there is $a \in \A_n$ with $b \extends a$,
			\item for all $n < m$, $a \in \A_n$ and $b \in \A_m$ with $b \extends a$ we have $f_n(a) \trianglelefteq f_m(b)$.
		\end{enumerate}
		Further, we write $\name(\seq{(\A_n, f_n)}{n < \omega})$ for the canonical $\PP$-name of $\seq{(\A_n, f_n)}{n < \omega}$, i.e.
		$$
		\name(\seq{(\A_n, f_n)}{n < \omega}) := \set{(a,(n,f_{n}(a)(n)))}{n < \omega \text{ and } a \in \A_n}.
		$$
	\end{definition}

	\begin{remark} \label{REM_Nice_Name_Real}
		Remember, that for every $p \in \PP$ and $\PP$-name $\dot{g}$ for a real below $p$ we may inductively define a nice $\PP$-name $\seq{(\A_n, f_n)}{n < \omega}$ for a real below $p$ such that
		$$
			p \forces \dot{f} = \name(\seq{(\A_n, f_n)}{n < \omega}).
		$$
		Further, if $\PP$ is c.c.c., then for any $p \in \PP$ there are at most $\left| \PP \right|^{\aleph_0}$ many nice names for reals below~$p$.
		We also have that nice names and their canonical names behave nicely under automorphisms in the following sense:
	\end{remark}

	\begin{remark}\label{REM_Automorphism_Of_Nice_Names}
		If $\seq{(\A_n,f_n)}{n < \omega}$ is a nice $\PP$-name for a real below $p$ and $\pi \in \Aut(\PP)$, then $\pi(\seq{(\A_n,f_n)}{n < \omega}) := \seq{(\B_n, g_n)}{n < \omega}$, where $\B_n = \pi[\A_n]$ and
		$$
			g_n(\pi(a)) := f_n(a),
		$$
		is a nice $\PP$-name for a real below $\pi(p)$ with
		$$
			\pi(\name(\seq{(\A_n,f_n)}{n < \omega})) = \name(\seq{(\B_n,g_n)}{n < \omega}).
		$$
	\end{remark}
	
	\begin{definition} \label{DEF_Nice_Condition}
		Let $\Phi$ be a $\Theta$-indexing function and $0 < \alpha \leq \aleph_1$.
		$D^\Phi_\alpha$ is the set of all nice conditions in $\PP^\Phi_\alpha$, where inductively $p \in \PP^\Phi_\alpha$ is a nice condition
		\begin{enumerate}[$\bullet$]
			\item for $\alpha = 1$: iff $p(0) = c^p$ for some $c^p \in \CC^\Phi$,
			\item for $\alpha + 1 > 1$: iff $p \restr \alpha \in D^\Phi_\alpha$ and 
			\begin{enumerate}[$\circ$]
				\item there is a finite set $\Theta_{\alpha}^p \subseteq \dom(\Phi)$,
				\item for every $\theta \in \Theta_{\alpha}^p$ there is a finite set $I^p_{\alpha, \theta} \subseteq \omega$,
				\item for every $i \in I_{\alpha, \theta}^p$ there is $n^p_{\alpha, \theta, i} < \omega$ and an $n^p_{\alpha, \theta, i}$-tree $s^p_{\alpha, \theta, i}$ and a finite set $F^p_{\alpha, \theta, i}$ of $D^{\Phi}_{\alpha}$-names, where every $\dot{f} \in F^p_{\alpha, \theta, i}$ is the canonical $D^\Phi_\alpha$-name of some nice $D^\Phi_\alpha$-name for a real below some $q \in D^{\Phi}_\alpha$ with $p \restr \alpha \extends q$,
				\item such that $p(\alpha)$ is the canonical name for the condition in $\dot{\QQ}^\Phi_\alpha = \prod_{\theta \in \dom(\Phi)}\TT(\dot{\T}^{\Phi, \theta}_\alpha)$
				with $\supp(p(\alpha)) = \Theta^p_{\alpha}$ and for every $\theta \in \Theta^p_{\alpha}$
				with $\supp(p(\alpha)(\theta)) = I^p_{\alpha, \theta}$ and for every $i \in I^p_{\alpha, \theta}$ we have $p(\alpha)(\theta)(i) = (s^p_{\alpha, \theta, i}, F^p_{\alpha, \theta, i})$,
			\end{enumerate}
			\item for limit $\alpha$: iff $p \restr \beta \in D^\Phi_\beta$ for all $\beta < \alpha$.
		\end{enumerate}
	\end{definition}
	
	\begin{remark} \label{REM_Construct_Condition_From_Parameters}
		Note that for any $p \in D^\Phi_\alpha$ the parameter $c^p$ and for every $\beta < \alpha$ the parameters $\Theta^p_\beta$, $I^p_{\beta, \theta}$, $n^p_{\beta, \theta, i}$, $s^p_{\beta, \theta, i}$ and $F^p_{\beta, \theta, i}$ are uniquely determined by $p$.
		Conversely, we may reconstruct $p$ from these parameters.
		Further, by definition of $\dot{\QQ}^\Phi_\alpha$ for every $\dot{f} \in F^p_{\alpha,\theta,i}$ as above we have that 
		$$
			p \restr \alpha \forces \dot{f} \restr n^p_{\alpha, \theta, i} \in s^p_{\alpha, \theta, i} \text{ and } \dot{f} \notin \bigcup_{T \in \dot{\T}^{\Phi, \theta}_\alpha} [T].
		$$
		Conversely, if $\dot{g}$ is the canonical $D^\Phi_\alpha$-name of some nice $D^\Phi_\alpha$-name for a real below some $q \in D^\Phi_\alpha$ with $p \restr \alpha \extends q$ and for some $\eta \in \Theta^p_\alpha$ and $j \in I^p_{\alpha, \eta}$ we have
		$$
			p \restr \alpha \forces \dot{f} \restr n^p_{\alpha, \eta, j} \in s^p_{\alpha, \eta, j} \text{ and } \dot{f} \notin \bigcup_{T \in \dot{\T}^{\Phi, \eta}_\alpha} [T],
		$$
		then we may extend $p \in D^\Phi_\alpha$ to a condition $r \in D^{\Phi}_\alpha$ by stipulating $r \restr \alpha := p \restr \alpha$ and
		\begin{enumerate}[$\bullet$]
			\item $\Theta^r_\alpha := \Theta^p_\alpha$,
			\item $I^r_{\alpha, \theta} := I^p_{\alpha, \theta}$ for every $\theta \in \Theta^r_{\alpha}$,
			\item $n^r_{\alpha, \theta, i} := n^p_{\alpha, \theta, i}$, $s^r_{\alpha, \theta, i} := s^p_{\alpha, \theta, i}$ and
			$$
				F^r_{\alpha, \theta, i} :=
				\begin{cases}
					F^p_{\alpha, \theta, i} \cup \simpleset{\dot{g}} & \text{if } (\theta, i) = (\eta, j),\\
					F^p_{\alpha, \theta, i} & \text{otherwise}.
				\end{cases}
			$$
			for every $\theta \in \Theta^r_{\alpha}$ and $i \in I^r_{\alpha, \theta}$.
		\end{enumerate}
	\end{remark}
	
	\begin{remark}\label{REM_Ds_Are_Increasing}
		 For $0 < \beta \leq \alpha \leq \aleph_1$ we have $\iota^\Phi_{\beta,\alpha}(D_{\beta}^{\Phi}) \subseteq D_{\alpha}^\Phi$ and for limit $\alpha \leq \aleph_1$ we have
		$$
			D_{\alpha}^\Phi = \bigcup_{\beta < \alpha} \iota^\Phi_{\beta,\alpha}(D_{\beta}^\Phi).
		$$
	\end{remark}
	
	\begin{lemma} \label{LEM_Ds_Are_Dense}
		Let $\Phi$ be a $\Theta$-indexing function and $0 < \alpha \leq \aleph_1$.
		Then, $D^\Phi_\alpha$ is dense in $\PP^\Phi_\alpha$.
	\end{lemma}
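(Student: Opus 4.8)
The plan is to prove density of $D^\Phi_\alpha$ in $\PP^\Phi_\alpha$ by induction on $\alpha$, mirroring the inductive definition of nice conditions. The base case $\alpha = 1$ is immediate: $\PP^\Phi_1 \cong \CC^\Phi$ and every condition $c \in \CC^\Phi$ is already of the form $c^p$, so $D^\Phi_1 = \PP^\Phi_1$. The limit case is also essentially free: if $\alpha$ is a limit and $p \in \PP^\Phi_\alpha$, then $p$ has finite support, so $\supp(p) \subseteq \beta$ for some $\beta < \alpha$; by Remark~\ref{REM_Ds_Are_Increasing} it suffices to find $q \in D^\Phi_\beta$ with $q \extends p \restr \beta$, which the induction hypothesis provides, and then $\iota^\Phi_{\beta,\alpha}(q) \in D^\Phi_\alpha$ extends $p$.

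The real work is the successor step $\alpha + 1$. Given $p \in \PP^\Phi_{\alpha+1}$, first apply the induction hypothesis to find $p' \restr \alpha \in D^\Phi_\alpha$ below $p \restr \alpha$; extending further if necessary, I may assume $p' \restr \alpha$ decides $\supp(p(\alpha))$ and, for each $\theta$ in that support, $\supp(p(\alpha)(\theta))$ — this is possible because these are forced to be finite subsets of $\dom(\Phi)$ and $\omega$ respectively, so by a finite sequence of extensions (or passing to an element of a maximal antichain and then extending) we can pin them down as ground-model finite sets $\Theta^q_\alpha$ and $I^q_{\alpha,\theta}$. Next, for each $\theta \in \Theta^q_\alpha$ and $i \in I^q_{\alpha,\theta}$, the condition $p(\alpha)(\theta)(i)$ is forced to be a pair $(T, F)$ where $T$ is an $n$-tree (a hereditarily finite object, hence again decidable as a ground-model $s^q_{\alpha,\theta,i}$ after finitely many extensions) and $F$ is a forced-finite set of $\PP^\Phi_\alpha$-names for reals. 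For each such name $\dot f$ for a real in $F$, use Remark~\ref{REM_Nice_Name_Real} to replace it below $p' \restr \alpha$ by (the canonical name of) a nice $\PP^\Phi_\alpha$-name; and since $D^\Phi_\alpha$ is dense, the antichains in that nice name can be taken to consist of conditions in $D^\Phi_\alpha$ below the relevant $q \in D^\Phi_\alpha$, so that $\dot f$ becomes a legitimate element of $F^q_{\alpha,\theta,i}$ in the sense of Definition~\ref{DEF_Nice_Condition}. Assembling these parameters via the reconstruction described in Remark~\ref{REM_Construct_Condition_From_Parameters} yields a nice condition $q \in D^\Phi_{\alpha+1}$ with $q \extends p$.

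The main obstacle is bookkeeping the finiteness and decidability claims cleanly: one must extend $p \restr \alpha$ only finitely often, yet decide $\Theta^p_\alpha$, then each $I^p_{\alpha,\theta}$, then each $n^p_{\alpha,\theta,i}$ and $s^p_{\alpha,\theta,i}$, and finally handle the finitely many names in each $F^p_{\alpha,\theta,i}$ — all while ensuring the nice-name antichains land inside $D^\Phi_\alpha$ and below conditions $q$ with $p \restr \alpha \extends q$, as Definition~\ref{DEF_Nice_Condition} demands. A minor subtlety is that replacing a name $\dot f$ by a nice name is only valid below $p' \restr \alpha$, so these replacements must be performed \emph{after} the support data has been fixed, and the forcing statements recorded in Remark~\ref{REM_Construct_Condition_From_Parameters} ($p \restr \alpha \forces \dot f \restr n^p_{\alpha,\theta,i} \in s^p_{\alpha,\theta,i}$ and $\dot f \notin \bigcup_{T \in \dot\T^{\Phi,\theta}_\alpha}[T]$) are automatically inherited from $p$ since $q \restr \alpha = p' \restr \alpha \extends p \restr \alpha$. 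Once the order of operations is arranged correctly, each individual step is a routine density or maximal-antichain argument.
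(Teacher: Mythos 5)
Your proof is correct and follows essentially the same route as the paper: induction on $\alpha$, with the base case via $\PP^\Phi_1 \cong \CC^\Phi$, the limit case via finite support and Remark~\ref{REM_Ds_Are_Increasing}, and the successor case by extending $p \restr \alpha$ into $D^\Phi_\alpha$ so as to decide the finitely many parameters of $p(\alpha)$ and replacing the names for reals by nice names via Remark~\ref{REM_Nice_Name_Real}. Your write-up is in fact more explicit about the order-of-operations bookkeeping than the paper's, which simply says one may choose $q \in D^\Phi_\alpha$ deciding ``all necessary parameters.''
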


	\begin{proof}
		By induction.
		Case $\alpha = 1$ follows from $\PP^\Phi_1 \cong \CC^\Phi$.
		For limit $\alpha$ let $p \in \PP^\Phi_\alpha$.
		Choose $\beta < \alpha$ and such that $\iota^\Phi_{\beta, \alpha}(p \restr \beta) = p$.
		By induction choose $q \in D^\Phi_\beta$ with $q \extends p \restr \beta$.
		By Remark~\ref{REM_Ds_Are_Increasing} we have $\iota^\Phi_{\alpha,\beta}(q) \in D^\Phi_\alpha$ and $\iota^\Phi_{\beta, \alpha}(q) \leq \iota^\Phi_{\beta, \alpha}(p \restr \beta) = p$.
		
		Finally, for $\alpha + 1$ let $p \in \PP^\Phi_{\alpha + 1}$.
		Then
		$$
			p \restr \alpha \forces p(\alpha) \in \dot{\QQ}^\Phi_\alpha = \prod_{\theta \in \dom(\Phi)}\TT(\dot{\T}^{\Phi, \theta}_\alpha)
		$$
		and by induction $D^\Phi_\alpha$ is dense in $\PP^\Phi_\alpha$ we may choose $q \in D^\Phi_\alpha$ which decides all necessary parameters of an element in $\prod_{\theta \in \dom(\Phi)}\TT(\dot{\T}^{\Phi, \theta}_\alpha)$.
		By Remark~\ref{REM_Nice_Name_Real} there a nice $D^\Phi_\alpha$-names for all $D^\Phi_\alpha$-names for reals below $q$ which occur in some $F^p_{\alpha, \theta, i}$.
		Then, the canonical name $\dot{q}_\alpha$ for $p(\alpha)$ as defined in Definition~\ref{DEF_Nice_Condition} satisfies
		$$
			q \forces p(\alpha) = \dot{q}_\alpha.
		$$
		Hence, $q \concat \dot{q}_\alpha \in D^\Phi_{\alpha + 1}$ and $q \concat \dot{q}_\alpha \extends p$.
	\end{proof}

	\begin{definition} \label{DEF_Hereditary_Support}
		Let $\Phi$ be a $\Theta$-indexing function, $0 < \alpha \leq \aleph_1$ and $p \in D^\Phi_\alpha$.
		We will inductively define countable subsets $\hsupp_\Theta(p) \subseteq \dom(\Phi)$ and $\hsupp(p) \subseteq \bigcup_{\theta \in \hsupp_\Theta(p)} (\simpleset{\theta} \times \Phi(\theta))$ called the hereditary support of $p$.
		
		Given this definition we define for the canonical $D^\Phi_\alpha$-name $\dot{f}$ of a nice $D^\Phi_\alpha$ name $\seq{(\A_n, f_n)}{n < \omega}$ for a real below $p$ the countable sets
		\begin{align*}
			\hsupp_\Theta(\dot{f}) &:= \bigcup_{n < \omega, a \in \A_n} \hsupp_\Theta(a)\\
			\hsupp(\dot{f}) &:= \bigcup_{n < \omega, a \in \A_n} \hsupp(a)
		\end{align*}
		For $\alpha = 1$, we define $\hsupp(p) := \supp(p(0))$ and let $\hsupp_\Theta(p)$ be the projection of $\hsupp(p)$ onto the first component.
		Next, for limit $\alpha$ we may choose $\beta < \alpha$ with $\iota^\Phi_{\beta, \alpha}(p \restr \beta) = p$ and define $\hsupp_\Theta(p) := \hsupp_\Theta(p \restr \beta)$ and $\hsupp(p) := \hsupp(p \restr \beta)$.
		Finally, for $\alpha + 1 > 1$ we define
		\begin{align*}
			\hsupp_\Theta(p) &:= \hsupp_\Theta(p \restr \alpha) \cup \Theta^p_\alpha \cup \bigcup \set{\hsupp_\Theta(\dot{f})}{\theta \in \Theta^p_\alpha, i \in I^p_{\alpha, \theta} \text{ and } \dot{f} \in F^p_{\alpha, \theta, i}},\\
			\hsupp(p) &:= \hsupp(p \restr \alpha) \cup \bigcup \set{\hsupp(\dot{f})}{\theta \in \Theta^p_\alpha, i \in I^p_{\alpha, \theta} \text{ and } \dot{f} \in F^p_{\alpha, \theta, i}}.
		\end{align*}
	\end{definition}
	
	\begin{lemma} \label{LEM_Counting_Hereditary}
		Assume $\sf{CH}$ and let $\Phi$ be a $\Theta$-indexing function, $0 < \alpha \leq \aleph_1$ and assume that both $\Theta_0 \subseteq \Theta$ and $I_0 \subseteq \bigcup_{\theta \in \Theta_0} (\simpleset{\theta} \times \Phi(\theta))$ are countable.
		Then, there are at most $\aleph_1$-many $p \in D^\Phi_\alpha$ with $\hsupp_\Theta(p) \subseteq \Theta_0$ and $\hsupp(p) \subseteq I_0$.
		Thus, for any $p \in D^\Phi_\alpha$ there are at most $\aleph_1$-many canonical $D^\Phi_\alpha$-names $\dot{f}$ of nice $D^\Phi_\alpha$-names for reals below $p$ with $\hsupp_\Theta(\dot{f}) \subseteq \Theta_0$ and $\hsupp(\dot{f}) \subseteq I_0$.
	\end{lemma}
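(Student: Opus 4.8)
The plan is to prove, by simultaneous induction on $0 < \alpha \leq \aleph_1$, both the bound on nice conditions and the ``thus''-clause, the latter in a slightly strengthened form. Fix countable $\Theta_0 \subseteq \Theta$ and $I_0 \subseteq \bigcup_{\theta \in \Theta_0}(\simpleset{\theta} \times \Phi(\theta))$ as in the statement. The inductive claim is that both of the following sets have size at most $\aleph_1$:
\begin{enumerate}[(a)]
	\item $A_\alpha := \set{p \in D^\Phi_\alpha}{\hsupp_\Theta(p) \subseteq \Theta_0 \text{ and } \hsupp(p) \subseteq I_0}$;
	\item $B_\alpha := $ the set of canonical $D^\Phi_\alpha$-names $\dot f$ of nice $D^\Phi_\alpha$-names for reals below \emph{some} $q \in D^\Phi_\alpha$ with $\hsupp_\Theta(\dot f) \subseteq \Theta_0$ and $\hsupp(\dot f) \subseteq I_0$.
\end{enumerate}
I would first record that, at any level $\alpha$, (b) follows from (a) using $\sf{CH}$ and the c.c.c.\ of the iteration: a nice $D^\Phi_\alpha$-name for a real is a sequence $\seq{(\A_n, f_n)}{n < \omega}$ where, by c.c.c., each $\A_n$ is a countable antichain, the support hypothesis on $\dot f$ forces $\bigcup_n \A_n \subseteq A_\alpha$, and each $f_n$ maps into the countable set $2^{>n}$; hence the number of such sequences, and so of the canonical names they induce, is at most $\bigl(\aleph_1^{\aleph_0}\bigr)^{\aleph_0} = 2^{\aleph_0} = \aleph_1$.

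For the base case $\alpha = 1$, a $p \in D^\Phi_1$ with $\hsupp(p) = \supp(p(0)) \subseteq I_0$ is just a finite partial function from the countable set $I_0$ into the countable set $\CC$, so $\sizeof{A_1} \leq \aleph_0$, and the condition $\hsupp_\Theta(p) \subseteq \Theta_0$ is automatic since $I_0 \subseteq \bigcup_{\theta \in \Theta_0}(\simpleset{\theta} \times \Phi(\theta))$. For limit $\alpha$, Remark~\ref{REM_Ds_Are_Increasing} gives $D^\Phi_\alpha = \bigcup_{\beta < \alpha} \iota^\Phi_{\beta,\alpha}(D^\Phi_\beta)$; every $p \in A_\alpha$ equals $\iota^\Phi_{\beta,\alpha}(p')$ for some $\beta < \alpha$ and $p' \in A_\beta$ (the hereditary supports are preserved along $\iota^\Phi_{\beta,\alpha}$ by Definition~\ref{DEF_Hereditary_Support}), and $p' \mapsto \iota^\Phi_{\beta,\alpha}(p')$ is injective for fixed $\beta$, so $\sizeof{A_\alpha} \leq \sum_{\beta < \alpha} \sizeof{A_\beta} \leq \sizeof{\alpha} \cdot \aleph_1 \leq \aleph_1$.

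The successor step $\alpha + 1 > 1$ is the substantive one. By Remark~\ref{REM_Construct_Condition_From_Parameters}, a nice $p \in D^\Phi_{\alpha+1}$ is determined by $p \restr \alpha \in D^\Phi_\alpha$, the finite set $\Theta^p_\alpha$, the finite sets $I^p_{\alpha,\theta}$, and the triples $(n^p_{\alpha,\theta,i}, s^p_{\alpha,\theta,i}, F^p_{\alpha,\theta,i})$. Reading off Definition~\ref{DEF_Hereditary_Support} at successor stages, $\hsupp_\Theta(p) \subseteq \Theta_0$ and $\hsupp(p) \subseteq I_0$ imply: $\hsupp_\Theta(p\restr\alpha)\subseteq\Theta_0$ and $\hsupp(p\restr\alpha)\subseteq I_0$, so by (a) at level $\alpha$ there are $\leq \aleph_1$ choices for $p \restr \alpha$; $\Theta^p_\alpha \subseteq \Theta_0$ is a finite subset of a countable set, so $\leq \aleph_0$ choices; each $I^p_{\alpha,\theta} \subseteq \omega$ is finite, so $\leq \aleph_0$ choices; each $n^p_{\alpha,\theta,i} < \omega$ and each $s^p_{\alpha,\theta,i}$ is an $n$-tree, so $\leq \aleph_0$ choices; and crucially $\hsupp_\Theta(\dot f)\subseteq\Theta_0$, $\hsupp(\dot f)\subseteq I_0$ for every $\dot f \in F^p_{\alpha,\theta,i}$, whence by (b) at level $\alpha$ each such $\dot f$ lies in a set of size $\leq \aleph_1$ and each finite set $F^p_{\alpha,\theta,i}$ in a set of size $\leq \aleph_1$. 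Since only finitely many pairs $(\theta, i)$ occur, multiplying these bounds gives $\sizeof{A_{\alpha+1}} \leq \aleph_1$, and then $\sizeof{B_{\alpha+1}} \leq \aleph_1$ follows from (a) as above. The final sentence of the lemma is then immediate, since a canonical name of a nice $D^\Phi_\alpha$-name for a real below the given $p \in D^\Phi_\alpha$ is in particular one below \emph{some} $q \in D^\Phi_\alpha$, so it lies in $B_\alpha$.

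The only genuine subtlety I anticipate is the bookkeeping around clause (b): the names $\dot f \in F^p_{\alpha,\theta,i}$ are nice names for reals below \emph{various} conditions $q \extends p\restr\alpha$ rather than below one fixed condition, so the induction must carry the ``below some $q$'' version; but their hereditary support is still pinned down by $\hsupp(p)$, so the strengthening is harmless. Everything else is a routine cardinal-arithmetic count, the one non-trivial ingredient being the use of $\sf{CH}$ (via $\aleph_1^{\aleph_0} = \aleph_1$) together with the c.c.c.\ of $\PP^\Phi_\alpha$ (to make the antichains in nice names countable).
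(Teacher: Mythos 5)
Your proposal is correct and follows essentially the same route as the paper: derive the count of nice names from the count of conditions via $\sf{CH}$ and the c.c.c.\ (countable antichains contained in the $\aleph_1$-sized set of conditions, so at most $\aleph_1^{\aleph_0} = \aleph_1$ names), and prove the count of conditions by induction on $\alpha$, with the successor step enumerating the finitely many parameters of Remark~\ref{REM_Construct_Condition_From_Parameters}. Your explicit strengthening of the inductive hypothesis to names ``below some $q$'' is the same bookkeeping the paper performs by first counting the possible $q$'s and then the names below each.
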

	
	\begin{proof}
		In order to see the second part of the statement, let $\dot{f}$ be the canonical $D^\Phi_\alpha$-name of a nice $D_\alpha^\Phi$-name $\seq{(\A_n, f_n)}{n < \omega}$ for a real below $p \in D^\Phi_\alpha$ with $\hsupp_\Theta(\dot{f}) \subseteq \Theta_0$ and $\hsupp(\dot{f}) \subseteq I_0$.
		Then, for any $n < \omega$ and $a \in \A_n$ we also have $\hsupp(a) \subseteq I_0$ and $\hsupp_\Theta(a) \subseteq \Theta_0$.
		But by the first part of the statement
		$$
			\left| \set{p \in D^\Phi_\alpha}{\hsupp(p) \subseteq I_0 \text{ and } \hsupp_\Theta(p) \subseteq \Theta_0} \right| \leq \aleph_1,
		$$
		so that Remark~\ref{REM_Nice_Name_Real} using $\sf{CH}$ and the fact that $\PP^{\Phi}_\alpha$ is c.c.c., we may compute the number of nice $D^\Phi_\alpha$-names for reals below $p$ as at most
		$$
			\aleph_1^{\aleph_0} = (\aleph_0^{\aleph_0})^{\aleph_0} = \aleph_0^{\aleph_0 \cdot \aleph_0} = \aleph_0^{\aleph_0} = \aleph_1.
		$$
		We prove the first part of the statement by induction.
		For $\alpha = 1$, as $\left|\CC\right| = \aleph_0$ and $I_0$ is countable there are at most $\aleph_0^{\aleph_0} = \aleph_1$-many conditions in $\PP^\Phi_1 \cong \CC^\Phi$ with $\hsupp(p) \subseteq I_0$.
		For limit $\alpha$, note that by Remark~\ref{REM_Ds_Are_Increasing} we have
		\begin{align*}
			&\set{p \in D^\Phi_\alpha}{\hsupp(p) \subseteq I_0 \text{ and } \hsupp_\Theta(p) \subseteq \Theta_0}\\
			= &\bigcup_{\beta < \alpha} \set{\iota^\Phi_{\beta, \alpha}(p)}{p \in D^\Phi_\beta, \hsupp(p) \subseteq I_0 \text{ and } \hsupp_\Theta(p) \subseteq \Theta_0}
		\end{align*}
		Thus, by induction we compute
		$$
			\left| \set{p \in D^\Phi_\alpha}{\hsupp(p) \subseteq I_0 \text{ and } \hsupp_\Theta(p) \subseteq \Theta_0} \right| \leq \left|\alpha\right| \cdot \aleph_1 = \aleph_1.
		$$
		Finally, for $\alpha + 1 > 1$ and $p \in D^\Phi_{\alpha + 1}$ we have $p \restr \alpha \in D^\Phi_\alpha$ and $\hsupp_\Theta(p \restr \alpha) \subseteq \hsupp_\Theta(p) \subseteq \Theta_0$ and $\hsupp(p \restr \alpha) \subseteq \hsupp(p) \subseteq I_0$, so by induction there are at most $\aleph_1$-many choices for $p \restr \alpha$.
		Also, $\Theta^p_\alpha \subseteq \hsupp_\Theta(p) \subseteq \Theta_0$, so there are at most countably many choices for $\Theta^p_\alpha$.
		Further, for any of the finitely many $\theta \in \Theta^p_\alpha$ there are at most countably many choices $I^p_{\alpha, \theta}$ and for any of the finitely many $i \in I^p_{\alpha, \theta}$ there are at most countably many choices for $n^p_{\alpha, \theta, i}$ and $s^p_{\alpha, \theta, i}$.
		Finally, for any $\dot{f} \in F^p_{\alpha, \theta, i}$ choose $q \in D^{\Phi}_\alpha$ such that $\dot{f}$ is the canonical $D^{\Phi}_\alpha$-name of some nice $D^{\Phi}_\alpha$-name for a real below $q$ with $p \restr \alpha \extends q$.
		Then, we have $\hsupp_\Theta(q) \subseteq \hsupp_\Theta(p \restr \alpha) \subseteq \hsupp_\Theta(p) \subseteq \Theta_0$ and $\hsupp(q) \subseteq \hsupp(p \restr \alpha) \subseteq \hsupp(p) \subseteq I_0$, so by induction assumption there at most $\aleph_1$-many choices for $q$.
		Analogously, $\hsupp_\Theta(\dot{f}) \subseteq \hsupp_\Theta(p) \subseteq \Theta_0$ and $\hsupp(\dot{f}) \subseteq \hsupp(p) \subseteq I_0$, so by induction assumption there are at most $\aleph_1$-many choices for $\dot{f}$.
		Hence, there are at most $\aleph_1$-many choices for $F^p_{\alpha, \theta, i}$.
		By Remark~\ref{REM_Construct_Condition_From_Parameters} $p(\alpha)$ is uniquely determined by these parameters, so that there are at most $\aleph_1$-many choices for $p$.
	\end{proof}
	
	Next, we prove that the action of $\Gamma$ on $\PP^\Phi_\alpha$ restricts to actions on our nice dense set $D^\Phi_\alpha$.
	
	\begin{lemma}\label{LEM_Ds_Fixed_Under_Action}
		Let $\Phi$ be an $\Theta$-indexing function, $\theta \in \dom(\Phi)$, $i \in \Phi(\theta)$, $\gamma \in \Gamma$ and $0 < \alpha \leq \aleph_1$.
		Then, $\pi^{\Phi, \theta}_{\alpha, i}(\gamma)(D^\Phi_\alpha) = D^\Phi_\alpha$.
	\end{lemma}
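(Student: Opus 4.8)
The statement asserts that each automorphism $\pi^{\Phi,\theta}_{\alpha,i}(\gamma)$ maps the nice dense set $D^\Phi_\alpha$ onto itself. Since $\pi^{\Phi,\theta}_{\alpha,i}(\gamma)$ is an involution (it is an automorphism of $\PP^\Phi_\alpha$ and $\gamma$ has order $\le 2$ in $\Gamma$), it suffices to show $\pi^{\Phi,\theta}_{\alpha,i}(\gamma)(D^\Phi_\alpha) \subseteq D^\Phi_\alpha$; applying this to $\gamma$ again gives the reverse inclusion. So the plan is to prove, by induction on $\alpha$ following the inductive structure of Definition~\ref{DEF_Nice_Condition}, that $\pi^{\Phi,\theta}_{\alpha,i}(\gamma)(p) \in D^\Phi_\alpha$ whenever $p \in D^\Phi_\alpha$.

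\emph{Base case $\alpha = 1$.} Here $\PP^\Phi_1 \cong \CC^\Phi$ and $p = c^p$ for some $c^p \in \CC^\Phi$; applying $\pi^{\Phi,\theta}_{1,i}(\gamma)$ (which on $\CC^\Phi$ just flips Cohen information at the $(\theta,i)$-coordinate according to $\gamma$, by Definition~\ref{DEF_Action_Induced}) again produces an element of $\CC^\Phi$, which is by definition a nice condition in $\PP^\Phi_1$.

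\emph{Limit case.} If $\alpha$ is a limit and $p \in D^\Phi_\alpha$, pick $\beta < \alpha$ with $\iota^\Phi_{\beta,\alpha}(p\restr\beta) = p$ and $p \restr \beta \in D^\Phi_\beta$ (Remark~\ref{REM_Ds_Are_Increasing}). Since the sequence of actions is increasing, the square in Definition~\ref{DEF_Increasing_Sequence_Of_Actions} commutes, so $\pi^{\Phi,\theta}_{\alpha,i}(\gamma)(p) = \iota^\Phi_{\beta,\alpha}\big(\pi^{\Phi,\theta}_{\beta,i}(\gamma)(p\restr\beta)\big)$; by the inductive hypothesis $\pi^{\Phi,\theta}_{\beta,i}(\gamma)(p\restr\beta) \in D^\Phi_\beta$, and $\iota^\Phi_{\beta,\alpha}(D^\Phi_\beta) \subseteq D^\Phi_\alpha$ again by Remark~\ref{REM_Ds_Are_Increasing}.

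\emph{Successor case.} This is the real work. Let $p \in D^\Phi_{\alpha+1}$ with parameters $\Theta^p_\alpha$, $I^p_{\alpha,\theta}$, $n^p_{\alpha,\theta,i}$, $s^p_{\alpha,\theta,i}$, $F^p_{\alpha,\theta,i}$. By the inductive hypothesis $\pi^{\Phi,\theta}_{\alpha,i}(\gamma)(p\restr\alpha) \in D^\Phi_\alpha$. Using the formula of Definition~\ref{DEF_Canonical_Extension}, $\pi^{\Phi,\theta}_{\alpha+1,i}(\gamma)(p)$ has restriction $\pi^{\Phi,\theta}_{\alpha,i}(\gamma)(p\restr\alpha)$ and last coordinate $\pi^{\Phi,\theta}_{\alpha,i}(\gamma)(p(\alpha))$. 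I must check this is again of the shape prescribed by Definition~\ref{DEF_Nice_Condition}, i.e.\ that applying the automorphism to $p(\alpha)$ --- the canonical name for a condition in $\prod_{\theta'\in\dom(\Phi)}\TT(\dot\T^{\Phi,\theta'}_\alpha)$ --- produces another such canonical name with the right kind of parameters. The key points: the automorphism fixes $\dot\T^{\Phi,\theta'}_\alpha$ (this is exactly the defining property of an increasing sequence of $\Gamma$-actions, Definition~\ref{DEF_Increasing_Sequence_Of_Actions}, so $\TT(\dot\T^{\Phi,\theta'}_\alpha)$ and its ordering are fixed), hence the support data $\Theta^p_\alpha$, $I^p_{\alpha,\theta'}$ are unchanged; the finite $n$-trees $s^p_{\alpha,\theta',i'}$ are check-names and so fixed; and each $\dot f \in F^p_{\alpha,\theta',i'}$, being the canonical $D^\Phi_\alpha$-name of some nice $D^\Phi_\alpha$-name $\seq{(\A_n,f_n)}{n<\omega}$ for a real below some $q \extendedby p\restr\alpha$, is carried by Remark~\ref{REM_Automorphism_Of_Nice_Names} to the canonical name of the nice name $\seq{(\pi^{\Phi,\theta}_{\alpha,i}(\gamma)[\A_n], g_n)}{n<\omega}$ for a real below $\pi^{\Phi,\theta}_{\alpha,i}(\gamma)(q)$, and by the inductive hypothesis $\pi^{\Phi,\theta}_{\alpha,i}(\gamma)(q) \in D^\Phi_\alpha$ with $\pi^{\Phi,\theta}_{\alpha,i}(\gamma)(q) \extendedby \pi^{\Phi,\theta}_{\alpha,i}(\gamma)(p\restr\alpha)$ (automorphisms preserve the order). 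So the image of each $F^p_{\alpha,\theta',i'}$ is a legitimate finite set of the required type of names. Assembling these, $\pi^{\Phi,\theta}_{\alpha+1,i}(\gamma)(p)$ is the canonical name built from valid parameters, hence lies in $D^\Phi_{\alpha+1}$.

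\emph{Main obstacle.} The one genuinely delicate point is the bookkeeping in the successor step showing that applying the automorphism commutes with "take the canonical name" at every nesting level --- in particular that $\pi^{\Phi,\theta}_{\alpha,i}(\gamma)(p(\alpha))$ is still a \emph{canonical name} of the prescribed form, not merely forced equal to one. This rests on (i) the automorphism fixing the parameters $\dot\T^{\Phi,\theta'}_\alpha$, which is guaranteed once we know we are working with an increasing sequence of $\Gamma$-actions, i.e.\ precisely the induced sequence of Corollary~\ref{COR_Induced_Sequence_Of_Actions}; and (ii) the compatibility of automorphisms with nice names from Remark~\ref{REM_Automorphism_Of_Nice_Names}, applied recursively through the finitely many levels of $D^\Phi_\alpha$-names inside the $F^p_{\alpha,\theta',i'}$'s. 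Since each condition involves only finitely much such data at each stage and the induction is on $\alpha \le \aleph_1$, the recursion terminates, and the argument goes through.
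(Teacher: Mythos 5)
Your proposal is correct and follows essentially the same route as the paper's proof: reduce to one inclusion via the involution property, then induct on $\alpha$ using Remark~\ref{REM_Ds_Are_Increasing} and the commuting square of Definition~\ref{DEF_Increasing_Sequence_Of_Actions} at limits, and Definition~\ref{DEF_Canonical_Extension} together with Remark~\ref{REM_Automorphism_Of_Nice_Names} at successors. Your extra remark that each transformed nice name lives below $\pi^{\Phi,\theta}_{\alpha,i}(\gamma)(q)\in D^\Phi_\alpha$ is a detail the paper leaves implicit, but it is the same argument.
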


	\begin{proof}
		It suffices to verify that $\pi^{\Phi, \theta}_{\alpha, i}(\gamma)(D^\Phi_\alpha) \subseteq D^\Phi_\alpha$, which we prove by induction.
		For $\alpha = 1$, let $p \in D^\Phi_1$.
		Then, we compute
		$$
			\pi^{\Phi, \theta}_{1, i}(\gamma)(p)(0) = \pi^{\Phi, \theta}_{1,i}(\gamma)(p(0)) = \pi^{\Phi, \theta}_{1,i}(\gamma)(c^p) \in \CC^\Phi,
		$$
		so that $\pi^{\Phi, \theta}_{1, i}(\gamma)(p) \in D^\Phi_1$.
		For limit $\alpha$, let $p \in D^\Phi_\alpha$ and choose $\beta < \alpha$ such that $\iota^\Phi_{\beta, \alpha}(p \restr \beta) = p$.
		By induction assumption $\pi^{\Phi, \theta}_{\beta, i}(\gamma)(p \restr \beta) \in D^\Phi_\beta$.
		By Remark~\ref{REM_Ds_Are_Increasing} we have $\iota^\Phi_{\beta, \alpha}(\pi^{\Phi, \theta}_{\beta, i}(\gamma)(p \restr \beta)) \in D^\Phi_\alpha$.
		Hence, by Definition~\ref{DEF_Increasing_Sequence_Of_Actions} we compute
		$$
			\pi^{\Phi, \theta}_{\alpha,i}(\gamma)(p)
			= \pi^{\Phi, \theta}_{\alpha,i}(\gamma)(\iota^\Phi_{\beta, \alpha}(p \restr \beta))
			= \iota^\Phi_{\beta, \alpha}(\pi^{\Phi, \theta}_{\beta,i}(\gamma)(p \restr \beta)) \in D^\Phi_\alpha.
		$$
		Finally, for $\alpha + 1 > 1$ let $p \in D^\Phi_{\alpha + 1}$.
		Then, $p \restr \alpha \in D^\Phi_\alpha$ and by Definition~\ref{DEF_Canonical_Extension}
		$$
			\pi^{\Phi, \theta}_{\alpha + 1, i}(\gamma)(p) = \pi^{\Phi, \theta}_{\alpha, i}(\gamma)(p \restr \alpha) \concat \pi^{\Phi, \theta}_{\alpha, i}(\gamma)(p(\alpha)).
		$$
		By induction assumption we obtain $\pi^{\Phi, \theta}_{\alpha, i}(\gamma)(p \restr \alpha) \in D^\Phi_\alpha$.
		By Remark~\ref{REM_Automorphism_Of_Nice_Names} $\pi^{\Phi, \theta}_{\alpha, i}(\gamma)(p(\alpha))$ is the canonical name for the condition in $\dot{\QQ}^\Phi_\alpha = \prod_{\theta \in \dom(\Phi)}\TT(\dot{\T}^{\Phi, \theta}_\alpha)$
		with $\supp(\pi^{\Phi, \theta}_{\alpha, i}(\gamma)(p(\alpha))) = \Theta^p_{\alpha}$ and for every $\theta \in \Theta^p_{\alpha}$
		with $\supp(\pi^{\Phi, \theta}_{\alpha, i}(\gamma)(p(\alpha))(\theta)) = I^p_{\alpha, \theta}$ and for every $i \in I^p_{\alpha, \theta}$ we have $\pi^{\Phi, \theta}_{\alpha, i}(\gamma)(p(\alpha))(\theta)(i) = (s^p_{\alpha, \theta, i}, \pi^{\Phi, \theta}_{\alpha, i}(\gamma)(F^p_{\alpha, \theta, i}))$.
		Hence, $\pi^{\Phi, \theta}_{\alpha + 1, i}(\gamma)(p) \in D^\Phi_{\alpha + 1}$.
	\end{proof}
	
	\begin{lemma} \label{LEM_Condition_Below_Stable_For_Automorphism}
		Let $\Phi$ be an $\Theta$-indexing function, $\theta \in \dom(\Phi)$, $i \in \Phi(\theta)$, $\gamma \in \Gamma$ and $p \in D_{\alpha}^{\Phi}$ for some $0 < \alpha \leq \aleph_1$ such that $\pi^{\Phi, \theta}_{1, i}(\gamma)(p \restr 1) = p \restr 1$.
		Then, there is $q \leq p$ in $D_{\alpha}^\Phi$ with $q(0) = p(0)$ and $\pi^{\Phi, \theta}_{\alpha, i}(\gamma)(q) = q$.
	\end{lemma}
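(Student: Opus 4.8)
The plan is to build $q$ by recursion along the iteration, keeping every parameter of $p$ untouched except the finite sets of names for reals, which I symmetrise. Since $\gamma+\gamma=0$ in $\Gamma$ and each $\pi^{\Phi,\theta}_{\beta,i}$ is a group homomorphism, every automorphism $\pi^{\Phi,\theta}_{\beta,i}(\gamma)$ is an involution, so the union of a set with its image under it is fixed by it; this is the whole point. Concretely I maintain the inductive hypothesis that $q\restr\beta\in D^\Phi_\beta$ with $q\restr\beta\extends p\restr\beta$ and $\pi^{\Phi,\theta}_{\beta,i}(\gamma)(q\restr\beta)=q\restr\beta$; note these two automatically yield $q\restr\beta\extends\pi^{\Phi,\theta}_{\beta,i}(\gamma)(p\restr\beta)$, since the action preserves the order (Remark~\ref{DEF_Action_Preserves_Order}), whence $q\restr\beta=\pi^{\Phi,\theta}_{\beta,i}(\gamma)(q\restr\beta)\extends\pi^{\Phi,\theta}_{\beta,i}(\gamma)(p\restr\beta)$. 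The base case $\alpha=1$ is exactly the hypothesis: put $q\restr 1:=p\restr 1$, which is fixed by $\pi^{\Phi,\theta}_{1,i}(\gamma)$ and has $q(0)=p(0)$. The limit case is forced: for $\beta<\epsilon$ with $\supp(p)\cap\epsilon\subseteq\beta$ the condition $q\restr\epsilon:=\iota^\Phi_{\beta,\epsilon}(q\restr\beta)$ lies in $D^\Phi_\epsilon$ by Remark~\ref{REM_Ds_Are_Increasing} and is fixed by $\pi^{\Phi,\theta}_{\epsilon,i}(\gamma)$ because $\iota^\Phi_{\beta,\epsilon}$ is a morphism of $\Gamma$-sets (Definition~\ref{DEF_Increasing_Sequence_Of_Actions}).

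For the successor step $\epsilon\to\epsilon+1$, I let $q(\epsilon)$ be the canonical $D^\Phi_\epsilon$-name of the condition of $\dot{\QQ}^\Phi_\epsilon=\prod_{\eta\in\dom(\Phi)}\TT(\dot{\T}^{\Phi,\eta}_\epsilon)$ given, as in Definition~\ref{DEF_Nice_Condition}, by the same data $\Theta^p_\epsilon$, $I^p_{\epsilon,\eta}$, $n^p_{\epsilon,\eta,j}$, $s^p_{\epsilon,\eta,j}$ as $p(\epsilon)$, but with each $F$-set replaced by $F^q_{\epsilon,\eta,j}:=F^p_{\epsilon,\eta,j}\cup\pi^{\Phi,\theta}_{\epsilon,i}(\gamma)(F^p_{\epsilon,\eta,j})$. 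I then check three things. First, that this is admissible data for a nice condition over $q\restr\epsilon$: each $\dot f\in F^p_{\epsilon,\eta,j}$ is a canonical name of a nice $D^\Phi_\epsilon$-name for a real below some $r\in D^\Phi_\epsilon$ with $p\restr\epsilon\extends r$, hence below some $r\extendedby q\restr\epsilon$; and by Remark~\ref{REM_Automorphism_Of_Nice_Names} with Lemma~\ref{LEM_Ds_Fixed_Under_Action}, $\pi^{\Phi,\theta}_{\epsilon,i}(\gamma)(\dot f)$ is the canonical name of a nice $D^\Phi_\epsilon$-name for a real below $\pi^{\Phi,\theta}_{\epsilon,i}(\gamma)(r)\in D^\Phi_\epsilon$, which by the inductive hypothesis is still $\extendedby q\restr\epsilon$. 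Second, that $q\restr\epsilon\forces q(\epsilon)\in\dot{\QQ}^\Phi_\epsilon$: by Remark~\ref{REM_Construct_Condition_From_Parameters} it suffices that $q\restr\epsilon$ forces $\dot f\restr n^p_{\epsilon,\eta,j}\in s^p_{\epsilon,\eta,j}$ and $\dot f\notin\bigcup_{T\in\dot{\T}^{\Phi,\eta}_\epsilon}[T]$ for every $\dot f\in F^q_{\epsilon,\eta,j}$; for $\dot f\in F^p_{\epsilon,\eta,j}$ this descends from $p\restr\epsilon\extends q\restr\epsilon$, and for $\dot f=\pi^{\Phi,\theta}_{\epsilon,i}(\gamma)(\dot g)$ with $\dot g\in F^p_{\epsilon,\eta,j}$ one applies $\pi^{\Phi,\theta}_{\epsilon,i}(\gamma)$ to the forcing statement $p\restr\epsilon$ makes about $\dot g$, using that check-names (in particular the finite tree $s^p_{\epsilon,\eta,j}$) are fixed by automorphisms and that $\pi^{\Phi,\theta}_{\epsilon,i}(\gamma)$ fixes $\dot{\T}^{\Phi,\eta}_\epsilon$ (Definition~\ref{DEF_Increasing_Sequence_Of_Actions}, available by Corollary~\ref{COR_Induced_Sequence_Of_Actions}), and then invokes $q\restr\epsilon\extends\pi^{\Phi,\theta}_{\epsilon,i}(\gamma)(p\restr\epsilon)$. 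Third, since $F^p_{\epsilon,\eta,j}\subseteq F^q_{\epsilon,\eta,j}$ and the tree-coordinates of $q(\epsilon)$ and $p(\epsilon)$ agree, $q\restr(\epsilon+1)\extends p\restr(\epsilon+1)$; and by Definition~\ref{DEF_Canonical_Extension} and Lemma~\ref{LEM_Ds_Fixed_Under_Action}, $\pi^{\Phi,\theta}_{\epsilon+1,i}(\gamma)(q\restr(\epsilon+1))=\pi^{\Phi,\theta}_{\epsilon,i}(\gamma)(q\restr\epsilon)\concat\pi^{\Phi,\theta}_{\epsilon,i}(\gamma)(q(\epsilon))$, whose first factor is $q\restr\epsilon$ by induction and whose second factor carries the $F$-sets $\pi^{\Phi,\theta}_{\epsilon,i}(\gamma)(F^p_{\epsilon,\eta,j})\cup F^p_{\epsilon,\eta,j}=F^q_{\epsilon,\eta,j}$ by involutivity, hence equals $q(\epsilon)$; so $q\restr(\epsilon+1)$ is fixed.

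Running this recursion up to $\alpha$ produces the required $q\in D^\Phi_\alpha$ with $q\extends p$, $q(0)=p(0)$ and $\pi^{\Phi,\theta}_{\alpha,i}(\gamma)(q)=q$. The only genuinely delicate point is the second item of the successor step, transporting the diagonalisation requirement $\dot f\notin\bigcup_{T\in\dot{\T}^{\Phi,\eta}_\epsilon}[T]$ along $\pi^{\Phi,\theta}_{\epsilon,i}(\gamma)$; this is exactly where one uses that the names $\dot{\T}^{\Phi,\eta}_\epsilon$ are fixed by the action (the defining clause of an increasing sequence of $\Gamma$-actions) together with the fact that $q\restr\epsilon$ is itself $\pi^{\Phi,\theta}_{\epsilon,i}(\gamma)$-fixed. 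Everything else is bookkeeping with Definition~\ref{DEF_Nice_Condition}, Remark~\ref{REM_Automorphism_Of_Nice_Names} and Lemma~\ref{LEM_Ds_Fixed_Under_Action}.
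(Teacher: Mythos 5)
Your proposal is correct and follows essentially the same route as the paper: induction on $\alpha$, with the successor step keeping all parameters of $p(\epsilon)$ fixed except the sets of names for reals, which are symmetrised to $F^p\cup\pi^{\Phi,\theta}_{\epsilon,i}(\gamma)(F^p)$, and the admissibility of the new names verified by applying the automorphism to the forcing statements while using that $\dot{\T}^{\Phi,\eta}_\epsilon$ and $q\restr\epsilon$ are fixed, with involutivity closing the argument. The extra detail you check (that $\pi^{\Phi,\theta}_{\epsilon,i}(\gamma)(\dot f)$ is a nice name below a condition extended by $q\restr\epsilon$) is handled implicitly in the paper but is correctly argued here.
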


	\begin{proof}
		By induction.
		The case $\alpha = 1$ is exactly the assumption given on $\gamma$.
		For limit $\alpha$ choose $\beta < \alpha$ with $\iota^\Phi_{\beta, \alpha}(p \restr \beta) = p$.
		By induction assumption choose $q \leq p \restr \beta$ in $D^\Phi_\beta$ such that $q(0) = p(0)$  and $\pi^{\Phi, \theta}_{\beta, i}(\gamma)(q) = q$.
		Then, we have $\iota^\Phi_{\beta, \alpha}(q)(0) = q(0) = p(0)$, by Remark~\ref{REM_Ds_Are_Increasing} $\iota^\Phi_{\beta, \alpha} \in D^\Phi_\alpha$ and by Definition~\ref{DEF_Increasing_Sequence_Of_Actions} we compute
		$$
			\pi^{\Phi, \theta}_{\alpha, i}(\gamma)(\iota^\Phi_{\beta, \alpha}(q)) = \iota^\Phi_{\beta, \alpha}(\pi^{\Phi, \theta}_{\beta, i}(\gamma)(q)) = \iota^\Phi_{\beta, \alpha}(q).
		$$
		Finally, for $\alpha + 1 > 1$ let $p \in D^\Phi_{\alpha + 1}$.
		Then, by induction assumption we may choose $q \leq p \restr \alpha$ in $D^\Phi_\alpha$ with $q(0) = p(0)$ and $\pi^{\Phi, \theta}_{\alpha, i}(\gamma)(q) = q$.
		We define
		\begin{enumerate}[$\bullet$]
			\item $\Theta^q_\alpha := \Theta^p_\alpha$,
			\item $I^q_{\alpha, \theta} := I^p_{\alpha, \theta}$ for every $\theta \in \Theta^q_{\alpha}$,
			\item $n^q_{\alpha, \theta, i} := n^p_{\alpha, \theta, i}$ and $s^q_{\alpha, \theta, i} := s^p_{\alpha, \theta, i}$ for every $\theta \in \Theta^q_\alpha$ and $i \in I^q_{\alpha, \theta}$,
			\item $F^q_{\alpha, \theta, i} := F^p_{\alpha, \theta, i} \cup \pi^{\Phi, \theta}_{\alpha, i}(\gamma)(F^p_{\alpha, \theta, i})$.
		\end{enumerate}
		Let $\dot{q}_\alpha$ be the canonical name for the condition in $\dot{\QQ}^\Phi_\alpha = \prod_{\theta \in \dom(\Phi)}\TT(\dot{\T}^{\Phi, \theta}_\alpha)$
		with $\supp(\dot{q}_\alpha) = \Theta^q_{\alpha}$, for every $\theta \in \Theta^q_{\alpha}$
		with $\supp(\dot{q}_\alpha(\theta)) = I^q_{\alpha, \theta}$ and for every $i \in I^q_{\alpha, \theta}$ we have $\dot{q}_\alpha(\theta)(i) = (s^q_{\alpha, \theta, i}, F^q_{\alpha, \theta, i})$.
		We claim that $q \concat \dot{q}_\alpha$ is as desired.
		To obtain $q \concat \dot{q}_\alpha \in D^\Phi_{\alpha + 1}$ by Remark~\ref{REM_Construct_Condition_From_Parameters} it suffices to verify that for every $\theta \in \Theta^p_\alpha$, $i \in I^p_{\alpha, \theta}$ and $\dot{f} \in F^p_{\alpha, \theta, i}$ we have
		$$
			q \forces \pi^{\Phi, \theta}_{\alpha, i}(\gamma)(\dot{f}) \restr n^p_{\alpha, \theta, i} \in s^p_{\alpha, \theta, i} \text{ and } \pi^{\Phi, \theta}_{\alpha, i}(\gamma)(\dot{f}) \notin \bigcup_{T \in \dot{\T}^{\Phi, \theta}_\alpha} [T].
		$$
		To this end, notice that $p \in D^\Phi_{\alpha + 1}$ implies
		$$
			p \restr \alpha \forces \dot{f} \restr n^p_{\alpha, \theta, i} \in s^p_{\alpha, \theta, i} \text{ and } \dot{f} \notin \bigcup_{T \in \dot{\T}^{\Phi, \theta}_\alpha} [T],
		$$
		so also $q \leq p \restr \alpha$ forces this.
		Further, $\pi^{\Phi, \theta}_{\alpha, i}(\gamma)(q) = q$ and $\pi^{\Phi, \theta}_{\alpha, i}(\gamma)(\dot{\T}^{\Phi, \theta}_{\alpha}) = \dot{\T}^{\Phi, \theta}_{\alpha}$, so applying the automorphism theorem to the previous statement yields the desired conclusion.
		Next, we have $\pi^{\Phi, \theta}_{\alpha, i}(\gamma)(F^q_{\alpha, \theta, i}) = F^q_{\alpha, \theta, i}$ since $\pi^{\Phi, \theta}_{\alpha, i}(\gamma)$ is an involution.
		This implies
		$$
			\pi^{\Phi, \theta}_{\alpha + 1, i}(\gamma)(q \concat \dot{q}_\alpha) = \pi^{\Phi, \theta}_{\alpha, i}(\gamma)(q) \concat \pi^{\Phi, \theta}_{\alpha, i}(\gamma)(\dot{q}_\alpha) = q \concat \dot{q}_\alpha.
		$$
		Finally, by definition we have $q \concat \dot{q}_\alpha \extends p$ and $(q \concat \dot{q}_\alpha)(0) = q(0) = p(0)$.
	\end{proof}

	\section{Complete embeddings} \label{SEC_Complete_Embeddings}

	In this section we combine the results of the previous sections in order to prove that our forcing in Definition~\ref{DEF_Forcing} has enough complete subforcings to carry out our isomorphism-of-names argument for Main Theorem~\ref{THM_AT_Spectrum}.
	The whole section will be devoted towards the proof of the following Theorem~\ref{THM_Complete_Subforcings} as it is an elaborate inductive construction of complete embeddings.

	\begin{theorem}\label{THM_Complete_Subforcings}
		Let $\Phi \subseteq \Psi$ be a $\Theta$-subindexing function and assume $\Phi$ is countable.
		Then, $\PP_{\alpha}^\Phi \completesubposet \PP_{\alpha}^\Psi$ for all $\alpha \leq \aleph_1$.
	\end{theorem}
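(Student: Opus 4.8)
The plan is to construct, by induction on $\alpha\le\aleph_1$, the canonical complete embedding $\iota^{\Phi,\Psi}_\alpha:\PP^\Phi_\alpha\to\PP^\Psi_\alpha$ extending the strong embedding $\iota^{\Phi,\Psi}$ of Remark~\ref{REM_Strong_Projection}, carrying a strengthened inductive hypothesis: (a) $\iota^{\Phi,\Psi}_\alpha$ commutes with the tower maps $\iota^\bullet_{\beta,\alpha}$ and with the induced group actions of Corollary~\ref{COR_Induced_Sequence_Of_Actions}; (b) $\iota^{\Phi,\Psi}_\alpha$ preserves the ``tree part'' of nice conditions, so that for every $\theta\in\dom(\Phi)$, $\beta<\alpha$ and $n<\omega$ the trees $\dot T^{\Phi,\theta}_{\beta,n}$ and $\dot T^{\Psi,\theta}_{\beta,n}$ are forced equal, whence $\iota^{\Phi,\Psi}_\alpha$ sends $\dot\T^{\Phi,\theta}_\beta$ into $\dot\T^{\Psi,\theta}_\beta$ and the only ``new'' trees of the $\Psi$-side are the Cohen trees $\dot T^{\Psi,\theta}_j$ with $j\in\Psi(\theta)\setminus\Phi(\theta)$; (c) every $p\in\PP^\Psi_\alpha$ has a reduction in $\PP^\Phi_\alpha$, obtained on nice conditions via a projection $\rho_\alpha:D^\Psi_\alpha\to D^\Phi_\alpha$. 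The base case $\alpha=1$ is exactly Remark~\ref{REM_Strong_Projection}, with $\rho_1$ the strong projection. The limit case is the standard fact that the direct limit of a coherent system of complete embeddings is a complete embedding, with Remark~\ref{REM_Ds_Are_Increasing} used to transport (b) and (c).

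The successor step $\alpha\to\alpha+1$ is the heart of the matter and has two halves. First, \emph{well-definedness}: for $p\in D^\Phi_{\alpha+1}$ we put $\iota^{\Phi,\Psi}_{\alpha+1}(p):=\iota^{\Phi,\Psi}_\alpha(p\restr\alpha)\concat(\text{image of }p(\alpha))$, keeping the same parameters $\Theta^p_\alpha, I^p_{\alpha,\theta}, n^p_{\alpha,\theta,i}, s^p_{\alpha,\theta,i}$ and mapping the names in $F^p_{\alpha,\theta,i}$ by $\iota^{\Phi,\Psi}_\alpha$. The only nontrivial point is that each $\dot f\in F^p_{\alpha,\theta,i}$ still witnesses, over $\PP^\Psi_\alpha$, that its image misses $\bigcup_{T\in\dot\T^{\Psi,\theta}_\alpha}[T]$; by hypothesis (b) this reduces to showing $\iota^{\Phi,\Psi}_\alpha(q)\forces\iota^{\Phi,\Psi}_\alpha(\dot f)\neq^{*}\dot c^{\Psi,\theta}_j$ for every $j\in\Psi(\theta)\setminus\Phi(\theta)$. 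This is where the extra automorphisms of our modified forcing are essential: $\iota^{\Phi,\Psi}_\alpha(\dot f)$ is fixed by every $\pi^{\Psi,\theta}_{\alpha,j}(\gamma)$, and Lemma~\ref{LEM_Condition_Below_Stable_For_Automorphism} produces, below any putative counterexample, a condition invariant under a flip $\gamma$ moving $\dot c^{\Psi,\theta}_j$ off the diagonal, so that the automorphism theorem yields a contradiction. Order- and incompatibility-preservation are then routine.

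The second half is \emph{completeness}, i.e.\ the construction of $\rho_{\alpha+1}$. Given $p\in D^\Psi_{\alpha+1}$, set $\rho_{\alpha+1}(p)\restr\alpha:=\rho_\alpha(p\restr\alpha)$, discard all coordinates $\theta\in\dom(\Psi)\setminus\dom(\Phi)$, and for $\theta\in\dom(\Phi)$ keep the $n$-trees $s^p_{\alpha,\theta,i}$ but replace each $\dot f\in F^p_{\alpha,\theta,i}$ by a canonical projection $\dot f'$: a $D^\Phi_\alpha$-name for a real with the same initial segment in $s^p_{\alpha,\theta,i}$ which misses $\bigcup_{T\in\dot\T^{\Phi,\theta}_\alpha}[T]$. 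For a Suslin-c.c.c.\ forcing $\dot f'$ would be the classical canonical projection of a nice name, but here there is no such forcing and the tree constraints must be respected; this is precisely the role of the device of a \emph{nice name for a finite set of reals with respect to a sequence of names for trees} (Definition~\ref{DEF_Nice_Name_WRT_Trees}), whose canonical projection (Lemma~\ref{LEM_Reduction_Of_Nice_Name}) behaves as required (Lemma~\ref{LEM_Projection_Of_Nice_WRT_Sequence}). One then checks that $\rho_{\alpha+1}(p)$ is a genuine reduction: any $r\le\rho_{\alpha+1}(p)$ in $\PP^\Phi_{\alpha+1}$ is compatible with $p$ in $\PP^\Psi_{\alpha+1}$, amalgamating coordinatewise, using that $r$'s $\Phi$-side reals miss the $\Phi$-side trees and hypothesis (b). The hereditary-support bookkeeping of Definition~\ref{DEF_Hereditary_Support} and the counting of Lemma~\ref{LEM_Counting_Hereditary} keep all objects under control; this is exactly where countability of $\Phi$, $\mathsf{CH}$, and the length $\aleph_1$ are used.

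The step I expect to be the real obstacle is the construction and verification of $\rho_{\alpha+1}$ on the $\dot\QQ$-component: producing, from a $\PP^\Psi_\alpha$-name for a real forced to miss the $\Psi$-side trees, a $\PP^\Phi_\alpha$-name for a real missing the $\Phi$-side trees that is simultaneously a legitimate side condition and witnesses a reduction. Pinning down the definition of nice names with respect to sequences of names for trees so that their canonical projections commute with the tower and interact correctly with the diagonalization properties (D1)--(D2) is the delicate part; everything else is bookkeeping on top of the machinery of Sections~\ref{SEC_Extend_Automorphisms} and~\ref{SEC_Dense_Subset}.
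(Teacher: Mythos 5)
Your plan matches the paper's proof in all essential respects: the same induction on $\alpha$ carrying the commutativity, action-equivariance, and tree-decomposition hypotheses (the paper's properties (A)--(F)), the same automorphism/flip argument via Lemma~\ref{LEM_Condition_Below_Stable_For_Automorphism} to show the image of a $\PP^\Phi_\alpha$-name for a real avoids the extra Cohen trees (the paper's property (G)), and the same construction of reductions via nice names for finitely many reals with respect to the enumerated sequences of names for trees (Definition~\ref{DEF_Nice_Name_WRT_Trees} and Lemmas~\ref{LEM_Reduction_Of_Nice_Name}, \ref{LEM_Projection_Of_Nice_WRT_Sequence}). The only minor inaccuracy is attributing the use of countability of $\Phi$ to the counting Lemma~\ref{LEM_Counting_Hereditary} (which is used in the Main Theorem, not here); in this proof countability is needed so that each $\dot{\T}^{\Phi,\theta}_\alpha$ can be enumerated in order type $\omega$, which is what makes the nice-name-with-respect-to-trees construction possible.
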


	By induction over $\alpha \leq \aleph_1$ we define embeddings $\iota^{\Phi,\Psi}_\alpha:\PP_{\alpha}^\Phi \to \PP_{\alpha}^\Psi$ and prove that they admit reductions from $\PP^\Psi_{\alpha}$ to $\PP^\Phi_\alpha$.
	Thus, $\iota^{\Phi, \Psi}_\alpha$ will be a complete embedding.
	Additionally, we will verify the following properties along our iteration:
		
	\begin{enumerate}[(A)]
		\item For all $\beta \leq \alpha$ the following diagram commutes:
		\begin{center}
			\begin{tikzcd}
				\PP^\Phi_\beta \arrow[d, "{\iota^\Phi_{\beta, \alpha}}"] \arrow[r, "{\iota^{\Phi, \Psi}_{\beta}}"] & \PP^\Psi_\beta \arrow[d, "{\iota^\Psi_{\beta, \alpha}}"] \\
				\PP^\Phi_\alpha \arrow[r, "{\iota^{\Phi, \Psi}_{\alpha}}"]                                         & \PP^\Psi_\alpha                                         
			\end{tikzcd}
		\end{center}
		\item For all $\theta \in \dom(\Phi)$ and $i \in \Phi(\theta)$ the embedding $\iota_{\alpha}^{\Phi, \Psi}: \PP^\Phi_\alpha \to \PP^\Psi_\alpha$ is a morphism of $\Gamma$-sets, i.e.\ the following diagram commutes for every $\gamma \in \Gamma$:	
		\begin{center}
			\begin{tikzcd}
				\PP^\Phi_\alpha \arrow[d, "{\pi^{\Phi,\theta}_{\alpha,i}(\gamma)}"] \arrow[r, "{\iota^{\Phi,\Psi}_{\alpha}}"] & \PP^\Psi_{\alpha} \arrow[d, "{\pi^{\Psi,\theta}_{\alpha,i}(\gamma)}"] \\					\PP^\Phi_\alpha \arrow[r, "{\iota^{\Phi,\Psi}_{\alpha}}"]                                                     & \PP^\Psi_{\alpha}                                                    
			\end{tikzcd}
		\end{center}	
		\item For all $\theta \in \dom(\Phi)$ and $i \in \Phi(\theta)$ we have
		$$
			\iota_{1}^{\Phi, \Psi}(\dot{c}^{\Phi, \theta}_{i}) = \dot{c}^{\Psi, \theta}_{i} \text{ and thus } \iota_{1}^{\Phi, \Psi}(\dot{T}^{\Phi, \theta}_{i}) = \dot{T}^{\Psi, \theta}_{i}.
		$$
		\item For all $ \alpha = \beta + 1 > 1$, $\theta \in \dom(\Phi)$ and $n < \omega$ we have
		$$
			\iota^{\Phi, \Psi}_{\beta}(\dot{T}^{\Phi, \theta}_{\beta, n}) = \dot{T}^{\Psi, \theta}_{\beta, n}.
		$$
		\item If $\alpha > 0$, then for all $\theta \in \dom(\Phi)$, the name $\dot{\T}^{\Psi, \theta}_\alpha$ is the canonical $\PP^\Psi_\alpha$-name for 
		$$
			\iota^{\Phi, \Psi}_{\alpha}(\dot{\T}^{\Phi, \theta}_\alpha) \cup \bigcup_{i \in \Psi(\theta) \setminus \Phi(\theta)}\iota^{\Psi}_{1, \alpha}(\T^{\Psi, \theta}_{i}).
		$$
		\item
		For all $\theta \in \dom(\Phi)$, $i \in \Psi(\theta) \setminus \Phi(\theta)$, $\gamma \in \Gamma$ we have that $\pi^{\Psi, \theta}_{\alpha, i}(\gamma)$ acts trivially on $\iota_\alpha^{\Phi, \Psi}(\PP^\Phi_\alpha)$.
		\item
		For all $\theta \in \dom(\Phi)$, $i \in \Psi(\theta) \setminus \Phi(\theta)$, $\gamma \in \Gamma$ and  $\PP_{\alpha}^\Phi$-name $\dot{f}$ for a real
		$$
			\PP^\Psi_{\alpha} \forces  \iota^{\Phi, \Psi}_{\alpha}(\dot{f}) \neq \iota^\Psi_{1, \alpha}(\pi^{\Psi, \theta}_{1, i}(\gamma)(\dot{c}^{\Psi, \theta}_i)).
		$$
	\end{enumerate}
	First, we prove that $(\text{G})$ follows from $(\text{F})$, so that we only need to verify $(\text{A})$ to $(\text{F})$ inductively:
		
	\begin{proof}
		Let $p \in \PP_{\alpha}^\Psi$.
		By Lemma~\ref{LEM_Ds_Are_Dense}, we may assume $p \in D^\Psi_\alpha$.
		Choose $N\notin \dom(p(0)(\theta, i))$.
		Let $\delta \in \Gamma$ be defined by $\delta(N) = 1$ and $0$ otherwise.
		Then, $\pi^{\Psi,\theta}_{1,i}(\delta)(p \restr 1) = p \restr 1$, so by the Lemma~\ref{LEM_Condition_Below_Stable_For_Automorphism} we may choose $q \leq p$ in $D^\Psi_\alpha$ such that $q(0) = p(0)$ and $\pi^{\Psi, \theta}_{\alpha, i}(\delta)(q) = q$.
		Thus, $N \notin \dom(q(0)(\theta, i))$ and we may define $q_k \leq q$ which replaces $p(0)(\theta, i)$ by $p(0)(\theta, i) \cup \simpleseq{N, j}$ for $j \in 2$.
		Then, we have $\pi^{\Psi, \theta}_{\alpha,i}(\delta)(q_j) = q_{1-j}$ for $j \in 2$ and there is a $k \in 2$ with
		$$
			q_0 \forces \iota^\Psi_{1, \alpha}(\pi^{\Psi, \theta}_{1, i}(\gamma)(\dot{c}^{\Psi, \theta}_i))(N) = k.
		$$
		Further, using $(\delta + \gamma)(N) = 1 - \gamma(N)$ we compute
		\begin{align*}
			\pi^{\Psi, \theta}_{\alpha,i}(\delta)(\iota^\Psi_{1, \alpha}(\pi^{\Psi, \theta}_{1, i}(\gamma)(\dot{c}^{\Psi, \theta}_i)))(N)
			&= \iota^\Psi_{1, \alpha}(\pi^{\Psi, \theta}_{1,i}(\delta)(\pi^{\Psi, \theta}_{1, i}(\gamma)(\dot{c}^{\Psi, \theta}_i)))(N)\\
			&= \iota^\Psi_{1, \alpha}(\pi^{\Psi, \theta}_{1,i}(\delta + \gamma)(\dot{c}^{\Psi, \theta}_i))(N)\\
			&= 1 - \iota^\Psi_{1, \alpha}(\pi^{\Psi, \theta}_{1,i}(\gamma)(\dot{c}^{\Psi, \theta}_i))(N).
		\end{align*}
		Thus, by the automorphism theorem we obtain
		$$
			q_{1} \forces \iota^\Psi_{1, \alpha}(\pi^{\Psi, \theta}_{1, i}(\gamma)(\dot{c}^{\Psi, \theta}_i))(N) = 1- k.
		$$
		Choose $r_0 \leq q_0$ such that $r_0 \forces \iota^{\Phi, \Psi}_{\alpha}(\dot{f})(N) = l$ for some $l \in 2$.
		Since $\dot{f}$ is a $\PP_{\alpha}^\Phi$-name by $(\text{F})$ we have $\pi^{\Psi, \theta}_{\alpha, i}(\delta)( \iota^{\Phi, \Psi}_{\alpha}(\dot{f})) = \iota^{\Phi, \Psi}_{\alpha}(\dot{f})$.
		Thus, the automorphism theorem yields
		$$
			\pi^{\Psi, \theta}_{\alpha, i}(\delta)(r_0) \forces  \iota^{\Phi, \Psi}_{\alpha}(\dot{f})(N) = l.
		$$
		But then either $r_0 \leq q_0 \leq q \leq p$ and
		$$
			r_0 \forces \iota^\Psi_{1, \alpha}(\pi^{\Psi, \theta}_{1, i}(\gamma)(\dot{c}^{\Psi, \theta}_i))(N) = k \neq l =  \iota^{\Phi, \Psi}_{\alpha}(\dot{f})(N)
		$$
		or $\pi^{\Psi,\theta}_{\alpha,i}(\delta)(r_0) \leq \pi^{\Psi,\theta}_{\alpha,i}(\delta)(q_0) = q_1 \leq q \leq p$ and
		\[
			\pi^{\Psi,\theta}_{\alpha,i}(\delta)(r_0) \forces \iota^\Psi_{1, \alpha}(\pi^{\Psi, \theta}_{1, i}(\gamma)(\dot{c}^{\Psi, \theta}_i))(N) = 1 - k \neq l =  \iota^{\Phi, \Psi}_{\alpha}(\dot{f})(N).\hfill\qedhere
		\]
	\end{proof}
	Next, we inductively define $\iota^{\Phi, \Psi}_\alpha$ and verify properties (A) to (F), so consider $\alpha = 1$ first.
	In this case we already defined $\iota^{\Phi, \Psi}_i:\PP^\Phi_1 \to \PP^\Psi_1$ as the complete embedding corresponding to $\iota^{\Phi, \Psi}: \CC^\Phi \to \CC^\Psi$.
		
	\begin{enumerate}[(A)]
		\item There is nothing to show.
		\item Follows immediately from Remark~\ref{REM_Morphism_Of_Gamma_Sets_On_Cohen}.
		\item By definition of $\dot{c}_i^{\Phi, \theta}$, $\dot{c}_i^{\Psi, \theta}$ and $\iota_1^{\Phi, \Psi}$.
		\item There is nothing to show.
		\item Let $\theta \in \dom(\Phi)$.
		Then, we compute
		\begin{align*}
			\dot{\T}^{\Psi, \theta}_1
			&= \bigcup_{i \in \Psi(\theta)}\dot{\T}^{\Psi, \theta}_{i} && (\text{Remark~\ref{REM_Properties_Of_Automorphism_Subgroup}})\\
			&= \bigcup_{i \in \Phi(\theta)}\dot{\T}^{\Psi, \theta}_{i} \cup \bigcup_{i \in \Psi(\theta) \setminus \Phi(\theta)}\dot{\T}^{\Psi, \theta}_{i} \\
			&= \bigcup_{i \in \Phi(\theta)}\iota_1^{\Phi, \Psi}(\dot{\T}^{\Phi, \theta}_{i}) \cup \bigcup_{i \in \Psi(\theta) \setminus \Phi(\theta)}\dot{\T}^{\Psi, \theta}_{i} && (\text{Remark~\ref{REM_Properties_Of_Automorphism_Subgroup}})\\
			&= \iota_1^{\Phi, \Psi}(\bigcup_{i \in \Phi(\theta)}\dot{\T}^{\Phi, \theta}_{i}) \cup \bigcup_{i \in \Psi(\theta) \setminus \Phi(\theta)}\dot{\T}^{\Psi, \theta}_{i} && (\text{canonical name})\\
			&= \iota_1^{\Phi, \Psi}(\dot{\T}^{\Phi, \theta}_1) \cup \bigcup_{i \in \Psi(\theta) \setminus \Phi(\theta)}\dot{\T}^{\Psi, \theta}_{i} && (\text{Remark~\ref{REM_Properties_Of_Automorphism_Subgroup}}).
		\end{align*}
		\item Follows immediately from the fact that $\pi^{\Psi, \theta}_{\alpha,i}(\gamma)$ only acts on Cohen information outside of the indexing of $\Phi$.
	\end{enumerate}
	Next, we consider limit $\alpha$.
	Then, by $\text{(A)}$ for every $\beta' \leq \beta < \alpha$ the following diagram commutes:
	\begin{center}
		\begin{tikzcd}
			\PP^\Phi_{\beta'} \arrow[d, "{\iota^\Phi_{\beta', \beta}}"] \arrow[r, "{\iota^{\Phi, \Psi}_{\beta'}}"] & \PP^\Psi_{\beta'} \arrow[d, "{\iota^\Psi_{\beta', \beta}}"] \\
			\PP^\Phi_\beta \arrow[r, "{\iota^{\Phi, \Psi}_{\beta}}"]                                               & \PP^\Psi_\beta                                             
		\end{tikzcd}
	\end{center}
	
	By the universal property of the direct limit there is a unique map $\iota^{\Phi,\Psi}_\alpha:\PP^\Phi_\alpha \to \PP^\Psi_\alpha$ such that for every $\beta \leq \alpha$ the diagram in $(\text{A})$ commutes.
	Further, as a direct limit of complete embeddings, also $\iota^{\Phi, \Psi}_\alpha$ is a complete embedding.
	Note that (C) and (D) are vacuous at limits.
		
	\begin{enumerate}[(A)]
		\item Follows from the universal property of the direct limit.
		\item Let $\theta \in \dom(\Phi)$, $i \in \Phi(\theta)$, $\gamma \in \Gamma$ and $p \in \PP^\Phi_\alpha$.
		Choose $\beta < \alpha$ such that $\iota^\Phi_{\beta, \alpha}(p \restr \beta) = p$.
		Then, we compute
		\begin{align*}
			\pi^{\Psi, \theta}_{\alpha, i}(\gamma)(\iota^{\Phi,\Psi}_\alpha(p)) 
			&= \pi^{\Psi, \theta}_{\alpha, i}(\gamma)(\iota^{\Phi,\Psi}_\alpha(\iota^\Phi_{\beta, \alpha}(p \restr \beta))) && (\text{choice of } \beta)\\
			&= \pi^{\Psi, \theta}_{\alpha, i}(\gamma)(\iota^\Psi_{\beta, \alpha}(\iota^{\Phi,\Psi}_\beta(p \restr \beta))) && (\text{A})\\
			&= \iota^\Psi_{\beta, \alpha}(\pi^{\Psi, \theta}_{\beta, i}(\gamma)(\iota^{\Phi,\Psi}_\beta(p \restr \beta))) && (\text{Definition~\ref{DEF_Increasing_Sequence_Of_Actions}})\\
			&= \iota^\Psi_{\beta, \alpha}(\iota^{\Phi,\Psi}_\beta(\pi^{\Phi, \theta}_{\beta, i}(\gamma)(p \restr \beta))) && (\text{(B) inductively})\\
			&= \iota^{\Phi,\Psi}_\alpha(\iota^\Phi_{\beta, \alpha}(\pi^{\Phi, \theta}_{\beta, i}(\gamma)(p \restr \beta))) && (\text{A})\\
			&= \iota^{\Phi,\Psi}_\alpha(\pi^{\Phi, \theta}_{\alpha, i}(\gamma)(\iota^\Phi_{\beta, \alpha}(p \restr \beta))) && (\text{Definition~\ref{DEF_Increasing_Sequence_Of_Actions}})\\
			&= \iota^{\Phi,\Psi}_\alpha(\pi^{\Phi, \theta}_{\alpha, i}(\gamma)(p)) && (\text{choice of } \beta).
		\end{align*}
		\setcounter{enumi}{4}
		\item Let $\theta \in \dom(\Phi)$.
		Then, we compute
		\begin{align*}
			\dot{\T}_\alpha^{\Psi, \theta}
			&= \bigcup_{\beta < \alpha} \iota^\Psi_{\beta, \alpha}(\dot{\T}^{\Psi, \theta}_\beta) && (\text{Definition~\ref{DEF_Forcing}})\\
			&= \bigcup_{\beta < \alpha} \iota^\Psi_{\beta, \alpha}\left[\iota^{\Phi, \Psi}_{\beta}(\dot{\T}^{\Phi, \theta}_\beta) \cup \bigcup_{i \in \Psi(\theta) \setminus \Phi(\theta)}\iota^{\Psi}_{1, \beta}(\T^{\Psi, \theta}_{i})\right] && (\text{(E) inductively})\\
			&= \bigcup_{\beta < \alpha} \left[ \iota^\Psi_{\beta, \alpha}(\iota^{\Phi, \Psi}_{\beta}(\dot{\T}^{\Phi, \theta}_\beta)) \cup \bigcup_{i \in \Psi(\theta) \setminus \Phi(\theta)}\iota^\Psi_{\beta, \alpha}(\iota^{\Psi}_{1, \beta}(\T^{\Psi, \theta}_{i}))\right] && (\text{canonical name})\\
			&= \bigcup_{\beta < \alpha} \left[ \iota^{\Phi, \Psi}_{\alpha}(\iota^\Phi_{\beta, \alpha}(\dot{\T}^{\Phi, \theta}_\beta)) \cup \bigcup_{i \in \Psi(\theta) \setminus \Phi(\theta)}\iota^{\Psi}_{1, \alpha}(\T^{\Psi, \theta}_{i})\right] && (\text{A})\\
			&= \iota^{\Phi, \Psi}_{\alpha}\left[\bigcup_{\beta < \alpha} \iota^\Phi_{\beta, \alpha}(\dot{\T}^{\Phi, \theta}_\beta)\right] \cup \bigcup_{i \in \Psi(\theta) \setminus \Phi(\theta)}\iota^{\Psi}_{1, \alpha}(\T^{\Psi, \theta}_{i}) && (\text{canonical name})\\
			&= \iota^{\Phi, \Psi}_{\alpha}(\dot{\T}_\alpha^{\Phi, \theta}) \cup \bigcup_{i \in \Psi(\theta) \setminus \Phi(\theta)}\iota^{\Psi}_{1, \alpha}(\T^{\Psi, \theta}_{i}) && (\text{Definition~\ref{DEF_Forcing}}).
		\end{align*}
		\item Let $\theta \in \dom(\Phi)$, $i \in \Psi(\theta) \setminus \Phi(\theta)$, $\gamma \in \Gamma$ and $p \in \PP^\Phi_\alpha$.
		Choose $\beta < \alpha$ with $\iota^\Phi_{\beta,\alpha}(p \restr \beta) = p$.
		Then, we compute
		\begin{align*}
			\pi^{\Psi, \theta}_{\alpha, i}(\gamma)(\iota_{\alpha}^{\Phi, \Psi}(p)) 
			&= \pi^{\Psi, \theta}_{\alpha, i}(\gamma)(\iota_{\alpha}^{\Phi, \Psi}(\iota^\Phi_{\beta,\alpha}(p \restr \beta))) && (\text{choice of } \beta)\\
			&= \pi^{\Psi, \theta}_{\alpha, i}(\gamma)(\iota^\Psi_{\beta,\alpha}(\iota_{\beta}^{\Phi, \Psi}(p \restr \beta))) && (\text{A})\\
			&= \iota^\Psi_{\beta,\alpha}(\pi^{\Psi, \theta}_{\beta, i}(\gamma)(\iota_{\beta}^{\Phi, \Psi}(p \restr \beta))) && (\text{Definition~\ref{DEF_Increasing_Sequence_Of_Actions}})\\
			&= \iota^\Psi_{\beta,\alpha}(\iota_{\beta}^{\Phi, \Psi}(p \restr \beta)) && (\text{(F) inductively})\\
			&= \iota_{\alpha}^{\Phi, \Psi}(\iota^\Phi_{\beta,\alpha}(p \restr \beta)) && (\text{A})\\
			&= \iota_{\alpha}^{\Phi, \Psi}(p) && (\text{choice of } \beta).
		\end{align*}
	\end{enumerate}
	Finally, consider $\alpha + 1 > 1$.
	By induction we have that $\iota^{\Phi, \Psi}_\alpha:\PP^{\Phi}_\alpha \to \PP^\Psi_\alpha$ is a complete embedding.
	Thus, we may naturally define for $p \in \PP^\Phi_{\alpha + 1}$
	$$
		\iota^{\Phi, \Psi}_{\alpha + 1}(p) := \iota^{\Phi, \Psi}_{\alpha}(p \restr \alpha) \concat \iota^{\Phi, \Psi}_{\alpha}(p(\alpha)).
	$$
	However, we need to verify that
	$$
		\iota^{\Phi, \Psi}_{\alpha}(p \restr \alpha) \forces \iota^{\Phi, \Psi}_{\alpha}(p(\alpha	)) \in \dot{\QQ}^\Psi_\alpha.
	$$
	Since $\Phi \subseteq \Psi$, by definition of $\dot{\QQ}^\Psi_\alpha$ it suffices to prove that if $\theta \in \dom(\Phi)$ and $\dot{f}$ is a $\PP^\Phi_\alpha$-name with
	$$
		p \restr \alpha \forces \dot{f} \notin \bigcup_{T \in \dot{\T}^{\Phi, \theta}_\alpha} [T],
	$$
	then also
	$$
		\iota^{\Phi, \Psi}_{\alpha}(p \restr \alpha) \forces \iota^{\Phi, \Psi}_{\alpha}(\dot{f}) \notin \bigcup_{T \in \dot{\T}^{\Psi, \theta}_\alpha} [T].
	$$
	By induction assumption of $(\text{E})$ we may distinguish the following three different types of trees in $ \dot{\T}^{\Psi, \theta}_\alpha$. First, let $i \in \Phi(\theta)$ and $\gamma \in \Gamma$.
	By assumption on $\dot{f}$ we have
	$$
		p \restr \alpha \forces \dot{f} \neq \iota^{\Phi}_{1, \alpha}(\pi^{\Phi, \theta}_{1,i}(\gamma)(\dot{c}^{\Phi, \theta}_i)),
	$$
	so that
	$$
		\iota^{\Phi, \Psi}_{\alpha}(p \restr \alpha) \forces \iota^{\Phi, \Psi}_{\alpha}(\dot{f}) \neq \iota^{\Phi, \Psi}_{\alpha}(\iota^{\Phi}_{1, \alpha}(\pi^{\Phi, \theta}_{1,i}(\gamma)(\dot{c}^{\Phi, \theta}_i))).
	$$
	Secondly, let $\beta < \alpha$ and $n < \omega$.
	By assumption on $\dot{f}$ we have
	$$
		p \restr \alpha \forces \dot{f} \notin [\iota^\Phi_{\beta, \alpha}(\dot{T}^{\Phi, \theta}_{\beta, n})].
	$$
	Thus, by induction assumption of $(\text{A})$ and $(\text{C})$ we get
	$$
		\iota^{\Phi, \Psi}_{\alpha}(p \restr \alpha) \forces \iota^{\Phi, \Psi}_{\alpha}(\dot{f}) \notin [\iota^{\Phi, \Psi}_{\alpha}(\iota^\Phi_{\beta, \alpha}(\dot{T}^{\Phi, \theta}_{\beta, n}))] =
		[\iota^\Psi_{\beta, \alpha}(\iota^{\Phi, \Psi}_{\beta}(\dot{T}^{\Phi, \theta}_{\beta, n}))] = [\iota^{\Psi}_{\beta, \alpha}(\dot{T}^{\Psi, \theta}_{\beta, n})].
	$$
	Finally, for  $i \in \Psi(\theta) \setminus \Phi(\theta)$ by induction assumption of $(\text{G})$ we get
	$$
		\iota^{\Phi, \Psi}_{\alpha}(p \restr \alpha) \forces  \iota^{\Phi, \Psi}_{\alpha}(\dot{f}) \neq \iota^\Psi_{1, \alpha}(\pi^{\Psi, \theta}_{1, i}(\gamma)(\dot{c}^{\Psi, \theta}_i)).
	$$
	
	Next, given $p \in \PP_{\alpha + 1}^\Psi$ we have to find a reduction $q \in \PP_{\alpha + 1}^\Phi$ with respect to the embedding $\iota^{\Phi,\Psi}_{\alpha + 1}$.
	By Lemma~\ref{LEM_Ds_Are_Dense} we may assume $p \in D^\Psi_{\alpha + 1}$.
	By induction, pick a reduction $q \in \PP^\Phi_\alpha$ of $p \restr \alpha \in D^\Psi_\alpha$ with respect to $\iota^{\Phi, \Psi}_\alpha$.
	Remember, that for every $\theta \in \Theta^p_\alpha$, $i \in I^p_{\alpha, \theta}$ and $\dot{f} \in F^p_{\alpha, \theta, i}$ we have
	$$
		p \restr \alpha \forces \dot{f} \notin \bigcup_{T \in \dot{\T}^{\Psi, \theta}_\alpha}[T].
	$$
	Thus, we will need to find a reduction $\dot{g}$ of $\dot{f}$ which satisfies
	$$
		q \forces \dot{g} \notin \bigcup_{T \in \dot{\T}^{\Phi, \theta}_\alpha}[T].
	$$
	Note that the standard canonical projection of a real (cf. \cite{FischerTornquist_2015}) need not satisfy this requirement.
	Thus, we introduce the following technical notions.
	For technical reasons, we need to enumerate the finite set $\bigcup\set{\simpleset{\theta} \times \simpleset{i} \times F^p_{\alpha, \theta, i}}{\theta \in \Theta^p_\alpha, i \in I^p_{\alpha, \theta}}$ by $\seq{(\theta_k, i_k, \dot{f}_k)}{k \in K}$.
	In particular, we have $\theta_\bullet:K \to \Theta^p_\alpha$.
	For every $\theta \in \Theta^p_\alpha$ by assumption on $\Phi$ the family $\dot{\T}^{\Phi, \theta}_\alpha$ is countable, so we may enumerate it as $\seq{\dot{S}^\theta_n}{n < \omega}$.
	Next, we will need the 	following refinement of the definition of a nice name for a real below $p$ in Definition~\ref{DEF_Nice_Name_Real}.
	
	\begin{definition} \label{DEF_Nice_Name_Set_Of_Reals}
		Let $\PP$ be a forcing, $p \in \PP$ and $K$ a finite set.
		A nice $\PP$-name for $K$-many reals below $p$ is a sequence $\seq{(\A_n, K_n)}{n < \omega}$ such that
		\begin{enumerate}[$\bullet$]
			\item for all $n < \omega$ the set $\A_n$ is a maximal antichain below $p$ and $K_n:K \times \A_n \to 2^{>n}$,
			\item for all $n < m$ the antichain $\A_m$ refines $\A_n$, i.e.\ every $b \in \A_m$ there is $a \in \A_n$ with $b \extends a$,
			\item for all $n < m$, $k \in K$, $a \in \A_n$ and $b \in \A_m$ with $b \extends a$ we have $K_n(k, a) \trianglelefteq K_m(k,  b)$.
		\end{enumerate}
		Further, we write $\name(\seq{(\A_n, K_n)}{n < \omega})$ for the canonical $\PP$-name of $\seq{(\A_n, K_n)}{n < \omega}$, i.e.
		$$
		\name(\seq{(\A_n, K_n)}{n < \omega}) := \set{(a,((k, n),K_{n}(k, a)(n)))}{n < \omega \text{ and } a \in \A_n} \in {^{K \times \omega}2}.
		$$
	\end{definition}

	\begin{remark}
		Notice that if $\seq{(\A_n, K_n)}{n < \omega}$ is a nice $\PP$-name for $K$-many reals below $p$, then for every $k \in K$ the sequence $\seq{(\A_n, K_n(k))}{n < \omega}$ is a nice $\PP$-name for a real below $p$ with
		$$
			\name(\seq{(\A_n, K_n(k))}{n < \omega}) = \name(\seq{(\A_n, K_n)}{n < \omega}) \restr (\simpleset{k} \times \omega).
		$$
		However, $\seq{(\A_n, K_n)}{n < \omega}$ is more than just the product of $K$-many nice $\PP$-names for reals below $p$ as all antichains have to coincide.
	\end{remark}
	
	With respect to the fixed $p \in D^\Psi_{\alpha + 1}$, $\theta_{\bullet}:K \to \Theta^p_\alpha$ and sequence $\seq{\dot{S}^\theta_n}{n < \omega}$ above, we define the following notion:
	
	\begin{definition}\label{DEF_Nice_Name_WRT_Trees}
		Let $\seq{(\A_n,K_n)}{n < \omega}$ be a nice $\PP^\Psi_\alpha$-name for $K$-many reals below $p \restr \alpha$.
		Then, we say $\seq{(\A_n,K_n)}{n < \omega}$ is a nice $\PP$-name for $K$-many reals below $p \restr \alpha$ with respect to $\theta_\bullet$ and $\seq{\seq{\dot{S}^\theta_n}{n < \omega}}{\theta \in \Theta^p_\alpha}$ iff for all $n < \omega$, $k \in K$ and $a \in \A_n$ we have
		$$
			a \forces K_n(k, a) \notin \iota^{\Phi, \Psi}_\alpha(\dot{S}^{\theta_k}_n).
		$$
	\end{definition}

	First, we argue that there is such a nice $\PP^\Psi_\alpha$-name $\seq{(\A_n,K_n)}{n < \omega}$ of $K$-many reals below $p \restr \alpha$ with respect to $\theta_\bullet$ and $\seq{\seq{\dot{S}^\Phi_n}{n < \omega}}{\theta \in \Theta^p_\alpha}$, so that for every $k \in K$ we have
	$$
		p \restr \alpha \forces \dot{f}_k = \name(\seq{(\A_n, K_n(k))}{n < \omega}).
	$$
	\begin{proof}
		We construct the nice name by recursion on $n$.
		Set $\A_{-1} := \simpleset{p \restr \alpha}$.
		Now, assume $\A_n$ is defined.
		For every $a \in \A_n$ choose a maximal antichain $\B(a)$ below $a$ such that for every $b \in \B(a)$ and $k \in K$ there is $K_{n + 1}(k, a) \in 2^{>n}$ with $K_{n}(k,b) \trianglelefteq K_{n + 1}(k,b)$ if $n \neq -1$ and such that
		$$
			b \forces K_{n + 1}(k, b) \trianglelefteq \dot{f}_k \text{ and } K_{n + 1}(k, b) \notin \iota^{\Phi, \Psi}_\alpha(\dot{S}^{\theta_k}_{n + 1}).
		$$
		This is possible as $b \extends a$, $K$ is finite and by assumption on $\dot{f}_k$ we have for every $k \in K$
		$$
			p \restr \alpha \forces \dot{f}_k \notin [\iota^{\Phi, \Psi}_\alpha(\dot{S}^{\theta_k}_{n + 1})].
		$$
		Finally, set $\A_{n + 1} := \bigcup_{a \in \A_n} \B(a)$.
		Clearly, $\seq{(\A_n, K_n)}{n < \omega}$ then has the desired properties.
	\end{proof}

	In \cite{FischerTornquist_2015}[Lemma~3.8] the existence of a reduction of a nice name for a real is proven.
	We will need an analogous result for nice names of $K$-many reals:
	
	\begin{lemma}\label{LEM_Reduction_Of_Nice_Name}
		Let $\QQ$ be a complete suborder of $\PP$, $p \in \PP$, $q \in \QQ$ a reduction of $p$ and assume that $\set{(\A_n,K_n)}{n < \omega}$ a nice $\PP$-name for $K$-many reals below $p$.
		Then, there is a nice $\QQ$-name $\set{(\B_n,L_n)}{n < \omega}$ for $K$-many reals below $q$ such that for all $n < \omega$ and $b \in \B_n$ there is an $a \in \A_n$ such that $b$ is a reduction of $a$ and $K_n(k, a) = L_n(k, b)$ for all $k \in K$.
	\end{lemma}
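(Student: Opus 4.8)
The plan is to construct $\seq{(\B_n, L_n)}{n < \omega}$ by recursion on $n$, maintaining the invariant that $\B_n$ is a maximal antichain of $\QQ$ below $q$ and that to each $b \in \B_n$ there has been assigned some $a(b) \in \A_n$ for which $b$ is a reduction of $a(b)$; one then simply sets $L_n(k,b) := K_n(k, a(b))$ for all $k \in K$, which lies in $2^{>n}$ as required. If at each step one moreover arranges $a(b') \extends a(b)$ whenever $b' \in \B_{n+1}$ lies below $b \in \B_n$, then $\seq{\B_n}{n<\omega}$ is refining and the coherence $L_n(k,b) = K_n(k,a(b)) \trianglelefteq K_{n+1}(k,a(b')) = L_{n+1}(k,b')$ is inherited from the coherence clause for $\seq{(\A_n,K_n)}{n<\omega}$ (applied to $a(b') \extends a(b)$, and to arbitrary $n<m$ by transitivity of $\trianglelefteq$). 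Hence the output is a nice $\QQ$-name for $K$-many reals below $q$, and the asserted property holds by fiat with the witness $a = a(b)$.

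The substantive step is a density claim. Fix $b \in \B_n$ with assignment $a(b) \in \A_n$ and consider
$$
E_b := \set{b' \extends b \text{ in } \QQ}{\exists\, a' \in \A_{n+1} \text{ with } a' \extends a(b) \text{ and } b' \text{ a reduction of } a'}.
$$
I claim $E_b$ is dense below $b$ in $\QQ$. Given $b^* \extends b$ in $\QQ$: since $b$ reduces $a(b)$, $b^*$ is compatible with $a(b)$ in $\PP$, so pick $r \extends b^*, a(b)$. As $\A_{n+1}$ refines $\A_n$ and $a(b) \in \A_n$, the set $\set{a' \in \A_{n+1}}{a' \extends a(b)}$ is a maximal antichain below $a(b)$, so some such $a'$ is compatible with $r$; pick $r' \extends r, a'$, so $r' \extends b^*$ and $r' \extends a'$. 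Let $s \in \QQ$ be a reduction of $r'$ (reductions exist because maximal antichains of $\QQ$ remain maximal in $\PP$). Every $b'' \extends s$ in $\QQ$ is compatible with $r'$, hence with the weaker condition $b^*$; taking $b'' = s$ shows $s$ is compatible with $b^*$ in $\PP$, hence in $\QQ$, since $\QQ$ is a complete suborder of $\PP$. Pick $b' \extends s, b^*$ in $\QQ$. Then $b' \extends b^*$, and since $b' \extends s$ and $s$ reduces $r' \extends a'$, also $b'$ reduces $a'$; thus $b' \in E_b$, proving density.

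Granting the claim, the recursion is routine. One starts with $\B_{-1} := \simpleset{q}$ and $a(q) := p$ (and $q$ is a reduction of $p$ by hypothesis); at the step producing $\B_0$ the same argument as above applies with $\A_0$ in place of $\A_{n+1}$, noting every $a' \in \A_0$ satisfies $a' \extends p = a(q)$. At each step, for each $b$ in the current antichain choose a maximal antichain $\B^b$ of $\QQ$ below $b$ with $\B^b \subseteq E_b$ (possible by density), fix for each $b' \in \B^b$ an $a(b') \in \A_{n+1}$ with $a(b') \extends a(b)$ and $b'$ a reduction of $a(b')$, and set $\B_{n+1} := \bigcup_b \B^b$ and $L_{n+1}(k,b') := K_{n+1}(k,a(b'))$; since the $\B^b$ are maximal antichains below the respective $b$ and these $b$ form a maximal antichain below $q$, $\B_{n+1}$ is again maximal below $q$ and refines $\B_n$. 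There are no limit stages. The main obstacle is the density claim; everything else is bookkeeping along the chains $a(b') \extends a(b)$, exactly as in the single-real case \cite{FischerTornquist_2015}[Lemma~3.8].
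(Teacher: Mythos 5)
Your proof is correct and is essentially the argument the paper intends: the paper itself gives no details, deferring to Lemma~3.8 of Fischer--T\"ornquist, and your recursion-with-density construction (maximal antichains $\B^b \subseteq E_b$ below each $b$, with coherent assignments $a(b') \extends a(b)$ and $L_n(k,b) := K_n(k,a(b))$) is exactly that argument adapted to $K$-many reals. The density claim and the use of the complete-suborder properties (existence of reductions, preservation of compatibility) are all handled correctly.
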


	\begin{proof}
		Exactly the same proof as for Lemma~3.8 in \cite{FischerTornquist_2015}.
	\end{proof}

	Analogously to \cite{FischerTornquist_2015}, we will call the nice $\PP^\Phi_\alpha$-name $\set{(\B_n,L_n)}{n < \omega}$ a canonical projection of the nice $\PP^\Psi_\alpha$-name $\set{(\A_n,K_n)}{n < \omega}$ below $q$.
	
	\begin{lemma}\label{LEM_Projection_Of_Nice_WRT_Sequence}
		Assume $\seq{(\A_n,K_n)}{n < \omega}$ is a nice $\PP^\Psi_\alpha$-name for $K$-many reals below $p \restr \alpha$ with respect to $\theta_\bullet$ and $\seq{\seq{\dot{S}_n^\theta}{n < \omega}}{\theta \in \Theta^p_\alpha}$.
		Further, assume that $\seq{(\B_n,L_n)}{n < \omega}$ is a canonical projection of $\set{(\A_n,K_n)}{n < \omega}$ below $q$.
		Then, for every $k \in K$
		$$
			q \forces \name(\set{(\B_n,L_n(k))}{n < \omega}) \notin \bigcup_{n < \omega}[\dot{S}^{\theta_k}_n].
		$$
	\end{lemma}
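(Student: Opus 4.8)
The plan is to argue by contradiction, fixing $k \in K$ and supposing that some $q' \le q$ in $\PP^\Phi_\alpha$ forces $\dot g_k := \name(\set{(\B_n, L_n(k))}{n < \omega}) \in [\dot S^{\theta_k}_n]$ for some fixed $n < \omega$; showing this is impossible for every $n$ gives the claim, since $\seq{\dot S^{\theta_k}_n}{n<\omega}$ enumerates $\dot{\T}^{\Phi,\theta_k}_\alpha$. The crucial structural point — and the only place the ``with respect to trees'' hypothesis is used — is that the contradiction must be extracted at precisely level $n$ of the nice name, matching the diagonal alignment built into Definition~\ref{DEF_Nice_Name_WRT_Trees}.

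First I would recall the standard fact about nice names for reals (implicit in Definition~\ref{DEF_Nice_Name_Real} and Remark~\ref{REM_Nice_Name_Real}): for any $b \in \B_n$ one has $b \forces L_n(k,b) \trianglelefteq \dot g_k$, since along the generic branch through the antichains $\B_m$ the canonical name $\dot g_k$ end-extends each $L_m(k,\cdot)$ by the coherence conditions. Now, since $\B_n$ is a maximal antichain below $q$ and $q' \le q$, pick $q'' \le q'$ with $q'' \le b$ for some $b \in \B_n$, and by Lemma~\ref{LEM_Reduction_Of_Nice_Name} choose $a \in \A_n$ with $K_n(k,a) = L_n(k,b)$ such that $b$ is a reduction of $a$ with respect to $\iota^{\Phi,\Psi}_\alpha$. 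Then $q'' \le b$ forces $L_n(k,b) \trianglelefteq \dot g_k$, while $q'' \le q'$ forces every initial segment of $\dot g_k$ into $\dot S^{\theta_k}_n$ (as $\dot g_k \in [\dot S^{\theta_k}_n]$); combining these, $q'' \forces_{\PP^\Phi_\alpha} K_n(k,a) \in \dot S^{\theta_k}_n$ (this uses only that $L_n(k,b) = K_n(k,a)$ has length $> n$, so the two statements overlap). Pushing this through the complete embedding $\iota^{\Phi,\Psi}_\alpha$ gives $\iota^{\Phi,\Psi}_\alpha(q'') \forces_{\PP^\Psi_\alpha} K_n(k,a) \in \iota^{\Phi,\Psi}_\alpha(\dot S^{\theta_k}_n)$, since a complete embedding preserves forced statements about names and a fixed ground-model string. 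On the other hand, Definition~\ref{DEF_Nice_Name_WRT_Trees} gives $a \forces_{\PP^\Psi_\alpha} K_n(k,a) \notin \iota^{\Phi,\Psi}_\alpha(\dot S^{\theta_k}_n)$. Because $b$ is a reduction of $a$ and $q'' \le b$, the condition $\iota^{\Phi,\Psi}_\alpha(q'')$ is compatible with $a$ in $\PP^\Psi_\alpha$, and any common extension forces both membership and non-membership of $K_n(k,a)$ in $\iota^{\Phi,\Psi}_\alpha(\dot S^{\theta_k}_n)$ — a contradiction.

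I expect the only genuinely delicate step to be the realization that one is forced to work at level $n$ (rather than some larger level) when ruling out $[\dot S^{\theta_k}_n]$: this is exactly why the nice name had to be constructed diagonally, with the $n$-th level of the name avoiding $\iota^{\Phi,\Psi}_\alpha(\dot S^{\theta_k}_n)$ for the $n$-th tree in the enumeration. Everything else is routine bookkeeping about reductions, the maximality of the antichains $\B_n$ below $q$, and the fact that $\iota^{\Phi,\Psi}_\alpha$ being a complete embedding lets forced statements about $\PP^\Phi_\alpha$-names pass to their images; in particular $\iota^{\Phi,\Psi}_\alpha(\dot S^{\theta_k}_n)$ is literally the name appearing in Definition~\ref{DEF_Nice_Name_WRT_Trees}, so no extra identification is needed.
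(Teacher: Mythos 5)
Your proposal is correct and follows essentially the same argument as the paper: assume a condition below $q$ forces membership in $[\dot S^{\theta_k}_n]$, pass to a common extension with some $b\in\B_n$, use the canonical projection to find $a\in\A_n$ with $K_n(k,a)=L_n(k,b)$ and $b$ a reduction of $a$, push the forced membership through $\iota^{\Phi,\Psi}_\alpha$, and contradict the defining clause of Definition~\ref{DEF_Nice_Name_WRT_Trees} via the compatibility of $\iota^{\Phi,\Psi}_\alpha(q'')$ with $a$. Your $q'$ and $q''$ play exactly the roles of the paper's $r_0$ and $r_1$.
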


	\begin{proof}
		Assume not, so choose $k \in K$, $n < \omega$ and $r_0 \extends q$ such that
		$$
			r_0 \forces \name(\set{(\B_n,L_n(k))}{n < \omega}) \in [\dot{S}^{\theta_k}_n].
		$$
		Choose $b \in \B_n$ such that $b \compat r_0$.
		Choose $r_1 \in \PP^{\Phi}_\alpha$ with $r_1 \extends b, r_0$.
		Since $\set{(\B_n,L_n)}{n < \omega}$ is a canonical projection below $q$ of $\set{(\A_n,K_n)}{n < \omega}$ choose $a \in \A_n$ such that $b$ is a reduction of $a$ and $K_n(k, a) = L_n(k, b)$.
		Thus, $\iota^{\Phi, \Psi}_\alpha(r_1) \compat a$.
		Then, by assumption we have
		$$
			r_1 \forces L_n(k, b) \in \dot{S}^{\theta_k}_n,
		$$
		which implies
		$$
			\iota^{\Phi, \Psi}_\alpha(r_1) \forces K_n(k, a) = L_n(k, b) \in \iota^{\Phi, \Psi}_\alpha(\dot{S}^{\theta_k}_n).
		$$
		On the other hand, since $\seq{(\A_n,f_n)}{n < \omega}$ is a nice name with respect to $\theta_\bullet$ and  $\seq{\dot{S}_n}{n < \omega}$
		$$
			a \forces K_n(k, a) \notin \iota^{\Phi, \Psi}_\alpha(\dot{S}^{\theta_k}_n)
		$$
		contradicting $\iota^{\Phi, \Psi}_\alpha(r_1) \compat a$.
	\end{proof}
	
	Finally, we define a reduction of $p$ as follows:
	By the previous discussion choose a nice $\PP^\Psi$-name $\seq{(\A_n,K_n)}{n < \omega}$ of $K$-many reals below $p \restr \alpha$ with respect to $\theta_\bullet$ and $\seq{\seq{\dot{S}^\theta_n}{n < \omega}}{\theta \in \Theta^p_\alpha}$, so that for every $k \in K$ we have
	$$
	p \restr \alpha \forces \dot{f}_k = \name(\seq{(\A_n, K_n(k))}{n < \omega}).
	$$
	By Lemma~\ref{LEM_Reduction_Of_Nice_Name} choose a canonical projection $\seq{(\B_n, L_n)}{n < \omega}$ of $\seq{(\A_n, K_n)}{n < \omega}$.
	Now, for $\theta \in \Theta^p_\alpha$ and $i \in I^p_{\alpha, \theta}$ we define $G_{\alpha, \theta, i}$ as
	$$
		\set{\name(\seq{(\B_n, L_n(k))}{n < \omega})}{k \in K \text{ with } \theta_k = \theta \text{ and } i_k = i}.
	$$
	Let $\dot{q}_\alpha$ be the canonical name for the condition in $\dot{\QQ}^\Phi_\alpha = \prod_{\theta \in \dom(\Phi)}\TT(\dot{\T}^{\Phi, \theta}_\alpha)$
	with $\supp(\dot{q}_\alpha) = \Theta^p_{\alpha}$, for every $\theta \in \Theta_{\alpha}$
	with $\supp(\dot{q}_\alpha(\theta)) = I^p_{\alpha, \theta}$ and for every $i \in I_{\alpha, \theta}$ we have $\dot{q}_\alpha(\theta)(i) = (s^p_{\alpha, \theta, i}, G_{\alpha, \theta, i})$.
	Since $\seq{\dot{S}_n^{\theta_k}}{n < \omega}$ enumerates $\dot{\T}^{\Phi, \theta_k}_{\alpha}$ by Lemma~\ref{LEM_Projection_Of_Nice_WRT_Sequence} for every $k \in K$ we have
	$$
		q \forces  \name(\seq{(\B_n, L_n(k))}{n < \omega}) \notin \bigcup_{T \in \dot{\T}^{\Phi, \theta_k}_\alpha}[T].
	$$
	Hence, we obtain
	$$
		q \forces \dot{q}_\alpha \in \dot{\QQ}^\Phi_\alpha = \prod_{\theta \in \dom(\Phi)}\TT(\dot{\T}^{\Phi, \theta}_\alpha),
	$$
	i.e.\ $q \concat \dot{q}_\alpha \in \PP^{\Phi}_{\alpha + 1}$.
	It remains to show that $q \concat \dot{q}_\alpha$ is indeed a reduction of $p$ with respect to $\iota^{\Phi, \Psi}_{\alpha + 1}$.
	
	\begin{proof}
		Let $r \extends q \concat \dot{q}_\alpha$.
		We need to show that $\iota^{\Phi,\Psi}_{\alpha + 1}(r) \compat p$.
		By extending $r$ we may assume $r \in D^{\Phi}_{\alpha + 1}$.
		Further, $r \restr \alpha \extends q$.
		Since $r \restr \alpha \forces r(\alpha) \leq \dot{q}_\alpha$ we have
		\begin{enumerate}[$\bullet$]
			\item $\Theta^p_\alpha \subseteq \Theta^r_\alpha$,
			\item $I^p_{\alpha, \theta} \subseteq I^r_{\alpha, \theta}$ for every $\theta \in \Theta^p_\alpha$,
			\item $n^p_{\alpha, \theta, i} \leq n^r_{\alpha, \theta, i}$ for every $\theta \in \Theta^p_\alpha$ and $i \in I^p_{\alpha, \theta}$,
			\item $s^p_{\alpha, \theta, i} \trianglelefteq s^r_{\alpha, \theta, i}$ for every $\theta \in \Theta^p_\alpha$ and $i \in I^p_{\alpha, \theta}$,
			\item For every $k \in K$ there is $\dot{h}_k \in F^r_{\alpha, \theta_k, i_k}$ such that
			$$
				r \restr \alpha \forces \dot{h}_k = \name(\seq{(\B_n, L_n(k))}{n < \omega}).
			$$
		\end{enumerate}
		Let $N := \max\set{n^r_{\alpha, \theta, i}}{\theta \in \Theta^p_\alpha, i \in I^p_{\alpha, \theta}}$.
		Since $r \restr \alpha \extends q$ and $\B_N$ is a maximal antichain below $q$ choose $b \in \B_N$ and $\bar{r} \in \PP^{\Phi}_\alpha$ with $\bar{r} \extends r \restr \alpha, b$.
		As $\seq{(\B_n, L_n)}{n < \omega}$ is a canonical projection of $\seq{(\A_n, K_n)}{n < \omega}$ choose $a \in \A_N$, so that $b$ is a reduction of $a$ and for all $k \in K$ we have $K_N(k,a) = L_N(k,b)$.
		Hence, $\iota^{\Phi,\Psi}_\alpha(\bar{r}) \compat a$, so choose $\bar{p} \in \PP^\Psi_\alpha$ with $\bar{p} \extends \iota^{\Phi,\Psi}_\alpha(\bar{r}), a$.
		We define
		\begin{enumerate}[$\bullet$]
			\item $\Theta^{\bar{p}}_\alpha := \Theta^{r}_\alpha$,
			\item $I^{\bar{p}}_{\alpha, \theta} := I^r_{\alpha, \theta}$ for every $\theta \in \Theta^{\bar{p}}_\alpha$,
			\item $n^{\bar{p}}_{\alpha, \theta, i} := n^r_{\alpha, \theta, i}$ and $s^{\bar{p}}_{\alpha, \theta, i} := s^r_{\alpha, \theta, i}$ for every $\theta \in \Theta^{\bar{p}}_\alpha$ and $i \in I^{\bar{p}}_{\alpha, \theta}$,
			\item $F^{\bar{p}}_{\alpha, \theta, i} := F^p_{\alpha, \theta,i} \cup \iota^{\Phi,\Psi}_{\alpha}(F^r_{\alpha, \theta, i})$ for every $\theta \in \Theta^{\bar{p}}_\alpha$ and $i \in I^{\bar{p}}_{\alpha, \theta}$,
		\end{enumerate}
		where every undefined set is to be treated as the empty set.
		Let $\dot{\bar{p}}_\alpha$ be the canonical name for the condition in $\dot{\QQ}^\Phi_\alpha = \prod_{\theta \in \dom(\Phi)}\TT(\dot{\T}^{\Phi, \theta}_\alpha)$
		with $\supp(\dot{\bar{p}}_\alpha) = \Theta^{\bar{p}}_{\alpha}$ and for every $\theta \in \Theta^{\bar{p}}_{\alpha}$
		with $\supp(\dot{\bar{p}}_\alpha(\theta)) = I^{\bar{p}}_{\alpha, \theta}$ and for every $i \in I^{\bar{p}}_{\alpha, \theta}$ we have $\dot{\bar{p}}_\alpha(\theta)(i) = (s^{\bar{p}}_{\alpha, \theta, i}, F^{\bar{p}}_{\alpha, \theta, i})$.
		By definition of $\bar{p} \concat \dot{\bar{p}}_\alpha$ we have $\bar{p} \concat \dot{\bar{p}}_\alpha \extends p, \iota^{\Phi, \Psi}_{\alpha + 1}(r)$, so we finish the proof by showing that $\bar{p} \concat \dot{\bar{p}}_\alpha \in \PP^{\Psi}_\alpha$.
		By definition of $F^{\bar{p}}_{\alpha, \theta, i}$ we distinguish the following two cases.
		First, let $k \in K$, by Remark~\ref{REM_Construct_Condition_From_Parameters} we have to prove
		$$
			\bar{p} \forces \dot{f}_k \restr n^{\bar{p}}_{\alpha, \theta_k, i_k} \in s^{\bar{p}}_{\alpha, \theta_k, i_k} \text{ and } \dot{f} \notin \bigcup_{T \in \dot{\T}^{\Psi, \theta_k}_\alpha} [T].
		$$
		Since $p \in D^{\Psi}_{\alpha + 1}$ we have
		$$
			p \restr \alpha \forces \dot{f} \notin \bigcup_{T \in \dot{\T}^{\Psi, \theta_k}_\alpha} [T],
		$$
		so also $\bar{p} \extends a \extends p \restr \alpha$ forces this.
		For the other property, choose $\dot{h}_k \in F^r_{\alpha, \theta_k, i_k}$ such that
		$$
			r \restr \alpha \forces \dot{h}_k = \name(\seq{(\B_n, L_n(k))}{n < \omega}).
		$$
		Since $r \in D^{\Phi}_{\alpha + 1}$ we have
		$$
			r \restr \alpha \forces \dot{h}_k \restr n^{r}_{\alpha, \theta_k, i_k} \in s^{r}_{\alpha, \theta_k, i_k}.
		$$
		Furthermore, as $N \geq n^{r}_{\alpha, \theta_k,i_k}$ and $b \in \B_N$ we have
		$$
			b \forces \name(\seq{(\B_n, L_n(k))}{n < \omega}) \restr n^{r}_{\alpha, \theta_k,i_k} = \restr L_N(k,b) \restr n^{r}_{\alpha, \theta_k,i_k}.
		$$
		Hence, $\bar{r} \extends r \restr \alpha, b$ implies that
		$$
			\bar{r} \forces L_N(k,b) \restr n^{r}_{\alpha, \theta_k,i_k} = \dot{h}_k \restr n^{r}_{\alpha, \theta_k,i_k} \in s^r_{\alpha, \theta_k, i_k}.
		$$
		Thus, we obtain $L_N(k,a) \restr n^{r}_{\alpha, \theta_k,i_k} \in s^{r}_{\alpha, \theta_k, i_k}$.
		But $n^{\bar{p}}_{\alpha, \theta_k, i_k} = n^{r}_{\alpha, \theta_k, i_k}$, $s^{\bar{p}}_{\alpha, \theta_k, i_k} = s^{r}_{\alpha, \theta_k, i_k}$ and by choice of $b$ we have $L_N(k,b) = K_N(k,a)$, so that
		$$
			K_N(k,a) \restr n^{\bar{p}}_{\alpha, \theta_k,i_k} \in s^{\bar{p}}_{\alpha, \theta_k, i_k}.
		$$
		Finally,
		$$
			a \forces \dot{f}_k = \name(\seq{(\B_n, L_n(k))}{n < \omega})
		$$
		and $\bar{p} \extends a$ yield the desired
		$$
			\bar{p} \forces \dot{f}_k \restr n^{\bar{p}}_{\alpha, \theta_k,i_k} = 	K_N(k,a) \restr n^{\bar{p}}_{\alpha, \theta_k,i_k} \in s^{\bar{p}}_{\alpha, \theta_k, i_k}.
		$$
		Secondly, let $\theta \in \Theta^r_\alpha$, $i \in I^r_{\alpha, \theta}$ and $\dot{h} \in F^r_{\alpha, \theta, i}$.
		Then, $\bar{r} \extends r \restr \alpha$ implies
		$$
			\bar{r} \forces \dot{h} \restr n^{r}_{\alpha, \theta, i} \in s^{r}_{\alpha, \theta, i} \text{ and } \dot{h} \notin \bigcup_{T \in \dot{\T}^{\Phi, \theta}_\alpha} [T].
		$$
		As before, we obtain
		$$
			\iota^{\Phi ,\Psi}_{\alpha}(\bar{r}) \forces \iota^{\Phi, \Psi}_{\alpha}(\dot{h}) \restr n^{r}_{\alpha, \theta, i} \in s^{r}_{\alpha, \theta, i} \text{ and } \iota^{\Phi, \Psi}_{\alpha}(\dot{h}) \notin \bigcup_{T \in \dot{\T}^{\Psi, \theta}_\alpha} [T].
		$$
		Hence, $\bar{p} \extends \iota^{\Phi, \Psi}_{\alpha}(\bar{r})$ implies that
		$$
		\bar{p} \forces \iota^{\Phi, \Psi}_{\alpha}(\dot{h}) \restr n^{\bar{p}}_{\alpha, \theta, i} \in s^{\bar{p}}_{\alpha, \theta, i} \text{ and } \iota^{\Phi, \Psi}_{\alpha}(\dot{h}) \notin \bigcup_{T \in \dot{\T}^{\Psi, \theta}_\alpha} [T].
		$$
		Thus, we finished proving $\bar{p} \concat \dot{\bar{p}}_\alpha \in \PP^{\Psi}_\alpha$.
	\end{proof}
		
	To complete the induction, it remains to prove $(\text{A})$ to $(\text{F})$:
	\begin{enumerate}[(A)]
		\item By induction on (A) it suffices to verify the following.
		Let $p \in \PP^{\Phi}_\alpha$.
		Then, we compute
		\begin{align*}
			\iota^{\Phi, \Psi}_{\alpha + 1}(\iota^{\Phi}_{\alpha, \alpha + 1}(p))
			&= \iota^{\Phi, \Psi}_{\alpha + 1}(p \concat \mathds{1})\\
			&= \iota^{\Phi, \Psi}_{\alpha}(p) \concat \mathds{1}\\
			&= \iota^{\Psi}_{\alpha, \alpha + 1}(\iota^{\Phi, \Psi}_{\alpha}(p)).
		\end{align*}
		\item Let $\theta \in \dom(\Phi)$, $i \in \Phi(\theta)$, $\gamma \in \Gamma$ and $p \in \PP^\Phi_{\alpha + 1}$.
		Then, we compute
		\begin{align*}
			\pi^{\Psi, \theta}_{\alpha + 1, i}(\gamma)(\iota^{\Phi,\Psi}_{\alpha + 1}(p))
			&= \pi^{\Psi, \theta}_{\alpha + 1, i}(\gamma)(\iota^{\Phi, \Psi}_{\alpha}(p \restr \alpha) \concat \iota^{\Phi, \Psi}_{\alpha}(p(\alpha))) && (\text{definition of } \iota^{\Phi,\Psi}_{\alpha + 1})\\
			&= \pi^{\Psi, \theta}_{\alpha, i}(\gamma)(\iota^{\Phi, \Psi}_{\alpha}(p \restr \alpha)) \concat \pi^{\Psi, \theta}_{\alpha, i}(\gamma)(\iota^{\Phi, \Psi}_{\alpha}(p(\alpha))) && (\text{Definition~\ref{DEF_Canonical_Extension}})\\
			&= \iota^{\Phi, \Psi}_{\alpha}(\pi^{\Phi, \theta}_{\alpha, i}(\gamma)(p \restr \alpha)) \concat \iota^{\Phi, \Psi}_{\alpha}(\pi^{\Phi, \theta}_{\alpha, i}(\gamma)(p(\alpha))) && (\text{(B) inductively})\\
			&= \iota^{\Phi, \Psi}_{\alpha + 1}(\pi^{\Phi, \theta}_{\alpha, i}(\gamma)(p \restr \alpha) \concat \pi^{\Phi, \theta}_{\alpha, i}(\gamma)(p(\alpha))) && (\text{definition of } \iota^{\Phi,\Psi}_{\alpha + 1})\\
			&= \iota^{\Phi, \Psi}_{\alpha + 1}(\pi^{\Phi, \theta}_{\alpha + 1, i}(\gamma)(p)) && (\text{Definition~\ref{DEF_Canonical_Extension}}).
		\end{align*}
		\item There is nothing to show.
		\item Let $\theta \in \dom(\Phi)$ and $n < \omega$.
		Then, $\iota^{\Phi, \Psi}_{\alpha + 1}(\dot{T}^{\Phi, \theta}_{\alpha, n}) = \dot{T}^{\Psi, \theta}_{\alpha, n}$
		immediately follows, since $\iota^{\Phi, \Psi}_\alpha$ preserves check-names.
		\item Let $\theta \in \dom(\Phi)$.
		Then, we compute using (E) inductively, (D), (A) and the fact that every name is chosen as a canonical name:
		\begin{align*}
			\dot{\T}_{\alpha + 1}^{\Psi, \theta}
			&= \iota^{\Psi}_{\alpha, \alpha + 1}(\dot{\T}^{\Psi, \theta}_{\alpha}) \cup \set{\dot{T}^{\Psi, \theta}_{\alpha, n}}{n \in \omega}\\
			&= \iota^{\Psi}_{\alpha, \alpha + 1}\left[\iota^{\Phi, \Psi}_{\alpha}(\dot{\T}^{\Phi, \theta}_\alpha) \cup \bigcup_{i \in \Psi(\theta) \setminus \Phi(\theta)}\iota^{\Psi}_{1, \alpha}(\T^{\Psi, \theta}_{i})\right] \cup \set{\iota^{\Phi, \Psi}_{\alpha + 1}(\dot{T}^{\Phi, \theta}_{\alpha, n})}{n \in \omega}\\
			&= \iota^{\Psi}_{\alpha, \alpha + 1}(\iota^{\Phi, \Psi}_{\alpha}(\dot{\T}^{\Phi, \theta}_\alpha)) \cup \bigcup_{i \in \Psi(\theta) \setminus \Phi(\theta)}\iota^{\Psi}_{\alpha, \alpha + 1}(\iota^{\Psi}_{1, \alpha}(\T^{\Psi, \theta}_{i})) \cup \iota^{\Phi, \Psi}_{\alpha + 1}(\set{\dot{T}^{\Phi, \theta}_{\alpha, n}}{n \in \omega})\\
			&= \iota^{\Phi, \Psi}_{\alpha + 1}(\iota^{\Phi}_{\alpha, \alpha + 1}(\dot{\T}^{\Phi, \theta}_\alpha)) \cup  \iota^{\Phi, \Psi}_{\alpha + 1}(\set{\dot{T}^{\Phi, \theta}_{\alpha, n}}{n \in \omega}) \cup \bigcup_{i \in \Psi(\theta) \setminus \Phi(\theta)}(\iota^{\Psi}_{1, \alpha + 1}(\T^{\Psi, \theta}_{i}))\\
			&= \iota^{\Phi, \Psi}_{\alpha + 1}\left[\iota^{\Phi}_{\alpha, \alpha + 1}(\dot{\T}^{\Phi, \theta}_\alpha) \cup \set{\dot{T}^{\Phi, \theta}_{\alpha, n}}{n \in \omega}\right] \cup \bigcup_{i \in \Psi(\theta) \setminus \Phi(\theta)}(\iota^{\Psi}_{1, \alpha + 1}(\T^{\Psi, \theta}_{i}))\\
			&= \iota^{\Phi, \Psi}_{\alpha + 1}(\dot{\T}_{\alpha + 1}^{\Phi, \theta}) \cup \bigcup_{i \in \Psi(\theta) \setminus \Phi(\theta)}(\iota^{\Psi}_{1, \alpha + 1}(\T^{\Psi, \theta}_{i})).
		\end{align*}
		\item Let $\theta \in \dom(\Phi)$, $i \in \Psi(\theta) \setminus \Phi(\theta)$, $\gamma \in \Gamma$ and $p \in \PP^{\Phi}_{\alpha + 1}$.
		Then, we compute
		\begin{align*}
			\pi^{\Psi, \theta}_{\alpha + 1, i}(\gamma)(\iota^{\Phi,\Psi}_{\alpha + 1}(p))
			&= \pi^{\Psi, \theta}_{\alpha + 1, i}(\gamma)(\iota^{\Phi, \Psi}_{\alpha}(p \restr \alpha) \concat \iota^{\Phi, \Psi}_{\alpha}(p(\alpha))) && (\text{definition of } \iota^{\Phi,\Psi}_{\alpha + 1})\\
			&= \pi^{\Psi, \theta}_{\alpha, i}(\gamma)(\iota^{\Phi, \Psi}_{\alpha}(p \restr \alpha)) \concat \pi^{\Psi, \theta}_{\alpha, i}(\gamma)(\iota^{\Phi, \Psi}_{\alpha}(p(\alpha))) && (\text{Definition~\ref{DEF_Canonical_Extension}})\\
			&= \iota^{\Phi, \Psi}_{\alpha}(p \restr \alpha) \concat \iota^{\Phi, \Psi}_{\alpha}(p(\alpha)) && \text{((F) inductively)}\\
			&= \iota^{\Phi,\Psi}_{\alpha + 1}(p) && (\text{definition of } \iota^{\Phi,\Psi}_{\alpha + 1}).
		\end{align*}
	\end{enumerate}
	This completes the induction and thus the proof of Theorem~\ref{THM_Complete_Subforcings}.\hfill\qedsymbol
	
	\section{Extending Isomorphisms through the iteration} \label{SEC_Extending_Isomorphisms}
	
	In Section~\ref{SEC_Extend_Automorphisms} we considered how to extend automorphisms of certain group actions through the iteration.
	Similarly, given bijections between the index sets of the Cohen reals of our iteration we will show how to extend these bijections to isomorphisms of the full iteration.
	These extension have a very categorical flavour, nevertheless we provide a self-contained presentation.
	
	\begin{definition}\label{DEF_Isomorphism}
		Let $\Phi, \Psi$ be $\Theta$-indexing functions.
		Then, we say $\mathbf{x} = (g, \set{h^{\theta}}{\theta \in \dom(\Phi)})$ is an isomorphism from $\Phi$ to $\Psi$ iff the following properties hold:
		\begin{enumerate}
			\item $g: \dom(\Phi) \to \dom(\Psi)$ is a bijection,
			\item for every $\theta \in \dom(\Phi)$ also $h^\theta:\Phi(\theta) \to \Psi(g(\theta))$ is a bijection.
		\end{enumerate}
	\end{definition}

	\begin{definition}\label{DEF_Identity_Isomorphism}
		Let $\Phi$ be a $\Theta$-indexing function.
		Then, we define the identity isomorphism from $\Phi$ to $\Phi$ by $\mathbf{1}_\Phi := (\id_{\dom(\Phi)}, \set{\id_{\Phi(\theta)}}{\theta \in \dom(\Phi)})$.
	\end{definition}

	\begin{definition}\label{DEF_Composition_Of_Isomorphisms}
		Let $\Phi, \Psi, \Chi$ be $\Theta$-indexing functions, $\mathbf{x}_0 = (g_0, \set{h_0^{\theta}}{\theta \in \dom(\Phi)})$ an isomorphism from $\Phi$ to $\Psi$ and $\mathbf{x}_1 = (g_1, \set{h_1^{\theta}}{\theta \in \dom(\Phi)})$ is an isomorphism from $\Psi$ to $\Phi$.
		Then, we define its composition $\mathbf{x}_1 \circ \mathbf{x}_0 := (g_2, \set{h_2^{\theta}}{\theta \in \dom(\Phi)})$ by
		\begin{enumerate}
			\item $g_2 := g_1 \circ g_0$,
			\item for every $\theta \in \dom(\Phi)$ we define $h_2^\theta := h^{g_0(\theta)}_1 \circ h^\theta_0$.
		\end{enumerate}
		Clearly, $\mathbf{x}_1 \circ \mathbf{x}_0$ is an isomorphism from $\Phi$ to $\Psi$ and it is easy to check, that composition is associative and the identity isomorphism satisfies left and right unit laws.
		In other words, the class of all $\Theta$-indexing functions with isomorphisms as morphisms is a category.
	\end{definition}
	
	\begin{definition}\label{DEF_Inverse_Of_Isomorphism}
		Let $\Phi, \Psi$ be $\Theta$-indexing functions and $\mathbf{x} = (g, \set{h^{\theta}}{\theta \in \dom(\Phi)})$ an isomorphism from $\Phi$ to $\Psi$.
		Then, we define its inverse $\mathbf{x}^{-1} := (g_\ast, \set{h_\ast^{\theta}}{\theta \in \dom(\Psi)})$ by
		\begin{enumerate}
			\item $g_\ast := g^{-1}$,
			\item for every $\theta \in \Theta$ we define $h_\ast^\theta := (h^{g^{-1}(\theta)})^{-1}$.
		\end{enumerate}
		Clearly, $\mathbf{x}^{-1}$ is an isomorphism from $\Psi$ to $\Phi$ and it is easy to check, that it is the unique isomorphism which satisfies $\mathbf{x}^{-1} \circ \mathbf{x} = \mathbf{1}_\Phi$ and $\mathbf{x} \circ \mathbf{x}^{-1} = \mathbf{1}_\Psi$.
		In other words, we not only have a category but a groupoid.
	\end{definition}
	
	\begin{definition}\label{DEF_Initial_Functor}
		Let $\Phi, \Psi$ be $\Theta$-indexing functions and $\mathbf{x} = (g, \set{h^{\theta}}{\theta \in \dom(\Phi)})$ an isomorphism from $\Phi$ to $\Psi$.
		Define $\kappa_\mathbf{x}:\CC^\Phi \to \CC^\Psi$ for $p \in \CC^\theta, \theta \in \dom(\Psi)$ and $i \in \Psi(\theta)$ by
		$$
			\kappa_\mathbf{x}(p)(\theta, i) := p(g^{-1}(\theta), (h^{g^{-1}(\theta)})^{-1}(i)).
		$$
		In other words, the information of $p$ is swapped around as given by the bijections $g$ and $h^\theta$.
		Clearly, $\kappa_{\mathbf{x}}$ is an isomorphism from the partial order $\CC^\Phi$ to $\CC^\Psi$.
	\end{definition}

	\begin{lemma}\label{LEM_Initial_Functor}
		Let $\Phi, \Psi, \Chi$ be $\Theta$-indexing functions, $\mathbf{x}_0 = (g_0, \set{h_0^{\theta}}{\theta \in \dom(\Phi)})$ an isomorphism from $\Phi$ to $\Psi$ and $\mathbf{x}_1 = (g_1, \set{h_1^{\theta}}{\theta \in \dom(\Psi)})$ is an isomorphism from $\Chi$ to $\Psi$.
		Then, we have
		\begin{enumerate}
			\item $\kappa_{\mathbf{1}_\Phi} = \id_{\CC^\Phi}$,
			\item $\kappa_{\mathbf{x}_1 \circ \mathbf{x}_0} = \kappa_{\mathbf{x}_1} \circ \kappa_{\mathbf{x}_0}$.
		\end{enumerate}
		In other words, $\kappa_{\bullet}$ is a functor between groupoids.
	\end{lemma}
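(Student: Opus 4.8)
The plan is to verify both identities by a direct unwinding of Definition~\ref{DEF_Initial_Functor}, evaluating each side at an arbitrary condition $p$ and an arbitrary index $(\theta,i)$. Since Definition~\ref{DEF_Initial_Functor} already records that each $\kappa_{\mathbf x}$ is an order isomorphism between the relevant Cohen forcings, and since a condition in $\CC^\Phi$ (resp.\ $\CC^\Psi$, $\CC^\Chi$) is completely determined by its values, it suffices in each case to check that the two maps agree coordinatewise; no order-theoretic content remains. It is also convenient to keep in mind that the formula $\kappa_{\mathbf x}(p)(\theta,i)=p(g^{-1}(\theta),(h^{g^{-1}(\theta)})^{-1}(i))$ is well posed: for $\theta\in\dom(\Psi)$ and $i\in\Psi(\theta)$ we have $g^{-1}(\theta)\in\dom(\Phi)$, and since $h^{g^{-1}(\theta)}$ is a bijection of $\Phi(g^{-1}(\theta))$ onto $\Psi(g(g^{-1}(\theta)))=\Psi(\theta)$, its inverse sends $i$ into $\Phi(g^{-1}(\theta))$, so the right-hand side indexes a legitimate coordinate of $p$.

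For (1), substitute $g=\id_{\dom(\Phi)}$ and $h^\theta=\id_{\Phi(\theta)}$ from Definition~\ref{DEF_Identity_Isomorphism} into the formula; both inverses are the identity, and one gets $\kappa_{\mathbf 1_\Phi}(p)(\theta,i)=p(\theta,i)$ for all $(\theta,i)$, i.e.\ $\kappa_{\mathbf 1_\Phi}=\id_{\CC^\Phi}$.

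For (2), write $\mathbf x_1\circ\mathbf x_0=(g_2,\set{h_2^\theta}{\theta\in\dom(\Phi)})$ with $g_2=g_1\circ g_0$ and $h_2^\theta=h_1^{g_0(\theta)}\circ h_0^\theta$ as in Definition~\ref{DEF_Composition_Of_Isomorphisms}, and fix $p\in\CC^\Phi$ together with an index $(\theta,i)$ of the target forcing $\CC^\Chi$. On the left, $\kappa_{\mathbf x_1\circ\mathbf x_0}(p)(\theta,i)=p\bigl(g_2^{-1}(\theta),(h_2^{g_2^{-1}(\theta)})^{-1}(i)\bigr)$; here $g_2^{-1}=g_0^{-1}\circ g_1^{-1}$, and since $g_0\bigl(g_2^{-1}(\theta)\bigr)=g_1^{-1}(\theta)$ the twisted component map collapses to $h_2^{g_2^{-1}(\theta)}=h_1^{g_1^{-1}(\theta)}\circ h_0^{g_0^{-1}(g_1^{-1}(\theta))}$, whose inverse is $(h_0^{g_0^{-1}(g_1^{-1}(\theta))})^{-1}\circ(h_1^{g_1^{-1}(\theta)})^{-1}$. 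On the right, $(\kappa_{\mathbf x_1}\circ\kappa_{\mathbf x_0})(p)(\theta,i)=\kappa_{\mathbf x_0}(p)\bigl(g_1^{-1}(\theta),(h_1^{g_1^{-1}(\theta)})^{-1}(i)\bigr)$, and applying the defining formula of $\kappa_{\mathbf x_0}$ once more at the index $\theta'=g_1^{-1}(\theta)$, $i'=(h_1^{g_1^{-1}(\theta)})^{-1}(i)$ yields $p\bigl(g_0^{-1}(g_1^{-1}(\theta)),(h_0^{g_0^{-1}(g_1^{-1}(\theta))})^{-1}((h_1^{g_1^{-1}(\theta)})^{-1}(i))\bigr)$. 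The two expressions coincide, establishing (2). Finally, (1) and (2) say exactly that $\kappa_\bullet$ preserves identities and composition, so together with the groupoid structure of Definition~\ref{DEF_Composition_Of_Isomorphisms} and Definition~\ref{DEF_Inverse_Of_Isomorphism} and the fact that each $\kappa_{\mathbf x}$ is a partial-order isomorphism, $\kappa_\bullet$ is a functor between groupoids.

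The computation is routine; the only point needing care is the bookkeeping in (2) around the twisted component maps $h_2^\theta=h_1^{g_0(\theta)}\circ h_0^\theta$: one must first substitute $\theta\mapsto g_2^{-1}(\theta)$ and only then simplify, and it is precisely the identity $g_0\circ g_2^{-1}=g_1^{-1}$ that reduces the superscript of $h_1$ to $g_1^{-1}(\theta)$, so that it matches the term produced by the two-step composite $\kappa_{\mathbf x_1}\circ\kappa_{\mathbf x_0}$. Everything else is just reading off Definition~\ref{DEF_Initial_Functor}.
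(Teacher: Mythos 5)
Your proposal is correct and follows essentially the same route as the paper: both identities are verified by evaluating each side coordinatewise at an arbitrary $p$ and index $(\theta,i)$ and unwinding Definition~\ref{DEF_Initial_Functor}, with the key simplification in (2) being exactly the identity $g_0\circ(g_1\circ g_0)^{-1}=g_1^{-1}$ that collapses the superscript of $h_1$. Your additional remark on well-posedness of the defining formula is a harmless bonus not present in the paper.
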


	\begin{proof}
		For the first statement let $p \in \CC^\Phi$, $\theta \in \dom(\Phi)$ and $i \in \Phi(\theta)$.
		Then, we compute
		\begin{align*}
			\kappa_{\mathbf{1}_\Phi}(p)(\theta, i)
			&= p((\id_{\dom(\Phi)}^{-1}(\theta), (\id_{\Phi(\theta)})^{-1}(i)) && (\text{Definition~\ref{DEF_Identity_Isomorphism}~and~\ref{DEF_Initial_Functor}})\\
			&= p(\theta, i).
		\end{align*}
		Secondly, let $p \in \CC^\Phi$, $\theta \in \dom(\Chi)$ and $i \in \Chi(\theta)$.
		Then, we compute
		\begin{align*}
			\kappa_{\mathbf{x}_1 \circ \mathbf{x}_0}(p)(\theta, i)
			&= p((g_1 \circ g_0)^{-1}(\theta), (h_1^{(g_0 \circ (g_1 \circ g_0)^{-1})(\theta)} \circ h_0^{(g_1 \circ g_0)^{-1}(\theta)})^{-1}(i)) && (\text{Definition~\ref{DEF_Composition_Of_Isomorphisms}~and~\ref{DEF_Initial_Functor}})\\
			&= p(g_0^{-1}(g_1^{-1}(\theta)), (h_1^{g_1^{-1}(\theta)} \circ h_0^{g_0^{-1}(g_1^{-1}(\theta))})^{-1}(i))\\
			&= p(g_0^{-1}(g_1^{-1}(\theta)), (h_0^{g_0^{-1}(g_1^{-1}(\theta))})^{-1}((h_1^{g_1^{-1}(\theta)})^{-1}(i)))\\
			&= \kappa_{\mathbf{x}_0}(p)(g_1^{-1}(\theta), (h_1^{g_1^{-1}(\theta)})^{-1}(i)) && (\text{Definition~\ref{DEF_Initial_Functor}})\\
			&= \kappa_{\mathbf{x}_1}(\kappa_{\mathbf{x}_0}(p))(\theta, i) && (\text{Definition~\ref{DEF_Initial_Functor}})\\
			&= (\kappa_{\mathbf{x}_1} \circ \kappa_{\mathbf{x}_0})(p)(\theta, i). &&\hfill\qedhere
		\end{align*}
	\end{proof}
	
	Next, we need to verify that the canonical $\CC^\Phi$-names $\dot{\T}^{\Phi, \theta}_i$ are mapped to $\dot{\T}^{\Psi, g(\theta)}_{h^\theta(i)}$ by $\kappa_\mathbf{x}$.
	To this end, we prove that $\kappa_\mathbf{x}$ behaves nicely with respect to the $\Gamma$-actions.
	
	\begin{lemma}\label{LEM_Funtor_Commutes_With_Gamma}
		Let $\Phi, \Psi$ be $\Theta$-indexing functions and $\mathbf{x} = (g, \set{h^{\theta}}{\theta \in \dom(\Phi)})$ is an isomorphism from $\Phi$ to $\Psi$.
		Let $\theta \in \dom(\Phi)$ and $i \in \Phi(\theta)$.
		Then, $\kappa_\mathbf{x}:\CC^\Phi \to \CC^\Psi$ is a morphism of $\Gamma$-sets, i.e.\ the following diagram commutes for every $\gamma \in \Gamma$:
		\begin{center}
			\begin{tikzcd}
				\CC^\Phi \arrow[r, "\kappa_{\mathbf{x}}"] \arrow[d, "{\pi^{\Phi, \theta}_i(\gamma)}"] & \CC^{\Psi} \arrow[d, "{\pi^{\Psi, g(\theta)}_{h^{\theta}(i)}(\gamma)}"] \\
				\CC^\Phi \arrow[r, "\kappa_{\mathbf{x}}"]                                             & \CC^{\Psi}                                                             
			\end{tikzcd}
		\end{center}
	\end{lemma}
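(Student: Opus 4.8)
The plan is to verify commutativity of the square by a direct coordinate-by-coordinate computation on $\CC^\Phi$. Recall from Definition~\ref{DEF_Action} together with Definition~\ref{DEF_Action_Induced} that, for $\gamma \in \Gamma$, the automorphism $\pi^{\Phi,\theta}_i(\gamma)$ of $\CC^\Phi$ acts as the identity on every Cohen coordinate $(\eta,j)$ with $(\eta,j)\neq(\theta,i)$ and, on the coordinate $(\theta,i)$, sends the Cohen condition $p(\theta,i)$ to $\gamma.(p(\theta,i))$, i.e.\ flips its value at exactly those $n$ with $\gamma(n)=1$; and recall from Definition~\ref{DEF_Initial_Functor} that $\kappa_{\mathbf{x}}$ merely relabels coordinates, $\kappa_{\mathbf{x}}(p)(\eta,j) = p(g^{-1}(\eta),(h^{g^{-1}(\eta)})^{-1}(j))$. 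So I would fix $p \in \CC^\Phi$ and $\gamma \in \Gamma$ and compare the two conditions $\kappa_{\mathbf{x}}(\pi^{\Phi,\theta}_i(\gamma)(p))$ and $\pi^{\Psi,g(\theta)}_{h^\theta(i)}(\gamma)(\kappa_{\mathbf{x}}(p))$ at an arbitrary coordinate $(\eta,j)$ of $\CC^\Psi$ (so $\eta\in\dom(\Psi)$, $j\in\Psi(\eta)$), observing first that both $\kappa_{\mathbf{x}}$ and the two $\Gamma$-actions preserve the per-coordinate domains in $\CC$, hence both conditions lie in $\CC^\Psi$ and it suffices to check that they agree coordinatewise.

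Next I would split into two cases according to whether $(\eta,j) = (g(\theta),h^\theta(i))$. Because $g$ and each $h^\theta$ are bijections, this equality is equivalent to $(g^{-1}(\eta),(h^{g^{-1}(\eta)})^{-1}(j)) = (\theta,i)$: from $g^{-1}(\eta)=\theta$ one gets $\eta=g(\theta)$, whence $(h^{g^{-1}(\eta)})^{-1}(j) = (h^\theta)^{-1}(j)$, which equals $i$ iff $j=h^\theta(i)$. In the equality case the left-hand side at $(\eta,j)$ is, by the defining formula for $\kappa_{\mathbf{x}}$, the $(\theta,i)$-coordinate of $\pi^{\Phi,\theta}_i(\gamma)(p)$, namely $\gamma.(p(\theta,i))$; on the right-hand side $\pi^{\Psi,g(\theta)}_{h^\theta(i)}(\gamma)$ flips by $\gamma$ the $(g(\theta),h^\theta(i))$-coordinate of $\kappa_{\mathbf{x}}(p)$, and that coordinate equals $p(\theta,i)$, so the result is again $\gamma.(p(\theta,i))$. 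In the inequality case the left-hand side is the $(g^{-1}(\eta),(h^{g^{-1}(\eta)})^{-1}(j))$-coordinate of $\pi^{\Phi,\theta}_i(\gamma)(p)$, and since that coordinate differs from $(\theta,i)$ it is just $p(g^{-1}(\eta),(h^{g^{-1}(\eta)})^{-1}(j))$; the right-hand side is the $(\eta,j)$-coordinate of $\kappa_{\mathbf{x}}(p)$, left untouched by $\pi^{\Psi,g(\theta)}_{h^\theta(i)}(\gamma)$, which is the same value. Hence the two conditions agree at every coordinate, so they are equal, which is precisely commutativity of the diagram.

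The argument is entirely bookkeeping and I do not anticipate a genuine obstacle. The only point requiring care is the bijective translation of the distinguished coordinate $(\theta,i)\leftrightarrow(g(\theta),h^\theta(i))$ through $\kappa_{\mathbf{x}}$, since the definition of $\kappa_{\mathbf{x}}$ is phrased in terms of $g^{-1}$ and the inverses $(h^{g^{-1}(\cdot)})^{-1}$; keeping these indices straight in both cases of the split is the substantive content of the proof.
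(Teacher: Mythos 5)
Your proposal is correct and follows essentially the same route as the paper's proof: a coordinatewise comparison on $\CC^\Psi$ with a case split on whether $(\eta,j)=(g(\theta),h^\theta(i))$, equivalently $(g^{-1}(\eta),(h^{g^{-1}(\eta)})^{-1}(j))=(\theta,i)$, with both sides reducing to $\pi(\gamma)(p(\theta,i))$ in the distinguished coordinate and to the untouched value elsewhere. Nothing is missing.
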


	\begin{proof}
		Let $\gamma \in \Gamma$ and $p \in \CC^\Phi$.
		Further, let $\eta \in \dom(\Psi)$ and $j \in \Psi(\eta)$.
		In case that $\theta = g^{-1}(\eta)$ and $i = (h^{g^{-1}(\eta)})^{-1}(j)$ we compute:
		\begin{align*}
			\kappa_{\mathbf{x}}(\pi^{\Phi, \theta}_i(\gamma)(p))(\eta, j) 
			&= \pi^{\Phi, \theta}_i(\gamma)(p)(g^{-1}(\eta), (h^{g^{-1}(\eta)})^{-1}(j)) && (\text{Definition~\ref{DEF_Initial_Functor}})\\
			&= \pi^{\Phi, \theta}_i(\gamma)(p)(\theta, i) && (\text{case property of } \eta, i)\\
			&= \pi(\gamma)(p(\theta, i)) && (\text{Definition~\ref{DEF_Action_Induced}})\\
			&= \pi(\gamma)(p(g^{-1}(\eta), (h^{g^{-1}(\eta)})^{-1}(j))) && (\text{case property of } \eta, i)\\
			&= \pi(\gamma)(\kappa_\mathbf{x}(p)(\eta, j)) && (\text{Definition~\ref{DEF_Initial_Functor}})\\
			&= \pi^{\Psi, \eta}_{j}(\gamma)(\kappa_\mathbf{x}(p))(\eta, j) && (\text{Definition~\ref{DEF_Action_Induced}})\\
			&= \pi^{\Psi, g(\theta)}_{h^\theta(i)}(\gamma)(\kappa_\mathbf{x}(p))(\eta, j) && (\text{case property of } \eta, i).
		\end{align*}
		Otherwise, we have that $\pi^{\Phi, \theta}_i$ acts trivially on the $(g^{-1}(\eta), (h^{g^{-1}(\eta)})^{-1}(j))$-component of $p$ and $\pi^{\Psi, g(\theta)}_{h^\theta(i)}(\gamma)$ acts trivially on the $(\eta, j)$-component of $\kappa_\mathbf{x}(p)$, so we compute
		\begin{align*}
			\kappa_{\mathbf{x}}(\pi^{\Phi, \theta}_i(\gamma)(p))(\eta, j)
			&= \pi^{\Phi, \theta}_i(\gamma)(p)(g^{-1}(\eta), (h^{g^{-1}(\eta)})^{-1}(j)) && (\text{Definition~\ref{DEF_Initial_Functor}})\\
			&= p(g^{-1}(\eta), (h^{g^{-1}(\eta)})^{-1}(j)) && (\pi^{\Phi, \theta}_i \text{ acts trivially})\\
			&= \kappa_\mathbf{x}(p)(\eta, j) && (\text{Definition~\ref{DEF_Initial_Functor}})\\
			&= \pi^{\Psi, g(\theta)}_{h^\theta(i)}(\gamma)(\kappa_\mathbf{x}(p))(\eta, j)  && (\pi^{\Psi, g(\theta)}_{h^\theta(i)} \text{ acts trivially}). \qedhere
		\end{align*}
	\end{proof}
	
	\begin{lemma} \label{LEM_Initial_Functor_Preserves_Trees}
		Let $\Phi, \Psi$ be $\Theta$-indexing functions and $\mathbf{x} = (g, \set{h^{\theta}}{\theta \in \dom(\Phi)})$ is an isomorphism from $\Phi$ to $\Psi$.
		Let $\theta \in \dom(\Phi)$ and $i \in \Phi(\theta)$.
		Then, we have
		\begin{enumerate}
			\item $\kappa_\mathbf{x}(\dot{c}^{\Phi, \theta}_i) = \dot{c}^{\Psi, g(\theta)}_{h^\theta(i)}$ and thus $\kappa_\mathbf{x}(\dot{T}^{\Phi, \theta}_i) = \dot{T}^{\Psi, g(\theta)}_{h^\theta(i)}$,
			\item $\kappa_\mathbf{x}(\dot{\T}^{\Phi, \theta}_{i}) = \dot{\T}^{\Psi, g(\theta)}_{h^\theta(i)}$,
			\item $\kappa_\mathbf{x}(\dot{\T}^{\Phi, \theta}) = \dot{\T}^{\Psi, g(\theta)}$.
		\end{enumerate}
	\end{lemma}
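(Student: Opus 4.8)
The plan is to prove the three items in the order stated, each one feeding into the next together with the lemmas already established. The only genuinely ``new'' content is in item~(1); items~(2) and~(3) are formal manipulations of canonical names.

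For~(1), I would unwind the definition of the canonical names. By Definition~\ref{DEF_Indexed_Cohen}, $\dot{c}^{\Phi, \theta}_i$ is the canonical $\CC^\Phi$-name for the real read off the $(\theta, i)$-th coordinate of the generic filter. The key observation is that, by Definition~\ref{DEF_Initial_Functor}, the order isomorphism $\kappa_\mathbf{x}:\CC^\Phi \to \CC^\Psi$ carries the $(\theta_0, i_0)$-th coordinate of $\CC^\Phi$ onto the $(g(\theta_0), h^{\theta_0}(i_0))$-th coordinate of $\CC^\Psi$: indeed, putting $\theta = g(\theta_0)$ and $i = h^{\theta_0}(i_0)$ one has $g^{-1}(\theta) = \theta_0$ and $(h^{g^{-1}(\theta)})^{-1}(i) = i_0$, so $\kappa_\mathbf{x}(p)(g(\theta_0), h^{\theta_0}(i_0)) = p(\theta_0, i_0)$. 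Applying $\kappa_\mathbf{x}$ to the explicit description of $\dot{c}^{\Phi, \theta}_i$ therefore replaces each condition $s$ by $\kappa_\mathbf{x}(s)$ and changes the coordinate being read from $(\theta, i)$ to $(g(\theta), h^\theta(i))$, which is exactly the explicit description of $\dot{c}^{\Psi, g(\theta)}_{h^\theta(i)}$. The tree version is then immediate: $\dot{T}^{\Phi, \theta}_i$ is, by Definition~\ref{DEF_Indexed_Cohen}, the canonical name for the tree whose only branch is $\dot{c}^{\Phi, \theta}_i$, and $\kappa_\mathbf{x}$, being an isomorphism of forcings, commutes with this definition.

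For~(2), I would use that $\dot{\T}^{\Phi, \theta}_{i}$ is by Definition~\ref{DEF_Names_For_Trees} the canonical name for $\set{\pi^{\Phi, \theta}_i(\gamma)(\dot{T}^{\Phi, \theta}_i)}{\gamma \in \Gamma}$. Since $\kappa_\mathbf{x}$ is an isomorphism of forcings, it commutes with the formation of the canonical name of a set, so $\kappa_\mathbf{x}(\dot{\T}^{\Phi, \theta}_{i})$ is the canonical name for $\set{\kappa_\mathbf{x}(\pi^{\Phi, \theta}_i(\gamma)(\dot{T}^{\Phi, \theta}_i))}{\gamma \in \Gamma}$. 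By Lemma~\ref{LEM_Funtor_Commutes_With_Gamma} each term equals $\pi^{\Psi, g(\theta)}_{h^\theta(i)}(\gamma)(\kappa_\mathbf{x}(\dot{T}^{\Phi, \theta}_i))$, and by~(1) this is $\pi^{\Psi, g(\theta)}_{h^\theta(i)}(\gamma)(\dot{T}^{\Psi, g(\theta)}_{h^\theta(i)})$. Letting $\gamma$ range over $\Gamma$ and invoking Definition~\ref{DEF_Names_For_Trees} once more identifies this with $\dot{\T}^{\Psi, g(\theta)}_{h^\theta(i)}$. For~(3), I would use that $\dot{\T}^{\Phi, \theta}$ is the canonical name for $\bigcup_{i \in \Phi(\theta)}\dot{\T}^{\Phi, \theta}_{i}$ (Remark~\ref{REM_Properties_Of_Automorphism_Subgroup}); hence $\kappa_\mathbf{x}(\dot{\T}^{\Phi, \theta})$ is the canonical name for $\bigcup_{i \in \Phi(\theta)}\kappa_\mathbf{x}(\dot{\T}^{\Phi, \theta}_{i}) = \bigcup_{i \in \Phi(\theta)}\dot{\T}^{\Psi, g(\theta)}_{h^\theta(i)}$ by~(2). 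Since $h^\theta:\Phi(\theta) \to \Psi(g(\theta))$ is a bijection, the index $h^\theta(i)$ runs over all of $\Psi(g(\theta))$, so this union equals $\bigcup_{j \in \Psi(g(\theta))}\dot{\T}^{\Psi, g(\theta)}_{j} = \dot{\T}^{\Psi, g(\theta)}$ by Definition~\ref{DEF_Names_For_Trees}.

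The main obstacle, modest as it is, is in item~(1): one must be careful about the precise combinatorial description of the canonical name for a Cohen real and verify that the coordinate permutation induced by $\kappa_\mathbf{x}$ carries that name \emph{literally} to the corresponding canonical name, not merely to a name forced equal to it (the latter would suffice for the statement, but the cleaner equality is what later arguments will want). Once~(1) is pinned down, items~(2) and~(3) are purely formal, relying only on Lemma~\ref{LEM_Funtor_Commutes_With_Gamma}, item~(1), and the bijectivity of $h^\theta$.
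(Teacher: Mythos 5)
Your proposal is correct and follows essentially the same route as the paper: item (1) from the coordinate-permuting definition of $\kappa_\mathbf{x}$ and the canonical name for a Cohen real, item (2) by distributing $\kappa_\mathbf{x}$ over the canonical name and invoking Lemma~\ref{LEM_Funtor_Commutes_With_Gamma} together with (1), and item (3) by reindexing the union via the bijection $h^\theta$. The paper's proof is exactly this computation, with (1) dismissed as immediate where you spell out the coordinate bookkeeping.
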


	\begin{proof}
		$(1)$ immediately follows from the definition of $\kappa_\mathbf{x}$ and the definition of the canonical name for a Cohen real.
		For $(2)$ by Definition~\ref{DEF_Names_For_Trees} remember $\dot{\T}^{\Phi, \theta}_{i}$ is the canonical $\CC^\Phi$-name for the set
		$$
			\set{\pi^{\Phi, \theta}_i(\gamma)(\dot{T}^{\Phi, \theta}_i)}{\gamma \in \Gamma}.
		$$
		Hence, we compute
		\begin{align*}
			\kappa_\mathbf{x}(\dot{\T}^{\Phi, \theta}_{i})
			&= \kappa_\mathbf{x}(\set{\pi^{\Phi, \theta}_i(\gamma)(\dot{T}^{\Phi, \theta}_i)}{\gamma \in \Gamma}) && (\text{Definition~\ref{DEF_Names_For_Trees}})\\
			&= \set{\kappa_\mathbf{x}(\pi^{\Phi, \theta}_i(\gamma)(\dot{T}^{\Phi, \theta}_i))}{\gamma \in \Gamma} && (\text{canonical name})\\
			&= \set{\pi^{\Psi, g(\theta)}_{h^\theta(i)}(\gamma)(\kappa_\mathbf{x}(\dot{T}^{\Phi, \theta}_i))}{\gamma \in \Gamma} && (\text{Lemma~\ref{LEM_Funtor_Commutes_With_Gamma}})\\
			&= \set{\pi^{\Psi, g(\theta)}_{h^\theta(i)}(\gamma)(\dot{T}^{\Psi, g(\theta)}_{h^\theta(i)})}{\gamma \in \Gamma} && (1)\\
			&= \dot{T}^{\Psi, g(\theta)}_{h^\theta(i)} && (\text{Definition~\ref{DEF_Names_For_Trees}}).
		\end{align*}
		Finally, for $(3)$ we compute
		\begin{align*}
			\kappa_\mathbf{x}(\dot{\T}^{\Phi, \theta})
			&= \kappa_\mathbf{x}(\bigcup_{i \in \Phi(\theta)}\dot{\T}^{\Phi, \theta}_{i}) && (\text{Remark~\ref{REM_Properties_Of_Automorphism_Subgroup}})\\
			&= \bigcup_{i \in \Phi(\theta)}\kappa_\mathbf{x}(\dot{\T}^{\Phi, \theta}_{i}) && (\text{canonical name})\\
			&= \bigcup_{i \in \Phi(\theta)}\dot{\T}^{\Psi, g(\theta)}_{h^\theta(i)} && (2)\\
			&= \bigcup_{i \in \Psi(g(\theta))}\dot{\T}^{\Psi, g(\theta)}_{i} && (h^\theta:\Phi(\theta) \to \Psi(g(\theta)) \text{ is a bijection})\\
			&= \dot{\T}^{\Psi, g(\theta)} && (\text{Remark~\ref{REM_Properties_Of_Automorphism_Subgroup}}).\qedhere
		\end{align*}
	\end{proof}
	
	So far, we have constructed a functor $\kappa_\bullet$ mapping indexing functions $\Phi$ to posets of the form $\CC^\Phi$.
	In terms of our iteration this corresponds to a functor $\kappa^1_\bullet$ mapping $\Theta$-indexing functions to posets of the form $\PP^\Phi_1$.
	We will extend these functors through the iteration to obtain an increasing sequence of functors in the following sense:
	
	\begin{definition} \label{DEF_Increasing_Sequence_Of_Functors}
		Let $\epsilon \leq \aleph_1$.
		We say that
		$$
			\seq{\kappa^\alpha_\bullet}{0 < \alpha \leq \epsilon}
		$$
		is an increasing sequence of functors iff every $\kappa^\alpha_\bullet$ is a functor mapping $\Theta$-indexing functions $\Phi$ to posets $\PP^\Phi_\alpha$, for all $0 < \alpha \leq \epsilon$, $\Phi, \Psi$ $\Theta$-indexing functions, $\mathbf{x} = (g, \set{h^{\theta}}{\theta \in \dom(\Phi)})$ an isomorphism from $\Phi$ to $\Psi$ and $\theta \in \dom(\Phi)$ we have
		$$
			\kappa_\mathbf{x}(\dot{\T}^{\Phi, \theta}) = \dot{\T}^{\Psi, g(\theta)}
		$$
		and  for every $0 < \alpha \leq \beta \leq \aleph_1$, $\Theta$-indexing functions $\Phi, \Psi$ and isomorphism $\mathbf{x}$ from $\Phi$ to $\Psi$ the following diagram commutes:
		\begin{center}
			\begin{tikzcd}
				\PP^{\Phi}_{\alpha} \arrow[r, "\kappa_{\mathbf{x}}^\alpha"] \arrow[d, "{\iota^{\Phi}_{\alpha,\beta}}"] & \PP^{\Psi}_{\alpha} \arrow[d, "{\iota^{\Psi}_{\alpha,\beta}}"] \\
				\PP^{\Phi}_{\beta} \arrow[r, "\kappa^\beta_{\mathbf{x}}"]                                              & \PP^{\Psi}_{\beta}                                            
			\end{tikzcd}
		\end{center}
		In other words, for every $0 < \alpha \leq \beta \leq \aleph_1$ the maps $\iota^\bullet_{\alpha,\beta}$ are a natural transformation from the functor $\kappa^\alpha_\bullet$ to the functor $\kappa^\beta_\bullet$.
	\end{definition}

	\begin{corollary}
		$\seq{\kappa^\alpha_\bullet}{0 < \alpha \leq 1}$ is an increasing sequence of functors (of length 1).
	\end{corollary}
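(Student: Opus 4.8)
The plan is to verify the three requirements of Definition~\ref{DEF_Increasing_Sequence_Of_Functors} in the case $\epsilon = 1$, where the only ordinal in play is $\alpha = 1$. First I would observe that $\kappa^1_\bullet$ is, by construction, nothing but the functor $\kappa_\bullet$ of Definition~\ref{DEF_Initial_Functor} transported along the identification $\PP^\Phi_1 \cong \CC^\Phi$; Lemma~\ref{LEM_Initial_Functor} already records that $\kappa_{\mathbf{1}_\Phi} = \id_{\CC^\Phi}$ and $\kappa_{\mathbf{x}_1 \circ \mathbf{x}_0} = \kappa_{\mathbf{x}_1} \circ \kappa_{\mathbf{x}_0}$, so $\kappa^1_\bullet$ is indeed a functor from the groupoid of $\Theta$-indexing functions into posets sending $\Phi$ to $\PP^\Phi_1$, which is the first requirement.

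Secondly, the condition $\kappa_{\mathbf{x}}(\dot{\T}^{\Phi, \theta}) = \dot{\T}^{\Psi, g(\theta)}$, for any isomorphism $\mathbf{x} = (g, \set{h^{\theta}}{\theta \in \dom(\Phi)})$ from $\Phi$ to $\Psi$ and any $\theta \in \dom(\Phi)$, is precisely the statement of Lemma~\ref{LEM_Initial_Functor_Preserves_Trees}~(3), so nothing further is needed here.

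Finally, the naturality square for the embeddings $\iota^\bullet_{\alpha, \beta}$ only has to be checked for $0 < \alpha \leq \beta \leq 1$, i.e.\ for $\alpha = \beta = 1$, and there both $\iota^\Phi_{1,1}$ and $\iota^\Psi_{1,1}$ are identity maps, so the diagram commutes trivially. I do not expect any obstacle: this corollary is merely the base case of the inductive construction of the induced sequence of functors, and it is obtained by assembling the already-established functoriality of $\kappa_\bullet$ (Lemma~\ref{LEM_Initial_Functor}) with the tree-preservation property (Lemma~\ref{LEM_Initial_Functor_Preserves_Trees}).
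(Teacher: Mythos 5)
Your proposal is correct and matches the paper's own proof: both verify the three clauses of Definition~\ref{DEF_Increasing_Sequence_Of_Functors} by citing Lemma~\ref{LEM_Initial_Functor} for functoriality, Lemma~\ref{LEM_Initial_Functor_Preserves_Trees} for the tree-preservation condition, and noting that the naturality requirement is trivial (vacuous) for a sequence of length $1$.
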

	
	\begin{proof}
		By Lemma~\ref{LEM_Initial_Functor} $\kappa^\alpha_\bullet$ is a functor, the second property of Definition~\ref{DEF_Increasing_Sequence_Of_Functors} holds by Lemma~\ref{LEM_Initial_Functor_Preserves_Trees}, and the third property is vacuous for a sequence of length 1.
	\end{proof}

	Note the similarity to Definition~\ref{DEF_Increasing_Sequence_Of_Actions} and Lemma~\ref{LEM_Initial_Automorphism_Preservation}.
	In Section~\ref{SEC_Extend_Automorphisms} we made sure to preserve some group structure of automorphisms through the iteration.
	Similarly, in this section we need to preserve the groupoid structure given by isomorphisms between $\Theta$-indexing functions.
	
	\begin{proposition} \label{LEM_Limits_Of_Increasing_Functors_Are_Unique}
		Let $\epsilon \leq \aleph_1$ be a limit.
		Assume
		$$
			\seq{\kappa^\alpha_\bullet}{0 < \alpha < \epsilon}
		$$
		is an increasing sequence of functors.
		Then, there is a unique functor $\kappa^\epsilon_\bullet$ so that
		$$
			\seq{\kappa^\alpha_\bullet}{0 < \alpha \leq \epsilon}
		$$
		is an increasing sequences of functors.
	\end{proposition}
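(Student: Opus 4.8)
The plan is to follow the proof of Lemma~\ref{LEM_Limits_Of_Increasing_Sequences_Are_Unique} almost verbatim, replacing each automorphism $\pi^{\Phi,\theta}_{\alpha,i}(\gamma)$ by the isomorphism $\kappa^\alpha_{\mathbf{x}}$ and additionally tracking the groupoid structure. Fix $\Theta$-indexing functions $\Phi, \Psi$ and an isomorphism $\mathbf{x} = (g, \set{h^\theta}{\theta \in \dom(\Phi)})$ from $\Phi$ to $\Psi$. The commuting squares in Definition~\ref{DEF_Increasing_Sequence_Of_Functors} say precisely that $\seq{\iota^\Psi_{\alpha,\epsilon}\circ\kappa^\alpha_{\mathbf{x}}:\PP^\Phi_\alpha\to\PP^\Psi_\epsilon}{0<\alpha<\epsilon}$ is a cocone on the directed system $\seq{\PP^\Phi_\alpha,\iota^\Phi_{\alpha,\beta}}{0<\alpha\leq\beta<\epsilon}$. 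Since $\PP^\Phi_\epsilon$ is a direct limit (a finite support iteration at a limit stage), there is a unique map $\kappa^\epsilon_{\mathbf{x}}:\PP^\Phi_\epsilon\to\PP^\Psi_\epsilon$ with $\kappa^\epsilon_{\mathbf{x}}\circ\iota^\Phi_{\alpha,\epsilon} = \iota^\Psi_{\alpha,\epsilon}\circ\kappa^\alpha_{\mathbf{x}}$ for all $0<\alpha<\epsilon$; this is exactly the required commuting square for the pair $\alpha<\beta=\epsilon$, while the square for $\alpha\leq\beta<\epsilon$ is inherited from the hypothesis. Since any functor $\kappa^\epsilon_\bullet$ completing the sequence must make these squares commute, $\kappa^\epsilon_{\mathbf{x}}$ is forced by the universal property, which gives the uniqueness clause.

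First I would check that $\kappa^\epsilon_\bullet$ is again a functor. Every $p\in\PP^\Phi_\epsilon$ has finite support, so pick $\alpha<\epsilon$ with $\iota^\Phi_{\alpha,\epsilon}(p\restr\alpha)=p$; then $\kappa^\epsilon_{\mathbf{1}_\Phi}(p)=\iota^\Phi_{\alpha,\epsilon}(\kappa^\alpha_{\mathbf{1}_\Phi}(p\restr\alpha))=\iota^\Phi_{\alpha,\epsilon}(p\restr\alpha)=p$ using functoriality of $\kappa^\alpha_\bullet$, so $\kappa^\epsilon_{\mathbf{1}_\Phi}=\id_{\PP^\Phi_\epsilon}$. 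For the composition law, given $\mathbf{x}_0$ from $\Phi$ to $\Psi$ and $\mathbf{x}_1$ from $\Psi$ to $\Chi$, and $p\in\PP^\Phi_\epsilon$ with $\iota^\Phi_{\alpha,\epsilon}(p\restr\alpha)=p$, one unravels $\kappa^\epsilon_{\mathbf{x}_1}(\kappa^\epsilon_{\mathbf{x}_0}(p))$ down to level $\alpha$ through the defining squares, applies $\kappa^\alpha_{\mathbf{x}_1}\circ\kappa^\alpha_{\mathbf{x}_0}=\kappa^\alpha_{\mathbf{x}_1\circ\mathbf{x}_0}$, and pushes back up through the defining square for $\kappa^\epsilon_{\mathbf{x}_1\circ\mathbf{x}_0}$; this is the same chain of equalities as in Lemma~\ref{LEM_Limits_Of_Increasing_Sequences_Are_Unique}, and yields $\kappa^\epsilon_{\mathbf{x}_1}\circ\kappa^\epsilon_{\mathbf{x}_0}=\kappa^\epsilon_{\mathbf{x}_1\circ\mathbf{x}_0}$. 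In particular $\kappa^\epsilon_{\mathbf{x}^{-1}}$ is a two-sided inverse of $\kappa^\epsilon_{\mathbf{x}}$ by Definition~\ref{DEF_Inverse_Of_Isomorphism}, so $\kappa^\epsilon_{\mathbf{x}}$ is indeed an isomorphism of partial orders and not merely an order-preserving map.

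Finally I would verify the name-preservation requirement $\kappa^\epsilon_{\mathbf{x}}(\dot{\T}^{\Phi,\theta}_\epsilon)=\dot{\T}^{\Psi,g(\theta)}_\epsilon$ for every $\theta\in\dom(\Phi)$. By Definition~\ref{DEF_Forcing}, $\dot{\T}^{\Phi,\theta}_\epsilon$ is the canonical $\PP^\Phi_\epsilon$-name for $\bigcup_{\alpha<\epsilon}\iota^\Phi_{\alpha,\epsilon}(\dot{\T}^{\Phi,\theta}_\alpha)$, and since $\kappa^\epsilon_{\mathbf{x}}$ sends canonical names to canonical names we get
\begin{align*}
\kappa^\epsilon_{\mathbf{x}}(\dot{\T}^{\Phi,\theta}_\epsilon)
&= \bigcup_{\alpha<\epsilon}\kappa^\epsilon_{\mathbf{x}}(\iota^\Phi_{\alpha,\epsilon}(\dot{\T}^{\Phi,\theta}_\alpha)) = \bigcup_{\alpha<\epsilon}\iota^\Psi_{\alpha,\epsilon}(\kappa^\alpha_{\mathbf{x}}(\dot{\T}^{\Phi,\theta}_\alpha))\\
&= \bigcup_{\alpha<\epsilon}\iota^\Psi_{\alpha,\epsilon}(\dot{\T}^{\Psi,g(\theta)}_\alpha) = \dot{\T}^{\Psi,g(\theta)}_\epsilon,
\end{align*}
using the defining square in the second equality and the inductive name-preservation hypothesis in the third; this is essentially the last display of Lemma~\ref{LEM_Limits_Of_Increasing_Sequences_Are_Unique}. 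The main obstacle, such as it is, is purely organizational: one must make sure the universal property is applied to the correct directed system and that the codomain ($\PP^\Psi_\epsilon$, not $\PP^\Phi_\epsilon$) is carried along the natural transformation $\iota^\bullet_{\alpha,\epsilon}$; once this bookkeeping is set up, every step reduces either to the universal property or to an inductive hypothesis, exactly as in the group-action case.
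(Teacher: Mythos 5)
Your proposal is correct and follows exactly the route the paper takes: the paper's own proof simply says to define $\kappa^\epsilon_\bullet$ as the pointwise direct limit and to ``argue as in Lemma~\ref{LEM_Limits_Of_Increasing_Sequences_Are_Unique}'', which is precisely the universal-property construction, uniqueness, composition-law, and name-preservation verification you spell out. No gaps.
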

	
	\begin{proof}
		Define $\kappa^\epsilon_\bullet$ as the pointwise direct limit of $\seq{\kappa^\alpha_\bullet}{0 < \alpha < \epsilon}$.
		That is, for given $\Theta$-indexing functions $\Phi, \Psi$ and an isomorphism $\mathbf{x} = (g, \set{h^\theta}{\theta \in \Theta})$ from $\Phi$ to $\Psi$ we define $\kappa^\epsilon_\mathbf{x}$ to be the direct limit of $\seq{\kappa^\alpha_\mathbf{x}}{\alpha < \epsilon}$.
		Then, argue as in Lemma~\ref{LEM_Limits_Of_Increasing_Sequences_Are_Unique}.
	\end{proof}
	
	Analogously to Definition~\ref{DEF_Canonical_Extension} the extension at successor steps is not unique.
	However, there is a canonical way to extend an increasing sequence of functors.
	
	\begin{definition}\label{DEF_Canonical_Extension_Functor}
		Let $\epsilon \leq \aleph_1$.
		Assume
		$
			\seq{\kappa^\alpha_\bullet}{0 < \alpha \leq \epsilon}
		$
		is an increasing sequence of functors.
		Let $\Phi, \Psi$ be $\Theta$-indexing functions and $\mathbf{x} = (g, \set{h^\theta}{\theta \in \dom(\Phi)})$ an isomorphism from $\Phi$ to $\Psi$.
		Then, we define $\kappa^{\epsilon + 1}_\mathbf{x}:\PP^{\Phi}_{\epsilon + 1} \to \PP^{\Psi}_{\epsilon + 1}$ for $p \in \PP^\Phi_{\epsilon + 1}$ by
		$$
			\kappa^{\epsilon + 1}_{\mathbf{x}}(p) := \kappa^{\epsilon}_{\mathbf{x}}(p \restr \alpha) \concat \kappa_\mathbf{x}^\epsilon(p(\epsilon)).
		$$
		Then, we call $\kappa^{\epsilon + 1}_\bullet$ the canonical extension of $\seq{\kappa^\alpha_\bullet}{0 < \alpha \leq \epsilon}$.
	\end{definition}
	
	Finally, analogous to Lemma~\ref{LEM_Canonical_Extension} and Corollary~\ref{COR_Induced_Sequence_Of_Actions} we obtain our desired induced sequence of with the following lemma.
	
	\begin{lemma} \label{LEM_Canonical_Extension_Functor}
		Let $\epsilon < \aleph_1$.
		Assume
		$
			\seq{\kappa^\alpha_\bullet}{0 < \alpha \leq \epsilon}
		$
		is an increasing sequence of functors and let $\kappa^{\epsilon + 1}_\bullet$ be the canonical extension.
		Then
		$
			\seq{\kappa^\alpha_\bullet}{0 < \alpha \leq \epsilon + 1}
		$
		is an increasing sequence of functors.
	\end{lemma}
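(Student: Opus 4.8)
The plan is to transcribe the inductive scheme used for the $\Gamma$-actions in Lemma~\ref{LEM_Canonical_Extension}, adjusting for the fact that $\kappa^\epsilon_\mathbf{x}$ does not fix the parameters $\dot{\T}^{\Phi,\eta}_\epsilon$ but relabels them according to $g$. First I would check that for an isomorphism $\mathbf{x}=(g,\set{h^\theta}{\theta\in\dom(\Phi)})$ from $\Phi$ to $\Psi$, the map $\kappa^{\epsilon+1}_\mathbf{x}$ of Definition~\ref{DEF_Canonical_Extension_Functor} is a well-defined order isomorphism $\PP^\Phi_{\epsilon+1}\to\PP^\Psi_{\epsilon+1}$. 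The only point is that $\kappa^\epsilon_\mathbf{x}(p(\epsilon))$, read as a $\PP^\Psi_\epsilon$-name, is forced to lie in $\dot{\QQ}^\Psi_\epsilon$. Since $\dot{\QQ}^\Phi_\epsilon=\prod_{\eta\in\dom(\Phi)}\TT(\dot{\T}^{\Phi,\eta}_\epsilon)$ together with its order is definable from the sequence $\seq{\dot{\T}^{\Phi,\eta}_\epsilon}{\eta\in\dom(\Phi)}$, and since by the tree-preservation clause of the inductive hypothesis $\kappa^\epsilon_\mathbf{x}(\dot{\T}^{\Phi,\eta}_\epsilon)=\dot{\T}^{\Psi,g(\eta)}_\epsilon$ for every $\eta$, applying $\kappa^\epsilon_\mathbf{x}$ carries $\dot{\QQ}^\Phi_\epsilon$ onto $\prod_{\zeta\in\dom(\Psi)}\TT(\dot{\T}^{\Psi,\zeta}_\epsilon)=\dot{\QQ}^\Psi_\epsilon$, with its order, the product coordinate being relabelled by $g$. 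Order preservation and bijectivity of $\kappa^{\epsilon+1}_\mathbf{x}$ then follow coordinatewise from the inductive hypothesis on $\kappa^\epsilon_\bullet$, the inverse being $\kappa^{\epsilon+1}_{\mathbf{x}^{-1}}$.

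Next I would verify the three defining clauses of an increasing sequence of functors (Definition~\ref{DEF_Increasing_Sequence_Of_Functors}). Functoriality, $\kappa^{\epsilon+1}_{\mathbf{1}_\Phi}=\id$ and $\kappa^{\epsilon+1}_{\mathbf{x}_1\circ\mathbf{x}_0}=\kappa^{\epsilon+1}_{\mathbf{x}_1}\circ\kappa^{\epsilon+1}_{\mathbf{x}_0}$, is immediate from Definition~\ref{DEF_Canonical_Extension_Functor} and functoriality of $\kappa^\epsilon_\bullet$ on conditions and names, exactly as in the group-homomorphism computation of Lemma~\ref{LEM_Canonical_Extension}. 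For naturality it suffices to add the single new square $\epsilon\to\epsilon+1$: for $p\in\PP^\Phi_\epsilon$ one has $\iota^\Phi_{\epsilon,\epsilon+1}(p)=p\concat\mathds{1}$, whence
\[
	\kappa^{\epsilon+1}_\mathbf{x}(\iota^\Phi_{\epsilon,\epsilon+1}(p)) = \kappa^\epsilon_\mathbf{x}(p)\concat\kappa^\epsilon_\mathbf{x}(\mathds{1}) = \kappa^\epsilon_\mathbf{x}(p)\concat\mathds{1} = \iota^\Psi_{\epsilon,\epsilon+1}(\kappa^\epsilon_\mathbf{x}(p));
\]
pasting this onto the inductive squares for $\alpha\leq\beta\leq\epsilon$ and factoring $\iota^\Phi_{\alpha,\epsilon+1}=\iota^\Phi_{\epsilon,\epsilon+1}\circ\iota^\Phi_{\alpha,\epsilon}$ gives naturality for all $\alpha\leq\beta\leq\epsilon+1$. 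For the tree-preservation clause at $\epsilon+1$, recall from Definition~\ref{DEF_Forcing} that $\dot{\T}^{\Phi,\eta}_{\epsilon+1}$ is the canonical name for $\iota^\Phi_{\epsilon,\epsilon+1}(\dot{\T}^{\Phi,\eta}_\epsilon)\cup\set{\dot{T}^{\Phi,\eta}_{\epsilon,n}}{n<\omega}$; by the square just established and the inductive hypothesis, $\kappa^{\epsilon+1}_\mathbf{x}$ sends $\iota^\Phi_{\epsilon,\epsilon+1}(\dot{\T}^{\Phi,\eta}_\epsilon)$ to $\iota^\Psi_{\epsilon,\epsilon+1}(\dot{\T}^{\Psi,g(\eta)}_\epsilon)$, so the remaining task is to show $\kappa^{\epsilon+1}_\mathbf{x}(\dot{T}^{\Phi,\eta}_{\epsilon,n})=\dot{T}^{\Psi,g(\eta)}_{\epsilon,n}$ for each $n<\omega$.

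I expect this last point to be the only genuine obstacle, and it is where the situation departs from Lemma~\ref{LEM_Canonical_Extension}: there these trees are fixed, whereas here they are permuted. The name $\dot{T}^{\Phi,\eta}_{\epsilon,n}$ is the canonical name for the union, over the generic filter, of the finite $n$-tree approximations appearing at the $(\eta,n)$-coordinate of the $\epsilon$-th iterand, i.e.\ it is assembled from check-names for concrete finite trees paired with conditions of $\PP^\Phi_{\epsilon+1}$. Since $\kappa^{\epsilon+1}_\mathbf{x}$ fixes check-names and, as observed in the first paragraph, acts on $\dot{\QQ}^\Phi_\epsilon$ purely by relabelling the product coordinate $\eta\mapsto g(\eta)$ while leaving the concrete finite-tree part of each condition in $\TT(\cdot)$ untouched, it carries the name for the $n$-th new tree at coordinate $\eta$ to the name for the $n$-th new tree at coordinate $g(\eta)$, namely $\dot{T}^{\Psi,g(\eta)}_{\epsilon,n}$. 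Making precise that conditions of $\TT(\T)$ carry their $n$-tree component as a ground-model object unaffected by $\kappa^\epsilon_\mathbf{x}$ is the one calculation demanding care; the rest is a direct transcription of the arguments for Lemmas~\ref{LEM_Canonical_Extension} and~\ref{LEM_Limits_Of_Increasing_Sequences_Are_Unique}.
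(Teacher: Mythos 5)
Your proposal is correct and follows exactly the route the paper intends: the paper omits this proof entirely, stating only that it is ``analogous to Lemma~\ref{LEM_Canonical_Extension} and Corollary~\ref{COR_Induced_Sequence_Of_Actions}'', and your argument is precisely that transcription. You also rightly isolate the one place where the analogy is not literal --- the names $\dot{T}^{\Phi,\eta}_{\epsilon,n}$ are permuted (coordinate $\eta$ relabelled to $g(\eta)$, finite-tree parts fixed as check-names) rather than fixed as in the automorphism case --- and your handling of it, together with reading the canonical extension as incorporating the coordinate relabelling of the product $\dot{\QQ}^\Phi_\epsilon$, is the correct reading of Definition~\ref{DEF_Canonical_Extension_Functor}.
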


	\begin{corollary}\label{COR_Induced_Sequence_Of_Functors}
		There is an increasing sequence of functors
		$
			\seq{\kappa^\alpha_\bullet}{0 < \alpha \leq \aleph_1}
		$
		such that $\kappa^{\epsilon + 1}_\bullet$ the canonical extension of $\seq{\kappa^\alpha_\bullet}{0 < \alpha \leq \epsilon}$ for every $\epsilon < \aleph_1$.
		We call this sequence the induced sequence of functors and will reserve the notions $\seq{\kappa^\alpha_\bullet}{0 < \alpha \leq \aleph_1}$ for it.
	\end{corollary}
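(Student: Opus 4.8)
The plan is to follow the proof of Corollary~\ref{COR_Induced_Sequence_Of_Actions} essentially verbatim, with increasing sequences of $\Gamma$-actions replaced by increasing sequences of functors in the sense of Definition~\ref{DEF_Increasing_Sequence_Of_Functors}. Concretely, I would construct the sequence $\seq{\kappa^\alpha_\bullet}{0 < \alpha \leq \aleph_1}$ by transfinite recursion on $\epsilon \leq \aleph_1$, carrying along the inductive hypothesis that $\seq{\kappa^\alpha_\bullet}{0 < \alpha \leq \epsilon}$ is an increasing sequence of functors and that $\kappa^{\delta+1}_\bullet$ is the canonical extension of $\seq{\kappa^\alpha_\bullet}{0 < \alpha \leq \delta}$ for every $\delta < \epsilon$.

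For the base case $\epsilon = 1$, the length-$1$ case has already been established above, using Lemma~\ref{LEM_Initial_Functor} for functoriality and Lemma~\ref{LEM_Initial_Functor_Preserves_Trees} for the preservation of the parameters $\dot{\T}^{\Phi,\theta}$, the naturality square being vacuous at length $1$. At a successor step $\epsilon + 1 < \aleph_1$, I would set $\kappa^{\epsilon+1}_\bullet$ to be the canonical extension of $\seq{\kappa^\alpha_\bullet}{0 < \alpha \leq \epsilon}$ as in Definition~\ref{DEF_Canonical_Extension_Functor}; Lemma~\ref{LEM_Canonical_Extension_Functor} then ensures that $\seq{\kappa^\alpha_\bullet}{0 < \alpha \leq \epsilon+1}$ is again an increasing sequence of functors. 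At a limit $\epsilon \leq \aleph_1$, Proposition~\ref{LEM_Limits_Of_Increasing_Functors_Are_Unique} supplies the unique functor $\kappa^\epsilon_\bullet$ for which $\seq{\kappa^\alpha_\bullet}{0 < \alpha \leq \epsilon}$ is an increasing sequence of functors.

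Since these three cases exhaust all $\epsilon \leq \aleph_1$, the recursion runs to completion and produces the required sequence, and by construction $\kappa^{\epsilon+1}_\bullet$ is the canonical extension of $\seq{\kappa^\alpha_\bullet}{0 < \alpha \leq \epsilon}$ for every $\epsilon < \aleph_1$. I do not expect any genuine obstacle, since all of the real work has been front-loaded into Lemma~\ref{LEM_Canonical_Extension_Functor} and Proposition~\ref{LEM_Limits_Of_Increasing_Functors_Are_Unique}. The only subtlety worth flagging is the bookkeeping: the hypothesis of Lemma~\ref{LEM_Canonical_Extension_Functor} asks for an entire increasing sequence of functors of length $\epsilon$, not merely the single functor $\kappa^\epsilon_\bullet$, so the inductive invariant must be stated about the whole initial segment $\seq{\kappa^\alpha_\bullet}{0 < \alpha \leq \epsilon}$, exactly as in the proof of Corollary~\ref{COR_Induced_Sequence_Of_Actions}.
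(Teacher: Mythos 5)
Your proposal is correct and matches the paper's proof, which likewise builds the sequence by transfinite recursion starting from the length-$1$ case, using Lemma~\ref{LEM_Canonical_Extension_Functor} at successors and Proposition~\ref{LEM_Limits_Of_Increasing_Functors_Are_Unique} at limits, in direct analogy with Corollary~\ref{COR_Induced_Sequence_Of_Actions}. The bookkeeping point you flag (carrying the whole initial segment as the inductive invariant) is exactly the right reading of the hypotheses.
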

	
	\begin{proof}
		We iteratively construct the desired sequence.
		By Lemma~\ref{LEM_Initial_Functor_Preserves_Trees} we may start with $\kappa^\alpha_\bullet$ as in Definition~\ref{DEF_Initial_Functor}, use Lemma~\ref{LEM_Canonical_Extension_Functor} for the successor step and Lemma~\ref{LEM_Limits_Of_Increasing_Functors_Are_Unique} for the limit step.
	\end{proof}

	The final ingredient we will need for the proof of Main Theorem~\ref{THM_AT_Spectrum} is a notion of restriction for isomorphisms between $\Theta$-indexing functions.
	We also show inductively that our increasing sequence of functors in Corollary~\ref{COR_Induced_Sequence_Of_Functors} maps restrictions to restrictions.

	\begin{definition}\label{DEF_Restriction}
		Let $\Phi, \Psi$ be $\Theta$-indexing functions, $\mathbf{x} = (g, \set{h^{\theta}}{\theta \in \dom(\Phi)})$ is an isomorphism from $\Phi$ to $\Psi$ and $\Phi_0 \subseteq \Phi$ a $\Theta$-subindexing function.
		Then, we define the image of $\Phi_0$ under $\mathbf{x}$ denoted by $\mathbf{x}[\Phi_0]$ as the $\Theta$-subindexing function of $\Psi$ defined by $\dom(\mathbf{x}[\Phi_0]) := g(\dom(\Phi_0))$ and for $\theta \in \dom(\mathbf{x}[\Phi_0])$ by
		$$
		\mathbf{x}[\Phi_0](\theta) := \set{h^{g^{-1}(\theta)}(i)}{ i \in \Phi_0(g^{-1}(\theta))}.
		$$
		The restriction of $\mathbf{x}$ to $\Phi_0$ denoted by $\mathbf{x} \restr \Phi_0$ is the isomorphism from $\Phi_0$ to $\mathbf{x}[\Phi_0]$ is defined by
		$$
		\mathbf{x} \restr \Phi_0 := (g \restr \dom(\Phi_0), \set{h^{\theta} \restr \Phi_0(\theta)}{\theta \in \dom(\Phi_0)}).
		$$
	\end{definition}
	
	\begin{lemma}\label{LEM_Restriction_Maps_To_Restriction}
		Let $\Phi, \Psi$ be $\Theta$-indexing functions, $\mathbf{x} = (g, \set{h^{\theta}}{\theta \in \dom(\Phi)})$ is an isomorphism from $\Phi$ to $\Psi$ and $\Phi_0 \subseteq \Phi$ a $\Theta$-subindexing function.
		Set $\Psi_0 := \mathbf{x}[\Phi_0]$.
		Then, the following diagram commutes
		\begin{center}
			\begin{tikzcd}
				\CC^{\Phi_0} \arrow[r, "\kappa_{\mathbf{x} \restr \Phi_0}"] \arrow[d, "{\iota^{\Phi_0, \Phi}}"] & \CC^{\Psi_0} \arrow[d, "{\iota^{\Psi_0, \Psi}}"] \\
				\CC^\Phi \arrow[r, "\kappa_{\mathbf{x}}"]                                                       & \CC^{\Psi}                                      
			\end{tikzcd}
		\end{center}
	\end{lemma}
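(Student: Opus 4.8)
The plan is to establish the asserted commutativity as an equality of maps $\CC^{\Phi_0}\to\CC^\Psi$, namely $\iota^{\Psi_0,\Psi}\circ\kappa_{\mathbf{x}\restr\Phi_0}=\kappa_{\mathbf{x}}\circ\iota^{\Phi_0,\Phi}$, by evaluating both composites coordinatewise. First I would recall from Remark~\ref{REM_Strong_Projection} that $\iota^{\Phi_0,\Phi}$ is nothing but the inclusion of partial functions: the indexing pairs of $\Phi_0$ form a subset of those of $\Phi$, and a condition $p\in\CC^{\Phi_0}$ is regarded verbatim as a condition of $\CC^\Phi$; the same holds for $\iota^{\Psi_0,\Psi}$. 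Thus it suffices to fix $p\in\CC^{\Phi_0}$ and a pair $(\theta,i)$ with $\theta\in\dom(\Psi)$ and $i\in\Psi(\theta)$, and to check that $\iota^{\Psi_0,\Psi}(\kappa_{\mathbf{x}\restr\Phi_0}(p))$ and $\kappa_{\mathbf{x}}(\iota^{\Phi_0,\Phi}(p))$ have the same value at $(\theta,i)$, where ``same value'' also covers the case that $(\theta,i)$ lies outside the support of both sides. Finiteness of the resulting supports is automatic, as all the reindexing maps involved are bijections.

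Next I would unwind the definitions. By Definition~\ref{DEF_Initial_Functor}, $\kappa_{\mathbf{x}}(\iota^{\Phi_0,\Phi}(p))(\theta,i)=p\bigl(g^{-1}(\theta),(h^{g^{-1}(\theta)})^{-1}(i)\bigr)$ precisely when the pair $\bigl(g^{-1}(\theta),(h^{g^{-1}(\theta)})^{-1}(i)\bigr)$ belongs to the indexing of $\Phi_0$, and it is undefined otherwise. On the other side, since $\mathbf{x}\restr\Phi_0=(g\restr\dom(\Phi_0),\{h^\theta\restr\Phi_0(\theta)\})$ by Definition~\ref{DEF_Restriction}, the value $\kappa_{\mathbf{x}\restr\Phi_0}(p)(\theta,i)$ equals that same expression $p\bigl(g^{-1}(\theta),(h^{g^{-1}(\theta)})^{-1}(i)\bigr)$ whenever $\theta\in\dom(\Psi_0)$ and $i\in\Psi_0(\theta)$, and the outer inclusion $\iota^{\Psi_0,\Psi}$ leaves it unchanged, while declaring it undefined when $(\theta,i)$ is not in the indexing of $\Psi_0$. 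So the values, where both are defined, visibly coincide, and everything reduces to comparing the two ``defined'' regions.

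This comparison is the only point requiring more than mechanical bookkeeping, and it is immediate from Definition~\ref{DEF_Restriction}: one needs
\[
g^{-1}(\theta)\in\dom(\Phi_0)\text{ and }(h^{g^{-1}(\theta)})^{-1}(i)\in\Phi_0(g^{-1}(\theta))
\iff
\theta\in\dom(\Psi_0)\text{ and }i\in\Psi_0(\theta),
\]
which holds because $\dom(\Psi_0)=g[\dom(\Phi_0)]$ gives the first equivalence, and for such $\theta$ the definition $\Psi_0(\theta)=\set{h^{g^{-1}(\theta)}(j)}{j\in\Phi_0(g^{-1}(\theta))}$ gives the second. Hence the supports agree and the values agree on the common support, so the two composites are equal. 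I do not expect a genuine obstacle here; the statement is essentially a definitional unwinding, and the only care needed is to keep track of the restricted domains of $g$ and of the maps $h^\theta$ so that the defined-region comparison is carried out correctly. (An entirely analogous coordinatewise verification, performed at each stage of the iteration and using Lemma~\ref{LEM_Funtor_Commutes_With_Gamma} to handle the $\dot{\T}$-parameters, would then upgrade this to the corresponding statement for $\kappa^\alpha_\bullet$ and the embeddings $\iota^{\Phi_0,\Phi}_\alpha$ of Section~\ref{SEC_Complete_Embeddings}, should that be needed downstream.)
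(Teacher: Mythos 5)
Your proposal is correct and follows essentially the same route as the paper: a coordinatewise evaluation at each pair $(\theta,i)$, splitting into the case where $(\theta,i)$ lies in the indexing of $\Psi_0$ (where both composites give $p(g^{-1}(\theta),(h^{g^{-1}(\theta)})^{-1}(i))$) and the complementary case (where both sides are trivial), with the defined-region equivalence coming directly from Definition~\ref{DEF_Restriction}. The only difference is cosmetic: the paper writes the trivial value as $\mathds{1}$ rather than speaking of the condition being undefined outside its support.
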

	
	\begin{proof}
		Let $p \in \CC^{\Phi_0}$, $\theta \in \dom(\Psi)$ and $i \in \Psi(\theta)$.
		If $\theta \in \dom(\Psi_0)$ and $i \in \Psi_0(\theta)$, we compute
		\begin{align*}
			\iota^{\Psi_0, \Psi}(\kappa_{\mathbf{\mathbf{x} \restr \Phi_0}}(p))(\theta, i)
			&= \kappa_{\mathbf{\mathbf{x} \restr \Phi_0}}(p)(\theta, i) && (i \in \Psi_0(\theta))\\
			&= p(g^{-1}(\theta), (h^{g^{-1}(\theta)})^{-1}(i)) && (\text{Definition~\ref{DEF_Initial_Functor}})\\
			&= \iota^{\Phi_0, \Phi}(p)(g^{-1}(\theta), (h^{g^{-1}(\theta)})^{-1}(i)) && ((h^{g^{-1}(\theta)})^{-1}(i) \in \Phi_0(\theta))\\
			&= \kappa_{\mathbf{x}}(\iota^{\Phi_0, \Phi}(p))(\theta, i) && (\text{Definition~\ref{DEF_Initial_Functor}})
		\end{align*}
		Otherwise, $\theta \in \dom(\Psi) \setminus \dom(\Psi_0)$ or $i \in \Psi(\theta) \setminus \Psi_0(\theta)$.
		Then, we have $g^{-1}(\theta) \in \dom(\Phi) \setminus \dom(\Phi_0)$ or $(h^{g^{-1}(\theta)})^{-1}(i) \in \Phi(\theta) \setminus \Phi_0(\theta)$, respectively.
		Then, we compute
		\begin{align*}
			\iota^{\Psi_0, \Psi}(\kappa_{\mathbf{\mathbf{x} \restr \Phi_0}}(p))(\theta, i)
			&= \mathds{1}\\
			&= \iota^{\Phi_0, \Phi}(p)(g^{-1}(\theta), (h^{g^{-1}(\theta)})^{-1}(i))\\
			&= \kappa_{\mathbf{x}}(\iota^{\Phi_0, \Phi}(p))(\theta, i) && (\text{Definition~\ref{DEF_Initial_Functor}}). \qedhere
		\end{align*}
	\end{proof}

	Inductively, we show that this commutative diagram not only holds for $\kappa^1_\bullet$, but for the entire increasing of functors $\seq{\kappa^\alpha_\bullet}{0 < \alpha \leq \aleph_1}$.

	\begin{lemma}\label{LEM_Restriction_Maps_To_Restriction_Succ}
		Let $\epsilon < \aleph_1$.
		Let $\Phi, \Psi$ be $\Theta$-indexing function, $\mathbf{x} = (g, \set{h^{\theta}}{\theta \in \Theta})$ is an isomorphism from $\Phi$ to $\Psi$ and $\Phi_0 \subseteq \Phi$ a $\Theta$-subindexing function.
		Set $\Psi_0 := \mathbf{x}[\Phi_0]$.
		Then, the following diagram commutes
		\begin{center}
			\begin{tikzcd}
				\PP^{\Phi_0}_{\epsilon + 1} \arrow[r, "\kappa^{\epsilon + 1}_{\mathbf{x} \restr \Phi_0}"] \arrow[d, "{\iota^{\Phi_0, \Phi}_{\epsilon + 1}}"] & \PP^{\Psi_0}_{\epsilon + 1} \arrow[d, "{\iota^{\Psi_0, \Psi}_{\epsilon + 1}}"] \\
				\PP^\Phi_{\epsilon + 1} \arrow[r, "\kappa^{\epsilon + 1}_{\mathbf{x}}"]                                                       & \PP^{\Psi}_{\epsilon + 1}                                     
			\end{tikzcd}
		\end{center}
	\end{lemma}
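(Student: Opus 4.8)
The plan is to treat the statement as the successor step of the induction on $\alpha \le \aleph_1$ announced just before the lemma, the base case $\alpha = 1$ being Lemma~\ref{LEM_Restriction_Maps_To_Restriction} and the limit case following from the universal property of the direct limit exactly as in the limit step of the proof of Theorem~\ref{THM_Complete_Subforcings}. Accordingly I would assume as inductive hypothesis that the corresponding square commutes at stage $\epsilon$, i.e. $\iota^{\Psi_0,\Psi}_{\epsilon}\circ\kappa^{\epsilon}_{\mathbf{x}\restr\Phi_0} = \kappa^{\epsilon}_{\mathbf{x}}\circ\iota^{\Phi_0,\Phi}_{\epsilon}$ as maps $\PP^{\Phi_0}_{\epsilon}\to\PP^{\Psi}_{\epsilon}$. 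I would then note that this automatically upgrades to an identity of the induced maps on $\PP^{\Phi_0}_{\epsilon}$-names: both composites extend to names by the same coordinatewise recursion, so an $\in$-induction on names reduces the name-level claim to the already-assumed condition-level claim.

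First I would fix $p \in \PP^{\Phi_0}_{\epsilon+1}$ and simply unfold the two definitions. By Definition~\ref{DEF_Canonical_Extension_Functor} one has $\kappa^{\epsilon+1}_{\mathbf{x}\restr\Phi_0}(p) = \kappa^{\epsilon}_{\mathbf{x}\restr\Phi_0}(p\restr\epsilon)\concat\kappa^{\epsilon}_{\mathbf{x}\restr\Phi_0}(p(\epsilon))$, and the embeddings $\iota^{\Psi_0,\Psi}_{\epsilon+1}$, $\iota^{\Phi_0,\Phi}_{\epsilon+1}$, $\kappa^{\epsilon+1}_{\mathbf{x}}$ are defined by the analogous coordinatewise rule at $\epsilon$ (cf.\ the successor step in the proof of Theorem~\ref{THM_Complete_Subforcings}). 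Applying $\iota^{\Psi_0,\Psi}_{\epsilon+1}$, distributing it over the two $\concat$-coordinates, invoking the inductive hypothesis once on the condition $p\restr\epsilon\in\PP^{\Phi_0}_{\epsilon}$ and once on the name $p(\epsilon)$ for an element of $\dot{\QQ}^{\Phi_0}_{\epsilon}$, and then recombining via the definitions of $\iota^{\Phi_0,\Phi}_{\epsilon+1}$ and $\kappa^{\epsilon+1}_{\mathbf{x}}$ yields $\iota^{\Psi_0,\Psi}_{\epsilon+1}(\kappa^{\epsilon+1}_{\mathbf{x}\restr\Phi_0}(p)) = \kappa^{\epsilon+1}_{\mathbf{x}}(\iota^{\Phi_0,\Phi}_{\epsilon+1}(p))$. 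This is formally the same diagram chase as the verification of~(B) at successor steps in the proof of Theorem~\ref{THM_Complete_Subforcings} and as Lemma~\ref{LEM_Canonical_Extension_Functor}, so I would not spell out every line.

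I do not expect a genuine obstacle here: the real content — that restriction is compatible with the embeddings, the transported $\Gamma$-actions, and the tree families — was already discharged in the earlier lemmas, and here it is merely propagated one step up the iteration. The only points worth an explicit word are bookkeeping. First, one should observe that the defining recursion for $\iota^{\Phi,\Psi}_{\bullet}$ makes sense for arbitrary $\Theta$-subindexing functions $\Phi\subseteq\Psi$ — countability of $\Phi$ in Theorem~\ref{THM_Complete_Subforcings} was used only to manufacture reductions, not to define the maps — so all four arrows of the square are legitimately defined even though $\Phi_0$ and $\Phi$ need not be countable. Secondly, one should check that $\kappa^{\epsilon+1}_{\mathbf{x}\restr\Phi_0}(p)$ and $\iota^{\Phi_0,\Phi}_{\epsilon+1}(p)$ really land in $\PP^{\Psi_0}_{\epsilon+1}$ and $\PP^{\Phi}_{\epsilon+1}$ respectively, which is exactly the well-definedness already established (Lemma~\ref{LEM_Canonical_Extension_Functor} and the successor step of Theorem~\ref{THM_Complete_Subforcings}, via property~(E) and the fact, built into Definition~\ref{DEF_Increasing_Sequence_Of_Functors}, that the functors $\kappa^{\bullet}$ send the tree-family names to tree-family names). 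With these remarks in place the argument is a short, purely formal computation, and I expect the hardest part to be nothing more than keeping the indices $\Phi_0,\Phi,\Psi_0,\Psi$ straight.
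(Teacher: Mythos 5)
Your proposal is correct and matches the paper's proof: the paper likewise fixes $p \in \PP^{\Phi_0}_{\epsilon+1}$, unfolds $\kappa^{\epsilon+1}_{\mathbf{x}\restr\Phi_0}$ and the successor-step embeddings coordinatewise via Definition~\ref{DEF_Canonical_Extension_Functor}, applies the inductive hypothesis once to the condition $p\restr\epsilon$ and once to the name $p(\epsilon)$, and recombines. Your explicit remark that the stage-$\epsilon$ hypothesis must be upgraded from conditions to names is a point the paper passes over silently under the label ``(induction)'', so it is a welcome clarification rather than a deviation.
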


	\begin{proof}
		Let $p \in \PP^{\Phi_0}_{\epsilon + 1}$.
		Then, we compute
		\begin{align*}
			\iota^{\Psi_0, \Psi}_{\epsilon + 1}(\kappa^{\epsilon + 1}_{\mathbf{x} \restr \Phi_0}(p))
			&= \iota^{\Psi_0, \Psi}_\epsilon(\kappa^{\epsilon + 1}_{\mathbf{x} \restr \Phi_0}(p) \restr \epsilon) \concat \iota^{\Psi_0, \Psi}_\epsilon(\kappa^{\epsilon + 1}_{\mathbf{x} \restr \Phi_0}(p)(\epsilon)) && (\text{see Section~\ref{SEC_Complete_Embeddings}})\\
			&= \iota^{\Psi_0, \Psi}_\epsilon(\kappa^{\epsilon}_{\mathbf{x} \restr \Phi_0}(p \restr \epsilon)) \concat \iota^{\Psi_0, \Psi}_\epsilon(\kappa_\mathbf{x}^\epsilon(p(\epsilon))) && (\text{Definition~\ref{DEF_Canonical_Extension_Functor}})\\
			&= \kappa^{\epsilon}_{\mathbf{x}}(\iota^{\Phi_0, \Phi}_{\epsilon}(p \restr \epsilon)) \concat \kappa_\mathbf{x}^\epsilon(\iota^{\Phi_0, \Phi}_\epsilon(p(\epsilon))) && (\text{induction})\\
			&= \kappa^{\epsilon}_{\mathbf{x}}(\iota^{\Phi_0, \Phi}_{\epsilon + 1}(p) \restr \epsilon) \concat \kappa_\mathbf{x}^\epsilon(\iota^{\Phi_0, \Phi}_{\epsilon + 1}(p)(\epsilon)) && (\text{see Section~\ref{SEC_Complete_Embeddings}})\\
			&= \kappa^{\epsilon + 1}_{\mathbf{x}}(\iota^{\Phi_0, \Phi}_{\epsilon + 1}(p))  && (\text{Definition~\ref{DEF_Canonical_Extension_Functor}}). \qedhere
		\end{align*}
	\end{proof}

	\begin{lemma}\label{LEM_Restriction_Maps_To_Restriction_Lim}
		Let $\epsilon \leq \aleph_1$ be a limit.
		Let $\Phi, \Psi$ be $\Theta$-indexing function, $\mathbf{x} = (g, \set{h^{\theta}}{\theta \in \Theta})$ is an isomorphism from $\Phi$ to $\Psi$ and $\Phi_0 \subseteq \Phi$ a $\Theta$-subindexing function.
		Set $\Psi_0 := \mathbf{x}[\Phi_0]$.
		Then, the following diagram commutes
		\begin{center}
			\begin{tikzcd}
				\PP^{\Phi_0}_{\epsilon} \arrow[r, "\kappa^{\epsilon}_{\mathbf{x} \restr \Phi_0}"] \arrow[d, "{\iota^{\Phi_0, \Phi}_{\epsilon}}"] & \PP^{\Psi_0}_{\epsilon} \arrow[d, "{\iota^{\Psi_0, \Psi}_{\epsilon}}"] \\
				\PP^\Phi_{\epsilon} \arrow[r, "\kappa^{\epsilon}_{\mathbf{x}}"]                                                       & \PP^{\Psi}_{\epsilon}                                     
			\end{tikzcd}
		\end{center}
	\end{lemma}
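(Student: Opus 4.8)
The plan is to run a single induction on $\alpha$ with $0 < \alpha \leq \aleph_1$, proving for every pair of $\Theta$-indexing functions $\Phi,\Psi$, every isomorphism $\mathbf{x}$ from $\Phi$ to $\Psi$ and every $\Theta$-subindexing function $\Phi_0 \subseteq \Phi$ (with $\Psi_0 := \mathbf{x}[\Phi_0]$) that
$$
	\iota^{\Psi_0,\Psi}_\alpha \circ \kappa^\alpha_{\mathbf{x}\restr\Phi_0} = \kappa^\alpha_{\mathbf{x}} \circ \iota^{\Phi_0,\Phi}_\alpha .
$$
The base case $\alpha = 1$ is Lemma~\ref{LEM_Restriction_Maps_To_Restriction}, the successor case is Lemma~\ref{LEM_Restriction_Maps_To_Restriction_Succ}, and the limit case is the present lemma; so it suffices to treat a limit $\epsilon \leq \aleph_1$ assuming the displayed identity for every $\beta < \epsilon$. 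Note also that $\mathbf{x}\restr\Phi_0$ is an isomorphism from $\Phi_0$ to $\Psi_0$ by Definition~\ref{DEF_Restriction}, and that $\kappa^\beta_{\mathbf{x}\restr\Phi_0}$, being a component of a functor, maps $\PP^{\Phi_0}_\beta$ into $\PP^{\Psi_0}_\beta$, so every composite written below is well-typed.

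For the limit case, fix $p \in \PP^{\Phi_0}_{\epsilon}$. Since $\PP^{\Phi_0}_{\epsilon}$ is a finite support iteration, hence the direct limit of the $\PP^{\Phi_0}_\beta$ for $\beta < \epsilon$, I would choose $\beta < \epsilon$ with $\iota^{\Phi_0}_{\beta,\epsilon}(p\restr\beta) = p$ and then perform a formal diagram chase, alternating two commuting squares available from the earlier sections: the naturality of $\iota^\bullet_{\beta,\epsilon}$ as a transformation from $\kappa^\beta_\bullet$ to $\kappa^\epsilon_\bullet$ (last clause of Definition~\ref{DEF_Increasing_Sequence_Of_Functors}), applied once to $\mathbf{x}\restr\Phi_0$ and once to $\mathbf{x}$; and property~(A) of the subindexing embeddings from Section~\ref{SEC_Complete_Embeddings}, applied once to $\Psi_0 \subseteq \Psi$ and once to $\Phi_0 \subseteq \Phi$ (this is exactly the place where one uses that the limit-stage embedding $\iota^{\Phi_0,\Phi}_\epsilon$ was defined as the direct limit of the $\iota^{\Phi_0,\Phi}_\beta$). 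Concretely, the chase runs as
\begin{align*}
	\iota^{\Psi_0,\Psi}_\epsilon(\kappa^\epsilon_{\mathbf{x}\restr\Phi_0}(p))
	&= \iota^{\Psi_0,\Psi}_\epsilon(\kappa^\epsilon_{\mathbf{x}\restr\Phi_0}(\iota^{\Phi_0}_{\beta,\epsilon}(p\restr\beta))) && (\text{choice of } \beta)\\
	&= \iota^{\Psi_0,\Psi}_\epsilon(\iota^{\Psi_0}_{\beta,\epsilon}(\kappa^\beta_{\mathbf{x}\restr\Phi_0}(p\restr\beta))) && (\text{Definition~\ref{DEF_Increasing_Sequence_Of_Functors}})\\
	&= \iota^{\Psi}_{\beta,\epsilon}(\iota^{\Psi_0,\Psi}_\beta(\kappa^\beta_{\mathbf{x}\restr\Phi_0}(p\restr\beta))) && (\text{property (A)})\\
	&= \iota^{\Psi}_{\beta,\epsilon}(\kappa^\beta_{\mathbf{x}}(\iota^{\Phi_0,\Phi}_\beta(p\restr\beta))) && (\text{induction on }\alpha)\\
	&= \kappa^\epsilon_{\mathbf{x}}(\iota^{\Phi}_{\beta,\epsilon}(\iota^{\Phi_0,\Phi}_\beta(p\restr\beta))) && (\text{Definition~\ref{DEF_Increasing_Sequence_Of_Functors}})\\
	&= \kappa^\epsilon_{\mathbf{x}}(\iota^{\Phi_0,\Phi}_\epsilon(\iota^{\Phi_0}_{\beta,\epsilon}(p\restr\beta))) && (\text{property (A)})\\
	&= \kappa^\epsilon_{\mathbf{x}}(\iota^{\Phi_0,\Phi}_\epsilon(p)) && (\text{choice of } \beta),
\end{align*}
which is the desired identity at $\epsilon$.

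I do not expect a genuine obstacle here: the argument is purely a diagram chase, closely parallel to the limit steps in Lemma~\ref{LEM_Limits_Of_Increasing_Sequences_Are_Unique}, Lemma~\ref{LEM_Ds_Fixed_Under_Action} and the limit case of Theorem~\ref{THM_Complete_Subforcings}. The one point that requires care is bookkeeping: keeping straight which of the two commuting squares (naturality of the $\iota^\bullet_{\beta,\epsilon}$ versus property~(A) for the subindexing embeddings) is being invoked, and at which indexing functions, since both squares look superficially alike; and making sure that the single $\beta < \epsilon$ chosen via finite support is used consistently on both the $\Phi$- and the $\Psi$-side. Everything else — that $\Psi_0 = \mathbf{x}[\Phi_0]$, that $\mathbf{x}\restr\Phi_0 : \Phi_0 \to \Psi_0$ is an isomorphism, and that all intermediate objects land in the intended posets — is immediate from the definitions of Section~\ref{SEC_Extending_Isomorphisms}.
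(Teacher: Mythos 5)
Your proposal is correct and is essentially identical to the paper's proof: the same choice of $\beta<\epsilon$ via finite support, followed by the same seven-step chase alternating the naturality square from Definition~\ref{DEF_Increasing_Sequence_Of_Functors} with property~(A) of Section~\ref{SEC_Complete_Embeddings} and the inductive hypothesis at stage $\beta$. No differences worth noting.
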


	\begin{proof}
		Let $p \in \PP^{\Phi_0}_\epsilon$.
		Choose $\alpha < \epsilon$ such that $\iota^{\Phi_0}_{\alpha, \epsilon}(p \restr \alpha) = p$.
		Then, we compute
		\begin{align*}
			\iota^{\Psi_0, \Psi}_{\epsilon}(\kappa^{\epsilon}_{\mathbf{x} \restr \Phi_0}(p))
			&= \iota^{\Psi_0, \Psi}_{\epsilon}(\kappa^{\epsilon}_{\mathbf{x} \restr \Phi_0}(\iota^{\Phi_0}_{\alpha, \epsilon}(p \restr \alpha))) && (\text{choice of } \alpha)\\
			&= \iota^{\Psi_0, \Psi}_{\epsilon}(\iota^{\Psi_0}_{\alpha, \epsilon}(\kappa^{\alpha}_{\mathbf{x} \restr \Phi_0}(p \restr \alpha))) && (\text{Definition~\ref{DEF_Increasing_Sequence_Of_Functors}})\\
			&= \iota^{\Psi}_{\alpha, \epsilon}(\iota^{\Psi_0, \Psi}_{\alpha}(\kappa^{\alpha}_{\mathbf{x} \restr \Phi_0}(p \restr \alpha))) && (\text{(A) in Section~\ref{SEC_Complete_Embeddings}})\\
			&= \iota^{\Psi}_{\alpha, \epsilon}(\kappa^{\alpha}_{\mathbf{x}}(\iota^{\Phi_0, \Phi}_{\alpha}(p \restr \alpha))) && (\text{induction})\\
			&= \kappa^{\epsilon}_{\mathbf{x}}(\iota^{\Phi}_{\alpha, \epsilon}(\iota^{\Phi_0, \Phi}_{\alpha}(p \restr \alpha))) && (\text{Definition~\ref{DEF_Increasing_Sequence_Of_Functors}})\\
			&= \kappa^{\epsilon}_{\mathbf{x}}(\iota^{\Phi_0, \Phi}_{\epsilon}(\iota^{\Phi_0}_{\alpha, \epsilon}(p \restr \alpha))) && (\text{(A) in Section~\ref{SEC_Complete_Embeddings}})\\
			&= \kappa^{\epsilon}_{\mathbf{x}}(\iota^{\Phi_0, \Phi}_{\epsilon}(p)) && (\text{choice of } \alpha).\qedhere
		\end{align*}
	\end{proof}
	
	\section{Proof of the Main Theorem} \label{SEC_Proof}
	
	Finally, we prove the our Main Theorem~\ref{THM_AT_Spectrum}.
	The main part of the proof is an isomorphism-of-names argument to exclude values from $\spec(\aT)$.
	For similar arguments, also see \cite{Brian_2021}, \cite{ShelahSpinas_2015}.
	
	\setcounter{section}{3}
	\begin{maintheorem}
		Assume $\sf{GCH}$ and let $\Theta$ be a set of uncountable cardinals such that
		\begin{enumerate}[\normalfont (I)]
			\item $\max(\Theta)$ exists and has uncountable cofinality,
			\item $\Theta$ is closed under singular limits,
			\item If $\theta \in \Theta$ with $\cof(\theta) = \omega$, then $\theta^+ \in \Theta$,
			\item $\aleph_1 \in \Theta$.
		\end{enumerate}
		Then, there is a c.c.c.\ forcing extension in which $\spec(\aT) = \Theta$ holds.
	\end{maintheorem}
\setcounter{section}{9}
	
	\begin{proof}
		For technical reasons we assume that $\max(\Theta)$ appears $\max(\Theta)$ many times in $\Theta$, so that $\Theta$ has size $\max(\Theta)$ and we add $\max(\Theta)$ many partitions of $\cantorspace$ into $F_\sigma$-sets of size $\max(\Theta)$.
		Let $\Psi$ be the $\Theta$-indexing function defined by $\Psi(\theta) := \theta$ for every $\theta \in \Theta$.
		We show that $\PP^\Psi_{\aleph_1} \forces \spec(\aT) = \Theta$.
		Since $\PP_{\aleph_1}^\Psi$ is c.c.c.\ no cardinals are collapsed and since $\left|\PP_{\aleph_1}^\Psi\right| = \max(\Theta)$ and $\max(\Theta)^{\aleph_0} = \max(\Theta)$ we have $\PP_{\aleph_1}^\Psi \forces \c = \max(\Theta)$.
		Further, as in Lemma~\ref{LEM_Realize_Witness} we have
		$$
			\PP^\Psi_{\aleph_1} \forces \Theta \subseteq \spec(\aT),
		$$
		so we only have to prove the reverse inclusion.
		Let $\lambda \notin \Theta$, $p \in \PP_{\aleph_1}^\Psi$ and $\seq{\dot{T}_\alpha}{\alpha < \lambda}$ be a family of $\PP_{\aleph_1}^\Psi$-names such that
		$$
			p \forces \seq{\dot{T}_\alpha}{\alpha < \lambda} \text{ is an almost disjoint family trees}.
		$$
		Since trees can be coded by reals we may assume that $\dot{T}_\alpha$ is a nice $\PP^\Psi_{\aleph_1}$-name as in Definition~\ref{DEF_Nice_Name_Real}.
		By assumption on $\Theta$ and {\sf{GCH}} there is a regular uncountable cardinal $\sigma \leq \lambda$ with $[\mu, \lambda] \cap \Theta = \emptyset$ and such that for all $\mu < \sigma$ we have $\mu^{\aleph_0} < \sigma$.
		Now, fix $\alpha < \lambda$.
		We define $\Theta_\alpha := \hsupp_\Theta(\dot{T}_\alpha)$, $D_\alpha := \hsupp(\dot{T}_\alpha)$ and for every $\theta \in \Theta$ let $D_\alpha(\theta) := D_\alpha \cap (\simpleset{\theta} \times V)$ (see Definition~\ref{DEF_Hereditary_Support}).
		Then, $\seq{\Theta_\alpha}{\alpha < \sigma}$ satisfies the assumptions of the generalized $\Delta$-system lemma:
		\begin{enumerate}[$\bullet$]
			\item $\seq{\Theta_\alpha}{\alpha < \sigma}$ is a family of size $\sigma$,
			\item $\left|\Theta_\alpha\right| < \aleph_1$ for all $\alpha < \sigma$,
			\item $\aleph_1 < \sigma$ and for all $\mu < \sigma$ we have $\mu^{<\aleph_1} = \mu^{\aleph_0} < \sigma$.
		\end{enumerate}
		Choose $I_0 \in [\sigma]^\sigma$ and $\Theta_R$ such that $\set{\Theta_\alpha}{\alpha \in I_0}$ is a $\Delta$-system lemma with root $\Theta_R$.
		Since $\left|\Theta\right| = \max(\Theta) > \sigma$, we may assume that we extended every $\Theta_\alpha$ for $\alpha \in I_0$ such that
		\begin{enumerate}
			\item $\Theta_\alpha$ is still countable and $\set{\Theta_\alpha}{\alpha \in I_0}$ is still a $\Delta$-system with root $\Theta_R$,
			\item For every $\alpha \in I_0$ we have $\left|\Theta_\alpha \setminus \Theta_R \right| = \aleph_0$.
		\end{enumerate}
		Next, also $\set{D_\alpha}{\alpha \in I_0}$ satisfies the assumptions of the generalized $\Delta$-system lemma:
		\begin{enumerate}[$\bullet$]
			\item $\set{D_\alpha}{\alpha \in I_0}$ is a family of size $\sigma$,
			\item $\left|D_\alpha\right| < \aleph_1$ for all $\alpha \in I_0$,
			\item $\aleph_1 < \sigma$ and for all $\mu < \sigma$ we have $\mu^{<\aleph_1} = \mu^{\aleph_0} < \sigma$.
		\end{enumerate}
		Choose $I_1 \in [I_0]^\sigma$ and $R$ such that $\set{D_\alpha}{\alpha \in I_1}$ is a $\Delta$-system lemma with root $R$.
		For every $\theta \in \Theta$ let $R(\theta) := R \cap (\simpleset{\theta} \times V)$.
		For every $\theta > \sigma$ we have $\left|\Psi(\theta)\right| > \sigma$, so we may assume that we extended every $D_\alpha$ for $\alpha \in I_1$ such that
		\begin{enumerate}\setcounter{enumi}{2}
			\item $D_\alpha$ is still countable and $\set{D_\alpha}{\alpha \in I_1}$ is still a $\Delta$-system with root $R$,
			\item For every $\alpha \in I_1$ and $\theta \in \Theta_R$ with $\theta > \sigma$ we have $\left|D_\alpha(\theta) \setminus R(\theta) \right| = \aleph_0$,
			\item For every $\alpha \in I_1$ and $\theta \in \Theta_\alpha \setminus \Theta_R$ we have $\left|D_\alpha(\theta)\right| = \aleph_0$.
		\end{enumerate}
		Now, set $I_2 := \set{\alpha \in I_1}{\text{For all } \theta \in \Theta_R \text{ with } \theta < \sigma \text{ we have } D_\alpha(\theta) \subseteq R(\theta)}$.
		Then, $I_2 \in [I_1]^\sigma$ as for every $\theta \in \Theta_R$ with $\theta < \sigma$ there are only $<\!\!\sigma$-many $\alpha \in I_1$ with $D_\alpha(\theta) \setminus R(\theta) \neq \emptyset$, since $\left|\Psi(\theta)\right| = \theta$ and $\set{D_\alpha}{\alpha \in I_1}$ is a $\Delta$-system of size $\sigma > \theta$.
		Thus, we obtain
		\begin{enumerate}\setcounter{enumi}{5}
			\item For every $\alpha \in I_2$ and $\theta \in \Theta_R$ with $\theta < \sigma$ we have $D_\alpha(\theta) = R(\theta)$.
		\end{enumerate}
		We extend our $\Delta$-system by one more element as follows.
		Choose $\Theta_\lambda \subseteq \Theta$ countable such that $\Theta_R \subseteq \Theta_\lambda$, $\left|\Theta_\lambda \setminus \Theta_R\right| = \aleph_0$ and for all $\alpha < \lambda$ we have $\Theta_\lambda \cap \Theta_\alpha = \Theta_R$.
		This is possible since $\left|\Theta\right| = \max(\Theta) > \lambda$.
		Now, for $\theta \in \Theta$ we define $D_\lambda(\theta)$ as follows:
		\begin{enumerate}[$\bullet$]
			\item If $\theta \in \Theta_R$ and $\theta < \sigma$ define $D_\lambda(\theta) := R(\theta)$,
			\item If $\theta \in \Theta_R$ and $\theta > \sigma$ we have $\left|\Psi(\theta)\right| = \theta > \lambda$, so choose $D_{\lambda}(\theta) \subseteq (\simpleset{\theta} \times \Psi(\theta))$ countable with $R(\theta) \subseteq D_\lambda(\theta)$, $\left|D_{\lambda}(\theta) \setminus R(\theta)\right| = \aleph_0$ and for all $\alpha < \lambda$ we have $D_\lambda(\theta) \cap D_{\alpha}(\theta) = R(\theta)$,
			\item If $\theta \in \Theta_\lambda \setminus \Theta_R$ choose any countable subset $D_\lambda(\theta) \subseteq (\simpleset{\theta} \times \Psi(\theta))$,
			\item If $\theta \in \Theta \setminus \Theta_\lambda$ set $D_\lambda(\theta) := \emptyset$.
		\end{enumerate}
		Finally, we define $D_\lambda := \bigcup_{\theta \in \Theta} D_\lambda(\theta)$.
		By choice of $\Theta_\lambda$ we have that $\set{\Theta_\alpha}{\alpha \in I_2 \cup \simpleset{\lambda}}$ is a $\Delta$-system with root $\Theta_R$ and similarly by choice of $D_\lambda$ also $\set{D_\alpha}{\alpha \in I_2 \cup \simpleset{\lambda}}$ is a $\Delta$-system with root $R$ and properties $(1)$ to $(6)$ still hold for every $\alpha \in I_2 \cup \simpleset{\lambda}$.
		Next, we define a $\Theta$-subindexing function $\Phi_R$ of $\Psi$ by $\dom(\Phi_R) := \Theta_R$ and for $\theta \in \Theta_R$ by
		$$
			\Phi_R(\theta) := 	\set{i \in \Psi(\theta)}{(\theta, i) \in R(\theta)}.
		$$	
		Analogously, for every $\alpha \in \lambda \cup \simpleset{\lambda}$ define a $\Theta$-subindexing function $\Phi_\alpha$ of $\Psi$ by $\dom(\Phi_\alpha) := \Theta_\alpha$ and for $\theta \in \Theta_\alpha$ by
		$$
			\Phi_\alpha(\theta) := 	\set{i \in \Psi(\theta)}{(\theta, i) \in D_\alpha(\theta)}.
		$$
		As $\Theta_R$ and $R$ are roots of their respective $\Delta$-system we obtain $\Phi_R \subseteq \Phi_\alpha$ for every $\alpha \in I_2 \cup \simpleset{\lambda}$.
		Since, $\hsupp(\dot{T}_\alpha) \subseteq D_\alpha$ we may pick a nice $\PP^{\Phi_\alpha}_{\aleph_1}$-name $\dot{T}^*_\alpha$ with $\iota_{\aleph_1}^{\Phi_\alpha, \Psi}(\dot{T}^*_\alpha) = \dot{T}_\alpha$.
		Further, by $(2)$ we may fix bijections $\seq{g_\alpha:\Theta_\alpha \to \omega}{\alpha \in I_2 \cup \simpleset{\lambda}}$ such that $g_\alpha \restr \Theta_R = g_\beta \restr\Theta_R$ for all $\alpha, \beta \in I_2 \cup \simpleset{\lambda}$.
		Then, for $\alpha, \beta \in I_2 \cup \simpleset{\lambda}$ we define $g_{\alpha, \beta}:\Theta_\alpha \to \Theta_\beta$ by 
		$$
			g_{\alpha, \beta}(\theta) := g_\beta^{-1}(g_\alpha(\theta)).
		$$
		Note that $\Theta_\alpha \cap \Theta_\beta = \Theta_R$ and $g_\alpha \restr \Theta_R = g_\beta \restr \Theta_R$ implies that
		$$
			g_{\alpha, \beta}(\theta) = g_\beta^{-1}(g_\alpha(\theta)) = \theta
		$$
		for all $\theta \in \Theta_R$ and $\alpha, \beta \in I_2 \cup \simpleset{\lambda}$.
		Hence, it is easy to verify that we obtain a system of bijections $\set{g_{\alpha, \beta}:\Theta_\alpha \to \Theta_\beta}{\alpha, \beta \in I_2 \cup \simpleset{\lambda}}$ with the following properties for all $\alpha, \beta, \gamma \in I_2 \cup \simpleset{\lambda}$:
		\begin{enumerate}[(G1)]
			\item $g_{\alpha, \alpha} = \id_{\Theta_\alpha}$ and $g_{\alpha, \beta}^{-1} = g_{\beta,\alpha}$,
			\item for all $\theta \in \Theta_R$ we have $g_{\alpha, \beta}(\theta) = \theta$,
			\item $g_{\alpha, \gamma} = g_{\beta, \gamma} \circ g_{\alpha, \beta}$.
		\end{enumerate}
		Next, for every $\alpha \in I_2 \cup \simpleset{\lambda}$ and $\theta \in \Theta_\alpha$ we may fix a bijection $h^{\theta}_\alpha:\Phi_\alpha(\theta) \to \omega$ such that for all $\alpha, \beta \in I_2 \cup \simpleset{\lambda}$, $\theta \in \Theta_R$ and $i \in R(\theta)$ we have $h_\alpha^\theta(i) = h^\theta_\beta(i)$.
		This is possible, since by $(4)$ and $(6)$ we have $\left|D_\alpha(\theta) \setminus R(\theta) \right| = \left|D_\beta(\theta) \setminus R(\theta) \right|$ for every $\theta \in \Theta_R$.
		Then, for $\alpha, \beta \in I_2 \cup \simpleset{\lambda}$ and $\theta \in \Theta_\alpha$ we define a map $h_{\alpha, \beta}^{\theta}:\Phi_\alpha(\theta) \to \Phi_\beta(g_{\alpha,\beta}(\theta))$ for $i \in \Phi_\alpha(\theta)$ by
		$$
			h_{\alpha, \beta}^\theta(i) := ((h_{\beta}^{g_{\alpha, \beta}(\theta)})^{-1} \circ h_{\alpha}^\theta)(i).
		$$
		We verify the following properties for all $\alpha, \beta, \gamma \in I_2 \cup \simpleset{\lambda}$ and $\theta \in \Theta_\alpha$:
		\begin{enumerate}[(H1)]
			\item $h^\theta_{\alpha, \alpha} = \id_{\Phi_\alpha(\theta)}$ and the map $h_{\alpha, \beta}^\theta:\Phi_\alpha(\theta) \to \Phi_\beta(g_{\alpha,\beta}(\theta))$ is a bijection with inverse $h_{\beta, \alpha}^{g_{\alpha,\beta}(\theta)}$,
			\item for all $i \in R(\theta)$ we have $h^\theta_{\alpha,\beta}(i) = i$,
			\item $h^\theta_{\alpha, \gamma} = h_{\beta, \gamma}^{g_{\alpha, \beta}(\theta)} \circ h_{\alpha,\beta}^{\theta}$.
		\end{enumerate}
	
		\begin{proof}\
			\begin{enumerate}[(H1)]
				\item Let $i \in \Phi_\alpha(\theta)$.
				Then, $g_{\alpha, \alpha}(\theta) = \theta$ by $(\text{G3})$, so that
				$$
				h^\theta_{\alpha, \alpha}(i) = ((h_{\alpha}^{g_{\alpha, \alpha}(\theta)})^{-1} \circ h_{\alpha}^\theta)(i) = ((h_{\alpha}^{\theta})^{-1} \circ h_{\alpha}^\theta)(i) = i.
				$$
				Next, by definition we have $h_{\beta, \alpha}^{g_{\alpha,\beta}(\theta)}:\Phi_\beta(g_{\alpha,\beta}(\theta)) \to \Phi_\alpha(g_{\beta,\alpha}(g_{\alpha,\beta}(\theta)))$.
				Further,  by $(\text{G1})$ $g_{\beta,\alpha}(g_{\alpha,\beta}(\theta)) = \theta$, so that $h_{\beta, \alpha}^{g_{\alpha,\beta}(\theta)}:\Phi_\beta(g_{\alpha,\beta}(\theta)) \to \Phi_\alpha(\theta)$, so the domains are correct.
				Now, let $i \in \Phi_\alpha(\theta)$.
				Then, we compute
				\begin{align*}
					(h_{\beta, \alpha}^{g_{\alpha,\beta}(\theta)} \circ h_{\alpha, \beta}^\theta)(i)
					&= ((h_{\alpha}^{g_{\beta, \alpha}(g_{\alpha,\beta}(\theta))})^{-1} \circ h_\beta^{g_{\alpha,\beta}(\theta)} \circ (h_{\beta}^{g_{\alpha, \beta}(\theta)})^{-1}  \circ h_{\alpha}^\theta)(i)\\
					&= ((h_{\alpha}^{\theta})^{-1} \circ h_{\alpha}^\theta)(i) = i.
				\end{align*}
				Analogously, for $i \in \Phi_\beta(g_{\alpha,\beta}(\theta))$ we compute
				\begin{align*}
					(h_{\alpha, \beta}^\theta \circ h_{\beta, \alpha}^{g_{\alpha,\beta}(\theta)})(i)
					&= ((h_{\beta}^{g_{\alpha, \beta}(\theta)})^{-1} \circ h_{\alpha}^\theta \circ (h_{\alpha}^{g_{\beta, \alpha}(g_{\alpha,\beta}(\theta))})^{-1} \circ h_\beta^{g_{\alpha,\beta}(\theta)})(i)\\
					&= ((h_{\beta}^{g_{\alpha, \beta}(\theta)})^{-1} \circ h_{\alpha}^\theta \circ (h_{\alpha}^{\theta})^{-1} \circ h_\beta^{g_{\alpha,\beta}(\theta)})(i)\\
					&= ((h_{\beta}^{g_{\alpha, \beta}(\theta)})^{-1} \circ h_\beta^{g_{\alpha,\beta}(\theta)})(i) = i.
				\end{align*}
				\item Let $i \in R(\theta)$.
				Then, $\theta \in \Theta_R$ and $g_{\alpha,\beta}(\theta) = \theta$ by $(\text{G2})$.
				Hence, by choice of the bijections
				\begin{align*}
					h_{\alpha, \beta}^\theta(i)
					&= ((h_{\beta}^{g_{\alpha, \beta}(\theta)})^{-1} \circ h_{\alpha}^\theta)(i)\\
					&= ((h_{\beta}^{\theta})^{-1} \circ h_{\alpha}^\theta)(i)\\
					&= ((h_{\beta}^{\theta})^{-1} \circ h_{\beta}^\theta)(i)\\
					&= i.
				\end{align*}
				\item Finally, let $\theta \in \Theta_\alpha$ and $i \in \Phi_\alpha(\theta)$.
				Then, $g_{\alpha, \gamma} = g_{\beta, \gamma} \circ g_{\alpha, \beta}$ by (G3), so we compute
				\begin{align*}
					(h_{\beta, \gamma}^{g_{\alpha, \beta}(\theta)} \circ h_{\alpha,\beta}^{\theta})(i)
					&= ((h_\gamma^{g_{\beta,\gamma}(g_{\alpha,\beta}(\theta))})^{-1} \circ h_\beta^{g_{\alpha,\beta}(\theta)} \circ (h_{\beta}^{g_{\alpha, \beta}(\theta)})^{-1} \circ h_{\alpha}^\theta)(i)\\
					&= ((h_\gamma^{g_{\alpha,\gamma}(\theta)})^{-1} \circ h_{\alpha}^\theta)(i)\\
					&= h_{\alpha, \gamma}^\theta(i).\qedhere
				\end{align*}
			\end{enumerate}
		\end{proof}
		Now, if for every $\alpha, \beta, \gamma \in I_2 \cup \simpleset{\lambda}$ we define the tuple $\mathbf{x}_{\alpha, \beta} := (g_{\alpha,\beta}, \set{h_{\alpha, \beta}^{\theta}}{\theta \in \Theta_\alpha})$
		(G1) to (G3) and (H1) to (H3) may be rephrased as a system of isomorphisms of $\Theta$-indexing functions $\seq{\mathbf{x}_{\alpha, \beta}}{\alpha, \beta \in I_2 \cup \simpleset{\lambda}}$ which satisfies
		\begin{enumerate}[(K1')]
			\item $\mathbf{x}_{\alpha, \alpha} = \mathbf{1}_{\Phi_\alpha}$ and $\mathbf{x}_{\alpha, \beta}^{-1} = \mathbf{x}_{\beta, \alpha}$,
			\item $\mathbf{x}_{\alpha, \alpha} \restr \Phi_R = \mathbf{1}_{\Phi_R}$,
			\item $\mathbf{x}_{\alpha, \gamma} = \mathbf{x}_{\beta, \gamma} \circ \mathbf{x}_{\alpha, \beta}$.
		\end{enumerate}
		Applying the functor $\kappa^{\aleph_1}_\bullet$ from Corollary~\ref{COR_Induced_Sequence_Of_Functors} to the system $\seq{\mathbf{x}_{\alpha, \beta}}{\alpha, \beta \in I_2 \cup \simpleset{\lambda}}$, we obtain a system of isomorphisms $\seq{\kappa_{\mathbf{x}_{\alpha, \beta}}:\PP^{\Phi_\alpha}_{\aleph_1} \to \PP^{\Phi_\beta}_{\aleph_1}}{\alpha, \beta \in I_2 \cup \simpleset{\lambda}}$ which satisfies
		\begin{enumerate}[(K1)]
			\item $\kappa_{\mathbf{x}_{\alpha, \alpha}} = \id_{\PP^{\Phi_\alpha}_{\aleph_1}}$ and $\kappa_{\mathbf{x}_{\alpha, \beta}}^{-1} = \kappa_{\mathbf{x}_{\beta, \alpha}}$,
			\item $\kappa_{\mathbf{x}_{\alpha, \beta}} \circ \iota_{\aleph_1}^{\Phi_R, \Phi_\alpha} = \iota_{\aleph_1}^{\Phi_R, \Phi_\beta}$,
			\item $\kappa_{\mathbf{x}_{\alpha, \gamma}} = \kappa_{\mathbf{x}_{\beta, \gamma}} \circ \kappa_{\mathbf{x}_{\alpha, \beta}}$.
		\end{enumerate}	
		Fix $\beta_0 \in I_2$.
		For every $\alpha \in I_2$ we have that $\dot{T}^*_\alpha$ is a nice $\PP^{\Phi_\alpha}_{\aleph_1}$-name for a tree.
		Thus, $\kappa_{\mathbf{x}_{\alpha, \beta_0}}(\dot{T}^*_\alpha)$ is a nice $\PP^{\Phi_{\beta_0}}_{\aleph_1}$-name for a tree.
		However, $\Phi_{\beta_0}$ is countable, so by Lemma~\ref{LEM_Counting_Hereditary} there are only $\aleph_1$-many nice $\PP^{\Phi_{\beta_0}}_{\aleph_1}$-names for such trees.
		Thus, choose $I_3 \in [I_2]^\sigma$ such that $\kappa_{\mathbf{x}_{\alpha, \beta_0}}(\dot{T}^*_\alpha) = \kappa_{\mathbf{x}_{\alpha', \beta_0}}(\dot{T}^*_{\alpha'})$ for all $\alpha, \alpha' \in I_3$.
		
		Finally, choose any $\alpha_0 \in I_3$ and define $\dot{T}^*_\lambda$ to be $\kappa_{\mathbf{x}_{\alpha_0, \lambda}}(\dot{T}^*_{\alpha_0})$.
		Then, $\dot{T}^*_\lambda$ is a nice $\PP^{\Phi_\lambda}_{\aleph_1}$-name for a tree.
		We show that this definition is independent of the choice of $\alpha_0 \in I_3$, so let $\alpha \in I_3$.
		Then, we compute
		\begin{align*}
			\dot{T}^*_\lambda
			&= \kappa_{\mathbf{x}_{\alpha_0, \lambda}}(\dot{T}^*_{\alpha_0}) && (\text{definition of } \dot{T}^*_\lambda)\\
			&= \kappa_{\mathbf{x}_{\beta_0, \lambda}} (\kappa_{\mathbf{x}_{\alpha_0, \beta_0}}(\dot{T}^*_{\alpha_0})) && (\text{K3})\\
			&= \kappa_{\mathbf{x}_{\beta_0, \lambda}} (\kappa_{\mathbf{x}_{\alpha, \beta_0}}(\dot{T}^*_{\alpha})) && (\alpha \in I_3)\\
			&= \kappa_{\mathbf{x}_{\alpha, \lambda}}(\dot{T}^*_{\alpha}) && (\text{K3}).
		\end{align*}
		Finally, let $\beta < \lambda$. Since $\set{\Theta_\alpha}{\alpha \in I_3}$ is a $\Delta$-system with root $\Theta_R$ and $\Theta_\beta$ is countable, there can only be countably many $\alpha \in I_3$ with $\Theta_\alpha \cap \Theta_\beta \not\subseteq \Theta_R$.
		Further, since $\set{D_\alpha}{\alpha \in I_3}$ is a $\Delta$-system with root $R$ and for every $\theta \in \Theta_R$ the set $\Phi_\beta(\theta)$ is countable, there can only be countable many $\alpha \in I_3$ with $\Phi_\alpha(\theta) \cap \Phi_\beta(\theta) \not\subseteq R(\theta)$.
		Thus, we may choose $\alpha \in I_3 \setminus \simpleset{\beta}$ such that $\Theta_\alpha \cap \Theta_\beta \subseteq \Theta_R$ and for all $\theta \in \Theta_R$ we have $\Phi_{\alpha}(\theta) \cap \Phi_{\beta}(\theta) \subseteq R(\theta)$.
		By definition of $\Theta_\lambda$ we also have $\Theta_\lambda \cap \Theta_\beta \subseteq \Theta_R$ and for all $\theta \in \Theta_R$ also $\Phi_{\lambda}(\theta) \cap \Phi_{\beta}(\theta) \subseteq R(\theta)$.
		For $\nu \in \simpleset{\alpha, \lambda}$ we define a $\Theta$-subindexing function $\Phi_\nu^*$ of $\Psi$ by $\dom(\Phi_\nu^*) := \Theta_\nu \cup \Theta_\beta$ and for $\theta \in \Theta_\nu^*$ by
		$$
			\Phi_\nu^*(\theta) := \Phi_\nu(\theta) \cup \Phi_\beta(\theta),
		$$
		where every undefined set is treated as the empty set.
		We define a bijection $g_{\alpha, \lambda}^*:\Theta_\alpha^* \to \Theta_\lambda^*$ for $\theta \in \Theta^*_\alpha$ by
		$$
			g_{\alpha, \lambda}^*(\theta) :=
			\begin{cases}
				g_{\alpha, \lambda}(\theta) & \text{if } \theta \in \Theta_\alpha,\\
				\theta & \text{if } \theta \in \Theta_\beta.
			\end{cases}
		$$
		This is well-defined by (G2) and  $\Theta_\alpha \cap \Theta_\beta \subseteq \Theta_R$.
		Further, for every $\theta \in \Theta_\alpha^*$ define a bijection $h_{\alpha, \beta}^{\theta, *}:\Phi_\alpha^*(\theta) \to \Phi^*_\beta(g_{\alpha,\lambda}^*(\theta))$ as follows:
		\begin{enumerate}[$\bullet$]
			\item If $\theta \in \Theta_R$ we have $\Phi_\alpha^*(\theta) \setminus \Phi_\alpha(\theta) = \Phi_\lambda^*(\theta) \setminus \Phi_\lambda(\theta)$ and $g_{\alpha, \lambda}^*(\theta) = \theta$, so we may extend the bijection $h_{\alpha, \lambda}^{\theta}:\Phi_\alpha(\theta) \to \Phi_\lambda(\theta)$ to $h_{\alpha, \beta}^{\theta, *}:\Phi_\alpha^*(\theta) \to \Phi^*_\beta(\theta)$ by 
			$$
			h^{\theta, *}_{\alpha, \lambda}(i) =
			\begin{cases}
				h_{\alpha, \lambda}^{\theta}(i) & \text{if } i \in \Phi_\alpha(\theta),\\
				i & \text{otherwise}.
			\end{cases}
			$$
			This is well-defined by (H2) and $\Phi_{\alpha}(\theta) \cap \Phi_{\beta}(\theta) \subseteq R(\theta)$.
			\item If $\theta \in \Theta_\alpha \setminus \Theta_R$, then we have $\Phi^*_\alpha(\theta) = \Phi_\alpha(\theta)$, $\Phi^*_\lambda(\theta) = \Phi_\lambda(\theta)$, so we may define $h^{\theta, *}_{\alpha, \lambda} = h_{\alpha, \lambda}^{\theta}$.
			\item If $\theta \in \Theta_\beta \setminus \Theta_R$, then we have $\Phi^*_\alpha(\theta) = \Phi_\beta(\theta) = \Phi^*_\lambda(\theta)$ and $g_{\alpha, \lambda}^*(\theta) = \theta$, so we may define $h^{\theta, *}_{\alpha, \lambda} = \id_{\Phi^*_\alpha(\theta)}$.
		\end{enumerate}
		Then, the tuple $\mathbf{x_{\alpha, \lambda}^*} = (g^*_{\alpha, \lambda}, \set{h^{\theta, *}_{\alpha, \lambda}}{\theta \in \Theta^*_\alpha})$ is an isomorphism from $\Phi_\alpha^*$ to $\Phi_\lambda^*$.
		Further, we have $\Phi_\alpha, \Phi_\beta \subseteq \Phi_\alpha^*$ and $\Phi_\lambda, \Phi_\beta \subseteq \Phi_\lambda^*$ as well as
		\begin{enumerate}[(L1')]
			\item $\mathbf{x_{\alpha, \lambda}^*} \restr \Phi_\alpha = \mathbf{x_{\alpha, \lambda}}$,
			\item $\mathbf{x_{\alpha, \lambda}^*} \restr \Phi_\beta = \mathbf{1}_{\Phi_\beta}$.
		\end{enumerate}
		By Lemma~\ref{LEM_Restriction_Maps_To_Restriction_Lim} applying $\kappa^{\aleph_1}_\bullet$ from Corollary~\ref{COR_Induced_Sequence_Of_Functors} yields
		an automorphism $\kappa_{\mathbf{x}^*_{\alpha, \lambda}}: \PP^{\Phi_\alpha^*}_{\aleph_1} \to \PP^{\Phi_\lambda^*}_{\aleph_1}$ with the following properties:
		\begin{enumerate}[(L1)]
			\item $\kappa_{\mathbf{x}^*_{\alpha, \lambda}} \circ \iota_{\aleph_1}^{\Phi_\alpha, \Phi_\alpha^*} = \iota_{\aleph_1}^{\Phi_\lambda, \Phi_\lambda^*} \circ \kappa_{\mathbf{x}_{\alpha, \lambda}}$,
			\item $\kappa_{\mathbf{x}^*_{\alpha, \lambda}} \circ \iota_{\aleph_1}^{\Phi_\beta, \Phi_\alpha^*} = \iota_{\aleph_1}^{\Phi_\beta, \Phi_\lambda^*}$.
		\end{enumerate}
		Choose $p^* \in \PP^{\Phi_R}_{\aleph_1}$ with $\iota^{\Phi_R, \Psi}_{\aleph_1}(p^*) = p$.
		Then, we compute
		\begin{align*}
			\kappa_{\mathbf{x}^*_{\alpha, \lambda}}(\iota_{\aleph_1}^{\Phi_R, \Phi_\alpha^*}(p^*))
			&= \kappa_{\mathbf{x}^*_{\alpha, \lambda}}(\iota_{\aleph_1}^{\Phi_\beta, \Phi_\alpha^*}(\iota_{\aleph_1}^{\Phi_R, \Phi_\beta}(p^*))) && (\Phi_R \subseteq \Phi_\beta \subseteq \Phi_\alpha^*)\\
			&= \iota_{\aleph_1}^{\Phi_\beta, \Phi_\lambda^*}(\iota_{\aleph_1}^{\Phi_R, \Phi_\beta}(p^*)) && (\text{L2})\\
			&= \iota_{\aleph_1}^{\Phi_R, \Phi_\lambda^*}(p^*) && (\Phi_R \subseteq \Phi_\beta \subseteq \Phi_\lambda^*).
		\end{align*}
		Similarly, by (L2) we have $\kappa_{\mathbf{x}^*_{\alpha, \lambda}}(\iota_{\aleph_1}^{\Phi_\beta, \Phi_\alpha^*}(\dot{T}^*_\beta)) = \iota_{\aleph_1}^{\Phi_\beta, \Phi_\lambda^*}(\dot{T}^*_\beta)$.
		We also compute
		\begin{align*}
			\kappa_{\mathbf{x}^*_{\alpha, \lambda}}(\iota_{\aleph_1}^{\Phi_\alpha, \Phi_\alpha^*}(\dot{T}^*_\alpha))
			&= \iota_{\aleph_1}^{\Phi_\lambda, \Phi_\lambda^*}(\kappa_{\mathbf{x}_{\alpha, \lambda}}(\dot{T}^*_\alpha)) && (\text{L1})\\
			&= \iota_{\aleph_1}^{\Phi_\lambda, \Phi_\lambda^*}(\dot{T}^*_\lambda) && (\alpha \in I_3).
		\end{align*}
		We may now finish the argument.
		Since
		$$
			p \forces_{\PP^{\Psi}_{\aleph_1}} [\dot{T}_\alpha] \cap [\dot{T}_\beta] = \emptyset,
		$$
		we have
		$$
			\iota^{\Phi_\alpha^*, \Psi}_{\aleph_1}(\iota_{\aleph_1}^{\Phi_R, \Phi_\alpha^*}(p^*)) \forces_{\PP^{\Psi}_{\aleph_1}} [\iota^{\Phi_\alpha^*, \Psi}_{\aleph_1}(\iota_{\aleph_1}^{\Phi_\alpha, \Phi_\alpha^*}(\dot{T}^*_\alpha))] \cap [\iota^{\Phi_\alpha^*, \Psi}_{\aleph_1}(\iota_{\aleph_1}^{\Phi_\beta, \Phi_\alpha^*}(\dot{T}^*_\beta))] = \emptyset.
		$$
		By Theorem~\ref{THM_Complete_Subforcings} we may use downwards absoluteness to obtain
		$$
			\iota_{\aleph_1}^{\Phi_R, \Phi_\alpha^*}(p^*) \forces_{\PP^{\Phi^*_\alpha}_{\aleph_1}} [\iota_{\aleph_1}^{\Phi_\alpha, \Phi_\alpha^*}(\dot{T}^*_\alpha)] \cap [\iota_{\aleph_1}^{\Phi_\beta, \Phi_\alpha^*}(\dot{T}^*_\beta)] = \emptyset.
		$$
		Applying the isomorphism $\kappa_{\mathbf{x}^*_{\alpha, \lambda}}:\PP^{\Phi^*_\alpha}_{\aleph_1} \to \PP^{\Phi^*_\lambda}_{\aleph_1}$ and the computation above yields
		$$
			\iota_{\aleph_1}^{\Phi_R, \Phi_\lambda^*}(p^*) \forces_{\PP^{\Phi^*_\lambda}_{\aleph_1}} [\iota_{\aleph_1}^{\Phi_\lambda, \Phi_\lambda^*}(\dot{T}^*_\lambda)] \cap [\iota_{\aleph_1}^{\Phi_\beta, \Phi_\lambda^*}(\dot{T}^*_\beta)] = \emptyset.
		$$
		By Theorem~\ref{THM_Complete_Subforcings} we may use $\Pi^1_1$-absoluteness to obtain
		$$
			p \forces_{\PP^{\Psi}_{\aleph_1}} [\dot{T}_\lambda] \cap [\dot{T}_\beta] = \emptyset,
		$$
		so that
		$$
			p \forces_{\PP^{\Psi}_{\aleph_1}} \seq{\dot{T}_\alpha}{\alpha < \lambda} \text{ is not maximal}.
		$$
	\end{proof}

	\bibliographystyle{plain}
	\bibliography{refs}
	
\end{document}